\documentclass[a4paper,11pt]{article} 

\usepackage[utf8]{inputenc} 
\usepackage{amsfonts}
\usepackage{amssymb}
\usepackage{amsthm}
\usepackage{amsmath}
\usepackage{a4wide}
\usepackage{mathrsfs}
\usepackage{mathtools}
\usepackage{epsfig}
\usepackage{palatino}
\usepackage{pifont}

\usepackage{xfrac}

\usepackage{tikz}
\usetikzlibrary{trees,positioning,calc,fit,decorations.markings}
\tikzstyle{every node}=[anchor=west]
\tikzstyle{sec}=[shape=rectangle, rounded corners,draw=gray, dotted,
                 minimum width = 6cm,minimum height = 1.5cm,yshift=-1cm]
\tikzstyle{lat}=[xshift=7cm]
\tikzstyle{optional}=[dashed,fill=gray!15]
\tikzset{bigarr/.style={
decoration={markings,mark=at position 1 with {\arrow[scale=1.7]{>}}},
postaction={decorate}}}

\usepackage{bm}
\usepackage{fp,ifthen}
\usepackage{nicefrac}
\usetikzlibrary{decorations.pathreplacing}
\usepackage{float}
\usepackage{tcolorbox}
\usepackage{enumerate} 
\usepackage{enumitem} 
\usepackage{bbm}
\usepackage[italian,english]{babel}

\usepackage{nicematrix}

\usepackage{nccmath}

\usepackage{tocloft}
\let\OLDthebibliography\thebibliography
\renewcommand\thebibliography[1]{
  \OLDthebibliography{#1}
  \setlength{\parskip}{0pt}
  \setlength{\itemsep}{4pt plus 0.3ex}
}

\newcommand{\mres}{\mathbin{\vrule height 1.6ex depth 0pt width
0.13ex\vrule height 0.13ex depth 0pt width 1.3ex}}

\usepackage[a4paper,twoside,top=2.5cm, bottom=2cm, left=2.5cm, right=2.5cm]{geometry} 

\newcommand{\pdfgraphics}{\ifpdf\DeclareGraphicsExtensions{.pdf,.jpg}\else\fi}
\usepackage{graphicx}
\usepackage{makecell}

\usepackage{lipsum}
\usepackage{color}
\definecolor{hanblue}{rgb}{0.27, 0.42, 0.81}
\definecolor{red}{rgb}{1.0, 0.0, 0.0}
\usepackage[colorlinks, citecolor=blue,linkcolor=blue, urlcolor = blue]{hyperref}

\theoremstyle{plain}

\newtheorem{teo}{Theorem}[section]
\newtheorem{lemma}[teo]{Lemma}
\newtheorem{prop}[teo]{Proposition}
\newtheorem{cor}[teo]{Corollary}

\theoremstyle{definition}
\newtheorem{defn}[teo]{Definition}

\newtheorem{rem}[teo]{Remark}

\theoremstyle{remark}

\numberwithin{equation}{section}

\newcommand \eps{\ensuremath{\varepsilon}} 
\renewcommand{\epsilon}{\varepsilon}
\newcommand{\N}{\ensuremath{\mathbb{N}}}
\newcommand{\Z}{\ensuremath{\mathbb Z}}
\newcommand{\R}{\ensuremath{\mathbb R}}

\newcommand{\supp}{\textnormal{spt}\hspace{1pt}}
\newcommand{\Cube}{\ensuremath{\mathcal{Q}}}
\newcommand{\link}{\operatorname{link}}

\newcommand{\diam}{\operatorname{diam}}
\newcommand{\osc}{\operatorname{osc}}

\newcommand{\dist}{\operatorname{dist}}

\usepackage{esint}

\newcommand{\U}{\ensuremath{\mathcal{U}}}

\newcommand{\sing}{\textnormal{sing}}
\newcommand{\dive}{\text{div}}

\newcommand{\s}{\hspace{7pt}}
\newcommand{\sms}{\hspace{4.5pt}}
\newcommand{\vsp}{\vspace{3pt}}
\newcommand{\Sp}{\mathbb{S}}
\newcommand{\ep}{\varepsilon}
\newcommand{\E}{\mathcal{E}}

\newcommand{\M}{\mathbb{M}}

\newcommand{\K}{\mathcal{K}}
\newcommand{\D}{\mathcal{D}}
\newcommand{\B}{\mathcal{B}}
\newcommand{\F}{\mathbf{F}}
\newcommand{\Leb}{\mathscr{L}}

\newcommand{\lcurr}{[\hspace{-1.3pt}[}
\newcommand{\rcurr}{]\hspace{-1.3pt}]}

\DeclarePairedDelimiter{\abs}{\lvert}{\rvert}

\makeatletter
\renewcommand*\env@matrix[1][*\c@MaxMatrixCols c]{%
  \hskip -\arraycolsep
  \let\@ifnextchar\new@ifnextchar
  \array{#1}}
\makeatother

\usepackage{titlesec}

\titleformat{\subsubsection}[runin]
  {\normalfont\normalsize\bfseries}
  {\thesubsubsection}{0.5em}{}[.]

\titlespacing*{\subsubsection}{0pt}{.8em}{0.8em}

\titlespacing*{\paragraph}{0pt}{.8em}{0.8em}

\usepackage{url}
\usepackage{authblk}

\makeatletter
\newcommand{\addressa}[1]{\gdef\@addressa{#1}}
\newcommand{\emaila}[1]{\gdef\@emaila{\url{#1}}}

\newcommand{\addressb}[1]{\gdef\@addressb{#1}}
\newcommand{\emailb}[1]{\gdef\@emailb{\url{#1}}}

\newcommand{\addressc}[1]{\gdef\@addressc{#1}}
\newcommand{\emailc}[1]{\gdef\@emailc{\url{#1}}}

\newcommand{\@endstuff}{\par\vspace{\baselineskip}\noindent
\begin{tabular}{@{}l}\scshape\@addressa\\\textit{E-mail address:} \@emaila\end{tabular} 

\vspace{12pt} \noindent
\begin{tabular}{@{}l}\scshape\@addressb\\ \textit{E-mail address:} \@emailb\end{tabular}

\vspace{12pt} \noindent
\begin{tabular}{@{}l}\scshape\@addressc\\ \textit{E-mail address:} \@emailc\end{tabular}
}

\AtEndDocument{\@endstuff}
\makeatother

\begin{document}

\pdfgraphics 

\title{Another look at a notion of fractional mass in codimension~two}


\author{Michele Caselli, Mattia Freguglia and Nicola Picenni}


\addressa{Michele Caselli \\ 
University of Sydney -- Quadrangle A14, Camperdown, NSW 2006, Australia \\ Princeton University -- 304 Washington Rd, Princeton NJ 08540, US}
\emaila{mc3147@princeton.edu}

\addressb{Mattia Freguglia \\ Bocconi University --  Via Roentgen 1, 20136 Milan, Italy}
\emailb{mattia.freguglia@unibocconi.it}

\addressc{Nicola Picenni \\ University of Pisa  -- Largo Bruno Pontecorvo 5, 56127 Pisa, Italy }
\emailc{nicola.picenni@unipi.it}


\date{}

\maketitle

\vspace{-0.5cm}

\begin{abstract}

We study a notion of fractional $s$-mass for codimension-two currents on closed Riemannian manifolds, defined via energy minimization with a prescribed Jacobian constraint. We prove equi-coercivity and $\Gamma$-convergence, with respect to the flat topology, of the $s$-mass on general codimension-two currents.

 We also prove several additional results for fixed $s$. We establish improved regularity for $s$-harmonic maps that are minimizing among competitors with vanishing Jacobian and show that their singular set has Minkowski dimension at most $n-3$. Moreover, we show that the $s$-mass defined via weak linking, as recently introduced by the authors, agrees with the prescribed Jacobian formulation used here, clarifying the extent to which the $s$-mass depends, or ultimately does not depend, on the way singularities are prescribed.

\end{abstract}

\tableofcontents

\vspace{5ex}

\noindent\textbf{Mathematics Subject Classification (2020)}: 35R11, 49J45, 49Q20, 58E20, 58K45.


\vsp

\noindent \textbf{Keywords}: Sobolev maps, distributional Jacobian, $s$-harmonic maps, $\Gamma$-convergence.

\section{Introduction}

Nonlocal variational models provide a flexible way to approximate local
geometric quantities by energies that retain information at all length scales. After the seminal work of Bourgain, Brezis, and Mironescu \cite{BBM01}, many different nonlocal approximations of Sobolev-type seminorms have been investigated in recent years (see \cite{Ponce-calcvar,nguyen-gamma-conv,agmp,nonlocal-book,ARBDN,BSVSY,lahti,gp-gamma-liminf,nguyen-cras,gennaioli-stefani} and many others). Some variants have also been developed in more geometric settings, to provide nonlocal approximations for classical geometric functionals \cite{Cesaroni_et_al, Mattia-Willmore-limsup, Sav-Val-Gamma, 2023-Solci, ABSS}.

When applied to characteristic functions, this point of view naturally leads to a
nonlocal characterization of sets of finite perimeter and to the convergence of
the fractional perimeter, introduced in \cite{CRS}, to the classical perimeter. Beyond pointwise convergence \cite{Davila-BBM}, the convergence of the fractional perimeter to the classical one holds at a deep geometric level. For example, the convergence holds in the sense of $\Gamma$-convergence \cite{Gammaconv}. This implies, in particular, that minimizers of the fractional perimeter converge suitably to minimizers of the perimeter. 

More recently, significant progress \cite{CDSV, Enric} has been made in the convergence of general critical points of the fractional perimeter to critical points of the area (i.e. to minimal surfaces). For stable or finite Morse index critical points of the $s$-perimeter, the support converges suitably (in low dimension) to classical minimal surfaces as $s \to 1$. These results show that
fractional perimeters are not merely pointwise approximations of perimeter, but
provide a powerful variational framework for studying classical questions about
minimal hypersurfaces.

The effectiveness of this codimension-one theory suggests a broader program: developing nonlocal energies that approximate higher-codimension area with the same properties that made the codimension-one theory effective. Several authors have proposed extensions of the notion of fractional perimeter in $\R^n$ to higher codimension \cite{PonsiglioneMass, CorneliaSeguin, FMfvar}, or even to surfaces with boundary \cite{Seguin, SeguinBdry}, and have proved their pointwise convergence to the classical area as $s \to 1$. 

Nevertheless, all these notions are based in some way on the additive structure of $\R^n$ (translation invariance of the kernel or integral-geometric formulas). Given the program outlined above, it is crucial to work with an intrinsic notion that applies also to closed Riemannian manifolds, both for geometric applications and for a sound variational framework.

A first necessary step in such a program is to establish the same variational foundations available in codimension one: compactness, $\Gamma$-convergence, and a robust intrinsic definition on nonsmooth geometric objects. 

\paragraph{The fractional $s$-mass.} Given a closed Riemannian manifold $M$ and $s \in (0,1)$, one defines (see \cite{SerraSurv, codim2-frac, BadranSing}) the fractional $s$-mass of a surface $\Sigma \subset M$ of codimension $k \in \{1,\dots,n\}$ as
    \begin{equation*}
      \M_{n-k,s}(\Sigma) := \min \Big\{  [u]^2_{H^{\frac{k-1+s}{2}}(M)} : u \in  H^{\frac{k-1+s}{2}}(M; \Sp^{k-1}) \ \text{with} \ {\star Ju} = \omega_{k-1} \Sigma \Big\}. 
    \end{equation*}
Here $\Sigma$ is a smooth $(n-k)$-boundary in $M$, ${\star J u}$ denotes the Jacobian $(n-k)$-current associated to the map $u$, $\omega_{k-1}:=\mathcal{H}^{k-1}(\Sp^{k-1})$, and the minimized quantity is the $H^{\frac{k-1+s}{2}}(M)$-seminorm. For $k=1$ and $\Sigma =\partial E$, this definition recovers the notion of fractional $s$-perimeter since the only competitor in the minimum problem is the characteristic function of $E$.

The definition of $\mathbb{M}_{n-k,s}$ was first proposed, for smooth boundaries in a slightly different form, by Serra~\cite{SerraSurv} as an extension of the fractional perimeter to higher codimension, preserving the property that its critical points can be interpreted as singular sets of stationary fractional harmonic maps into spheres.  

In \cite{codim2-frac}, the authors recently proved that for closed, oriented, \emph{smooth} $(n-2)$-dimensional boundaries with integer multiplicity, $\M_{n-2,s}$ suitably rescaled $\Gamma$-converges as $s \to 1$ to a multiple of the classical $(n-2)$-dimensional Hausdorff measure, thus identifying $\M_{n-2,s}$ as an effective nonlocal approximation of codimension-two area. 

However, in \cite{codim2-frac} the variational framework of this approximation remained incomplete in two essential ways: the corresponding compactness/equi-coercivity was left open, and the analysis was restricted to \emph{smooth} codimension-two boundaries. In this work, we address these two open points and extend the analysis further. Moreover, we develop the theory in the more general setting of closed Riemannian manifolds.

\subsection{Main results} 

Since the rest of the work deals only with the codimension-two case, in what follows we will denote $\M_{n-2,s}$ simply by $\M_{s}$. Let $n\ge 2$ and $M$ be a smooth, closed, connected, oriented $n$-dimensional Riemannian manifold. In this work, we say that $\Sigma$ is an \emph{admissible boundary} if
\begin{equation}\label{eq:sigma class def}
\parbox{0.8\linewidth}{\centering
$\Sigma=\partial T $, where $T$ is an integer rectifiable $(n-1)$-current of finite mass. 
}
\end{equation}
In particular, $\Sigma$ is an $(n-2)$-dimensional current of finite flat norm, though not necessarily of finite mass. If $\Sigma$ also has finite mass, then $\Sigma$ is an integral $(n-2)$-boundary. Among other things, throughout the paper we identify if and when finite mass of $\Sigma$ is recovered from fractional information; see Corollary \ref{cor: bdd rescled mass -> integral} and Section \ref{sbs: domain of the s-mass}.

\subsubsection{$\Gamma$-convergence on currents}

A main goal of this work is to extend the $\Gamma$-convergence result of \cite{codim2-frac} for $(1-s)^2 \M_{s}$ to the most natural geometric class of admissible boundaries \eqref{eq:sigma class def}, and to prove the corresponding equi-coercivity property.

\begin{teo}[Equi-coercivity and $\Gamma$-convergence]\label{thm: new main asymptotics}
    As $s \to 1$, the rescaled functionals $(1-s)^2 \M_{s}$ are equi-coercive on the class of admissible boundaries with respect to the flat topology of $M$, and 
    \begin{equation*}
      \Gamma - \lim_{s \to 1} (1-s)^2 \M_{s}(\Sigma) = \frac{2\pi\omega_{n-1}}{n} \M(\Sigma), 
   \end{equation*}
where $\M(\Sigma)$ denotes the mass of $\Sigma$, and the $\Gamma$-limit is intended with respect to the flat topology. That is
\begin{itemize}
    \item[$(i)$] If $s \to 1$ is a (discrete) sequence and $(1-s)^2 \M_{s}(\Sigma_s)$ is uniformly bounded, then $\{\Sigma_s\}$ is relatively compact in the flat topology of $M$ and every limit point $\Sigma$ has finite mass.
    \item[$(ii)$] Whenever $s \to 1$ and $\Sigma_s \to \Sigma$ in the flat topology of $M$, there holds
    \[
        \liminf_{s \to 1} \hspace{0.03cm} (1-s)^2 \M_{s}(\Sigma_s)
        \geq \frac{2\pi\omega_{n-1}}{n}\M(\Sigma).
    \]
    Moreover, for every admissible boundary $\Sigma$ there exists a sequence of admissible boundaries $\Sigma_s$
    with $\Sigma_s \to \Sigma$ in the flat topology of $M$ such that
    \[
        \limsup_{s \to 1} \hspace{0.03cm} (1-s)^2 \M_{s}(\Sigma_s)
        \leq \frac{2\pi\omega_{n-1}}{n}\M(\Sigma).
    \]
\end{itemize}
\end{teo}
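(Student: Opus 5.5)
The plan is to reduce the whole statement to the smooth-boundary results of \cite{codim2-frac}, using the Jacobian structure of competitors. Throughout, $u_s$ denotes a minimizer for $\M_s(\Sigma_s)$, so $\star Ju_s=2\pi\Sigma_s$ and $(1-s)^2[u_s]^2_{H^{\frac{1+s}{2}}(M)}\le C$.

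\textbf{Compactness.} The first step is a uniform estimate on the Jacobians. Since $u_s$ is $\Sp^1$-valued, I would write $\star Ju_s = d(u_s^*d\theta)$ in the distributional sense. Interpolation should then bound the flat norm of $Ju_s$ by the $W^{1,1}$-type (or BV-type) size of $u_s$. The standard fact needed here is that $(1-s)[u]^2_{H^{\frac{1+s}{2}}}$ controls, up to constants, a fractional norm close to $W^{1,2}$. By Bourgain--Brezis--Mironescu, $(1-s)$ times that seminorm converges to the Dirichlet energy. With the extra factor $(1-s)$, however, the natural quantity is $\int|\nabla u|^2$ restricted away from vortex cores, which is of logarithmic size $\sim (1-s)^{-1}$.

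This suggests the following Jerrard--Soner/Alberti--Baldo--Orlandi type argument. I would regularize $u_s$ at scale $\varepsilon=\varepsilon(s)$, with $|\log\varepsilon|\sim (1-s)^{-1}$, for instance by harmonic extension to $M\times(0,\varepsilon)$ or by mollification. This produces $v_s$ with $\int|\nabla v_s|^2/|\log\varepsilon|\lesssim (1-s)^2[u_s]^2$, together with a Ginzburg--Landau-type penalization of $1-|v_s|^2$. Jerrard--Soner compactness then shows that $Jv_s$ is precompact in the flat topology, and that its limits are integral currents with mass at most $C\liminf$. Finally, I would check that the flat distance between $Jv_s$ and $Ju_s$ tends to zero; this follows by the usual estimate $\|Ju-Jv\|_{\flat}\lesssim\|u-v\|_{L^2}(\|\nabla u\|+\|\nabla v\|)$ at scale $\varepsilon$. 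Together these give (i), including finiteness of the mass of every limit point.

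\textbf{Liminf.} The same regularization yields the lower bound, once the constants are tracked sharply. The Ginzburg--Landau $\Gamma$-liminf (Jerrard--Soner, ABO) gives
\[
\liminf \int|\nabla v_s|^2/|\log\varepsilon|\ge \pi\,\M(\Sigma).
\]
The energy comparison between $u_s$ and $v_s$ must therefore be sharp to first order. I would obtain it by localizing the Gagliardo seminorm with a slicing or blow-up argument. Concretely, for $\|\Sigma\|$-a.e.\ point I would blow up around the approximate tangent plane and reduce to the two-dimensional normal slice. There the sharp constant $\frac{2\pi\omega_{n-1}}{n}$ per unit area is exactly the constant computed in \cite{codim2-frac} for flat boundaries; this identification is why the constant matches. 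A measure-theoretic lower bound, via a Radon--Nikodym density argument on the limit of the energy measures $\mu_s$, then gives (ii)-liminf.

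\textbf{Limsup.} For $\Sigma$ of infinite mass there is nothing to prove. Otherwise $\Sigma$ is an integral boundary, and I would use polyhedral or smooth approximation of integral boundaries: Federer's approximation in $\R^n$, adapted to $M$ through a triangulation, gives smooth boundaries $\Sigma_j\to\Sigma$ in flat norm with $\M(\Sigma_j)\to\M(\Sigma)$. The recovery sequences of \cite{codim2-frac} handle each $\Sigma_j$, and a diagonal argument finishes (ii).

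\textbf{Main obstacle.} I expect the hardest step to be the sharp liminf for rough $\Sigma_s$, since it requires a nonlocal energy lower bound localized near $\Sigma$ with the exact constant. I would first attempt it through the Caffarelli--Silvestre-type extension to $M\times(0,\infty)$ with weight $y^{-s}$, where vortex-ball constructions are more tractable.
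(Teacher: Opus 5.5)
\textbf{Main gap: the $\Gamma$-liminf.} This is where the theorem has new content for non-smooth $\Sigma$, and your proposal does not supply the mechanism. Your first idea, a single-scale regularization $v_s$ followed by the sharp Ginzburg--Landau liminf, cannot give $\frac{2\pi\omega_{n-1}}{n}$. In the paper, the $H^{\frac{1+s}{2}}$ energy is bounded below, sharply, by an average of Ginzburg--Landau energies of lattice discretizations. The average runs over all scales $\rho^{1/(1-s)}$, $\rho\in(0,1)$, with weight $|\log\rho|$, and over rotations and translations (see \eqref{eq:key-est1}). One fixed scale $\varepsilon(s)$ weights the frequencies of $u_s$ differently from the seminorm. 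So no single comparison constant is sharp both for a straight vortex and for maps whose energy sits at frequencies of order $\varepsilon(s)^{-1}$. The paper remarks explicitly that this route does not yield the sharp constant.

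Your fallback is to blow up at $\|\Sigma\|$-a.e.\ point and use the flat case of \cite{codim2-frac}. The architecture is right, but it omits the decisive step. Knowing the energy of the straight vortex only gives an upper bound. The lower bound must hold for arbitrary competitors whose singular sets $\Sigma_s$ are not the tangent plane $\Pi$ but only flat-close to it, with no mass control. All you know is that $\Sigma_s-\theta(x_0)\Pi=\partial T_s$ with $\M(T_s)$ small. Hence $|\deg(w_s,\gamma)|=\theta(x_0)$ only for loops avoiding $N_s=\supp T_s$.

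The paper handles this with Proposition~\ref{prop:local-liminf}, a quantitative lower bound with loss $1-C_n\lambda$, where $\lambda=\limsup_s(\mathcal H^{n-1}(N_s)/(\delta\,\mathcal H^{n-2}(E)))^{1/3}$. Its proof has four ingredients:
\begin{enumerate}
\item random lattice discretizations at scales $\rho^{1/(1-s)}$;
\item a Markov-inequality count of cubes whose edges have large oscillation or meet $N_s$;
\item a choice of normal square loops avoiding these bad cubes, on which the discretized map has modulus $>1/2$ and degree $\theta(x_0)$;
\item the two-dimensional Ginzburg--Landau degree bound on each normal square.
\end{enumerate}
Combined with Lemma~\ref{lemma:blow-up} and a Vitali covering by cubes with flat defect $<\varepsilon r_i^{n-1}$, this gives the liminf.

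If you prefer a Fonseca--M\"uller blow-up with diagonalization, you must couple $s_k$ to the radii $r_k$. The flat defect has to be $o(r_k^{n-1})$, and also $(1-s_k)|\log r_k|\to0$, because rescaling the $H^{\frac{1+s}{2}}$ seminorm on $Q_r$ and normalizing by $r^{n-2}$ produces a factor $r^{1-s}$. You then still need to check that the flat-limit lower bound you invoke applies locally in a cube to such approximants. None of this appears in your sketch.

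\textbf{Smaller points.} Your compactness argument is essentially the paper's: heat-flow regularization at $t=e^{-2/(1-\sigma)}$, the spectral representation (where $\alpha_{n,\sigma}\sim 1-\sigma$ supplies the extra factor), Alberti--Baldo--Orlandi compactness, and a comparison of Jacobians. That last comparison cannot use $\|u_s-v_s\|_{L^2}(\|\nabla u_s\|_{L^2}+\|\nabla v_s\|_{L^2})$, since $u_s$ has vortex singularities and $\nabla u_s\notin L^2$. The paper instead uses the $H^{1/2}$ estimate of Theorem~\ref{teo:BN-Invent fractional}, together with $[u_s-P_tu_s]^2_{H^{1/2}}\lesssim(1-\sigma)t^{\sigma-1/2}[u_s]^2_{H^\sigma}$, which is exponentially small in $(1-\sigma)^{-1}$.

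In the limsup, Federer--Fleming approximation gives polyhedral chains, not smooth embedded boundaries. Obtaining a smooth embedded $S$ with $\F(\Sigma-S)<\varepsilon$ and $\mathcal H^{n-2}(S)<\M(\Sigma)+\varepsilon$ is exactly the Almgren--Browder--Caldini--De~Lellis theorem (Theorem~\ref{thm: Caldini approx}). With it, $S=\partial N$ for an embedded $N$ and the construction of \cite{codim2-frac} applies. Without it, you must build recovery maps for polyhedral boundaries and control interactions along the skeleton, as in Remark~\ref{rem: Gamma-limusp secondo modo}.

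Finally, on $M$ both the localization of the liminf and the limsup computation need $|\K_\sigma(p,q)-\dist(p,q)^{-(n+2\sigma)}|\le C_M\dist(p,q)^{-(n+2\sigma-1)}$ uniformly as $\sigma\to1$ (Proposition~\ref{prop: comparison kernel sharp}). Your proposal treats $M$ as if it were Euclidean.
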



Therefore, the fractional $s$-mass provides a nonlocal approximation of the codimension-two area in full geometric generality. This gives a codimension-two analog of the classical $\Gamma$-convergence for the fractional perimeter \cite{Gammaconv, Ponce-calcvar}.

In codimension one, it is classical (see e.g. \cite[Corollary~5 and Remark~3]{BBM01}) that a set $E$ has finite perimeter if and only if $(1-s){\rm Per}_s(E)$ is uniformly bounded as $s \to 1$. As a direct application of $(i)$ in Theorem \ref{thm: new main asymptotics} to the constant sequence $\Sigma_s\equiv \Sigma$, we obtain one of these implications in codimension two for the $s$-mass.

\begin{cor}[BBM-type characterization]\label{cor: bdd rescled mass -> integral} Let $\Sigma$ be an admissible boundary such that
\begin{equation*}
    \limsup_{s \to 1} \hspace{0.03cm} (1-s)^2 \M_{s}(\Sigma) <+\infty. 
\end{equation*}
Then $\Sigma$ has finite mass and is an integral boundary.
\end{cor}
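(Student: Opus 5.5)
The plan is to apply Theorem~\ref{thm: new main asymptotics}$(i)$ to the constant sequence $\Sigma_s\equiv\Sigma$, as the text preceding the statement suggests, and then use the structure of admissible boundaries to upgrade finite mass to integrality.

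First we extract a sequence along which the energy is bounded. By assumption there exist $C<\infty$ and $s_0<1$ such that $(1-s)^2\M_s(\Sigma)\le C$ for all $s\in(s_0,1)$. Fix any discrete sequence $s_j\to1$ with $s_j\in(s_0,1)$ and set $\Sigma_{s_j}:=\Sigma$. This is a sequence of admissible boundaries with $(1-s_j)^2\M_{s_j}(\Sigma_{s_j})$ uniformly bounded, so part $(i)$ of Theorem~\ref{thm: new main asymptotics} applies.

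Next we identify the limit. Part $(i)$ says the sequence is relatively compact in the flat topology and that every limit point has finite mass. The sequence is constant, so its only limit point is $\Sigma$ itself, and therefore $\M(\Sigma)<\infty$. If one prefers a quantitative bound, part $(ii)$ with the trivial recovery $\Sigma_s\equiv\Sigma$ gives $\frac{2\pi\omega_{n-1}}{n}\M(\Sigma)\le\liminf(1-s)^2\M_s(\Sigma)\le C$. This makes no use of the $\limsup$ being finite beyond boundedness along one sequence.

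Finally we show integrality. Since $\Sigma$ is admissible, $\Sigma=\partial T$ with $T$ integer rectifiable of finite mass. Hence $\Sigma$ is an integer flat chain with $\partial\Sigma=0$, and by the above it has finite mass. By the boundary rectifiability theorem of Federer--Fleming, in the form for flat chains (an integer flat chain of finite mass is rectifiable), $\Sigma$ is integer rectifiable. Then $\M(T)+\M(\partial T)<\infty$, so $T$ is integral, and $\Sigma=\partial T$ is an integral boundary. On a closed manifold this is transferred via a smooth isometric embedding or local charts, where the flat-chain rectifiability result holds.

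All the substantive work lies in Theorem~\ref{thm: new main asymptotics}$(i)$. The only delicate point of this corollary is the last step: finite mass of a boundary of an integer rectifiable current must imply integer rectifiability. That requires the closure/rectifiability theorem for integer flat chains, not merely the definition of mass.
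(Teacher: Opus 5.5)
Your proposal is correct and matches the paper, which obtains this corollary exactly as a direct application of Theorem~\ref{thm: new main asymptotics}$(i)$ to the constant sequence $\Sigma_s\equiv\Sigma$. For integrality, the paper relies on the remark after \eqref{eq:sigma class def} that an admissible boundary of finite mass is an integral boundary, which is precisely your boundary-rectifiability step. The proof of $(i)$ also yields the flat limit directly as an integral boundary, via the Alberti--Baldo--Orlandi compactness theorem.
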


Observe that the converse of Corollary~\ref{cor: bdd rescled mass -> integral} does not hold: finite classical mass alone does not imply finite $s$-mass. See $(iii)$ in Section~\ref{sbs: domain of the s-mass} for an example of such a phenomenon.

\subsubsection{Relationship between $\Sigma$ and minimizers in the definition of $\M_s(\Sigma)$}

A second theme of this work is to investigate the relation between the prescribed \(\Sigma\) and the class of minimizers
\[
\mathcal{A}(\Sigma):=\Bigl\{u\in\mathfrak{F}_s^J(\Sigma):\
[u]^2_{H^{\frac{1+s}{2}}(M)}=\M_s(\Sigma)\Bigr\}
\]
in the definition of \(\M_s(\Sigma)\). Since every \(u\in\mathcal{A}(\Sigma)\) has Jacobian prescribed by \(\Sigma\), it is natural to ask whether the top-dimensional singularities (i.e. $(n-2)$-dimensional) of \(u\) are exactly \( \supp \Sigma\), or whether additional singularities of the same dimension (but zero degree)
can appear away from \(\Sigma\).

This question is closely related to the regularity theory of $\Sp^1$-valued fractional harmonic maps. For \(\sigma:=\tfrac{1+s}{2}\in(1/2,1)\) and \(n\geq 2\), it is known \cite{GlobRegFrac, Partialreg} that stationary
\(\sigma\)-harmonic maps are smooth outside a singular set of Minkowski dimension at most
\(n-2\): if \(n=2\), the singular set is locally finite, while if \(n\geq 3\), then
\(\dim_{\rm Min}\sing(u)\leq n-2\). Of course, the same holds for minimizing
\(\sigma\)-harmonic maps. Some results in this direction have also recently been established for intrinsic fractional $\sigma$-harmonic maps \cite{IntrinsicFHM}. 

However, in our case dimension \(n-2\) is precisely the
dimension of the prescribed \(\Sigma\). Thus the general theory alone does not
distinguish singularities generated by the Jacobian constraint from possible degree-zero
singularities away from \(\Sigma\). Our next result shows that, for \(s\) sufficiently close to \(1\), minimizers in \(\mathcal{A}(\Sigma)\) do not have additional top-dimensional singularities away
from \(\Sigma\). 

\begin{teo}\label{thm: improved W1p p<3} There exists $s_* \in (0,1)$ with the following property. Let $s \in (s_*,1)$ and $u \in \mathcal{A}(\Sigma) $ be a minimizer in the definition of $\M_s(\Sigma)$. Then
\begin{itemize}
    \item[$(i)$] The Minkowski dimension of ${\rm sing}(u) \setminus \supp \Sigma $  is at most $n-3$.
     \item[$(ii)$] $u \in W^{1,p}_{\rm loc} (M\setminus \supp \Sigma)$ for all $ p \in [1,3)$ with quantitative estimates.
\end{itemize}
\end{teo}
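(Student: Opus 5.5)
The plan is a local analysis away from $\supp \Sigma$. There the Jacobian constraint is locally trivial, so $u$ should behave like a minimizing $\sigma$-harmonic map among competitors with vanishing Jacobian. Fix a ball $B$ with $2B \cap \supp \Sigma = \emptyset$. Any competitor that coincides with $u$ outside $B$ and has $Ju = 0$ in $B$ keeps the constraint $\star J v = \pi\Sigma$. On a ball a map with vanishing Jacobian admits a lifting, at least in a suitable fractional sense: $v = e^{i\varphi}$. So $u$ minimizes $[\cdot]^2_{H^{\sigma}}$ among such competitors, with $\sigma = \tfrac{1+s}{2}$. Working through the Caffarelli--Silvestre type extension (or the intrinsic harmonic extension on $M$), I obtain an $\varepsilon$-regularity theorem and a monotonicity formula. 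Both are available from \cite{GlobRegFrac, Partialreg}, since minimizers are stationary.

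The key new ingredient is to exclude top-dimensional, $(n-2)$-dimensional, singularities in $M\setminus\supp\Sigma$. Here I would run a dimension reduction / blow-up argument. Suppose the $(n-2)$-dimensional density of the singular set were positive at some point. Blow-ups, by compactness of minimizers (strong convergence in $H^\sigma$, which follows from the minimality and a standard comparison), give a tangent map that is homogeneous and invariant along an $(n-2)$-plane. This reduces to a $0$-homogeneous minimizing $\sigma$-harmonic map $\R^2\to\Sp^1$ with vanishing Jacobian, i.e. of degree zero around the origin.

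Such a map lifts globally, $v=e^{i\varphi}$, with $\varphi$ $0$-homogeneous on $\R^2$. I would then show by a competitor (replace $\varphi$ by its average near the origin, or by a cutoff interpolation) that nonconstant $\varphi$ cannot be minimizing. The energy of a nonconstant $0$-homogeneous map in $H^\sigma$ with $\sigma>1/2$ is locally infinite in dimension $2$, since $H^{\sigma}$ scales like $r^{2-2\sigma}$ with infinite log-type tails at the origin. The $H^\sigma$ seminorm of $e^{i\varphi}$ should be comparable to that of $\varphi$ for $s$ close to $1$. Concretely, for $\sigma$ near $1$ the cost of a degree-zero singularity is strictly larger than that of the smoothed-out competitor. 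This is where $s_*$ enters: I would prove the rigidity ``$0$-homogeneous degree-zero minimizers are constant'' uniformly for $s>s_*$, by a contradiction/compactness argument as $s\to1$. The limit is a local $W^{1,2}$-harmonic problem, where degree-zero $\Sp^1$-valued minimizers are smooth since they are harmonic functions composed with $e^{i\cdot}$. This uniform rigidity is the step I expect to be the main obstacle, since it requires the $\varepsilon$-regularity constants and compactness to be uniform as $s\to1$ with the $(1-s)$ rescaling.

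With that, Federer's dimension reduction gives $\dim_{\rm Min}\sing(u)\setminus\supp\Sigma\le n-3$; for Minkowski rather than Hausdorff, I would use the quantitative stratification of Naber--Valtorta type, or the covering argument in \cite{Partialreg}. This gives $(i)$. For $(ii)$, $\varepsilon$-regularity gives $|\nabla u|(x)\lesssim r_u(x)^{-1}$, where $r_u$ is the regularity scale. The quantitative volume estimate $|\{r_u<r\}\cap B|\lesssim r^{3-\delta}$ for every $\delta>0$ then yields $\int_B|\nabla u|^p<\infty$ for all $p<3$, with bounds depending only on $\M_s(\Sigma)$, $\dist(B,\supp\Sigma)$ and $p$.
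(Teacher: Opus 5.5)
Your overall scheme matches the paper's. You localize away from $\supp\Sigma$ and note that $u$ is null-Jacobian minimizing there. You use the quantitative stratification and $\varepsilon$-regularity of \cite{GlobRegFrac} at fixed $\sigma$. Via compactness and dimension reduction, you reduce to rigidity of $0$-homogeneous null-Jacobian minimizing maps $\R^2\to\Sp^1$ for $\sigma$ close to $1$, followed by the layer-cake argument. The gap is in that rigidity step (Theorem~\ref{thm: classification R2}), the only new input, and your justifications for it do not hold.

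\begin{itemize}
\item \emph{Infinite energy.} A nonconstant $0$-homogeneous map does not have locally infinite $H^\sigma$-energy in $\R^2$ for $\sigma<1$. By Lemma~\ref{lem:reduction-fractional-energy-homogeneous}, $[u]^2_{H^\sigma(B_R)}=\frac{R^{2-2\sigma}}{2-2\sigma}\iint_{\Sp^1\times\Sp^1}|g(\omega)-g(\theta)|^2\widetilde K_{2,\sigma}(\omega,\theta)\,d\omega\,d\theta<\infty$ whenever $g\in H^\sigma(\Sp^1)$. This is why the vortex $x/|x|$, and hence $\M_s(\Sigma)$, is finite. The logarithmic divergence is a $\sigma=1$ phenomenon; if your claim held, no threshold $s_*$ would be needed.
\item \emph{Comparability of phase and map.} $[e^{i\varphi}]_{H^\sigma}\le[\varphi]_{H^\sigma}$ is trivial, but there is no reverse linear bound. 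One needs the nonlinear lifting estimate of Theorem~\ref{teo:fractional_lifting}, whose explicit $(1-\sigma)$-dependence is essential.
\item \emph{No limit problem as $s\to1$.} Under the normalization $(1-\sigma)[\cdot]^2_{H^\sigma}$, which tends to the Dirichlet energy, a nonconstant $0$-homogeneous profile has energy blowing up like $(1-\sigma)^{-1}$. Under $(1-\sigma)^2[\cdot]^2_{H^\sigma}$ it stays of order $\|g'\|^2_{L^2(\Sp^1)}$, but every $W^{1,2}$ competitor has zero limit energy, and $g(x/|x|)\notin W^{1,2}_{\rm loc}$. So there is no ``$W^{1,2}$-harmonic limit problem'' whose rigidity you can invoke. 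You would also need compactness of null-Jacobian minimizers uniformly in $\sigma$, which you do not have.
\end{itemize}
Uniformity of the $\varepsilon$-regularity constants in $s$, which you flag as the main obstacle, is not needed: only the classification threshold must be uniform.

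What is missing is a quantitative comparison at each fixed $\sigma$, with constants tracked in $1-\sigma$, followed by a small-energy rigidity statement. The paper argues as follows.
\begin{itemize}
\item \emph{Competitor.} Take $v=e^{i\psi}$ on $B_1$, with $\psi$ the harmonic extension of the lift $\varphi$ of $g$, and $v=u$ outside $B_1$.
\item \emph{Cross term.} Minimality and the fractional Hardy inequality (constant $\lesssim1-\sigma$) absorb the cross term across $\partial B_1$, giving $[u]^2_{H^\sigma(B_1)}\le3[v]^2_{H^\sigma(B_1)}$.
\item \emph{Chain of estimates.} Combine:
  \begin{itemize}
  \item the interpolation bound $\sigma(1-\sigma)[f]^2_{H^\sigma}\le C\|f\|^{2-2\sigma}_{L^2}\|\nabla f\|^{2\sigma}_{L^2}$;
  \item $\|\nabla\psi\|^2_{L^2(B_1)}=[\varphi]^2_{H^{1/2}(\Sp^1)}\le C(1-\sigma)[\varphi]^2_{H^\sigma(\Sp^1)}$;
  \item the lifting estimate;
  \item $(1-\sigma)[u]^2_{H^\sigma(B_1)}\ge c[g]^2_{H^\sigma(\Sp^1)}$.
  \end{itemize}
  These give $[u]^2\le C(1-\sigma)^\sigma[u]^{2\sigma}+C(1-\sigma)^{1/2}[u]^2$, so $(1-\sigma)^2[u]^2_{H^\sigma(B_1)}$ is arbitrarily small for $\sigma$ near $1$.
\item \emph{Small-energy rigidity.} One must still conclude constancy; this is Proposition~\ref{prop:small_energy_minimizers}. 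Slicing gives $\osc(g,\Sp^1)\le1/2$, so the phase has oscillation $<\pi/2$. The maximum principle for $\int_{\Sp^1}\sin(\varphi(x)-\varphi(y))\widetilde K_\sigma(x,y)\,dy=0$ at a maximum point then forces $\varphi$ to be constant.
\end{itemize}

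Your suggested competitor (``replace $\varphi$ by its average near the origin'') addresses neither half. The comparison is scale invariant, so modifying near the origin is the same as modifying in $B_1$. The gain must come from a competitor that is $W^{1,2}$ on the whole ball: its energy is $O((1-\sigma)^{-1})$ against the $(1-\sigma)^{-2}$ scale of $u$, and the nonlinear lifting bound is what controls it.
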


This theorem follows from the analogous local result, Theorem \ref{thm: local improved W1p p<3}, for a suitable class of $\Sp^1$-valued fractional harmonic maps with vanishing Jacobian; see Section \ref{sec: sec 3} for details. 

The central mechanism behind Theorem \ref{thm: improved W1p p<3} (and Theorem \ref{thm: local improved W1p p<3}) is a rigidity result for tangent maps with vanishing Jacobian. After a classical dimension-reduction, this amounts to classifying \(0\)-homogeneous $\sigma$-harmonic minimizing maps in $\R^2$ whose Jacobian is zero.
The next theorem gives this classification when \(\sigma\) is sufficiently close
to \(1\).

We point out that the threshold $\sigma_*$ (and thus the corresponding $s_*$ in Theorem~\ref{thm: improved W1p p<3}) is not obtained by compactness, but in principle it might be made explicit (see Remark~\ref{rem:s*}).

\begin{teo}\label{thm: classification R2} There exists $\sigma_* \in (1/2,1)$ with the following property. Let $\sigma\in (\sigma_* ,1)$ and let $u \in H^\sigma_{\rm loc}(\R^2; \Sp^1)$ be a $0$-homogeneous, null-Jacobian minimizing map (see Definition \ref{def:null-jac-min}). Then $u$ is constant.  
\end{teo}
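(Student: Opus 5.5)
The plan is to treat $u$ as a perturbation of its limit as $\sigma\to1$ and to use the vanishing Jacobian to remove the only mechanism that could force a singularity at the origin.

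\textbf{Step 1: lifting.} Since $u$ is $0$-homogeneous, it is determined by its trace $g:=u|_{\Sp^1}\colon\Sp^1\to\Sp^1$, and $u\in H^\sigma_{\rm loc}$ with $\sigma>1/2$ forces $g\in H^{\sigma}(\Sp^1)$, hence $g$ is continuous (its trace regularity is $H^{\sigma}$ by homogeneity, since the radial variable decouples). The null-Jacobian condition says that the distributional Jacobian of $u$, which for a $0$-homogeneous map equals $\pi\deg(g)\,\delta_0$, vanishes. Hence $\deg g=0$, so $g=e^{i\varphi}$ with $\varphi\in H^\sigma(\Sp^1;\R)$ single-valued, and $u=e^{i\varphi(x/|x|)}$.

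\textbf{Step 2: competitors.} Minimality among null-Jacobian competitors allows perturbations $u_t=e^{i(\varphi+t\psi)}$ with $\psi$ compactly supported. These have zero Jacobian because they possess a global lift. Two consequences follow. First, the Euler--Lagrange equation
\[
\mathrm{Im}\bigl(\bar u\,(-\Delta)^\sigma u\bigr)=0 .
\]
Second, nonnegativity of the second variation: for all $\psi\in C^\infty_c(\R^2)$,
\[
\iint \frac{|\psi(x)-\psi(y)|^2\cos(\varphi(x)-\varphi(y)) + \ldots}{|x-y|^{2+2\sigma}}\,dx\,dy\ \ge 0 ,
\]
where the omitted terms are the usual ones. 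I would also compare against the constant competitor $u\equiv e^{i\bar\varphi}$, glued in a ball through a cutoff of the lift. This gives the energy bound $[u]^2_{H^\sigma(B_1)}\le C(1-\sigma)^{-1}$-type control. More importantly, in the local regime it gives that the energy per scale is at most that of the gluing layer.

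\textbf{Step 3: compactness and rigidity as $\sigma\to1$ (the main step).} Here I do not argue by abstract compactness, since the paper says $\sigma_*$ is explicit. Instead I use the scaling of the energy of a $0$-homogeneous map. The $H^\sigma(B_R)$ energy of $u$ equals
\[
c_\sigma\,R^{2-2\sigma}\,\mathcal{E}_\sigma(g)
\]
for a quadratic-type functional $\mathcal{E}_\sigma(g)$ on the circle. The constant $c_\sigma$ blows up like $(1-\sigma)^{-1}$ times $\int_{\Sp^1}|g'|^2$ as $\sigma\to1$, precisely as in the logarithmic divergence of degree-zero-but-nonconstant $0$-homogeneous maps in $H^1$. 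A competitor that equals $u$ outside $B_R$ and interpolates the lift $\varphi$ radially to its mean inside $B_R$ (for instance $\varphi_R(x)=\bar\varphi+\eta(|x|/R)(\varphi(x/|x|)-\bar\varphi)$) has energy in $B_R$ bounded by $C R^{2-2\sigma}\bigl(\|\varphi\|_{H^\sigma(\Sp^1)}^2+\text{lower order}\bigr)$, with a constant that stays bounded as $\sigma\to1$, plus a nonlocal cross-interaction term between $B_R$ and $B_R^c$. The homogeneous map pays the divergent factor $\sim(1-\sigma)^{-1}$, because its singularity at $0$ costs energy at every scale. The modified map removes that logarithmic-type accumulation inside $B_{\delta R}$. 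Comparing the two, minimality yields
\[
(1-\sigma)^{-1}\,c\,[\varphi]^2_{\dot H^{\sigma}(\Sp^1)}\le C\,[\varphi]^2_{\dot H^{\sigma}(\Sp^1)},
\]
which forces $[\varphi]=0$ once $\sigma>\sigma_*$. I expect this to be the main obstacle. The nonlocal tails, i.e. the interaction of the modified region with the exterior, must be estimated uniformly in $\sigma$. The energy of the interpolation must also be controlled by the circle seminorm of the lift $\varphi$ rather than by that of $g$. I would handle both by working directly with the lift and using $|e^{ia}-e^{ib}|\le|a-b|$, so that the competitor's energy is bounded by the scalar $H^\sigma$ energy of $\varphi_R$. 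For that scalar function the exact homogeneous computation and the cutoff estimates are explicit. For the lower bound on the energy of $u$, I would use $|e^{ia}-e^{ib}|\ge \frac{2}{\pi}|a-b|$ when $|a-b|\le\pi$ after a reduction, or alternatively the Euler--Lagrange/stability information to control the phase.

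\textbf{Step 4: conclusion.} From $[\varphi]=0$ we get that $\varphi$ is constant, so $u$ is constant. Finally, I would double-check that the threshold depends only on the explicit constants $c_\sigma$ and $C$, consistent with Remark~\ref{rem:s*}.
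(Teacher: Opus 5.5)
There is a genuine gap in Step~3. The inequality $(1-\sigma)^{-1}c\,[\varphi]^2_{H^\sigma(\Sp^1)}\le C[\varphi]^2_{H^\sigma(\Sp^1)}$ has a left-hand side you cannot justify.

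By the homogeneous reduction (Lemma~\ref{lem:reduction-fractional-energy-homogeneous}), the energy of $u$ controls only $(1-\sigma)^{-1}[g]^2_{H^\sigma(\Sp^1)}$, the seminorm of $g=e^{i\varphi}$, not of the phase. Passing from $g$ to $\varphi$ through $|e^{ia}-e^{ib}|\ge\frac{2}{\pi}|a-b|$ requires $\osc\varphi\le\pi$, and that a priori bound is exactly what is missing. A degree-zero $g$ may wind back and forth many times. Then pairs with $g(\omega)=g(\theta)$ but $\varphi(\omega)-\varphi(\theta)\in 2\pi\Z\setminus\{0\}$ contribute nothing to the energy of $u$. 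Your alternative, the Euler--Lagrange equation for the phase, has no maximum principle without such an oscillation bound. The second-variation inequality gives no quantitative control either.

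What is actually available is the nonlinear, essentially sharp lifting estimate of Theorem~\ref{teo:fractional_lifting}: $[\varphi]^2\le C[g]^2+C(1-\sigma)^{-(1-\frac{1}{2\sigma})}[g]^{2/\sigma}$. In your scheme the competitor is bounded \emph{linearly} by the scalar energy of the phase, via $|e^{ia}-e^{ib}|\le|a-b|$. Combining the two yields only
\[
\frac{c}{1-\sigma}[g]^2_{H^\sigma(\Sp^1)}\le C[g]^2_{H^\sigma(\Sp^1)}+\frac{C}{(1-\sigma)^{1-\frac{1}{2\sigma}}}[g]^{2/\sigma}_{H^\sigma(\Sp^1)} .
\]
This says that either $g$ is constant or $[g]^{2(1-\sigma)/\sigma}_{H^\sigma(\Sp^1)}\gtrsim(1-\sigma)^{-1/(2\sigma)}$. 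Nothing in your argument excludes this huge-energy alternative. The bound $[u]^2_{H^\sigma(B_1)}\lesssim(1-\sigma)^{-1}$ claimed in Step~2 does not help, because it is circular: the glued competitor's cost again depends on the uncontrolled quantity $[\varphi]^2$.

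The missing idea is to make the competitor's energy depend \emph{sublinearly} on the phase by exploiting $|v|=1$. The paper takes the harmonic extension $\psi$ of $\varphi$ and sets $v=e^{i\psi}$ in $B_1$, with $v=u$ outside $B_1$. Lemma~\ref{lemma:interpolation} together with $\|v\|_{L^2(B_1)}\le C$ gives
\[
[v]^2_{H^\sigma(B_1)}\le C(1-\sigma)^{-1}\|\nabla\psi\|^{2\sigma}_{L^2}=C(1-\sigma)^{-1}[\varphi]^{2\sigma}_{H^{1/2}(\Sp^1)}\le C(1-\sigma)^{\sigma-1}[\varphi]^{2\sigma}_{H^\sigma(\Sp^1)} .
\]
The power $\sigma$ turns the superlinear lifting term $[g]^{2/\sigma}$ into $[g]^2$. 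The $B_1\times B_1^c$ interactions are absorbed using the fractional Hardy inequality and homogeneity. The result is $[u]^2\le C(1-\sigma)^{\alpha}[u]^{2\sigma}+C(1-\sigma)^{1/2}[u]^2$ for some $\alpha>0$. Hence $(1-\sigma)^2[u]^2_{H^\sigma(B_1)}\to 0$: this is smallness, not vanishing.

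Constancy then requires a second ingredient that your plan also lacks, Proposition~\ref{prop:small_energy_minimizers}. Slicing along squares gives $\osc g\le 1/2$, so the lift has oscillation below $\pi/2$. Evaluating $\int_{\Sp^1}\sin(\varphi(x)-\varphi(y))\widetilde K_\sigma(x,y)\,dy=0$ at a maximum point of $\varphi$ then forces $\varphi$ to be constant.
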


In the local case, say for $p$-harmonic maps in two dimensions with $p<2$, the analogous rigidity statement follows immediately from lifting and the maximum principle applied to the trace of the phase on the unit circle. In the fractional setting, this strategy does not work directly. Although there is a lifting and the trace solves a suitable integro-differential equation on $\Sp^1$, this equation is not elliptic and does not satisfy a maximum principle without an a priori bound on the oscillation of the phase. Instead, the proof requires a genuinely nonlocal argument in $\R^2$, similar in spirit to the classification of two-dimensional $s$-minimal cones for the fractional perimeter by Savin and Valdinoci \cite{SavVal-2Dcones}, which is nontrivial even in codimension one.

 \begin{rem}
    It is reasonable to expect that, perhaps under mild additional assumptions, minimizers in $\mathcal{A}(\Sigma)$ are in fact smooth away from $\Sigma$. In view of the dimension-reduction, one possible way to prove this would be to classify $0$-homogeneous tangent maps with an isolated singularity at the origin in every dimension $n\ge 3$, for $s$ sufficiently close to $1$. We do not address this question in this work. 
\end{rem}

Theorem \ref{thm: improved W1p p<3} shows that, for $s$ sufficiently close to $1$, minimizers $u \in \mathcal{A}(\Sigma)$ have no $(n-2)$-dimensional singularities away from the prescribed $\Sigma$. Thus, when $\Sigma$ is stationary for $\M_s$ (with respect to inner-variations), every associated stationary $\sigma$-harmonic map in $\mathcal{A}(\Sigma)$ realizes $\Sigma$ exactly as its top-dimensional singular stratum.

\begin{cor}
   Let $s \in (s_*,1)$ and $\Sigma$ be stationary for $\M_s$ under inner-variations. Then, every map $u\in \mathcal{A}(\Sigma)$ is a stationary $\sigma$-harmonic map such that the $(n-2)$-dimensional stratum of the singular set of $u$ is exactly $\Sigma$.  
\end{cor}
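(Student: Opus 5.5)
The plan is to combine three ingredients: the first variation of $\M_s$ under inner variations, the Euler--Lagrange equation with respect to outer variations among maps with the same Jacobian, and Theorem~\ref{thm: improved W1p p<3}.

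\textbf{Step 1: inner stationarity of $u$.} Fix $u\in\mathcal{A}(\Sigma)$ and a smooth vector field $X$ on $M$ with flow $\phi_t$. Since $\star J(u\circ\phi_t^{-1})=(\phi_t)_\#\Sigma$, the map $u\circ\phi_t^{-1}$ is an admissible competitor for $\M_s((\phi_t)_\#\Sigma)$. Hence
\[
\M_s((\phi_t)_\#\Sigma)\le [u\circ\phi_t^{-1}]^2_{H^{\frac{1+s}{2}}(M)}, \qquad \text{with equality at } t=0.
\]
The right-hand side is differentiable in $t$, by the usual change of variables in the Gagliardo double integral; on a manifold one uses the kernel's regularity off the diagonal. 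Stationarity of $\Sigma$ means the left-hand side has zero derivative at $t=0$, taken as one-sided derivatives or in the sense adopted in the definition. Since the difference $g(t)$ of the two sides satisfies $g\ge 0$ and $g(0)=0$, we get $g'(0)=0$. Therefore
\[
\frac{d}{dt}\Big|_{t=0}[u\circ\phi_t^{-1}]^2=0,
\]
which is inner stationarity of $u$.

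\textbf{Step 2: $\sigma$-harmonicity.} Consider outer variations $u_t=ue^{it\varphi}$ with $\varphi\in C^\infty(M)$. These preserve the distributional Jacobian, since $J(ue^{i\varphi})=Ju$ for smooth real $\varphi$: the extra term is exact and has zero boundary. So $u_t\in\mathfrak{F}^J_s(\Sigma)$, and minimality gives the weak equation $\langle(-\Delta)^\sigma u, iu\,\varphi\rangle=0$. This is exactly the $\sigma$-harmonic map equation into $\Sp^1$. Together with Step 1, $u$ is a stationary $\sigma$-harmonic map.

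\textbf{Step 3: identifying the top stratum.} I expect this to be the main obstacle, since it requires care with the definition of strata. By Theorem~\ref{thm: improved W1p p<3}(i), $\sing(u)\setminus\supp\Sigma$ has Minkowski dimension at most $n-3$, so it has no $(n-2)$-dimensional part. Conversely, $\supp\Sigma\subset\sing(u)$, because on any open set where $u$ is smooth (or merely continuous $W^{1,1}$), $Ju=0$, and $\star Ju=2\pi\Sigma$.

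It remains to show that the $(n-2)$-stratum, namely the points where no tangent map is invariant along $n-1$ directions, has the same $(n-2)$-dimensional content as $\supp\Sigma$. I would argue as follows.
\begin{itemize}
\item At $\mathcal{H}^{n-2}$-a.e.\ point of $\supp\Sigma$, a blow-up of $u$ has Jacobian equal to a nonzero integer multiple of an $(n-2)$-plane. This uses the rectifiability of $\Sigma$, which has finite mass when $\M_s(\Sigma)<\infty$ via the flat/mass considerations of Section~\ref{sbs: domain of the s-mass}, together with the continuity of the Jacobian under strong $H^\sigma$ convergence of blow-ups, which is available for minimizers by compactness.
\item Such a tangent map cannot be $(n-1)$-symmetric, hence these points lie in the $(n-2)$-stratum.
\end{itemize}
Consequently the $(n-2)$-dimensional stratum of $\sing(u)$ coincides with $\Sigma$ up to lower-dimensional sets, which is the meaning of ``exactly''.
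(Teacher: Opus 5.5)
Step 1 reaches the paper's conclusion by a more elementary route. Step 2 proves harmonicity on all of $M$, whereas the paper records it only away from $\supp\Sigma$ (or across smooth $\Sigma$). The first half of Step 3 is exactly the paper's argument. The bullet points at the end, however, rest on a false claim and should be dropped.

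\textbf{Steps 1 and 2.} The paper deduces inner stationarity of every $u\in\mathcal{A}(\Sigma)$ from the envelope formula (Proposition~\ref{lem:envelope-Ms}). The one-sided derivatives of $t\mapsto\M_s(\phi_t^X(\Sigma))$ are the minimum and maximum over $\mathcal{A}(\Sigma)$ of the inner variations, so if both vanish, every minimizer has zero inner variation. Your comparison $g\ge 0$, $g(0)=0$ is exactly the easy half of that proof, namely \eqref{est_right-deriv} and its left-sided analogue.

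You are right that the easy half suffices here. Stationarity of $\Sigma$ already gives differentiability of $t\mapsto\M_s(\phi_t^X(\Sigma))$ at $0$. Differentiability of $t\mapsto[u\circ\phi_{-t}^X]^2_{H^\sigma(M)}$ is Lemma~\ref{lemma: energy under flow}; the change-of-variables argument you sketch needs derivative bounds on the kernel near the diagonal, not only its regularity off the diagonal. The hard half (equidifferentiability, Milgrom--Segal, strong convergence of the minimizers $v_\tau$) is only needed to identify the one-sided derivatives for non-stationary $\Sigma$.

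Your Step 2 uses phase variations $ue^{it\varphi}$ with $\varphi\in C^\infty(M)$. These preserve $\star Ju$ globally, so they give the $\sigma$-harmonic map equation on all of $M$ for any admissible $\Sigma$. To close the argument, note that $\varphi\mapsto\iint(\varphi(x)-\varphi(y))\,\mathrm{Im}(u(x)\overline{u(y)})\,\K_\sigma(x,y)\,dV(x)\,dV(y)$ is bounded by $[\varphi]_{H^\sigma(M)}[u]_{H^\sigma(M)}$, since $|\mathrm{Im}(u(x)\overline{u(y)})|\le|u(x)-u(y)|$. Hence you may test with $\varphi=iu\cdot\psi$ for $\psi\in C^\infty(M;\R^2)$. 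The normal component of the equation then holds automatically because $|u|=1$.

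\textbf{Step 3.} The first two sentences are precisely the paper's argument: Theorem~\ref{thm: improved W1p p<3}(i), together with $\supp\Sigma\subset\sing(u)$ because $\star Ju=0$ wherever $u$ is continuous.

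The bullets, however, assert that $\M_s(\Sigma)<\infty$ forces $\M(\Sigma)<\infty$, and this is false at fixed $s$. By Section~\ref{sbs: domain of the s-mass}, finite-$s$-mass boundaries are exactly the currents $\frac1\pi\star Ju$ with $u\in W^{1,1+s}(M;\Sp^1)$, and such Jacobians can have infinite mass. For example, take disjoint small $(n-2)$-spheres (dipoles if $n=2$) of radii $r_i$ with $\sum r_i^{n-2}=\infty$ but $\sum r_i^{n-1-s}<\infty$. Finite mass is recovered only from a bound on $(1-s)^2\M_s$ as $s\to1$ (Corollary~\ref{cor: bdd rescled mass -> integral}).

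There are two further problems in the bullets:
\begin{itemize}
\item The phrase ``$\mathcal{H}^{n-2}$-a.e.\ point of $\supp\Sigma$'' conflates the closed support with a rectifiable carrier.
\item Compactness of blow-ups under a nonzero prescribed Jacobian is not what Proposition~\ref{prop: compactness min} provides.
\end{itemize}

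The bullets are also unnecessary. Once Steps 1--2 show that $u$ is a stationary $\sigma$-harmonic map on all of $M$, the general theory recalled in the introduction gives $\dim_{\rm Min}\sing(u)\le n-2$, and hence the same bound for $\supp\Sigma$. Moreover, every singular point already lies in the stratum you describe: an $(n-1)$-symmetric tangent map is constant for $\sigma>1/2$, so $\varepsilon$-regularity applies at any point that has one. Combined with the first half of your Step 3, this is all the statement asserts.

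Minor: the paper's normalization is $\star Ju=\pi\Sigma$, not $2\pi\Sigma$, and correspondingly $\star J(u\circ\phi_t^{-1})=\pi(\phi_t)_\#\Sigma$.
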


This corollary follows immediately from Theorem~\ref{thm: improved W1p p<3} and the following envelope formula for the first variation of $\M_s$. Since $\M_s$ is defined by minimizing the energy under the Jacobian constraint, differentiating $\M_s$ under a deformation of $\Sigma$ requires comparing the energies of transported minimizers in $\mathcal{A}(\Sigma)$, which may be non-unique even up to rotations in $\Sp^1$. The next result gives the corresponding envelope formula.

\begin{prop}[Envelope formula for the first variation of $\M_s$]\label{lem:envelope-Ms}
Let $s\in (0,1)$ and let $\Sigma\subset M $ be an admissible boundary such that $\M_s(\Sigma) <+\infty$. Let also $X$ be a smooth vector field on $M$. Then
\begin{align}
\frac{d}{dt}\bigg|_{t=0^+} \M_s(\phi_t^X(\Sigma))
&=
\min_{u\in\mathcal{A}(\Sigma)}
\frac{d}{dt}\bigg|_{t=0^+} [u\circ\phi_{-t}^X]^2_{H^{\frac{1+s}{2}}(M)}, \label{eq: right der} \\[1ex]
\frac{d}{dt}\bigg|_{t=0^-} \M_s(\phi_t^X(\Sigma))
&=
\max_{u\in\mathcal{A}(\Sigma)}
\frac{d}{dt}\bigg|_{t=0^-} [u\circ\phi_{-t}^X]^2_{H^{\frac{1+s}{2}}(M)}. \label{eq: left der} 
\end{align}
\end{prop}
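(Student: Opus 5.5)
This is a Danskin-type envelope argument; we treat the right derivative, and the left one is symmetric. The key fact is the transport of the Jacobian constraint under the flow: if $u\in\mathfrak{F}^J_s(\Sigma)$, then $u\circ\phi^X_{-t}\in\mathfrak{F}^J_s(\phi^X_t(\Sigma))$. Indeed, $\phi_t^X$ is an orientation-preserving diffeomorphism, and the distributional Jacobian commutes with pushforward by diffeomorphisms, i.e.\ $\star J(u\circ\phi_{-t})=(\phi_t)_\#\star Ju$. This can be checked for smooth maps and extended by density and continuity of the Jacobian in $H^{\frac{1+s}{2}}$. Moreover $\phi_t(\Sigma)$ is again an admissible boundary.

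\textbf{Upper bound.} Set $E_u(t):=[u\circ\phi_{-t}^X]^2_{H^{\frac{1+s}{2}}(M)}$. For every $u\in\mathcal A(\Sigma)$ we have $\M_s(\phi_t(\Sigma))\le E_u(t)$ and $E_u(0)=\M_s(\Sigma)$. We must show that $E_u$ is differentiable at $0$. Write the seminorm through a kernel representation on $M$ (e.g.\ the Caffarelli--Silvestre/heat-kernel form $\iint |u(x)-u(y)|^2K_s(x,y)$ or the extension energy). Changing variables $x=\phi_t(x')$ moves the $t$-dependence onto the kernel and the volume form, $K_s(\phi_t x,\phi_t y)J_t(x)J_t(y)$. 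Its $t$-derivative is bounded by $C\,K_s(x,y)$, since $|\phi_t x-\phi_t y|$ is comparable to $|x-y|$ with a smooth first-order correction. Dominated convergence then gives $E_u\in C^1$ near $0$, with derivative $\frac{d}{dt}E_u(0)=\iint|u(x)-u(y)|^2\,\partial_t[\cdots]\,dx\,dy$, which depends continuously on $u$ in $H^{\frac{1+s}{2}}$. It follows that $\limsup_{t\to0^+}\frac{\M_s(\phi_t\Sigma)-\M_s(\Sigma)}{t}\le \inf_{u\in\mathcal A(\Sigma)}E_u'(0)$.

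\textbf{Lower bound (main step).} Take $t_k\downarrow0$ realizing the $\liminf$ of the difference quotients. Choose minimizers $u_k\in\mathcal A(\phi_{t_k}\Sigma)$, which exist by the direct method and the weak closedness of the Jacobian constraint, an earlier result. Put $v_k:=u_k\circ\phi_{t_k}\in\mathfrak F^J_s(\Sigma)$. Then $\M_s(\Sigma)\le E_{v_k}(0)$, while $\M_s(\phi_{t_k}\Sigma)=E_{v_k}(t_k)$. Hence
\[
\frac{\M_s(\phi_{t_k}\Sigma)-\M_s(\Sigma)}{t_k}\ge\frac{E_{v_k}(t_k)-E_{v_k}(0)}{t_k}=E_{v_k}'(\tau_k)
\]
for some $\tau_k\in(0,t_k)$. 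Since the energies are bounded, $v_k$ is bounded in $H^{\frac{1+s}{2}}$, so up to a subsequence $v_k\rightharpoonup v$ weakly and strongly in $L^2$. By closedness of the constraint, $v\in\mathfrak F^J_s(\Sigma)$. The main obstacle is to upgrade this to $v\in\mathcal A(\Sigma)$ with strong convergence, so that $E'_{v_k}(\tau_k)\to E'_v(0)$; the derivative is a quadratic form that is not weakly continuous.

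To handle it, first note that $\M_s(\phi_t\Sigma)$ is continuous in $t$. Upper semicontinuity comes from the upper bound, and the reverse inequality from the symmetric construction with $u\circ\phi_t$, which gives $\M_s(\Sigma)\le E_{v_k}(0)\le(1+Ct_k)\,E_{v_k}(t_k)$. Therefore $[v_k]^2\to\M_s(\Sigma)\le[v]^2\le\liminf[v_k]^2$. So $v$ is a minimizer, and the norms converge, which gives strong convergence in $H^{\frac{1+s}{2}}$. The kernel $\partial_t[\cdots]$ is dominated by $CK_s$ and is continuous in $t$, so $E'_{v_k}(\tau_k)\to E'_v(0)\ge\inf_{\mathcal A(\Sigma)}E'_u(0)$. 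Combining with the upper bound shows that the right derivative exists and equals the infimum.

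The infimum is attained by the same compactness argument. $\mathcal A(\Sigma)$ is compact in the strong topology: a minimizing sequence of minimizers converges weakly, the norms are all equal to $\M_s(\Sigma)$, so the convergence is strong, and $u\mapsto E_u'(0)$ is continuous. Hence the infimum is a min.

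For \eqref{eq: left der}, apply the result to $-X$. Since $\phi^{-X}_t=\phi^X_{-t}$, this gives $\frac{d}{dt}|_{0^-}=-\min_u\frac{d}{dt}|_{0^+}[u\circ\phi^{-X}_{-t}]^2=\max_u\frac{d}{dt}|_{0^-}E_u$.
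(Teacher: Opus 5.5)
Your proposal is correct and follows essentially the same envelope (Danskin-type) argument as the paper. The upper bound comes from a fixed minimizer. The lower bound uses minimizers of the transported problems, which converge strongly (via continuity of $t\mapsto\M_s(\phi_t^X(\Sigma))$ and convergence of the seminorms) to an element of $\mathcal{A}(\Sigma)$, combined with continuity of the first variation under strong $H^{\frac{1+s}{2}}$ convergence.

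The differences are only in execution. You do the lower bound by hand with the mean value theorem, whereas the paper invokes Milgrom--Segal's equidifferentiability theorem. You also get continuity of the first variation by differentiating the transported kernel directly, whereas the paper uses the Caffarelli--Silvestre stress-energy formula of Lemma~\ref{lem: conv first variation}. Your route needs a derivative estimate on $\K_\sigma$ along the flow, which is what underlies Lemma~\ref{lemma: energy under flow}; the comparability $|\phi_t x-\phi_t y|\sim|x-y|$ alone does not give it.
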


In particular, whenever the first variation $\frac{d}{dt}\big|_{t=0} \M_s(\phi_t^X(\Sigma)) $ exists (e.g. when $\Sigma$ is minimal/stationary in a suitable class, and this quantity is zero), or whenever $\mathcal{A}(\Sigma)$ is a singleton up to rotations (i.e. the minimum is unique up to rotations), we have 
\begin{equation*}
\frac{d}{dt}\bigg|_{t=0} \M_s(\phi_t^X(\Sigma))
=
\frac{d}{dt}\bigg|_{t=0} [u \circ\phi_{-t}^X]^2_{H^{\frac{1+s}{2}}(M)} \quad \mbox{for all } u\in \mathcal{A}(\Sigma). 
\end{equation*}

\subsubsection{Additional results at fixed $s$}

Lastly, we also establish several structural properties of the fractional $s$-mass for fixed $s \in (0,1)$. 

A first point concerns the definition of the $s$-mass itself. In \cite[Remark~1.6]{codim2-frac} the authors observed that one could replace the weak-linking condition used there by the requirement of having a prescribed Jacobian, as in the present work, without affecting the asymptotic results for $s \to 1$. In Lemma \ref{lem: equivalence linking-Jacobian} we show that the two constructions yield the same notion of fractional mass: the weak-linking and prescribed-Jacobian definitions of $\M_{s}$ are equivalent. In addition, in Corollary \ref{cor: equivalence inf on smooth} we show the equivalence with the definition of $\M_s$ via minimization on \emph{smooth} linking maps.

We then analyze the natural domain of $\M_s$ on currents and its relation to the classical mass; see Section \ref{sbs: domain of the s-mass}. In particular, this discussion clarifies which finite-mass currents are detected by our fractional theory at a fixed $s$, and how this differs from the regime $s \to 1$. Finally, we prove basic variational properties of $\M_s$, including coercivity, lower semicontinuity, and a fractional isoperimetric inequality; see Proposition \ref{prop: fixed s coerc and lsc} and Proposition \ref{prop: frac iso}.

Taken together, these results show that, in codimension two, the fractional $s$-mass is not merely an asymptotic approximation of the area, but also defines a robust intrinsic nonlocal theory on closed manifolds.

\begin{rem}\label{rem: localized stuff} For clarity, we have focused on presenting our results in the case of a closed manifold $M$. Nevertheless, every statement and proof remain valid when $M$ is replaced by $\R^n$ or by a smooth bounded domain $\Omega \subset \R^n$, with essentially no modifications. The only result that would require a minor modification is the compactness $(i)$ of Theorem \ref{thm: new main asymptotics}. In the case of a bounded domain, one replaces the spectral characterization of \(H^\sigma\) used here on a closed manifold with the corresponding Fourier transform characterization in $\R^n$, after applying a bounded extension operator from \(H^\sigma(\Omega)\) to \(H^\sigma(\mathbb R^n)\). 
\end{rem}

\smallskip\noindent
\textbf{Organization of the paper.}

Section \ref{sec:Preliminaries}: we collect the needed preliminaries on fractional Sobolev spaces on manifolds, currents and distributional Jacobians, the flat topology, and we recall the Ginzburg--Landau equi-coercivity theorem of \cite{2005-Indiana-ABO}. 

Section \ref{sec: sec 3}: we prove Theorem~\ref{thm: improved W1p p<3} and Theorem~\ref{thm: classification R2}, establishing the (local) improved regularity and higher integrability for minimizing $\sigma$-harmonic maps with vanishing Jacobian. 

Section \ref{sec: frac s mass}: we introduce the fractional $s$-mass, prove its equivalence with the weak-linking formulation, prove the envelope formula for the first variation in Proposition~\ref{lem:envelope-Ms} together with further properties at fixed $s$.

Section \ref{sec: gamma comp} is devoted to the proof of the equi-coercivity and $\Gamma$-convergence results on currents, including the liminf and limsup inequalities. 

Lastly, in the Appendix we collect auxiliary results used throughout the paper, which we have not found anywhere in the literature.

\section{Preliminaries}\label{sec:Preliminaries}

We write $M$ for an $n$-dimensional smooth, oriented, closed (i.e. compact and without boundary) Riemannian manifold, $dV$ for the Riemannian volume form of $M$ and $\Delta$ for its Laplace--Beltrami operator, with the convention that $\Delta$ is a nonpositive operator. 

\smallskip

For $x\in M$ and $r>0$, we denote by $\mathcal{B}_r(x)$ the geodesic ball centered at $x$ with radius $r$ in~$M$. In the Euclidean setting, $B_r(x)$ denotes the usual Euclidean ball.

\subsection{Fractional Sobolev spaces on Riemannian manifolds}

Let $\sigma \in (0,1)$. We recall here the definition of the fractional Sobolev spaces $H^{\sigma}(M)$ on (closed) manifolds and their associated seminorm. We refer to \cite{CFSfrac} for more details and proofs.

Let $H_M(x,y,t)$ denote the heat kernel of $M$, namely, the minimal positive fundamental solution to the heat equation $\partial_t u - \Delta u = 0$ on $M$ such that $u(\cdot, t) \to \delta_y$ in the distributional sense as $t \to 0^+$.

For $x,y \in M$ we define the singular kernel
\begin{equation*}
  \K_\sigma(x,y)
  := \alpha_{n,\sigma}^{-1} \frac{\sigma}{\Gamma (1-\sigma )}
     \int_0^{\infty} H_M(x,y,t)\,\frac{dt}{t^{1+\sigma}} , 
  \end{equation*}
 where 
\begin{equation}\label{eq: alpha-def}
\alpha_{n,\sigma} := \frac{4^\sigma \Gamma(n/2+\sigma)}{\pi^{n/2} |\Gamma(-\sigma)|},
\end{equation}
and there holds
\begin{equation}\label{eq: alpha-est}
c \, \sigma(1-\sigma) \le \alpha_{n,\sigma} \le C \sigma(1-\sigma)
\end{equation}
for some dimensional constants \(c,C>0\).

Then, the fractional Sobolev seminorm of $u\in L^2(M; \R^2)$ is given by 
\begin{equation}\label{eq:Hs-kernel}
  [u]_{H^{\sigma}(M)}^2
  := \iint_{M\times M} |u(x)-u(y)|^2 \K_\sigma(x,y)\,dV(x) dV(y) , 
\end{equation}
and we set
\begin{gather*}
     H^{\sigma}(M; \R^2)
  := \bigl\{u\in L^2(M;\R^2) \, : \,   [u]_{H^{\sigma}(M)}<\infty \bigr\}, \\[0.5ex]  H^{\sigma}(M; \Sp^1)
  := \bigl\{u \in H^{\sigma}(M; \R^2) \, : \,  |u|=1 \mbox{ a.e. on } M \bigr\}.
\end{gather*}

For a domain $\Omega\subseteq M$ we will also use the fractional energy
$$\E_\sigma(u,\Omega):=\iint_{M\times M \setminus \Omega^c\times \Omega^c} |u(x)-u(y)|^2 \K_\sigma(x,y)\,dV(x) dV(y).$$

\begin{rem}\label{rem: normalization K}
    Note that the normalization constant in the definition of $\K_\sigma$ has been chosen such that, if the closed manifold $M$ is replaced by the Euclidean space $\R^n$, then
    \begin{align*}
    \K_{\sigma}(x,y) & = \alpha_{n,\sigma}^{-1} \frac{\sigma}{\Gamma(1-\sigma)} \int_{0}^{\infty} H_{\R^n}(x,y,t) \frac{dt}{t^{1+\sigma}} \\[0.5ex] & = \alpha_{n,\sigma}^{-1} \frac{\sigma}{\Gamma(1-\sigma)} \int_{0}^{\infty} \bigg(  \frac{1}{(4\pi t)^{\frac{n}{2}}} e^{-\frac{|x-y|^2}{4t}}\bigg) \frac{dt}{t^{1+\sigma}} = \frac{1}{|x-y|^{n+ 2 \sigma}},
\end{align*}
Hence, we recover the usual form of the Gagliardo seminorm on $\mathbb{R}^n$. Also, observe that in our definition of $\K_\sigma$, we have normalized by an additional factor of $\alpha_{n,\sigma}^{-1}$ compared to \cite{CFSfrac}. 
\end{rem}

The kernel $\K_\sigma$ is symmetric, smooth away from the diagonal, and
exhibits the expected singular behavior $\K_\sigma(x,y)\sim {\rm dist}(x,y)^{-(n+2 \sigma)}$
as $x \to y$; see Proposition \ref{prop: comparison kernel sharp} in the Appendix or \cite{CFSfrac} for a detailed discussion. 

\subsubsection{Spectral and Fourier representations} In the proof of {\it (i)} in Theorem \ref{thm: new main asymptotics}, we will need the equivalence of our characterization of $H^\sigma$ above with the one given by spectral theory for the Laplacian. 

\begin{prop}[{\cite[Proposition 3.2]{CFSfrac}}] \label{prop: spectral characterization} Let $\{\varphi_k\}_{k\ge 0}$ be an orthonormal basis of $L^2(M)$ of eigenfunctions of $-\Delta$, and let $\{\lambda_k\}_{k \ge 0}$ be the corresponding eigenvalues. Let $\sigma \in (0,1)$ and $u\in H^\sigma(M)$. Then 
\begin{equation*}
    \frac{\alpha_{n,\sigma}}{2} [u]^2_{H^{\sigma}(M)} =  \sum_{k \ge 0} \lambda_k^\sigma \langle u, \varphi_k\rangle_{L^2(M)}^2 ,  
\end{equation*}
where $\alpha_{n,\sigma}$ is given by \eqref{eq: alpha-def}. 
    
\end{prop}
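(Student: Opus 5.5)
The plan is to express both sides through the heat kernel and the spectral decomposition of $-\Delta$ on the closed manifold $M$. It suffices to treat real-valued $u\in L^2(M)$; for $\R^2$-valued maps we then sum over components.

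The first step is to expand the heat kernel as $H_M(x,y,t)=\sum_{k\ge0}e^{-\lambda_k t}\varphi_k(x)\varphi_k(y)$, using the orthonormal basis $\{\varphi_k\}$. Since $\int_M H_M(x,y,t)\,dV(y)=1$ by stochastic completeness of a closed manifold, we can write
\[
\iint_{M\times M}|u(x)-u(y)|^2H_M(x,y,t)\,dV(x)dV(y)=2\|u\|_{L^2}^2-2\langle u,e^{t\Delta}u\rangle=2\sum_{k\ge0}\bigl(1-e^{-\lambda_k t}\bigr)\langle u,\varphi_k\rangle^2 .
\]

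The second step is to integrate this identity in $t$ against $\alpha_{n,\sigma}^{-1}\frac{\sigma}{\Gamma(1-\sigma)}t^{-1-\sigma}\,dt$. Every integrand is nonnegative: $|u(x)-u(y)|^2H_M\ge0$ because $H_M$ is positive, and each term $1-e^{-\lambda_k t}$ is nonnegative. Tonelli's theorem therefore lets us exchange the $t$-integral with both the $(x,y)$-integral and the sum over $k$, with no integrability assumption required. This gives
\[
[u]^2_{H^\sigma(M)}=\frac{2}{\alpha_{n,\sigma}}\frac{\sigma}{\Gamma(1-\sigma)}\sum_k\langle u,\varphi_k\rangle^2\int_0^\infty(1-e^{-\lambda_k t})\,t^{-1-\sigma}\,dt .
\]

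The third step is the scalar computation. For $\lambda>0$, integration by parts gives
\[
\int_0^\infty(1-e^{-\lambda t})t^{-1-\sigma}\,dt=\frac{\lambda}{\sigma}\int_0^\infty e^{-\lambda t}t^{-\sigma}\,dt=\frac{\Gamma(1-\sigma)}{\sigma}\lambda^\sigma .
\]
The boundary terms vanish since $\sigma\in(0,1)$. For $\lambda_0=0$ the integral is zero, which matches $\lambda_0^\sigma=0$. Substituting, the constant $\frac{\sigma}{\Gamma(1-\sigma)}$ cancels, and multiplying both sides by $\alpha_{n,\sigma}/2$ yields the claimed identity.

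The main obstacle is justifying the manipulations in the first step, namely the pointwise convergence of the eigenfunction expansion for $H_M$ and the identity $\iint u(x)u(y)H_M\,dV\,dV=\sum_k e^{-\lambda_k t}\langle u,\varphi_k\rangle^2$. I would settle this with standard facts for $t>0$ on a compact manifold. The expansion converges in $L^2(M\times M)$, indeed uniformly, thanks to Weyl's law and sup-norm bounds on eigenfunctions. The pairing is then just $\langle u,e^{t\Delta}u\rangle$, computed via Parseval. This identity holds for each fixed $t$, so the Tonelli argument in the second step completes the proof; the identity also shows that finiteness of either side is equivalent.
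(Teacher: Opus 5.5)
Your proof is correct. The heat-kernel identity $\iint|u(x)-u(y)|^2H_M(x,y,t)\,dV\,dV=2\sum_k(1-e^{-\lambda_k t})\langle u,\varphi_k\rangle^2$, combined with Tonelli and $\int_0^\infty(1-e^{-\lambda t})t^{-1-\sigma}\,dt=\Gamma(1-\sigma)\lambda^\sigma/\sigma$, gives exactly the constant $\alpha_{n,\sigma}/2$, and it does so for every $u\in L^2(M)$. The paper does not reprove this statement but imports it from \cite[Proposition~3.2]{CFSfrac}; that result rests on the same ingredients, namely the spectral expansion of $H_M$ and this Gamma-function (Bochner subordination) identity, with the extra factor $\alpha_{n,\sigma}^{-1}$ in $\K_\sigma$ accounting for the different normalization, so your proposal is essentially the standard argument.
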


\subsection{Currents, Jacobians and flat topology}

In this subsection, we fix the basic notation for Jacobians and currents and refer the reader to \cite[Section~2]{2005-Indiana-ABO}, \cite[Section~3]{ABO2-singularities}, and \cite[Section~2]{p-energy} for a detailed discussion. 

Let \(\Omega\subset M \) be an open set (possibly coinciding with the whole $M$) and denote by \(\mathcal{D}_k(\Omega)\) the space of $k$-dimensional currents in $\Omega$, endowed with the mass $\mathbb{M}_\Omega(T)$ and the boundary operator \(\partial T\).

For a \(k\)-rectifiable set $E$, a measurable function $\theta : E \to \Z\setminus\{0\}$, and a choice of unit simple \(k\)-vectors \(\tau :  E\to \bigwedge_k\R^n\) spanning \(T_x E\) for \(\mathcal{H}^k\)-a.e. \(x\in E\), we denote by $ \lcurr E,\theta,\tau \rcurr$ the \(k\)-current defined by
\[
\langle \lcurr E,\theta,\tau \rcurr ,\omega\rangle
:= 
\int_E \theta(x)\,\tau(x)\cdot \omega(x) \,d\mathcal{H}^k(x), \qquad \forall \, \omega \in \mathcal{D}^k (M) . 
\]
A \(k\)-current \(T\) is said to be \emph{rectifiable} if \(T= \lcurr E,\theta,\tau \rcurr\) for some \(E\), \(\theta\), and \(\tau\) as above.

\subsubsection{Flat norm} For an integral $k$-boundary $T$ we define the \emph{flat norm} of $T$ as
\begin{equation}\label{eq: flat int}
\F_\Omega (T):=\inf \big\{\M_\Omega (S): \text{$S$ is an integral $(k+1)$-current with $\partial S =T$ in $\Omega$}\big\}.
\end{equation}
In our case of $k=n-2$, the above notion of ``integral flat norm" is known (see Proposition~4.1 and Proposition~4.2 in \cite{Bre-Mi}) to coincide with 
\begin{equation*}
    \F_\Omega (T):=\inf \big\{\M_\Omega (S): \text{$S$ is any $(n-1)$-current with $\partial S = T $ in $\Omega$}\big\}.
\end{equation*}
Furthermore, by a classical theorem of Wolfe \cite[IX, Theorem 7C]{WhytBook}, see also \cite{WhitFormsExt}, this last expression can be equivalently expressed by duality as  
\begin{equation}\label{eq: flat dual}
    \F_\Omega(T):=\sup \big\{ \langle T,\omega \rangle: \omega \in \D^{n-2} (\Omega), \, \|d \omega \|_{L^\infty(\Omega)}\leq 1 \big\} . 
\end{equation}
We will implicitly use the equivalence between \eqref{eq: flat int}-\eqref{eq: flat dual} many times. 

\subsubsection{The distributional Jacobian}

For a map $u\in W^{1,p}(\Omega;\R^k)$ with \(p\ge k-1\), we denote by \( \star Ju\in\mathcal{D}_{n-k}(\Omega)\) its distributional Jacobian, characterized by
\begin{equation} \label{eq:def-jac-1}
    \langle \star Ju,\omega\rangle \;=\; \int_\Omega d\omega\wedge j(u),\qquad 
    j(u):=\frac1k \sum_{i=1}^k (-1)^{i-1} u^i\, du^1\wedge\cdots\wedge \widehat{du^i}\wedge\cdots\wedge du^k,
\end{equation}
for every test form \(\omega\in C_c^\infty(\Omega;\Lambda^{n-k} )\). For maps \(u\) taking values in \(\Sp^{k-1}\) the Jacobian \(\star Ju\) is always an admissible boundary, so whenever it has locally finite mass it is an integral \((n-k)\)-current (see \cite[Section~3]{ABO2-singularities}). We recall here only the basic properties needed in the sequel, referring to the works mentioned above for further background and proofs.

For our purposes, we need the following quantitative estimates of the flat distance between Jacobians
of nearby maps, due to Brezis and Nguyen.

\begin{teo}[see Theorem~1 in \cite{BN-Invent} and Corollary~1.7 in \cite{Bre-Mi}]\label{teo:BN-Invent}
Let $\Omega \subseteq M$ be a bounded Lipschitz domain on a Riemannian manifold $M$, let $p\in [k-1,+\infty]$ and let $q\in [1,+\infty]$ be such that $(k-1)/p + 1/q=1$. Then, for every $u,v\in W^{1,p}(\Omega;\R^k)\cap L^{q}(\Omega;\R^k)$, there holds
\begin{equation*}
\F_\Omega(\star Ju-\star Jv)\leq C_{k,\Omega} \|u-v\|_{L^q}(\|\nabla u\|_{L^p} + \|\nabla v\|_{L^p})^{k-1} .
\end{equation*}
\end{teo}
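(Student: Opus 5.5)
We plan to argue by duality, using the characterization \eqref{eq: flat dual} of $\F_\Omega$: it suffices to show that for every $\omega\in\D^{n-k}(\Omega)$ with $\|d\omega\|_{L^\infty(\Omega)}\le 1$,
\[
\big|\langle \star Ju-\star Jv,\omega\rangle\big| \le C\,\|u-v\|_{L^q}\big(\|\nabla u\|_{L^p}+\|\nabla v\|_{L^p}\big)^{k-1}.
\]
We first prove this for $u,v\in C^\infty(\overline\Omega;\R^k)$ and pass to general maps by density at the end.

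\textbf{Step 1 (rewriting the Jacobian).} For smooth $u$ and compactly supported $\omega$, we show that the $k$ summands in $j(u)$ give the same pairing with $d\omega$. Indeed,
\[
d\big(u^1u^i\,du^2\wedge\cdots\widehat{du^i}\cdots\wedge du^k\big)
\]
is, up to sign, the difference of the $1$-st and $i$-th summands (each multiplied by $k$). Since $d(d\omega)=0$ and $\omega$ has compact support, Stokes' theorem gives
\[
\langle\star Ju,\omega\rangle=\int_\Omega d\omega\wedge u^1\,du^2\wedge\cdots\wedge du^k .
\]

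\textbf{Step 2 (telescoping).} We then write
\[
u^1du^2\wedge\cdots\wedge du^k-v^1dv^2\wedge\cdots\wedge dv^k
=(u^1-v^1)\,du^2\wedge\cdots\wedge du^k+\sum_{i=2}^k v^1\,dv^2\wedge\cdots\wedge d(u^i-v^i)\wedge du^{i+1}\wedge\cdots\wedge du^k .
\]
The first term is already of the desired form. In each remaining term, $d(u^i-v^i)\wedge(\text{closed forms})$ is exact, so we integrate by parts once more (again with no boundary term, since $\omega$ has compact support). This moves the derivative off $u^i-v^i$ and onto $v^1 d\omega$. Because $dd\omega=0$, the result is $\pm\int d\omega\wedge dv^1\wedge(u^i-v^i)\wedge(\cdots)$. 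After this, every term has the form
\[
\int_\Omega d\omega\wedge (u^i-v^i)\, dw_1\wedge\cdots\wedge dw_{k-1},\qquad w_j\in\{u^\ell,v^\ell\}.
\]
Using $|d\omega|\le 1$ and Hölder with exponents $q$ and $p/(k-1)$, which are admissible because $(k-1)/p+1/q=1$, each term is bounded by
\[
\|u-v\|_{L^q}\big(\|\nabla u\|_{L^p}+\|\nabla v\|_{L^p}\big)^{k-1}.
\]
Summing the terms gives the estimate with $C=C_k$, and taking the supremum over $\omega$ gives the smooth case.

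\textbf{Step 3 (density), the main obstacle.} For $u,v\in W^{1,p}\cap L^q$ we approximate by smooth maps $u_m\to u$, $v_m\to v$ in $W^{1,p}(\Omega)$ and in $L^q(\Omega)$; this uses that $\Omega$ is a bounded Lipschitz domain. When $q=\infty$, which is the endpoint $p=k-1$, convergence holds only in $L^q_{\rm loc}$ or a.e.\ with a uniform $L^\infty$ bound, so we take truncated mollifications. We then need $\langle\star Ju_m,\omega\rangle\to\langle\star Ju,\omega\rangle$ for each fixed $\omega$. This follows because $j(u)$ is a sum of products of one factor in $L^q$ with $k-1$ factors in $L^p$, so by Hölder and the exponent relation the map $u\mapsto j(u)$ is continuous into $L^1$; at $q=\infty$ one uses dominated convergence instead. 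Since the right-hand side of the estimate also converges, and the bound holds for each fixed $\omega$ uniformly in $m$, passing to the limit and then to the supremum gives the claim. The delicate point is the endpoint case $q=\infty$, where strong $L^\infty$ approximation fails and the continuity of the Jacobian must be obtained through a.e.\ convergence plus the uniform bound.
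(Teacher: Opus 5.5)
Your argument is correct. It is essentially the standard Brezis--Nguyen proof behind the cited result; the paper gives no proof of its own, only the references. That proof consists of telescoping by multilinearity, one integration by parts using $dd\omega=0$ to trade $d(u^i-v^i)$ for $d\omega$, H\"older with $(k-1)/p+1/q=1$, and density.

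There is one slip in Step~3. You single out $q=\infty$ as the delicate endpoint, but at the other endpoint $p=\infty$, $q=1$, smooth maps are not dense in $W^{1,\infty}$ either, so ``$u_m\to u$ in $W^{1,p}$'' fails there. The fix is the same as for $q=\infty$: mollify near $\supp\omega$, so that $\nabla u_m\to\nabla u$ a.e.\ with $\limsup_m\|\nabla u_m\|_{L^\infty}\le\|\nabla u\|_{L^\infty}$, and get $j(u_m)\to j(u)$ in $L^1_{\rm loc}$ by dominated convergence.

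Also, at both endpoints the right-hand side does not converge in general. You only have that its $\limsup_m$ is bounded by its value at $(u,v)$, which is all the limiting argument needs.

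Finally, since $\omega$ has compact support, approximating on a neighborhood of $\supp\omega$ suffices. So the Lipschitz regularity and boundedness of $\Omega$ play no role, and your constant depends only on $n$ and $k$.
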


\subsubsection{Equi-coercivity for the Ginzburg--Landau functional}

The equi-coercivity part of our main results, {\it (i)} in Theorem \ref{thm: new main asymptotics}, relies on the analogous equi-coercivity result in \cite{2005-Indiana-ABO} for the Ginzburg--Landau functionals 
\begin{equation} \label{eq:def-GL-1}
    {GL}_{\eps}^n(u,\Omega) := \int_{\Omega} \frac{1}{2} |\nabla u|^2 + \frac{(1-|u|^2)^2}{4 \eps^2}, \quad u \in W^{1,2}(\Omega; \R^2) .  
\end{equation}
Here $n\ge 2$ and $\Omega$ is either a bounded domain in $\R^n$ or a closed Riemannian manifold of dimension $n$.

\begin{teo}[Theorem 1.1 in \cite{2005-Indiana-ABO}]\label{thm: ABO comp} Let $n\ge 2$ and $\ep>0$. Assume that a (discrete) sequence of maps $u_\ep \in W^{1,2}(\Omega; \R^2)$ satisfies
\begin{equation*}
\limsup_{\ep \to 0} \, \frac{{GL}_\eps^n(u_\eps,\Omega)}{\abs{\log \eps}}<+\infty .
\end{equation*}
Then, there exists an integral $(n-2)$-boundary $\Sigma$ such that (up to a subsequence) the Jacobians $\star Ju_{\ep}$ converge to $\pi \Sigma$ in the flat topology of $\Omega$. 
\end{teo}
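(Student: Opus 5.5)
The statement is quoted from \cite{2005-Indiana-ABO}, so we only recall the strategy used there, adapted to a closed manifold $M$ or a bounded domain $\Omega$. The first step is a reduction to the case $|u_\eps|\le 1$ with a controlled Lipschitz scale. Composing $u_\eps$ with the radial retraction onto the closed unit disk does not increase either term of $GL_\eps^n$. The price is a change of Jacobian, which by Theorem~\ref{teo:BN-Invent} (with $k=2$, $p=q=2$) is controlled in flat norm by
\[
\|u_\eps-\tilde u_\eps\|_{L^2}\,\|\nabla u_\eps\|_{L^2}\lesssim \eps\,|\log\eps|^{1/2}\,|\log\eps|^{1/2}.
\]
This tends to zero provided one first uses the potential bound $\int(1-|u|^2)^2\le C\eps^2|\log\eps|$. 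Strictly speaking, that bound only controls the region where $|u|>1$ in an integrated sense, so the retraction step should be combined with a mollification at scale $\eps$. The same flat-norm estimate controls the error introduced by mollifying.

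The core step is a compactness argument via an almost-Jacobian approximation. Fix a cubical or triangulated grid of size $h$ (small but fixed) in a chart covering of $M$, and choose it so that its $(n-2)$-skeleton is in generic position. By Fubini and an averaging over translations, the energy of $u_\eps$ restricted to the $2$-faces is bounded by $C h^{-(n-2)}|\log\eps|$ in total, with a good estimate on each face. On each $2$-face $F$, the two-dimensional theory (Jerrard/Sandier ball construction) gives the following: on $\partial F$ one has $|u_\eps|\ge 1/2$ away from a small set, the degree $d_F\in\mathbb Z$ is defined, and
\[
\pi\sum_F |d_F| \lesssim \frac{GL_\eps(u_\eps,\text{2-skeleton})}{|\log\eps|}.
\]
One then defines the polyhedral integral $(n-2)$-current $P_\eps$ dual to the $2$-faces, weighted by $d_F$. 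The same Brezis--Nguyen type estimate, applied cell by cell, shows that
\[
\F(\star Ju_\eps-\pi P_\eps)\to 0 \quad\text{as } h\to 0,
\]
uniformly in $\eps$ once $\eps\ll h$. Together with the previous display, this gives the uniform mass bound $\M(P_\eps)\le C$. Since $\star Ju_\eps$ is exact, $P_\eps$ is also a boundary up to a small flat error. The Federer--Fleming compactness theorem then yields a subsequence with $P_\eps\to\Sigma$ in flat norm, where $\Sigma$ is an integral boundary; closedness of boundaries under flat convergence handles the boundary property. A diagonal argument over $h\to 0$ concludes that $\star Ju_\eps\to\pi\Sigma$.

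The main obstacle is the good-grid selection. One must show that for most translations and rotations of the grid, the restriction of $u_\eps$ to every $2$-face has energy at most $C|\log\eps|$ times the face-area factor, and that no face carries energy so concentrated that the two-dimensional lower bound fails. One must also control, uniformly in $h$, the flat error between $\star Ju_\eps$ and its polyhedral approximation. My first attempt at this would be to average $GL_\eps$ over the family of translated grids, and to use the coarea formula in the transverse directions.
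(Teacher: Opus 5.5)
The paper gives no proof of this statement: it is quoted from \cite{2005-Indiana-ABO} and used as a black box, so the citation is the paper's entire argument. Your sketch has the right architecture (a good grid, degrees $d_F$ on $2$-faces, the dual polyhedral current $P_\eps$, Federer--Fleming). However, the step on which your final diagonal argument rests is not justified, and it cannot be obtained by the route you indicate. A Brezis--Nguyen estimate applied cell by cell does \emph{not} give $\F(\star Ju_\eps-\pi P_\eps)\to 0$ as $h\to 0$ uniformly in $\eps$. To use Theorem~\ref{teo:BN-Invent} you must realize $\pi P_\eps$ as $\star Jv_\eps$, for instance with $v_\eps$ the cellwise $0$-homogeneous extension of $u_\eps/|u_\eps|$ from the $1$-skeleton. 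Then the error is of the form
\[
\F(\star Ju_\eps-\star Jv_\eps)\lesssim \int |u_\eps-v_\eps|\,\bigl(|\nabla u_\eps|+|\nabla v_\eps|\bigr),
\]
which is controlled by the \emph{raw} Dirichlet energy, and that is only bounded by $C|\log\eps|$. With Poincar\'e on cells of size $h$, the best you can get is an error of order $h|\log\eps|$, which diverges as $\eps\to 0$ for fixed $h$. Sharper bookkeeping does not help. Take $u_\eps=e^{i|\log\eps|^{1/2}x_1}$: then $GL_\eps^n(u_\eps,\Omega)\sim|\log\eps|$, $\star Ju_\eps=0$ and all $d_F=0$. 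Yet $\int|u_\eps-v_\eps|\,|\nabla u_\eps|$ is of order $\min\{h|\log\eps|,|\log\eps|^{1/2}\}$. An estimate in terms of the normalized energy $GL^n_\eps/|\log\eps|$ alone would need to know where the energy concentrates (ball construction on slices), and a cell-by-cell Brezis--Nguyen bound does not see this.

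The remedy is to couple the mesh to $\eps$: take $h=h_\eps\to 0$ as a sufficiently large negative power of $|\log\eps|$.
\begin{itemize}
\item The Brezis--Nguyen error is polynomial in $h$ and $|\log\eps|$ (essentially $h|\log\eps|$), so it now tends to zero.
\item Since $\log(h_\eps/\eps)=(1-o(1))|\log\eps|$, the two-dimensional degree lower bound on faces of size $h_\eps$, averaged over translations, still gives $\sum_F|d_F|\lesssim h_\eps^{2-n}$, that is $\M(P_\eps)\le C$. The error terms in that bound involve $\log|d_F|$ and $h_\eps$ times the energy on $\partial F$; both are absorbed after averaging.
\item Federer--Fleming then applies directly to $P_\eps=\frac1\pi\star Jv_\eps$, which is exactly an integral boundary, and no diagonal argument in $h$ is needed.
\end{itemize}
A smaller point: $d_F$ is defined only if $|u_\eps|\ge 1/2$ on \emph{all} of $\partial F$, not merely away from a small set. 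This does not come from the ball construction but from the choice of grid. After mollification $|\nabla u_\eps|\le C/\eps$, so the potential bound covers $\{|u_\eps|<1/2\}$ by $O(|\log\eps|\,\eps^{2-n})$ balls of radius $C\eps$. A randomly translated $1$-skeleton meets an expected $O(\eps|\log\eps|\,h_\eps^{1-n})\to 0$ of these balls.
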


\subsection{Factorization theorem and Jacobian of maps in $H^{\sigma}(M;\Sp^1)$}\label{sbs: factor Jac}

Let $\sigma\in (1/2,1)$. There are several equivalent ways to define a Jacobian $\star Ju$ for a map $u \in H^{\sigma}(M; \Sp^1)$, which coincides with the usual Jacobian $(n-2)$-current if $u$ is also in $W^{1,1}(M; \Sp^1)$. For our purposes, it is convenient to use the factorization result \cite[Theorem 7.1]{Bre-Mi}, stating that every $u \in H^{\sigma}(M; \Sp^1)$ can be written as 
\begin{equation}\label{eq: Mir factorization}
    u= e^{i \varphi} v , \qquad \varphi \in H^{\sigma}(M) , \quad  v \in W^{1, 2\sigma}(M ; \Sp^1)  . 
\end{equation}
Then, one defines 
\begin{equation*}
    \star Ju := \star Jv , 
\end{equation*}
where $\star Jv$ is the Jacobian $(n-2)$-current associated to $v$ defined by~\eqref{eq:def-jac-1}. This definition is independent of the choice of the factorization and is independent of $\sigma$; we refer to \cite[Section 8.1]{Bre-Mi} for details.

We shall also use a fractional counterpart of Theorem \ref{teo:BN-Invent}, which follows from the local version applied to the harmonic extension (see \cite[Lemma~9]{BBM-ihes}, \cite[Theorem~3]{BN-Invent}, \cite[Theorem~1.1]{Bousquet-Mironescu}).

\begin{teo}\label{teo:BN-Invent fractional}
Let $\Omega \subseteq M$ be a bounded Lipschitz domain on a Riemannian manifold $M$. Then, for every $u,v\in H^{1/2}(\Omega;\R^2)$, there holds 
\begin{equation*}
\F_\Omega(\star Ju-\star Jv)\leq C_{\Omega} [u-v]_{H^{1/2}} ( [u]_{H^{1/2}} + [ v ]_{H^{1/2}}) .
\end{equation*}
\end{teo}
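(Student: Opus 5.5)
The plan is to reduce the statement to the local estimate of Theorem~\ref{teo:BN-Invent}, with $k=2$ and $p=q=2$, applied to harmonic extensions into one extra dimension.

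Write $w := u - v$. Consider the cylinder $\widetilde\Omega := \Omega\times(0,1)$, a bounded Lipschitz domain in $M\times\R$, and let $U, V$ be suitable extensions of $u, v$ into $\widetilde\Omega$. The natural choice is the harmonic (or Poisson-type) extension, cut off smoothly in the vertical variable near $t=1$. The first step is to record the standard trace and extension estimates
\[
\|\nabla U\|_{L^2(\widetilde\Omega)}\le C[u]_{H^{1/2}}, \qquad \|\nabla V\|_{L^2}\le C[v]_{H^{1/2}}, \qquad \|\nabla (U-V)\|_{L^2}\le C[u-v]_{H^{1/2}} .
\]
Since the extension operator is linear, $U-V$ is exactly the extension of $w$. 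Because the statement only involves seminorms, one first subtracts averages: the Jacobian is unchanged when constants are added, up to a controlled correction. Concretely, I would replace $u$ by $u-\bar u$ and $v$ by $v-\bar v$ and use Poincaré on the extension.

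The second step relates the flat norm of the boundary Jacobians to the flat norm of $\star JU - \star JV$ in the cylinder. Take a test form $\omega\in\D^{n-2}(\Omega)$ with $\|d\omega\|_\infty\le 1$ and extend it to $\widetilde\omega(x,t)=\eta(t)\,\omega(x)$, where $\eta$ is a cutoff with $\eta(0)=1$ and $\eta=0$ near $t=1$. Stokes' theorem in $\widetilde\Omega$ then gives
\[
\int_\Omega d\omega\wedge j(u) - \int_\Omega d\omega\wedge j(v) = \pm\int_{\widetilde\Omega} d\widetilde\omega\wedge\big(dU^1\wedge dU^2 - dV^1\wedge dV^2\big)
\]
at least for smooth $u,v$. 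The general case follows by density, using that both sides are continuous in $H^{1/2}$; this continuity is itself a consequence of the estimate proved for smooth maps. Here $\|d\widetilde\omega\|_\infty\le C$.

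The third step handles the difference of the $2$-forms by the algebraic splitting
\[
dU^1\wedge dU^2 - dV^1\wedge dV^2 = d(U^1-V^1)\wedge dU^2 + dV^1\wedge d(U^2-V^2).
\]
Cauchy--Schwarz then bounds the integral by $C\|\nabla(U-V)\|_{L^2}\big(\|\nabla U\|_{L^2}+\|\nabla V\|_{L^2}\big)$, which combined with the first step yields the claim. I expect the main obstacle to be making the boundary identity rigorous. One must check that the definition of $\star Ju$ for $H^{1/2}$ maps coincides with this extension formula, which needs the factorization of Section~\ref{sbs: factor Jac} for the $\Sp^1$-valued case, or density of smooth maps in $H^{1/2}(\Omega;\R^2)$ in general. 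One must also handle the lateral boundary $\partial\Omega\times(0,1)$, where $\widetilde\omega$ vanishes because $\omega$ has compact support in $\Omega$. For this I would rely on the cited references (\cite[Lemma~9]{BBM-ihes}, \cite[Theorem~1.1]{Bousquet-Mironescu}).
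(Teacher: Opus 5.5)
Your overall route is the one the paper has in mind. The paper gives no detailed proof, only the remark that the estimate follows from the local Brezis--Nguyen bound applied to harmonic extensions, with references. Your Stokes identity, the splitting $dU^1\wedge dU^2-dV^1\wedge dV^2=d(U^1-V^1)\wedge dU^2+dV^1\wedge d(U^2-V^2)$, and Cauchy--Schwarz are the natural way to carry this out.

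However, the claim $\|d\widetilde\omega\|_{L^\infty}\le C$ fails as written. In this paper the flat norm is the one in \eqref{eq: flat dual}, where test forms are constrained only by $\|d\omega\|_{L^\infty}\le 1$, with no bound on $\omega$ itself. For your extension $\widetilde\omega=\eta(t)\,\omega(x)$ you have $d\widetilde\omega=\eta'(t)\,dt\wedge\omega+\eta(t)\,d\omega$, so $\|d\widetilde\omega\|_{L^\infty}\le\|\eta'\|_{L^\infty}\|\omega\|_{L^\infty}+1$. For $n\ge 3$ the first term is not controlled. Replacing $\omega$ by $\omega+d\alpha$ with $\alpha\in\D^{n-3}(\Omega)$ leaves $d\omega$ and the pairing $\langle\star Ju-\star Jv,\omega\rangle$ unchanged, but makes $\|\omega\|_{L^\infty}$ as large as you like. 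So your bound is not uniform over admissible test forms, and you cannot take the supremum. For $n=2$ this is harmless: $\omega$ is a function and, after subtracting a constant if $\Omega=M$, satisfies $\|\omega\|_{L^\infty}\le C_\Omega\|d\omega\|_{L^\infty}$. In general, choosing a compactly supported primitive of $d\omega$ with an $L^\infty$ bound would need an extra lemma that you do not supply.

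The repair is to arrange that only $d\omega$ enters. There are two ways to do this.

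\emph{Option 1: drop the cutoff.} Work on $\Omega\times(0,\infty)$ with, for example, the Poisson extension $U(\cdot,t)=e^{-t\sqrt{-\Delta}}\widetilde u$ of a zero-average extension $\widetilde u$ of $u$ (on a closed $M$, which is the case used in the paper). Test with $\mathrm{pr}^*\omega$, whose differential is exactly $\mathrm{pr}^*d\omega$, where $\mathrm{pr}$ is the projection onto $\Omega$. The top boundary term $\int_\Omega d\omega\wedge\bigl(j(U(\cdot,T))-j(V(\cdot,T))\bigr)$ tends to zero along a sequence $T\to\infty$. This holds because $\|U(\cdot,T)\|_{L^2}$ stays bounded while $\int_0^\infty\|\nabla U(\cdot,t)\|_{L^2}^2\,dt<\infty$, and likewise for $V$.

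\emph{Option 2: move the cutoff.} Keep the cutoff but put it on $\mathrm{pr}^*d\omega$ instead of on $\omega$, i.e.\ apply Stokes to $\eta(t)\,\mathrm{pr}^*d\omega\wedge\bigl(j(U)-j(V)\bigr)$. This produces your bulk term plus the extra term $\int\eta'(t)\,dt\wedge\mathrm{pr}^*d\omega\wedge\bigl(j(U)-j(V)\bigr)$. Since $|j(U)-j(V)|\le|U-V|\,|\nabla U|+|V|\,|\nabla(U-V)|$, the extra term is at most $C\bigl(\|U-V\|_{L^2}\|\nabla U\|_{L^2}+\|V\|_{L^2}\|\nabla(U-V)\|_{L^2}\bigr)$. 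That is controlled by the right-hand side once the averages of $u$ and $v$ are subtracted, and this is where the normalization is genuinely needed. Note also that $\star J(u-c)=\star Ju$ holds exactly for constants $c$, not just up to a correction.

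With either modification, the rest of your argument, including the density step, goes through.
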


Finally, we recall a useful density result in $H^\sigma(M,\Sp^1)$. Define the class 
\begin{align*}
      \mathcal{R} :=  \Big\{ & u \in C^\infty (M\setminus \mathcal{S}; \Sp^1) \cap W^{1,1}(M; \Sp^1) \, : \,  \star J u = \pi \mathcal{S} , \, \mathcal{S}=\bigcup_{j=1}^r \Sigma_j , \: \Sigma_j \mbox{ smooth, disjoint, } \\ & (n-2)\mbox{-submanifold of $M$, and } |\nabla^\ell u(x)| \le C(\ell,M) \dist(x, \mathcal{S})^{-\ell}, \, \forall \, \ell \ge 1, \, \forall \, x\in M \Big\} . 
   \end{align*}

\begin{teo}\label{thm: approx with standard singularities} Let $\sigma \in [1/2,1)$, then the class $\mathcal{R}$ is dense in $H^{\sigma}(M; \Sp^1)$.    
\end{teo}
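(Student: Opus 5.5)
We prove density of $\mathcal{R}$ in $H^\sigma(M;\Sp^1)$ by reducing to the $W^{1,1}$-type density theory for $\Sp^1$-valued maps through the factorization \eqref{eq: Mir factorization}. Given $u\in H^\sigma(M;\Sp^1)$, write $u=e^{i\varphi}v$ with $\varphi\in H^\sigma(M)$ and $v\in W^{1,2\sigma}(M;\Sp^1)$. Smooth functions are dense in $H^\sigma(M)$, so we may pick $\varphi_k\in C^\infty(M)$ with $\varphi_k\to\varphi$ in $H^\sigma$. For $v$ we use the Bethuel--Zheng/Bethuel-type density result. Since $1\le 2\sigma<2$, maps in $W^{1,2\sigma}(M;\Sp^1)$ can be approximated in $W^{1,2\sigma}$ by maps $v_k$ that are smooth away from a finite union of closed $(n-2)$-dimensional submanifolds. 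Near each submanifold, $v_k$ has the form $(x'/|x'|)^{\pm 1}$ in normal coordinates. We will in fact want the singular submanifolds to be smooth and disjoint, and the derivative bounds $|\nabla^\ell v_k|\lesssim \dist(x,\mathcal{S})^{-\ell}$ to hold. We will quote this from the standard construction: a cubical decomposition, followed by the homogeneous extension on $2$-skeleton cells, followed by smoothing of the singular set. On a manifold this is done via local charts or a Nash embedding.

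Set $u_k:=e^{i\varphi_k}v_k$. Because $\varphi_k$ is smooth, $u_k$ is smooth off $\mathcal{S}_k$ and lies in $W^{1,1}$, since $|\nabla v_k|\lesssim \dist^{-1}$ is integrable in codimension two. It also inherits the derivative bounds: by Leibniz, the terms coming from $\varphi_k$ are bounded, while those from $v_k$ are $\lesssim \dist^{-\ell}$, and $M$ is compact. The Jacobian is unchanged by the smooth phase factor, so $\star Ju_k=\star Jv_k=\pi\mathcal{S}_k$, with the multiplicities and orientations encoded in $\mathcal{S}_k$ as in the definition of $\mathcal{R}$. Hence $u_k\in\mathcal{R}$. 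The constant $C(\ell,M)$ in the definition of $\mathcal{R}$ should be read as allowed to depend on the map. Otherwise one should restrict to $\varphi_k$ with controlled derivatives. This is harmless, since the constant can be absorbed into the $\dist^{-\ell}$ behavior only up to the map-dependent factor.

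It remains to show convergence $u_k\to u$ in $H^\sigma$. Split
\[
u_k-u=(e^{i\varphi_k}-e^{i\varphi})v_k+e^{i\varphi}(v_k-v).
\]
For the second term, $W^{1,2\sigma}\hookrightarrow H^\sigma$ (Gagliardo--Nirenberg for bounded maps: $[w]_{H^\sigma}^2\lesssim \|w\|_{L^\infty}^{2-2\sigma}\|\nabla w\|_{L^{2\sigma}}^{2\sigma}$ up to lower order) gives $v_k\to v$ in $H^\sigma$. Multiplication by the bounded $H^\sigma$ function $e^{i\varphi}$ is then the delicate point, since $\mathrm{ess\,sup}$ bounds alone are not enough. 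We will use the product estimate
\[
[fg]_{H^\sigma}\le \|f\|_\infty[g]_{H^\sigma}+\Big(\iint |f(x)-f(y)|^2|g(y)|^2\K_\sigma\Big)^{1/2},
\]
and control the cross term by dominated convergence along a subsequence with $v_k\to v$ a.e. For $f=e^{i\varphi}$ and $g=v_k-v$, the integrand $|f(x)-f(y)|^2|g(y)|^2\K_\sigma$ is dominated by $4|f(x)-f(y)|^2\K_\sigma\in L^1$ and tends to $0$ a.e. The first term is handled symmetrically: $e^{i\varphi_k}\to e^{i\varphi}$ in $H^\sigma$, since $z\mapsto e^{iz}$ is $1$-Lipschitz and a Vitali/dominated argument applies along $H^\sigma$-convergent sequences. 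The cross term with the bounded factor $v_k$ is treated again by dominated convergence, now using $|v_k(x)-v_k(y)|^2$. This is the step we expect to be the main obstacle: $v_k$ varies with $k$, so domination must use the strong convergence $|v_k(x)-v_k(y)|^2\K_\sigma\to|v(x)-v(y)|^2\K_\sigma$ in $L^1(M\times M)$, which is a generalized dominated convergence argument. We conclude, after passing to a diagonal subsequence, that $u_k\to u$ in $H^\sigma$.
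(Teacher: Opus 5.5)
Your argument is correct, but it is organized differently from the paper's. The paper does not reprove the result. It cites the fractional density theorem directly in $H^\sigma$ (\cite[Theorem~10.3]{Bre-Mi}, \cite{Muccidensity}) and only adds one observation: there the singular set is a regular level set $w^{-1}(a)$ of a smooth $\R^2$-valued map. Hence it is automatically a smooth closed submanifold whose components are disjoint, have multiplicity one, and satisfy the $\dist^{-\ell}$ bounds.

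You instead derive the fractional statement from the local one. The factorization $u=e^{i\varphi}v$ reduces everything to three ingredients: density in $W^{1,2\sigma}(M;\Sp^1)$, smooth approximation of the phase, and continuity of products in $H^\sigma$. Your treatment of the products is sound: the splitting of $u_k-u$, the product bound, and generalized dominated convergence for the cross term dominated by $|v_k(x)-v_k(y)|^2\K_\sigma$. Gagliardo--Nirenberg then gives $v_k\to v$ in $H^\sigma$. This also works at $\sigma=1/2$, provided you use the endpoint version of the factorization (with $v\in W^{1,1}$); the paper only states the factorization for $\sigma>1/2$. What your route buys is that any projection or averaging happens at the $W^{1,2\sigma}$ level, where $2\sigma<2$ makes $\int|z-a|^{-2\sigma}\,da$ locally finite. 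What it costs is the deep factorization theorem on top of a separate density theorem.

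One step should be replaced. The construction you describe for $v_k$ (cubical decomposition and homogeneous extension, as in Bethuel--Zheng) produces a polyhedral singular set on a dual skeleton. Its $(n-2)$-faces meet along $(n-3)$-dimensional pieces and may carry arbitrary integer multiplicity. ``Smoothing the singular set'' into disjoint, smooth, multiplicity-one components is not a routine step.

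Instead, take smooth $w_k\colon M\to\R^2$ with $w_k\to v$ in $W^{1,2\sigma}$, and set $v_k:=\pi_{a_k}\circ w_k$. Here $\pi_a$ is the radial projection onto $\Sp^1$ used in the Appendix (it fixes $\Sp^1$), and $a_k$ is a regular value of $w_k$ selected by averaging over a small disk. This works globally on $M$ without charts. It gives $\mathcal{S}_k=w_k^{-1}(a_k)$ with exactly the structure the paper relies on, including the derivative bounds. Those bounds come with map-dependent constants, which is consistent with your reading of $C(\ell,M)$.
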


\begin{proof} 
The density statement is proved, for instance, in \cite[Theorem~10.3]{Bre-Mi} and \cite[Theorem~1]{Muccidensity}. We only comment on the disjointness of the $\Sigma_j$ in $\mathcal{S} = \cup_{j=1}^r \Sigma_j$. 

In the construction in \cite[Theorem~10.3]{Bre-Mi}, the singular set $\mathcal{S}$ is obtained as the inverse image of a regular value of a smooth map. In particular, it can be written as the disjoint union of its connected components $\Sigma_j$. 
\end{proof}

\section{Improved regularity for $\sigma$-minimizing maps with null Jacobian}\label{sec: sec 3}

In this section we discuss the properties of minimizers $u$ in the definition of $\M_s(\Sigma)$. Let $u \in \mathfrak{F}_s^J(\Sigma)$ be a minimizer in the definition of $\M_s$ and set
\[
    \sigma := \frac{1+s}{2} \in \bigg(\frac12,1\bigg).
\]
Then we deduce that $u$ is a null-Jacobian minimizing $\sigma$-harmonic map in $M\setminus \supp \Sigma$, according to the following definition.

\begin{defn}[Null-Jacobian minimizing maps] \label{def:null-jac-min}
Let $\Omega \subseteq M$ be open. A map $u \in H^\sigma(\Omega; \Sp^1)$ is said to be a null-Jacobian minimizing $\sigma$-harmonic map if $\star J u = 0 $ in $\Omega$ and $\E_\sigma(u,\Omega) \le \E_\sigma(v,\Omega) $ for every $v \in H^\sigma(\Omega; \Sp^1)$ with $\star Jv=0$ and ${\rm spt}(u-v) \Subset \Omega$. 
    
\end{defn}

A null-Jacobian minimizing map lies between the two more familiar notions of minimizing $\sigma$-harmonic map with vanishing Jacobian and a stationary $\sigma$-harmonic map with vanishing Jacobian.

As a consequence, we obtain that the following properties hold.

\begin{itemize}
    \item[$(i)$] $u$ is a weakly $\sigma$-harmonic map in $M \setminus \supp \Sigma$, namely, for every $\Omega \Subset M \setminus \supp \Sigma $ we have 
\begin{equation*}
     \frac{d}{dt}\bigg|_{t=0} \E_\sigma \bigg( \frac{u + t \varphi }{|u + t \varphi|} , \Omega \bigg) =0 \qquad \forall \varphi \in C^\infty_c(\Omega). 
\end{equation*}

This follows directly from the minimality of $u$ away from $\Sigma$. In particular, see for example \cite[Section 3]{Partialreg}, $u$ is a solution of the $\sigma$-harmonic map equation
     \begin{equation*}
        (-\Delta)^\sigma u  = A_\sigma(u) u    \qquad \mbox{in } \, \mathscr{D}'(M \setminus \supp \Sigma) ,  
        \end{equation*}
         where
        \begin{equation*}
        A_\sigma(u)(x) :=  \int_M |u(x)-u(y)|^2 \K_\sigma(x,y) \, dV(y)  . 
    \end{equation*}

Moreover, if $\Sigma$ is a smooth $(n-2)$-dimensional surface, then it has zero $H^\sigma$-capacity and hence $u$ is a weakly $\sigma$-harmonic map in all of $M$ (i.e. also across $\Sigma$).

\item[$(ii)$] $u$ is a stationary $\sigma$-harmonic map in $M \setminus \supp \Sigma $. That is, for every $\Omega \Subset M \setminus \supp \Sigma $ and smooth vector field $X$ with $\supp X \Subset \Omega $, 
    \begin{equation*}
        \frac{d}{dt}\bigg|_{t=0} \E_\sigma(u \circ \phi_{-t}^X, \Omega ) = 0, 
    \end{equation*}
where $\phi_{t} ^X$ is the flow on $M$ generated by $X$. Indeed, the condition $\star Ju=0$ in $M\setminus \supp \Sigma$ is preserved under composition with flows supported away from $\Sigma$, so minimality in our class implies stationarity.

 \item[$(iii)$] $u \in W^{1,p}_{\rm loc}(M\setminus \supp \Sigma)$ for every $p\in [1,2)$. This follows by \cite[Theorem 1.5]{GlobRegFrac} since $u$ is stationary in $M\setminus \supp \Sigma$ by part $(ii)$. In particular, $u$ has a well-defined weak gradient $\nabla u \in L^1_{\rm loc}(M \setminus \supp \Sigma)$ away from $\Sigma$. We will use this in the proof of Proposition \ref{prop: compactness min}.

\end{itemize}

\subsection{The $0$-homogeneous case}

The goal of this subsection is to prove Theorem \ref{thm: classification R2}. As a first step, we prove a weaker version that we will later use in the proof of Theorem \ref{thm: classification R2}.

\begin{prop}\label{prop:small_energy_minimizers}
   Let $\sigma_\circ \in (1/2,1)$ and $\sigma\in (\sigma_\circ,1)$. Then, there is a constant $\kappa=\kappa(\sigma_\circ) >0 $ with the following property. Let $u \in H^\sigma_{\rm loc}(\R^2; \Sp^1)$ be a $0$-homogeneous weakly $\sigma$-harmonic map with
   \begin{equation}\label{eq: quant no Jacobian}
       (1-\sigma)^2 [u]^2_{H^\sigma(B_1)} \le \kappa .
   \end{equation}
    Then, $u$ is constant. 
\end{prop}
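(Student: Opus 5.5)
Write $u(x)=g(x/|x|)$ with $g\colon \Sp^1\to\Sp^1$. The plan is to turn the smallness assumption \eqref{eq: quant no Jacobian} into a quantitative bound on $g$ in a normalized fractional norm on $\Sp^1$. That bound should force $\deg g=0$ and give a small oscillation of a lifting of $g$. The $\sigma$-harmonic map equation then becomes an elliptic nonlocal equation for the phase, and a maximum principle argument finishes.

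\emph{Step 1 (energy of homogeneous maps).} Split $B_1$ into dyadic annuli $A_k=B_{2^{-k}}\setminus B_{2^{-k-1}}$ and use $0$-homogeneity together with the scaling of the kernel $|x-y|^{-2-2\sigma}$.
\begin{itemize}
\item The interaction of $A_k$ with itself and with its neighbours scales like $2^{-k(2-2\sigma)}$ times a fixed quantity.
\item That fixed quantity controls $[g]^2_{H^\sigma(\Sp^1)}$, where the seminorm is taken with the kernel $|\theta-\theta'|^{-1-2\sigma}$. To see this, restrict to pairs $x,y$ in $A_0$ with comparable radii; the radial integration then yields the one-dimensional kernel.
\item Summing over $k$ produces a factor $\sum_k 2^{-k(2-2\sigma)}\simeq (1-\sigma)^{-1}$.
\end{itemize}
This gives
\[
(1-\sigma)[g]^2_{H^\sigma(\Sp^1)} \le C(\sigma_\circ)\,(1-\sigma)^2[u]^2_{H^\sigma(B_1)}\le C\kappa .
\]
The left-hand side is the correctly normalized Bourgain--Brezis--Mironescu-type seminorm, which stays comparable to $\|g'\|^2_{L^2}$ uniformly as $\sigma\to1$.

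\emph{Step 2 (degree zero, lifting, small oscillation).} I will use a fractional Morrey embedding on $\Sp^1$ whose constant is uniform for $\sigma\in(\sigma_\circ,1)$, in the normalized form
\[
\osc_{I} g \le C(\sigma_\circ)\,|I|^{\sigma-\frac12}\big((1-\sigma)[g]^2_{H^\sigma(\Sp^1)}\big)^{1/2}
\]
for arcs $I$. This can be proved via Campanato averages, tracking the constants. Combined with Step 1 it gives $\osc_{\Sp^1} g\le C\sqrt\kappa$.
\begin{itemize}
\item For $\kappa$ small, $g$ takes values in a small arc of $\Sp^1$. Hence $\deg g=0$, and $g=e^{i\theta}$ with $\theta$ continuous and $\osc\theta\le C\sqrt\kappa<\pi/2$.
\item Then $u=e^{i\varphi}$ with $\varphi(x)=\theta(x/|x|)$, which is $0$-homogeneous and lies in $H^\sigma_{\rm loc}$.
\end{itemize}

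\emph{Step 3 (equation for the phase and maximum principle).} Weak $\sigma$-harmonicity of $u$, tested with variations $u e^{it\psi}$ for $\psi\in C^\infty_c(\R^2\setminus\{0\})$, gives
\[
\iint \sin\big(\varphi(x)-\varphi(y)\big)\big(\psi(x)-\psi(y)\big)\,|x-y|^{-2-2\sigma}\,dx\,dy=0 .
\]
Since $|\varphi(x)-\varphi(y)|<\pi/2$ everywhere, $\sin(a)$ has the same sign as $a$ and satisfies $\sin a\ge c\,a$ on this range. So this is a uniformly elliptic nonlinear nonlocal equation. By homogeneity and continuity, $\varphi$ attains $M=\max\theta$ along a whole ray.

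Formally, at a point $x_0$ on that ray the integrand $\sin(M-\varphi(y))$ is nonnegative, so the equation forces $\varphi\equiv M$. To make this rigorous I will test with $\psi=(\varphi-M+\varepsilon)_+\eta$, where $\eta$ is a radial cutoff supported away from $0$ and $\infty$. This gives a weak strong-maximum-principle / De Giorgi argument showing that the superlevel set $\{\varphi>M-\varepsilon\}$ has full measure for every $\varepsilon$. The cutoff errors are controlled because $\varphi$ is bounded and homogeneous. Hence $\varphi$ is constant, and so is $u$.

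\emph{Main obstacle.} I expect the hardest step to be obtaining Steps 1–2 with constants uniform as $\sigma\to1$, since the smallness is only at the $(1-\sigma)^2$ scale. In particular one must check that the annular decomposition loses exactly one factor $(1-\sigma)^{-1}$, and that the Morrey embedding holds uniformly in the normalized seminorm. The rigorous version of the maximum principle in Step 3 needs some care near the origin and at infinity, but it should be routine once the oscillation bound is in hand.
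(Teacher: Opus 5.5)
Your argument is correct. Step~3 is in substance the paper's maximum-principle argument, but Steps~1--2 reach the key bound $\osc(g,\Sp^1)\lesssim\sqrt{\kappa}$ by a different route.

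\textbf{How the paper gets the oscillation bound.} It never passes through $[g]_{H^\sigma(\Sp^1)}$ here. It uses the planar slicing formula \eqref{eq:planar_slicing_squares-new} and applies, on each chord $S_{\theta,t}$, the uniform one-dimensional bound $\osc(u,S_{\theta,t})^2\le C_0(1-\sigma)(2t)^{2\sigma-1}[u]^2_{H^\sigma(S_{\theta,t})}$ from \cite[Lemma~2.8]{codim2-frac}. Four such chords contain the boundary of a square centred at the origin, along which $u$ already attains all of $g(\Sp^1)$. The second factor $(1-\sigma)^{-1}$ then comes from $\int_0^{1/\sqrt2}(2t)^{1-2\sigma}\,dt$.

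\textbf{How you get it.} You first reduce to the circle via $(1-\sigma)[u]^2_{H^\sigma(B_1)}\gtrsim[g]^2_{H^\sigma(\Sp^1)}$. This is exactly Corollary~\ref{cor:Mattia}, which the paper derives from the exact polar formula of Lemma~\ref{lem:reduction-fractional-energy-homogeneous} and uses only later, for Theorem~\ref{thm: classification R2}. You then take the second factor from a $(1-\sigma)$-normalized Morrey embedding on $\Sp^1$.

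\textbf{Comparison.} Your route is more modular: it shows that one factor $(1-\sigma)^{-1}$ comes from the radial integration and the other from the one-dimensional embedding. The paper's route only needs the uniform embedding on straight segments, where it is already available.

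\textbf{A caution on your Morrey step.} If you prove it via Campanato averages, the factor $(1-\sigma)^{1/2}$ appears only if you use, at every dyadic scale, the Poincar\'e inequality with the sharp constant $(1-\sigma)$ (Bourgain--Brezis--Mironescu). A plain Campanato iteration loses this factor, and then your Steps~1--2 would only give $\osc g\lesssim(1-\sigma)^{-1/2}\sqrt{\kappa}$. On $\Sp^1$ it is quicker to use Fourier series. One has $(1-\sigma)[g]^2_{H^\sigma(\Sp^1)}\gtrsim\sum_k|k|^{2\sigma}|\hat g_k|^2$ uniformly in $\sigma$. Combining $|g(x)-g(y)|\le\sum_k|\hat g_k|\min\{2,|k||x-y|\}$ with Cauchy--Schwarz gives the estimate with a constant depending only on $\sigma_\circ$.

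\textbf{On Step~3.} Be precise about why the cutoff is harmless. Write $\psi(x)=\chi(|x|)\,v(x/|x|)$ with $v=(\theta-M+\varepsilon)_+$, and in polar coordinates split
$\psi(x)-\psi(y)=\tfrac{\chi(r)+\chi(\rho)}{2}\bigl(v(\omega)-v(\omega')\bigr)+\tfrac{v(\omega)+v(\omega')}{2}\bigl(\chi(r)-\chi(\rho)\bigr)$.
\begin{itemize}
\item The second piece integrates to exactly zero, because $\sin(\theta(\omega)-\theta(\omega'))$ is antisymmetric under $\omega\leftrightarrow\omega'$ at fixed radii. A size estimate would not suffice: bounding this piece in absolute value gives the same order $R^{2-2\sigma}$ as the main term when the cutoff lives at scale $R$.
\item The remaining integrand is pointwise nonnegative, since $\sin$ on $(-\pi/2,\pi/2)$ and $t\mapsto(t-M+\varepsilon)_+$ are both nondecreasing. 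Hence it vanishes a.e., which forces $\theta$ to be constant immediately, with no De Giorgi iteration.
\end{itemize}
This is the weak form of the paper's pointwise evaluation at a maximum point of the phase. It has the small advantage of not requiring the equation on $\Sp^1$ to hold pointwise.
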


\begin{proof}
    Since $u$ is $0$-homogeneous, we can write $u(x)=g(x/\abs{x})$ for some $g \colon \Sp^1\to\Sp^1$.
    
    Recall the slicing formula (see~\cite[eq.~(2.7)]{codim2-frac})
        \begin{align}
            [u]_{H^{\sigma}(B_1)} ^2 
            =
            \frac{1}{4} \int_{\Sp^1} d \theta \int_{0} ^{1} \Big( [u]_{H^{\sigma}(S_{\theta,t})} ^2 + 
            [u]_{H^{\sigma}(S_{\theta^{\perp},t})} ^2 +
            [u]_{H^{\sigma}(S_{\theta,-t})} ^2 +
            [u]_{H^{\sigma}(S_{\theta^{\perp},-t})} ^2 \Big) \, dt, \label{eq:planar_slicing_squares-new}
        \end{align}
where $\theta^\perp$ denotes the counterclockwise rotation of $\theta \in \Sp^1$ by an angle equal to $\pi/2$, and the set $S_{\theta,t}:=\{t\theta^\perp +\xi\theta : \abs{\xi} \in [0,\sqrt{1-t^2})\}$ is the intersection of the line $t\theta^\perp+\theta \R$ with the unit ball $B_1 \subset \R^2$.

Since the projection $x\mapsto x/|x|$ is smooth on every segment $S_{\theta,t}$ with $t>0$, it follows that $g\in H^{\sigma}(\Sp^1;\Sp^1)$, and in particular $u$ is continuous away from the origin, by the embedding $H^{\sigma}(\Sp^1;\Sp^1) \hookrightarrow C^0(\Sp^1;\Sp^1)$ for $\sigma > 1/2$.

More quantitatively, by \cite[Lemma~2.8]{codim2-frac}, we obtain that
$$\osc(u,S_{\theta,t}) ^2\leq C_0 (1-\sigma) (2t)^{2\sigma-1} [u]_{H^{\sigma}(S_{\theta,t})} ^2,$$
where $C_0>0$ depends only on $\sigma_\circ$.

Observe that, for every $t \in (0,1/\sqrt{2})$, the union of the four segments in the right-hand side of (\ref{eq:planar_slicing_squares-new}) contains the boundary of a square of side length $2t$ centered at the origin. Let $L_t:=\partial (-t,t)^2$ and for every $\theta \in \Sp^1$ we denote by $\theta \cdot L_t$ the set $\{ \theta z : z \in L_t \}$. Then, since the continuity of $u$ prevents jumps in the vertices, for every $\theta\in \Sp^1$ and $t\in (0,1/\sqrt{2})$ it holds that
$$\osc(g,\Sp^1)^2 =\osc(u,\theta\cdot L_t)^2 \leq 4\bigl(\osc(u,S_{\theta,t}) ^2 + \osc(u,S_{\theta^{\perp},t}) ^2 + \osc(u,S_{\theta,-t}) ^2 + \osc(u,S_{\theta^{\perp},-t}) ^2 \bigr),$$
and hence
    \begin{equation*} \notag
            [u]_{H^{\sigma}(B_1)} ^2 
            \ge \frac{\osc(g,\Sp^1)^2}{16 C_0 (1-\sigma)} \int_{\Sp^1} d \theta \int_{0} ^{\frac{1}{\sqrt{2}}}
            (2t)^{1-2\sigma} \, dt 
            =\frac{\pi }{2^{4+\sigma} C_0 (1-\sigma)^2} \mathrm{osc}(g, \Sp^1)^2.
        \end{equation*}

    Recalling the assumption~\eqref{eq: quant no Jacobian} we obtain 
    \begin{equation*}
        \osc(g, \Sp^1)^2 \le \frac{32 C_0}{\pi} (1-\sigma)^2 [u]^2_{H^{\sigma}(B_1)} \le \frac{32 C_0}{\pi}\kappa,
    \end{equation*}
    and choosing $\kappa:= \pi/(64C_0)$ this gives $ \osc(g, \Sp^1) \le 1/2$, so the map $g$ cannot be surjective.
    
    Therefore $\mathrm{deg}(g) = 0$, so Theorem~\ref{teo:fractional_lifting} gives $g = e^{i\varphi}$, for some (continuous) $\varphi \in H^{\sigma}(\Sp^1)$. Moreover,
        \[
            \abs*{ \sin \bigg( \frac{\varphi(x)-\varphi(y)}{2} \bigg) } = \frac{\abs{e^{i\varphi(x)}- e^{i\varphi(y)}}}{2} \le \frac{1}{4}, \qquad \forall x,y \in \Sp^1.
        \]
    By continuity, this readily implies that $\abs{\varphi(x)-\varphi(y)} \le 2 \arcsin(1/4)$ for every $x, y \in \Sp^1$.
        
    Finally, we exploit the fact that $u$ is weakly $\sigma$-harmonic. This property translates into the following equation for the phase $\varphi$: 
    \begin{equation*}
        \int_{\Sp^1} \sin(\varphi(x)-\varphi(y)) \widetilde K_\sigma(x,y) \, dy = 0, \qquad \forall x \in \Sp^1, 
    \end{equation*}
    where $\widetilde K_\sigma$ is a suitable positive kernel. Applying this identity at a maximum point $x_0 \in \Sp^1$ for $\varphi$, we deduce that $\sin(\varphi(x_0) - \varphi(y)) \ge 0$. Using again that the oscillation of $\varphi$ is bounded by $2\arcsin(1/4) < \pi/2$, we obtain that $\varphi(x_0) - \varphi(y) = 0$, for every $y \in \Sp^1$. This concludes the proof.
\end{proof}

The next two results are well-known estimates for fractional norms, in which we need to emphasize the correct behavior of the constants when $\sigma\sim 1$. The first one is the following interpolation inequality.
\begin{lemma}\label{lemma:interpolation}
There exists a dimensional constant $C>0$ such that for every $\sigma \in (0,1)$ it holds
\begin{equation*}
    \sigma(1-\sigma)[f]_{H^\sigma(B_1)}^2\leq C \|f\|_{L^2(B_1)} ^{2-2\sigma} \|\nabla f\|_{L^2(B_1)} ^{2\sigma}\qquad \forall f\in H^1(B_1).
\end{equation*}
\end{lemma}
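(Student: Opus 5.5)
The plan is to reduce to the whole space by extension and then prove the inequality via the Fourier transform, keeping track of how the constants depend on $\sigma$. First we make the constants explicit on $\R^n$ (here $B_1\subset\R^n$ with $n=2$ or general $n$). Fix a bounded linear extension operator $E\colon H^1(B_1)\to H^1(\R^n)$, for instance reflection across $\partial B_1$ followed by a smooth cutoff. This operator satisfies
\begin{equation*}
\|Ef\|_{L^2(\R^n)}\le C\|f\|_{L^2(B_1)}, \qquad \|Ef\|_{H^1(\R^n)}\le C\|f\|_{H^1(B_1)} .
\end{equation*}
Since $[f]_{H^\sigma(B_1)}\le [Ef]_{H^\sigma(\R^n)}$, it suffices to bound the whole-space seminorm.

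On $\R^n$ we use the Fourier identity
\begin{equation*}
[g]^2_{H^\sigma(\R^n)}=c_{n,\sigma}\int|\xi|^{2\sigma}|\hat g|^2\,d\xi, \qquad c_{n,\sigma}\sim \frac{1}{\sigma(1-\sigma)} .
\end{equation*}
This is exactly the normalization encoded in $\alpha_{n,\sigma}$ and \eqref{eq: alpha-est}. By Hölder's inequality with exponents $1/(1-\sigma)$ and $1/\sigma$,
\begin{equation*}
\int|\xi|^{2\sigma}|\hat g|^2\le\Big(\int|\hat g|^2\Big)^{1-\sigma}\Big(\int|\xi|^2|\hat g|^2\Big)^{\sigma}=\|g\|_{L^2}^{2-2\sigma}\|\nabla g\|_{L^2}^{2\sigma}.
\end{equation*}
Hence
\begin{equation*}
\sigma(1-\sigma)[f]^2_{H^\sigma(B_1)}\le C\|Ef\|_{L^2}^{2-2\sigma}\|\nabla Ef\|_{L^2}^{2\sigma}\le C\|f\|_{L^2(B_1)}^{2-2\sigma}\big(\|f\|_{L^2(B_1)}+\|\nabla f\|_{L^2(B_1)}\big)^{2\sigma}.
\end{equation*}

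The main obstacle is that the extension only controls the full $H^1$ norm, while the statement has $\|\nabla f\|_{L^2}$ alone on the right-hand side. To remove the extra $L^2$ term, we use that the left-hand side is invariant under $f\mapsto f-\bar f$, where $\bar f$ is the average of $f$ on $B_1$. We apply the above to $f-\bar f$ and use Poincaré's inequality $\|f-\bar f\|_{L^2(B_1)}\le C\|\nabla f\|_{L^2(B_1)}$ in the gradient factor only. This gives a bound by $C\|f-\bar f\|_{L^2}^{2-2\sigma}\|\nabla f\|_{L^2}^{2\sigma}$. Finally $\|f-\bar f\|_{L^2(B_1)}\le\|f\|_{L^2(B_1)}$, since subtracting the mean is an $L^2$-orthogonal projection. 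All constants are independent of $\sigma$ apart from the explicit factor $c_{n,\sigma}\sigma(1-\sigma)$, which is bounded above and below by \eqref{eq: alpha-est}; this proves the claim.
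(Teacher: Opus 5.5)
Your proof is correct, but it reaches the key inequality by a different route from the paper.

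The second half of your argument is exactly what the paper does. You replace $f$ by $f-\fint_{B_1}f$, apply Poincaré in the gradient factor, and use $\|f-\bar f\|_{L^2(B_1)}\le\|f\|_{L^2(B_1)}$ in the other factor.

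The difference is in how you get the inhomogeneous inequality $\sigma(1-\sigma)[g]^2_{H^\sigma(B_1)}\le C\|g\|^{2-2\sigma}_{L^2(B_1)}\|g\|^{2\sigma}_{H^1(B_1)}$. The paper simply cites \cite[Theorem 5.7]{foghem2026interp}, a general $W^{s,p}$ interpolation result on domains that tracks the dependence on $\sigma$. You prove the $p=2$ case directly, in three steps:
\begin{itemize}
\item a fixed, $\sigma$-independent reflection-and-cutoff extension, bounded simultaneously $L^2\to L^2$ and $H^1\to H^1$;
\item the Fourier representation of the Gagliardo seminorm, whose multiplier constant $2/\alpha_{n,\sigma}$ is comparable to $(\sigma(1-\sigma))^{-1}$ by \eqref{eq: alpha-est};
\item Hölder's inequality in frequency, which holds with constant $1$.
\end{itemize}

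Your route is self-contained. It also shows clearly that the only $\sigma$-dependence is the factor $\sigma(1-\sigma)$, coming entirely from the Fourier normalization, since the extension constants enter only as $C^{2-2\sigma}C^{2\sigma}=C^2$. The paper's citation is shorter and covers general $p$, which is not needed here.

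You were right to require that the extension be bounded on $L^2$ and not only on $H^1$. That is exactly what allows you to recover the factor $\|f\|_{L^2(B_1)}^{2-2\sigma}$ rather than a weaker bound by $\|\nabla f\|_{L^2}^2$. The inversion $x\mapsto x/|x|^2$ followed by a cutoff provides it.
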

\begin{proof} By \cite[Theorem 5.7]{foghem2026interp} applied with $(p, \eta, s , \sigma ) = (2,\sigma,1,0)$ we have that 
\begin{equation*}
    \sigma(1-\sigma)\|g\|_{H^\sigma(B_1)}^2 \leq C \|g \|_{L^2(B_1)} ^{2-2\sigma} \| g \|_{H^1(B_1)} ^{2\sigma}\qquad \forall g\in H^1(B_1).
\end{equation*}
Applying this inequality to $g=f-\fint_{B_1} f$ and using the Poincaré inequality for $H^1(B_1)$ gives the desired result. 
\end{proof}

The second estimate that we need is the following fractional version of the Hardy inequality.
\begin{lemma}[Fractional Hardy inequality]\label{lemma:frac-hardy}
Let $\Omega\subset \R^n$ be a convex open set and let $\sigma_0\in (1/2,1)$, $\sigma\in[\sigma_0,1)$. Then there exists a constant $C=C(\sigma_0,n)$ such that
    \[
        \int_{\Omega} \frac{f(x)^2}{d_\Omega(x)^{2\sigma}}\,dx\leq C(1-\sigma)[f]_{H^\sigma(\Omega)} ^2, \qquad \forall f\in H^\sigma _0(\Omega),
    \]
where $d_\Omega(x):=\dist(x,\partial \Omega).$
\end{lemma}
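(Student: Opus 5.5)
The plan is to prove the fractional Hardy inequality with the sharp dependence $(1-\sigma)$ by reducing to one dimension along lines and using a one-dimensional Hardy inequality on a half-line (or an interval), for which the constant is explicit. This is the same slicing philosophy used in the proof of Proposition~\ref{prop:small_energy_minimizers}. By density it suffices to take $f\in C^\infty_c(\Omega)$.

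\textbf{Step 1 (one-dimensional inequality).} For $g\in C_c^\infty(0,\infty)$ and $\sigma\in[\sigma_0,1)$, I would show
\[
\int_0^\infty \frac{g(t)^2}{t^{2\sigma}}\,dt\le C(\sigma_0)(1-\sigma)\int_0^\infty\!\!\int_0^\infty\frac{|g(t)-g(\tau)|^2}{|t-\tau|^{1+2\sigma}}\,dt\,d\tau .
\]
I would argue via averages. Write $g(t)=\bigl(g(t)-\fint_{t}^{2t}g\bigr)+\fint_t^{2t}g$. The first term is controlled by the nonlocal energy on $[t,2t]$, with no loss in $1-\sigma$, because $|t-\tau|^{-1-2\sigma}\ge (2t)^{-1-2\sigma}$ there. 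For the averages one uses a dyadic telescoping: $\fint_{2^k}^{2^{k+1}}g$ is a sum of differences of consecutive dyadic averages, and each difference is bounded by the local energy on adjacent dyadic blocks. The geometric sum carries the factor $\sum_j 2^{-j(2\sigma-1)}\sim (2\sigma-1)^{-1}$, which is harmless since $\sigma\ge\sigma_0>1/2$.

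The real issue is that the naive bound loses the gain $(1-\sigma)$: the inequality $\int g^2 t^{-2\sigma}\lesssim[g]^2$ holds with a constant that does not vanish as $\sigma\to1$, but the energy itself behaves like $(1-\sigma)^{-1}\|g'\|^2$. The statement asks for the stronger form with a factor $(1-\sigma)$ on the right. To capture it, I would use the full kernel tail, not just neighbouring blocks. Near the boundary, $g$ vanishes on $(-\infty,0]$ after extension by zero, which is legitimate for $H^\sigma_0$ with $\sigma>1/2$. The interaction term $\int_0^\infty g(t)^2\int_{-\infty}^0|t-\tau|^{-1-2\sigma}d\tau\,dt=\frac{1}{2\sigma}\int g^2t^{-2\sigma}$ is then part of the energy on $\mathbb{R}$. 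The remaining task is to compare the energy on $\mathbb{R}$ with the energy on $(0,\infty)$, with the correct constants.

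\textbf{Obstacle and fix.} I expect this step, the comparison of the energies over $\mathbb R$ and over the half-line, to be the main obstacle. It is exactly the known fact that for $\sigma>1/2$ the zero extension is bounded on $H^\sigma_0$ with constant blowing up like $(2\sigma-1)^{-1}$ but not like $(1-\sigma)^{-1}$. I would follow Bourgain--Brezis--Mironescu/Dyda: the regional fractional Hardy constant on the half-line is explicit, of the form $\frac{c_\sigma}{\,\cdot\,}$ with $c_\sigma\sim 1/(1-\sigma)$ coming from $\int_0^1 (1-r^{\sigma-1/2})^2(1-r)^{-1-2\sigma}dr$. Computing this via the ground-state substitution $g=t^{\sigma-1/2}h$ gives the desired sharp constant. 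Concretely, the ground-state representation yields
\[
[g]^2_{H^\sigma(0,\infty)}-\mathcal C_\sigma\int_0^\infty\frac{g^2}{t^{2\sigma}}\,dt\ge0,\qquad \mathcal C_\sigma\ge \frac{c(\sigma_0)}{1-\sigma},
\]
and $\mathcal C_\sigma$ is shown to be of order $(1-\sigma)^{-1}$ by explicit evaluation of the Beta-type integrals.

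\textbf{Step 2 (convex domains via slicing).} For $x\in\Omega$, the convexity of $\Omega$ gives $d_\Omega(x)=\min$ over directions $e$ of the distance along the chord, up to using a supporting half-space: if $p\in\partial\Omega$ is a closest point, then $\Omega\subset H$, the half-space with $d_H(x)=d_\Omega(x)$. I would average the one-dimensional inequality over all lines, using the slicing formula $[f]^2_{H^\sigma(\Omega)}=c_{n}\int_{\Sp^{n-1}}\int_{e^\perp}[f]^2_{H^\sigma(\Omega\cap\ell)}$ with $c_n$ uniformly bounded in $\sigma$. Along a line $\ell$ in direction $e$, the chord $\Omega\cap\ell$ is an interval, and Step 1 on an interval (applied from both endpoints, by splitting at the midpoint) bounds $\int f^2\,\delta_{\ell}^{-2\sigma}$, where $\delta_\ell$ is the distance to the chord endpoints. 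Finally, $\int_{\Sp^{n-1}}\delta_{\ell_{x,e}}(x)^{-2\sigma}de\ge c\,d_\Omega(x)^{-2\sigma}$. This holds because $\Omega\subset H$ forces $\delta_{\ell_{x,e}}(x)\le d_\Omega(x)/|e\cdot\nu|$, and $\int|e\cdot\nu|^{2\sigma}de\ge c(n)$. Integrating in $x$ gives the claim with $C=C(\sigma_0,n)$.
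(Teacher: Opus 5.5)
Your route is correct. In substance it is the proof of the inequality that the paper simply quotes, so the two routes differ only in how much is proved from scratch.

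\textbf{The paper's route.} It takes the convex-domain inequality (16) of \cite{frac-Hardy-ineq}. The constant there is $k_{n,\sigma}^{-1}=\big(\int_{\Sp^{n-1}}|e\cdot\nu|^{2\sigma}\,de\big)\,\kappa_\sigma$, with $\kappa_\sigma:=\int_0^1(1-r^{\sigma-1/2})^2(1-r)^{-1-2\sigma}\,dr$. The paper then extracts the factor $1-\sigma$ from the elementary bound $1-r^{\alpha}\ge\alpha(1-r)$, which gives $\kappa_\sigma\ge(2\sigma-1)^2/(8(1-\sigma))$.

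\textbf{Your route.} Your ingredients are exactly how that cited inequality is proved: the ground state $t^{\sigma-1/2}$ in one dimension; slicing with the $\sigma$-independent factor $1/2$; the supporting half-space bound $\delta_\ell(x)\le d_\Omega(x)/|e\cdot\nu|$; and $\int_{\Sp^{n-1}}|e\cdot\nu|^{2\sigma}de\ge\mathcal H^{n-1}(\Sp^{n-1})/n$. The one-dimensional potential your substitution produces is $\kappa_\sigma\,t^{-2\sigma}$, the same integral. So both routes arrive at the same constant, and the elementary inequality above is the quickest way to carry out your ``explicit evaluation''. Your version is self-contained; the paper's is two lines.

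Two steps of your write-up need repair.

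\textbf{The zero-extension paragraph gives no gain.} Since $[Eg]^2_{H^\sigma(\R)}=[g]^2_{H^\sigma(0,\infty)}+\sigma^{-1}\int_0^\infty g^2t^{-2\sigma}\,dt$, an extension constant that is merely bounded, even uniformly for $\sigma\ge\sigma_0$, only yields the Hardy inequality without the factor $1-\sigma$. You would need the extension constant to be $1+O(1-\sigma)$, which is the statement itself. It is the explicit ground-state constant that does the work, so drop that paragraph.

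\textbf{Passing from the half-line to a chord.} Step 1 bounds $\int g^2t^{-2\sigma}$ by the energy on the whole half-line, but on a chord $(a,b)$ you only control the regional energy on $(a,b)$.
\begin{itemize}
\item Restricting $f$ to the two halves of the chord is not admissible, because the pieces do not vanish at the midpoint.
\item Extending by zero beyond $b$ adds $\sigma^{-1}\int_a^b f^2(b-t)^{-2\sigma}dt$ to the energy. This can be absorbed only if $\mathcal C_\sigma>1/\sigma$, which holds for $\sigma$ near $1$ but fails as $\sigma\to1/2$.
\end{itemize}
The clean fix is to apply the ground-state identity directly on $(a,b)$ with $w(t)=(t-a)^{\sigma-1/2}$.
\begin{itemize}
\item Since $w$ is increasing, the missing interactions with $y>b$ contributed $w(x)-w(y)<0$ to the potential $w(x)^{-1}\,\mathrm{p.v.}\!\int(w(x)-w(y))|x-y|^{-1-2\sigma}dy$.
\item Hence the interval potential is pointwise at least the half-line one, $\kappa_\sigma(x-a)^{-2\sigma}$.
\item Doing the same from $b$ and using $\delta_\ell^{-2\sigma}\le(x-a)^{-2\sigma}+(b-x)^{-2\sigma}$ gives $\int_a^b f^2\delta_\ell^{-2\sigma}\le\kappa_\sigma^{-1}[f]^2_{H^\sigma(a,b)}$.
\end{itemize}
With this, the rest of your Step 2 goes through unchanged.
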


\begin{proof}
For $\sigma\in (1/2,1)$ equation (16) in \cite{frac-Hardy-ineq} states that
    \[
        \int_{\Omega} \frac{f(x)^2}{d_\Omega(x)^{2\sigma}}\,dx\leq k_{n,\sigma} [f]_{H^\sigma(\Omega)} ^2, \qquad \forall f\in H^\sigma _0(\Omega),
    \]
where
    \[
        k_{n,\sigma}:= \bigg(2\pi^{\frac{n-1}{2}}\frac{\Gamma(\frac{1+2\sigma}{2})}{\Gamma(\frac{n+2\sigma}{2})}\int_0 ^1 \frac{\big(1-r^{\frac{2\sigma-1}{2}}\big)^2}{(1-r)^{1+2\sigma}}\,dr\bigg)^{-1}.
    \]

Hence the claim follows just by observing that the factor in front of the integral is strictly positive for $\sigma>1/2$ and
    \[
        \int_0 ^1 \frac{\big(1-r^{\frac{2\sigma-1}{2}}\big)^2}{(1-r)^{1+2\sigma}}\,dr \ge \int_0 ^1 \frac{\frac{(2\sigma-1)^2}{4}(1-r)^2}{(1-r)^{1+2\sigma}}\,dr=\frac{(2\sigma-1)^2}{8(1-\sigma)},
    \]
where we used the elementary inequality $1-r^\alpha \ge \alpha (1-r)$, valid for all $(\alpha,r) \in (0,1)^2$.
\end{proof}

The last ingredient that we need for the proof of Theorem~\ref{thm: classification R2} is a relation between the $H^\sigma$ seminorm of a $0$-homogeneous map and the $H^\sigma$ seminorm of its trace on the sphere.

\begin{lemma}
\label{lem:reduction-fractional-energy-homogeneous}
Let $n\geq 2$, $\sigma\in (0,1)$ and let $u(x)=g(x/|x|)$ for every $x \in \R^n \setminus \{0\}$. Then,
\begin{equation*}
    [u]_{H^\sigma(B_R)}^2
    =
    \frac{R^{n-2\sigma}}{n-2\sigma}
    \iint_{\Sp^{n-1}\times \Sp^{n-1}}
    |g(\omega)-g(\theta)|^2 \widetilde K_{n,\sigma}(\omega, \theta)
    \, d\omega d\theta,
\end{equation*}
where
\begin{equation*}
   \widetilde K_{n,\sigma}(\omega, \theta)
    :=
    2\int_0^1
    \frac{t^{n-1}}
    {(1+t^2-2t \omega\cdot\theta)^{\frac{n+2\sigma}{2}}}
    \,dt .
\end{equation*}
Moreover,
\begin{equation*}
    \widetilde{K}_{n,\sigma}(\omega,\theta) =   \frac{c_{n,\sigma} + o(1)}{|\omega-\theta|^{n-1+2\sigma}}  ,  \quad \mbox{as } \,  |\omega-\theta| \to 0 ,
\end{equation*}
where the $o(1)$ is independent of $\sigma$ and
\begin{equation*}
    c_{n,\sigma}
    =
    \int_{-\infty}^{\infty}
    \frac{d\tau}
    {(1+\tau^2)^{\frac{n+2\sigma}{2}}}
    =
    \sqrt{\pi}\,
    \frac{\Gamma\big(\frac{n+2\sigma-1}{2}\big)}
    {\Gamma\big(\frac{n+2\sigma}{2}\big)}.
\end{equation*}
\end{lemma}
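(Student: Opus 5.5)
We will work in polar coordinates $x=r\omega$, $y=\rho\theta$ with $r,\rho\in(0,R)$ and $\omega,\theta\in\Sp^{n-1}$. Since $u$ is $0$-homogeneous, $|u(x)-u(y)|^2=|g(\omega)-g(\theta)|^2$ depends only on the angles. Hence
\begin{equation*}
[u]^2_{H^\sigma(B_R)}=\iint_{\Sp^{n-1}\times\Sp^{n-1}}|g(\omega)-g(\theta)|^2\left(\int_0^R\!\!\int_0^R\frac{r^{n-1}\rho^{n-1}\,dr\,d\rho}{(r^2+\rho^2-2r\rho\,\omega\cdot\theta)^{\frac{n+2\sigma}{2}}}\right)d\omega\, d\theta .
\end{equation*}
The plan is to compute the inner radial integral $I(\omega,\theta)$ explicitly in terms of $\widetilde K_{n,\sigma}$, and then to do the asymptotics of $\widetilde K_{n,\sigma}$ separately.

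For the radial integral, the integrand is symmetric in $(r,\rho)$. We split the square into $\{\rho<r\}$ and $\{\rho>r\}$, which gives a factor $2$ times the integral over $\rho<r$. On that region we substitute $\rho=tr$ with $t\in(0,1)$, so $d\rho=r\,dt$, and the integrand becomes
\begin{equation*}
r^{2n-1-(n+2\sigma)}\,t^{n-1}(1+t^2-2t\,\omega\cdot\theta)^{-\frac{n+2\sigma}{2}} .
\end{equation*}
The $r$-integral is $\int_0^R r^{n-1-2\sigma}\,dr=R^{n-2\sigma}/(n-2\sigma)$, which is finite because $n\ge 2>2\sigma$. By Tonelli (everything is nonnegative), we obtain exactly $I=\frac{R^{n-2\sigma}}{n-2\sigma}\widetilde K_{n,\sigma}(\omega,\theta)$, including the factor $2$ built into $\widetilde K_{n,\sigma}$. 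This proves the first identity.

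For the asymptotics, set $\delta:=|\omega-\theta|$, so that $\omega\cdot\theta=1-\delta^2/2$ and $1+t^2-2t\,\omega\cdot\theta=(1-t)^2+t\delta^2$. The singularity concentrates at $t=1$. We substitute $1-t=\delta\tau$, for which $dt=\delta\,d\tau$ and $\tau\in(0,1/\delta)$. This gives
\begin{equation*}
\widetilde K_{n,\sigma}=2\delta^{1-n-2\sigma}\int_0^{1/\delta}\frac{(1-\delta\tau)^{n-1}\,d\tau}{(\tau^2+1-\delta\tau)^{\frac{n+2\sigma}{2}}} .
\end{equation*}
As $\delta\to0$, the integrand converges pointwise to $(1+\tau^2)^{-\frac{n+2\sigma}{2}}$. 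So the integral tends to $\int_0^\infty(1+\tau^2)^{-\frac{n+2\sigma}{2}}d\tau=c_{n,\sigma}/2$, and multiplying by $2$ gives $c_{n,\sigma}$.

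The main point is to make the $o(1)$ uniform in $\sigma\in(0,1)$. We do this by dominated convergence with a $\sigma$-independent majorant. For $\tau\le 1/(2\delta)$ we have $\tau^2+1-\delta\tau\ge \tfrac12(1+\tau^2)$ (using $\delta\tau\le\tfrac12\le\tfrac12+\tfrac{\tau^2}{2}$), hence the integrand is at most $2^{(n+2)/2}(1+\tau^2)^{-n/2}$, which is integrable for $n\ge2$. On $\tau\in(1/(2\delta),1/\delta)$, i.e.\ $t<1/2$, the original integrand is bounded, so this piece contributes $O(\delta^{n-1+2\sigma})\cdot O(1)$ relative to the leading term, uniformly in $\sigma$. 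The pointwise errors $|(1-\delta\tau)^{n-1}-1|$ and $|(\tau^2+1-\delta\tau)^{-a}-(1+\tau^2)^{-a}|$, with $a=\tfrac{n+2\sigma}{2}\in[n/2,(n+2)/2]$, are bounded by $C\delta\tau$ times the majorant with $C$ independent of $\sigma$. Integrating the truncated tail then yields an error $o(1)$ independent of $\sigma$; this is slightly delicate when $n=2$, where $\int\tau(1+\tau^2)^{-1}$ diverges logarithmically, but truncating at $\tau\le \delta^{-1/2}$ and using the tail bound handles it.

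Finally, the closed form of $c_{n,\sigma}$ follows from the Beta integral $\int_{\R}(1+\tau^2)^{-a}\,d\tau=\sqrt\pi\,\Gamma(a-\tfrac12)/\Gamma(a)$.
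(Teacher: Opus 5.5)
Your proposal is correct and follows the paper's proof essentially step by step: polar coordinates, the symmetric split of the radial square with $\rho=rt$, the identity $1+t^2-2t\,\omega\cdot\theta=(1-t)^2+t\delta^2$, the substitution $t=1-\delta\tau$, and the Beta integral for $c_{n,\sigma}$. The only difference is that you make the uniformity in $\sigma$ explicit with $O(\delta\log(1/\delta))$-type error bounds, whereas the paper simply asserts it after a dominated-convergence argument. Your split at $\tau=1/(2\delta)$ is unnecessary, since for $\delta\le 1$ one has $\tau^2+1-\delta\tau\ge \tau^2+1-\tau\ge\tfrac12(1+\tau^2)$ for all $\tau\ge 0$.
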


\begin{proof}
By scaling we can assume $R=1$. Since $u$ is $0$-homogeneous with profile $g$, by polar coordinates writing $x=r\omega$ and $y=\rho\theta$, with
$r,\rho\in(0,1)$ and $\omega,\theta\in \Sp^{n-1}$, we have that
\[
    [u]_{H^\sigma(B_1)}^2
    =
    \iint_{\Sp^{n-1}\times \Sp^{n-1}}
    |g(\omega)-g(\theta)|^2 I_{n,\sigma}(\omega,\theta)
    \,d\omega d\theta,
\]
where
\[
    I_{n,\sigma}(\omega,\theta)
    :=
    \int_0^1\int_0^1
    \frac{r^{n-1}\rho^{n-1}}
    {(r^2+\rho^2-2r\rho\,\omega\cdot\theta)^{\frac{n+2\sigma}{2}}}
    \,d\rho\,dr .
\]
We compute this last integral. We split the square $(0,1)^2$ into the two regions
$\rho<r$ and $r<\rho$. On the first one we set $\rho=rt$. Then $t\in(0,1)$ and
$d\rho=r dt$, and therefore
\begin{align*}
    \int_0^1\int_0^r
    \frac{r^{n-1}\rho^{n-1}}
    {(r^2+\rho^2-2r\rho\,\omega\cdot\theta)^{\frac{n+2\sigma}{2}}}
    \,d\rho\,dr
    &=
    \int_0^1\int_0^1
    \frac{r^{n-1}(rt)^{n-1}r}
    {(r^2(1+t^2-2t\,\omega\cdot\theta))^{\frac{n+2\sigma}{2}}}
    \,dt\,dr                                                     \\
    &=
    \bigg(\int_0^1 r^{n-1-2\sigma}\,dr\bigg)
    \bigg(
    \int_0^1
    \frac{t^{n-1}}
    {(1+t^2-2t\,\omega\cdot\theta)^{\frac{n+2\sigma}{2}}}
    \,dt
    \bigg) 
   \\  &=
    \frac{1}{n-2\sigma}
    \bigg(
    \int_0^1
    \frac{t^{n-1}}
    {(1+t^2-2t\,\omega\cdot\theta)^{\frac{n+2\sigma}{2}}}
    \,dt
    \bigg) . 
\end{align*}
The region $r<\rho$ gives the same contribution by symmetry. Hence
\[
    \widetilde K_{n,\sigma}(\omega, \theta) = (n-2\sigma) I_{n,\sigma}(\omega,\theta)
    =
     2
    \int_0^1
    \frac{t^{n-1}}
    {(1+t^2-2t\,\omega\cdot\theta)^{\frac{n+2\sigma}{2}}}
    \,dt.
\]
It remains to identify the singular behavior of $\widetilde K_{n,\sigma}$ near the diagonal. Let
\[
    \delta:=|\omega-\theta|.
\]
Since
\[
    1+t^2-2t\,\omega\cdot\theta
    =
    (1-t)^2+t|\omega-\theta|^2
    =
    (1-t)^2+t \delta^2,
\]
we have
\[
    \widetilde K_{n,\sigma}(\omega,\theta)
    =
    2\int_0^1
    \frac{t^{n-1}}
    {((1-t)^2+t\delta^2)^{\frac{n+2\sigma}{2}}}
    \,dt .
\]
We set $t=1-\delta\tau$. Then
\[
    \widetilde K_{n,\sigma} (\omega,\theta)
    =
    \frac{2}{\delta^{n+2\sigma-1}}
    \int_0^{1/\delta}
    \frac{(1-\delta\tau)^{n-1}}
    {(\tau^2+1-\delta\tau)^{\frac{n+2\sigma}{2}}}
    \,d\tau .
\]
Since
\[
    \frac{(1-\delta\tau)^{n-1}}
    {(\tau^2+1-\delta\tau)^{\frac{n+2\sigma}{2}}}
    \leq
    \frac{C}{(1+\tau^2)^{\frac{n+2\sigma}{2}}} \in L^1(0,\infty),
\]
by dominated convergence we can pass to the limit as $\delta\to 0$, and we obtain that
\[
   \lim_{|\omega-\theta|\to 0}\widetilde K_{n,\sigma}(\omega,\theta)|\omega-\theta|^{n-1+2\sigma}=
   \lim_{\delta\to 0} 2\int_0 ^{1/\delta}
    \frac{(1-\delta\tau)^{n-1}}
    {(\tau^2+1-\delta\tau)^{\frac{n+2\sigma}{2}}}
    \,d\tau
    =  
    \int_{-\infty}^\infty
    \frac{d\tau}
    {(1+\tau^2)^{\frac{n+2\sigma}{2}}},
\]
and the limit is uniform with respect to $\sigma\in (0,1)$. Finally, using
\[
    \int_{-\infty}^{\infty}
    \frac{d\tau}{(1+\tau^2)^a}
    =
    \sqrt{\pi}\,
    \frac{\Gamma(a-\frac12)}{\Gamma(a)}
    \quad\mbox{with } a = \frac{n+2\sigma}{2} >\frac{1}{2},
\]
we obtain the stated formula for $c_{n,\sigma}$.
\end{proof}

\begin{cor}\label{cor:Mattia}
    Let $\sigma\in [1/2,1)$ and $u \in H^\sigma_{\rm loc}(\R^2;\Sp^1)$ be a $0$-homogeneous map with $u(x)=g(x/|x|)$. Then $g \in H^\sigma(\Sp^1;\Sp^1)$, and there exists an absolute $c > 0$ such that 
        \[
            (1-\sigma) [u]_{H^\sigma(B_1)}^2 \ge c [g]_{H^\sigma(\Sp^1)}^2 . 
        \]
\end{cor}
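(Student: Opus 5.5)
The plan is to apply Lemma~\ref{lem:reduction-fractional-energy-homogeneous} with $n=2$ and $R=1$, and then bound the reduced kernel from below by a multiple of the standard kernel on $\Sp^1$. With $n=2$ the lemma gives
\[
[u]_{H^\sigma(B_1)}^2=\frac{1}{2-2\sigma}\iint_{\Sp^1\times\Sp^1}|g(\omega)-g(\theta)|^2\,\widetilde K_{2,\sigma}(\omega,\theta)\,d\omega\,d\theta ,
\]
so that
\[
(1-\sigma)[u]_{H^\sigma(B_1)}^2=\tfrac12\iint|g(\omega)-g(\theta)|^2\widetilde K_{2,\sigma}(\omega,\theta)\,d\omega\,d\theta .
\]
The factor $(1-\sigma)$ is therefore exactly absorbed by the radial integration. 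It remains to show that $\widetilde K_{2,\sigma}(\omega,\theta)\ge c\,|\omega-\theta|^{-1-2\sigma}$ for all $\omega\neq\theta$ on $\Sp^1$, with $c>0$ absolute and uniform in $\sigma\in[1/2,1)$. Here $[g]_{H^\sigma(\Sp^1)}^2$ is understood as the Gagliardo seminorm with kernel comparable to $|\omega-\theta|^{-1-2\sigma}$; chordal and geodesic distances are comparable on $\Sp^1$, and the intrinsic kernel $\K_\sigma$ on $\Sp^1$ is comparable to this by Proposition~\ref{prop: comparison kernel sharp}. This inequality also yields $g\in H^\sigma(\Sp^1;\Sp^1)$ whenever the left-hand side is finite, and $u\in H^\sigma_{\rm loc}$ guarantees exactly that.

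For the kernel bound I will argue directly from the formula rather than only from the asymptotics. Set $\delta=|\omega-\theta|\in(0,2]$. The identity from the lemma gives
\[
\widetilde K_{2,\sigma}=2\int_0^1 t\,\big((1-t)^2+t\delta^2\big)^{-(1+\sigma)}\,dt .
\]
I restrict the integral to $t\in[1/2,1]$, where $t\ge1/2$ and $(1-t)^2+t\delta^2\le(1-t)^2+\delta^2$. This gives
\[
\widetilde K_{2,\sigma}\ge\int_0^{1/2}\big(\tau^2+\delta^2\big)^{-(1+\sigma)}\,d\tau .
\]
Substituting $\tau=\delta r$, the right-hand side equals
\[
\delta^{-1-2\sigma}\int_0^{1/(2\delta)}(1+r^2)^{-(1+\sigma)}\,dr\ \ge\ \delta^{-1-2\sigma}\int_0^{1/4}(1+r^2)^{-2}\,dr ,
\]
using $\delta\le2$ and $1+\sigma\le2$. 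This constant is absolute.

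Combining the two steps gives $(1-\sigma)[u]_{H^\sigma(B_1)}^2\ge c[g]_{H^\sigma(\Sp^1)}^2$. The only delicate point is uniformity in $\sigma$ and in $\delta$ away from the diagonal. The asymptotic statement in Lemma~\ref{lem:reduction-fractional-energy-homogeneous} handles only small $\delta$, which is why I use the elementary lower bound valid on the whole range $\delta\in(0,2]$. I do not expect any real obstacle beyond checking which normalization of $[g]_{H^\sigma(\Sp^1)}$ is intended, and any normalization comparable to $|\omega-\theta|^{-1-2\sigma}$ with constants uniform in $\sigma$ is covered.
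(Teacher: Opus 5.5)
Your argument is correct and is the same as the paper's proof, which simply applies Lemma~\ref{lem:reduction-fractional-energy-homogeneous} with $n=2$, $R=1$; there the factor $\frac{1}{2-2\sigma}$ absorbs $(1-\sigma)$. Your elementary bound $\widetilde K_{2,\sigma}(\omega,\theta)\ge c\,|\omega-\theta|^{-1-2\sigma}$ holds on the whole range $|\omega-\theta|\in(0,2]$ with $c$ uniform in $\sigma$, and it makes explicit a step the paper leaves implicit, since the lemma itself only records the near-diagonal asymptotics.
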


\begin{proof} 
    This follows directly by Lemma \ref{lem:reduction-fractional-energy-homogeneous} applied to $n=2$ and $R=1$. 
\end{proof}

We are now ready to prove our rigidity result for $0$-homogeneous maps.

\begin{proof}[Proof of Theorem~\ref{thm: classification R2}]
Let $g\colon\Sp^1\to \Sp^1$ be such that $u(x)=g(x/|x|)$. Then, Corollary~\ref{cor:Mattia} implies that $g\in H^\sigma(\Sp^1)$, and in particular $g$ is continuous. Moreover, since $\star Ju=0$, it follows that $g$ has degree zero, and hence it admits a continuous lifting $\varphi\colon\Sp^1\to \R$, for which $g=e^{i\varphi}$ and, by Theorem~\ref{teo:fractional_lifting}, it satisfies
\begin{equation}\label{nonlinear_est_lifting}
    [\varphi ]_{H^{\sigma}(\Sp^1)}^2 \le C  [g]_{H^{\sigma}(\Sp^1)}^2  + \frac{C}{(1-\sigma)^{1-1/(2\sigma)}} [g]_{H^{\sigma}(\Sp^1)}^{2/\sigma} . 
\end{equation}

Now let $\psi\in H^1(B_1)$ be the harmonic extension of $\varphi$, namely the minimizer of the Dirichlet energy in $B_1$ with trace equal to $\varphi$ on $\partial B_1=\Sp^1$. Then by standard Fourier expansion, it holds that
\begin{equation}\label{trace_eq}
\|\nabla \psi\|_{L^2(B_1)}^2 = [\varphi]_{H^{1/2}(\Sp^1)}^2.
\end{equation}

Finally, we set $v:=e^{i\psi}$, and we observe that $v\in H^1(B_1)$ has trace equal to $g$ at $\partial B_1$.

If we extend $v$ to $\R^2$ by setting $v:=u$ outside $B_1$, then the local minimality of $u$ (applied in $B_r$ with $r\searrow 1$) yields
\begin{equation}\label{eq:u_loc_min}
\E_\sigma(u,B_1)\leq \E_\sigma(v,B_1).
\end{equation}

Moreover, since $v$ coincides with $u$ outside $B_1$ it holds that
\begin{align}
\E_\sigma(v,B_1)&=[v]_{H^\sigma(B_1)} ^2 +2\iint_{B_1\times B_1^c} \frac{|u(y)-v(x)|^2}{|y-x|^{2+2\sigma}}\,dxdy \nonumber\\
&\leq [v]_{H^\sigma(B_1)}^2 + \biggl(2+\frac{2}{\ep}\biggr)\iint_{B_1\times B_1^c} \frac{|u(x)-v(x)|^2}{|y-x|^{2+2\sigma}}\,dxdy\nonumber\\
&\quad +(2+2\ep)\iint_{B_1\times B_1^c}\frac{|u(y)-u(x)|^2}{|y-x|^{2+2\sigma}}\,dxdy,\label{est:energy_v}
\end{align}
for every $\ep>0$.

Now we observe that
$$\int_{B_1^c} \frac{dy}{|y-x|^{2+2\sigma}}\leq \int_{B_{1-|x|}^c} \frac{dy}{|y|^{2+2\sigma}}=\frac{\pi}{\sigma}(1-|x|)^{-2\sigma},$$
and hence, since $u-v \in H^\sigma _0(B_1)$, from Lemma~\ref{lemma:frac-hardy} we deduce that
\begin{multline*}
\iint_{B_1\times B_1^c} \frac{|u(x)-v(x)|^2}{|y-x|^{2+2\sigma}}\,dxdy \leq \frac{\pi}{\sigma}\int_{B_1} \frac{|u(x)-v(x)|^2}{(1-|x|)^{2\sigma}}\,dx\\
\leq C(1-\sigma) [u-v]_{H^\sigma(B_1)} ^2\leq C(1-\sigma)\bigl([u]_{H^\sigma(B_1)}^2 +[v]_{H^\sigma(B_1)}^2\bigr).
\end{multline*}

Combining this estimate with (\ref{eq:u_loc_min}) and (\ref{est:energy_v}) we obtain that
\begin{align}
[u]_{H^\sigma(B_1)}^2 &=\E_\sigma(u,B_1)-2\iint_{B_1\times B_1^c}\frac{|u(y)-u(x)|^2}{|y-x|^{2+2\sigma}}\,dxdy\nonumber\\
&\leq [v]_{H^\sigma(B_1)}^2 + \frac{C}{\ep}(1-\sigma)\bigl([u]_{H^\sigma(B_1)}^2 +[v]_{H^\sigma(B_1)}^2\bigr) +2\ep \iint_{B_1\times B_1^c}\frac{|u(y)-u(x)|^2}{|y-x|^{2+2\sigma}}\,dxdy\label{s*1}
\end{align}

Moreover, since $u$ is $0$-homogeneous, we deduce that we can choose $\ep$ (independently of $u$ and $\sigma$) so that
\begin{equation*}
2\ep\iint_{B_1\times B_1^c}\frac{|u(y)-u(x)|^2}{|y-x|^{2+2\sigma}}\,dxdy\leq \frac{1}{4} [u]_{H^\sigma(B_1)}^2.
\end{equation*}

As a consequence, if $\sigma$ is sufficiently close to $1$ so that $ C(1-\sigma)\leq \ep/4$, we conclude that
$$[u]_{H^\sigma(B_1)}^2\leq 3 [v]_{H^\sigma(B_1)}^2.$$

At this point, the following chain of inequalities holds true
\begin{align}
[u]_{H^\sigma(B_1)}^2 &\leq 3[v]_{H^\sigma(B_1)}^2
\stackrel{(1)}{\leq} C_1(1-\sigma)^{-1}\|\nabla v\|_{L^2(B_1)} ^{2\sigma} = C_1(1-\sigma)^{-1}\|\nabla \psi\|_{L^2(B_1)} ^{2\sigma}\nonumber\\
&\stackrel{(2)}{=} C_1(1-\sigma)^{-1}[\varphi]_{H^{1/2}(\Sp^1)} ^{2\sigma}
\stackrel{(3)}{\leq} C_2(1-\sigma)^{\sigma-1} [\varphi]_{H^\sigma(\Sp^1)} ^{2\sigma}\nonumber\\
&\stackrel{(4)}{\leq} C_3 (1-\sigma)^{\sigma-1} [g]_{H^{\sigma}(\Sp^1)}^{2\sigma}  + C_3(1-\sigma)^{-1/2} [g]_{H^{\sigma}(\Sp^1)}^{2}\nonumber\\
&\stackrel{(5)}{\leq} C_4 (1-\sigma)^{\sigma} [u]_{H^{\sigma}(B_1)}^{2\sigma}  + C_4(1-\sigma)^{1/2} [u]_{H^{\sigma}(B_1)}^{2}.\label{s*2}
\end{align}

Here inequality (1) follows from Lemma~\ref{lemma:interpolation}, equality (2) is (\ref{trace_eq}), inequality (3) follows from \cite[Remark~5]{BBM01} and inequality (4) follows from (\ref{nonlinear_est_lifting}) and the subadditivity of the map $x\mapsto x^\sigma$, and inequality (5) follows from Corollary~\ref{cor:Mattia}.

Therefore, if $\sigma$ is sufficiently close to $1$ so that $C_4 (1-\sigma)^{1/2}<1/2$, we obtain that
$$[u]_{H^\sigma(B_1)} ^2 \leq  2C_4 (1-\sigma)^{\sigma} [u]_{H^{\sigma}(B_1)}^{2\sigma},$$
and, finally,
\begin{equation}\label{s*3}
[u]_{H^\sigma(B_1)}^2 \leq (2C_4)^{\frac{1}{1-\sigma}} (1-\sigma)^{\frac{\sigma}{1-\sigma}},
\end{equation}
so by Proposition~\ref{prop:small_energy_minimizers} we conclude that $u$ must be constant if $\sigma$ is sufficiently close to $1$.
\end{proof}

\begin{rem}\label{rem:s*}
Following the proof of Theorem~\ref{thm: classification R2} one may find an explicit value for $\sigma_*$. Indeed, we exploited the smallness of $(1-\sigma)$ exactly three times: in order to absorb the squared seminorm of $u$ in the right-hand sides of (\ref{s*1}) and (\ref{s*2}) into the respective left-hand side (notice that $\ep$ might also be made explicit), and then to ensure that the right-hand side of (\ref{s*3}) is smaller than the constant in Proposition~\ref{prop:small_energy_minimizers}. In all these steps it would be possible to find an explicit value of $\sigma_*$ (depending only on the sharp constants in some inequalities) for which the required estimate holds.
\end{rem}

\subsection{Higher integrability for $\sigma$-minimizing maps: proof of Theorem \ref{thm: improved W1p p<3}}

We recall that $\mathcal{B}_r(x)$ denotes the geodesic ball centered at $x$ with radius $r$ in $M$. The goal of this subsection is to prove the following result, from which Theorem \ref{thm: improved W1p p<3} follows.

\begin{teo}\label{thm: local improved W1p p<3} There exists $\sigma_* \in (1/2,1)$ such that, for every $\sigma\in (\sigma_*,1)$, if $u \colon \B_{4R}(x) \to \Sp^1$ is a null-Jacobian minimizing $\sigma$-harmonic map satisfying $\E_\sigma(u, \B_{4R}(x)) \le \Lambda $, then
\begin{itemize}
   
    \item[$(i)$] The Minkowski dimension of ${\rm sing}(u)$ is at most $n-3$.
     \item[$(ii)$] $u \in W^{1,p} (\B_R(x))$ for every $ p \in [1,3)$, with a quantitative estimate depending on $\sigma,p,\Lambda$, and the geometry of $M$ in $\B_{4R}(x)$. 
\end{itemize}
Moreover, by elliptic regularity, $u \in W^{2\sigma,\frac{p}{2\sigma}} (\B_R(x))$ for every $ p \in [1,3)$. 
\end{teo}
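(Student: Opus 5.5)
The plan is to run a Federer-type dimension reduction for null-Jacobian minimizing maps, using Theorem~\ref{thm: classification R2} to rule out the $(n-2)$-dimensional stratum. Then we upgrade the resulting singular-set bound to $W^{1,p}$ integrability through an $\varepsilon$-regularity estimate at the natural scale.

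\textbf{Step 1 (compactness of blow-ups).} First we establish a compactness statement for null-Jacobian minimizers. Consider a sequence $u_k$ of null-Jacobian minimizing $\sigma$-harmonic maps on $\B_{4R}$ with uniformly bounded energy. Up to a subsequence, $u_k\to u$ strongly in $H^\sigma_{\rm loc}$, and the limit $u$ is again null-Jacobian minimizing. We would prove this as follows.
\begin{itemize}
\item Weak compactness gives a limit $u$.
\item Theorem~\ref{teo:BN-Invent fractional} passes the constraint $\star Ju_k=0$ to the limit, since $H^\sigma\hookrightarrow H^{1/2}$ locally and $u_k\to u$ strongly in $H^{1/2}_{\rm loc}$ by interpolation.
\item Strong convergence and minimality follow from the standard cut-and-paste argument. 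Given a competitor $v$ for $u$, we glue $v$ to $u_k$ on a thin annulus by a convex combination projected to $\Sp^1$, choosing the annulus by Fubini so that the gluing costs little energy.
\end{itemize}
The delicate point is that the glued map must still have zero Jacobian. Since $u$ and $u_k$ are close in $H^\sigma$ on the annulus and both have degree zero there, we would use the lifting available for small-energy degree-zero pieces (Theorem~\ref{teo:fractional_lifting}) and interpolate phases instead of maps. This step is what Proposition~\ref{prop: compactness min} is presumably for.

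\textbf{Step 2 (monotonicity and tangent maps).} Stationarity, item $(ii)$ in the list above, gives the fractional monotonicity formula through the Caffarelli--Silvestre type extension, as in \cite{Partialreg}. Combined with Step~1, this shows that tangent maps exist, are $0$-homogeneous, and are null-Jacobian minimizing in $\R^n$. We then run Federer's dimension reduction. Suppose the top stratum $\mathcal{S}^{n-2}\setminus\mathcal{S}^{n-3}$ were nonempty, and take a point in it. After splitting off $n-2$ translation-invariant directions, we obtain a nonconstant $0$-homogeneous null-Jacobian minimizer on $\R^2$. Invariance in those directions reduces the energy to $\R^2$ with the same kernel exponent, and the minimality and Jacobian conditions descend to the reduced map. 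This contradicts Theorem~\ref{thm: classification R2} for $\sigma>\sigma_*$. The $\varepsilon$-regularity theorem of \cite{GlobRegFrac, Partialreg} says that small normalized energy implies smoothness. Together with the quantitative stratification of Naber--Valtorta type used in \cite{GlobRegFrac}, this upgrades the bound on the Hausdorff dimension to $\dim_{\rm Min}{\rm sing}(u)\le n-3$, which proves $(i)$.

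\textbf{Step 3 ($W^{1,p}$ estimate).} We use the quantitative version from the same covering argument. For $r$-tubular neighbourhoods,
\[
|\{x\in \B_R : r_u(x)<r\}| \le C(\Lambda,\sigma,\eta) r^{3-\eta},
\]
where $r_u(x)$ is the regularity scale. On the set where $r_u(x)\ge r$, the $\varepsilon$-regularity estimates give $|\nabla u|\le C/r$. Summing dyadically,
\[
\int |\nabla u|^p \lesssim \sum_j 2^{jp} 2^{-j(3-\eta)},
\]
which is finite for $p<3$. This proves $(ii)$. Finally, the $\sigma$-harmonic map equation $(-\Delta)^\sigma u=A_\sigma(u)u$ has right-hand side bounded by the local energy density, which lies in $L^{p/(2\sigma)}$. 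Standard Calder\'on--Zygmund estimates for $(-\Delta)^\sigma$ then give the $W^{2\sigma,p/(2\sigma)}$ bound.

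The main obstacle is Step~1: closedness of the null-Jacobian minimizing class under strong limits requires a gluing construction that preserves $\star J=0$ with controlled fractional energy. A second, more technical difficulty is turning the qualitative classification into the quantitative Minkowski estimate. That requires the compactness of Step~1 to be uniform in the sense needed by the quantitative stratification machinery.
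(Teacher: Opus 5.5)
Your proposal is correct and follows essentially the paper's route. The ingredients match: compactness of null-Jacobian minimizers (Proposition~\ref{prop: compactness min}); reduction of an $(n-2)$-symmetric limit to a $0$-homogeneous null-Jacobian minimizer on $\R^2$ (Lemma~\ref{lem: dim reduction minimality}), which Theorem~\ref{thm: classification R2} forces to be constant; the quantitative stratification and $\varepsilon$-regularity of \cite{GlobRegFrac}; the layer-cake bound for $|\nabla u|$; and Calder\'on--Zygmund estimates (Lemma~\ref{lem: A in W^1r}) for the last claim.

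The paper skips your qualitative Federer step. It isolates the new input directly as the self-improvement Lemma~\ref{lem: sym self improvement n-2}, namely that $(n-2,\delta)$-symmetry forces $(n,\varepsilon)$-symmetry, proved by exactly the contradiction argument you anticipate. This gives $\B_r(u)\subset\mathcal{S}^{n-3}_{\eta,cr}(u)$ and hence the Minkowski and $W^{1,p}$ bounds at once.

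Two small corrections. First, the phases in the null-Jacobian gluing come from $\star Ju=0$ on a ball, not from the one-dimensional Theorem~\ref{teo:fractional_lifting}; the strong $H^\sigma_{\rm loc}$ convergence is supplied beforehand by the compactness of stationary maps in \cite{Partialreg}. Second, $A_\sigma(u)\in L^{p/(2\sigma)}$ needs an argument, which the paper gives via a Kato--Ponce--Vega estimate.
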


 In order to prove this theorem, we recall two definitions from \cite{GlobRegFrac} inspired by a work by Cheeger and Naber \cite{CheegerNaberQuant1}. 

We will give complete details of this local regularity result in the Euclidean setting, since the proof of the general case follows with minor modifications using the results in Appendix~\ref{app1: kernel and comparability}.

\begin{defn}[{$k$-symmetric map}] We say that a map $h \colon \R^n \to \R^d$ is $k$-symmetric if $h$ is $0$-homogeneous and it has $k$ independent directions of translation invariance. 
    
\end{defn}

\begin{defn}[Quantitative symmetry] Let $\sigma \in (1/2,1)$ and let $\Omega \subset \R^n$ be an open set. Given $u \in  H^{\sigma}(\Omega ; \Sp^1)$, $\epsilon > 0$, and a nonnegative integer $k$, we say that $u$ is $(k,\epsilon)$-symmetric in $B_r(x_0) \Subset \Omega$ if there exists a $k$-symmetric map $h \in  H^{\sigma}(B_{2r}(x_0) ; \Sp^1)$ such that
\[
\fint_{B_1^+(0,0)} |U(\mathbf{x} + r \mathbf{y} )-H(r \mathbf{y} )|^{2} \,d\mathbf{y}
=
\fint_{B_r^+(x_0,0)} |U( \mathbf{y} )-H(\mathbf{y}- \mathbf{x} )|^{2}\,d \mathbf{y}
\le \epsilon  , 
\]
where $ \mathbf{x}:=(x,z), \mathbf{y}:=(y,z) \in \R^{n+1}_+$, and $U$ and $H$ are the Caffarelli--Silvestre extensions of $u$ and $h$ respectively. 
\end{defn}

\begin{defn}[Quantitative singular set]
Let $u \in H^{\sigma}(\Omega;\Sp^1)$, $r,\eta>0$, and $k \in \{0,1,\cdots,n\}$. We define the
$k$-quantitative singular stratum $\mathcal{S}^k_{\eta,r}(u)\subset \Omega$ as
    \[
        \mathcal{S}^k_{\eta,r}(u)
        =
        \big\{
        x \in \Omega :
        u \text{ is not } (k+1,\eta)\text{-symmetric in } B_\rho(x)
        \text{ for every } \rho \in [r,1]
        \big\}.
    \]
\end{defn}

With this terminology, we can give the proof of Theorem \ref{thm: local improved W1p p<3}. 

\begin{proof}[Proof of Theorem \ref{thm: local improved W1p p<3}] The proof follows the same lines as in \cite{GlobRegFrac}, with the only new input being the stronger symmetry self-improvement (i.e. Lemma \ref{lem: sym self improvement n-2}) under the assumptions of minimality and $\star J u =0$. Let us briefly recall the argument in \cite{GlobRegFrac}. 

With no loss of generality, assume $x=0$ and $R=1$. The starting point is the quantitative stratification estimate, namely the volume estimate for the quantitative singular set \(\mathcal{S}^k_{\eta,r}(u)\) in \cite[Theorem~1.8]{GlobRegFrac}. This part is completely general and yields, for every $\eta>0$, that
\begin{equation}\label{eq: quant strat estimate}
\mathcal{H}^n \big(T_r(\mathcal{S}^k_{\eta,r}(u))\cap B_1\big)\le C r^{n-k-\eta}, \qquad \forall \, r \in (0,1) , 
\end{equation}
where $T_r(A)$ denotes an $r$-tubular neighborhood of the set $A$.

To convert this into regularity, the key input is the \(\varepsilon\)-regularity theorem \cite[Theorem~4.1]{GlobRegFrac}, which is obtained by combining the \((n,\varepsilon)\)-regularity statement of \cite[Lemma~4.2]{GlobRegFrac} with the symmetry self-improvement result \cite[Lemma~4.3]{GlobRegFrac}. More precisely, \cite[Lemma~4.3]{GlobRegFrac} shows that for every $\ep>0$ there exists $\delta>0$ such that \((n-1,\delta)\)-symmetry implies \((n,\varepsilon)\)-symmetry. Geometrically, this is because an \((n-1)\)-symmetric tangent map depends on only one variable, and in one dimension for $\sigma \in (1/2,1)$ every \(0\)-homogeneous tangent map in $H^\sigma_{\rm loc}(\R)$ is necessarily constant. 

As a consequence, \cite[Theorem~4.1]{GlobRegFrac} implies there exists a $\delta>0$ such that if $u$ is \((n-1,\delta)\)-symmetric in $B_2$ then $u$ is regular in $B_{1}$. Hence, for $\eta$ fixed suitably small, there are no points with bad regularity scales 
\begin{equation*}
     \B_r(u) :=\{x \in B_1 :  r_u(x) < r \}, \quad  r_u(x) = \max \Big \{ r \in (0,1] \, : \, \sup_{y \in B_r(x)} |\nabla u(y)| \le 1/r \Big\} , 
\end{equation*}
inside the strata $\mathcal{S}^n_{\eta, c r}(u)$ and $\mathcal{S}^{n-1}_{\eta, cr}(u)$, for every $r \in (0,1)$. In particular, all points with bad regularity scales lie in the $(n-2)$-quantitative stratum, that is $\B_r(u) \subset \mathcal{S}^{n-2}_{\eta,cr}(u)$.

Inserting \(k=n-2\) into the general volume estimate \eqref{eq: quant strat estimate} gives \cite[Theorem~1.6]{GlobRegFrac} namely
\[
\mathcal{H}^n \big(T_r(  \B_r(u) ) \big)\le C r^{2-\eta}.
\]
Since $\sing(u)\subset \B_r(u)$ for every $r \in (0,1)$, this volume bound already implies that the Minkowski dimension of $\sing(u)$ is $\le n-2$. 
Moreover, from this estimate also \(\nabla u\in L^p (B_1) \) for every $ p \in [1,2)$ immediately follows, since
\begin{equation*}
    \{x \in B_1  :  |\nabla u(x)|>1/r \} \subset  \{x \in B_1  :  r_u(x) < r \} =  \B_r(u) , 
\end{equation*}
and by the layer cake representation 
\begin{align*}
    \int_{B_1}|\nabla u|^p & = p \int_0^\infty t^{p-1} \mathcal{H}^n(\{x\in B_1 : |\nabla u| > t \}) \, dt \\ & \le  \mathcal{H}^n(B_1) + \int_1^\infty t^{p-1} \mathcal{H}^n(\{x\in B_1 : |\nabla u| > t \}) \, dt \\& \le \mathcal{H}^n(B_1) + C \int_1^\infty t^{p-1+\eta-2} dt , 
\end{align*}
which is finite provided that $\eta < 2-p$. 

Thus, the only point that must be improved in order to achieve \(L^p\) for every $ p \in [1,3)$ and Minkowski dimension $\le n-3$ is precisely \cite[Lemma~4.3]{GlobRegFrac}: one would need a stronger symmetry self-improvement statement showing that \((n-2,\delta)\)-symmetry already forces \((n,\varepsilon)\)-symmetry. Equivalently, one must rule out nontrivial \(2\)-dimensional \(0\)-homogeneous tangent maps under our additional hypothesis of minimality and $\star J u =0$. This is done in Lemma \ref{lem: sym self improvement n-2}, which we prove below, and is the substitute for \cite[Lemma 4.3]{GlobRegFrac} in our case. 

Once this is available, the same argument above gives \(   \B_r(u) \subset \mathcal{S}^{n-3}_{\eta,cr}(u)\), and then \eqref{eq: quant strat estimate} with \(k=n-3\) would immediately imply the volume estimate 
\begin{equation*}
   \mathcal{H}^n(T_r( \{x \in B_1  :  |\nabla u(x)|>1/r \})) \le  \mathcal{H}^n(T_r(  \B_r(u) ) )\le C r^{3-\eta} , 
\end{equation*}
for every $\eta$ suitably small. This implies \(\nabla u\in L^p(B_1)\) for every \(p<3\) and Minkowski dimension of the singular set $\le n-3$. 

From here, the higher order regularity follows by Lemma \ref{lem: A in W^1r}. 
\end{proof}

\begin{lemma}\label{lem: sym self improvement n-2} Let $\sigma_*$ be the constant of Theorem \ref{thm: classification R2}. Let $\sigma\in (\sigma_* ,1)$ and $u \in {H}^{\sigma}(B_2; \Sp^1)$ be a null-Jacobian minimizing $\sigma$-harmonic map. Then, for every $\varepsilon > 0$ there exists $\delta > 0$ such that if $u$ is
$(n-2,\delta)$-symmetric on $B_1$, then $u$ is also $(n,\varepsilon)$-symmetric on $B_1$.
\end{lemma}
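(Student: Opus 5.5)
We argue by contradiction and compactness, following the scheme of \cite[Lemma~4.3]{GlobRegFrac}. Suppose the statement fails for some $\varepsilon>0$. Then there are null-Jacobian minimizing $\sigma$-harmonic maps $u_j\in H^\sigma(B_2;\Sp^1)$ (with the energy bound implicit in the setting, e.g.\ $\E_\sigma(u_j,B_2)\le\Lambda$) and $(n-2)$-symmetric maps $h_j$ such that $u_j$ is $(n-2,1/j)$-symmetric on $B_1$ via $h_j$, but $u_j$ is not $(n,\varepsilon)$-symmetric on $B_1$. The uniform energy bound gives, up to subsequences, $u_j\to u_\infty$ in $L^2_{\rm loc}$ and weakly in $H^\sigma$, with Caffarelli--Silvestre extensions $U_j\to U_\infty$ in $L^2(B_1^+)$. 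The $h_j$ have uniformly bounded $H^\sigma_{\rm loc}$ energy by the same symmetry estimate and the boundedness of $U_j$, so they also converge to an $(n-2)$-symmetric $h_\infty$. Passing to the limit in the defining inequality gives $U_\infty=H_\infty$ on $B_1^+$, hence $u_\infty=h_\infty$ on $B_1$.

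The key step is to show that $u_\infty$ is itself a null-Jacobian minimizing map, at least in $B_1$, and that its Jacobian vanishes. For the Jacobian, the flat continuity of Theorem~\ref{teo:BN-Invent fractional}, or more precisely the stability of the Jacobian under strong $H^{\sigma}$ convergence (which implies $H^{1/2}$ convergence), gives $\star Ju_\infty=0$. This requires upgrading the weak convergence to strong convergence, and that upgrade is the main obstacle. I would obtain it with the standard compactness argument for minimizers. Given a competitor $v$ for $u_\infty$ with $\star Jv=0$ and ${\rm spt}(u_\infty-v)\Subset B_r$, glue $v$ to $u_j$ in an annulus by a cutoff interpolation $w_j$, chosen via Fubini/averaging over radii so that the interpolation cost is $o(1)$ by the $L^2$ convergence. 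Then renormalize to $\Sp^1$; where $|w_j|$ is small, fix this by the usual projection trick with averaging over centers. One must make sure $w_j$ still has zero Jacobian. This holds because a convex interpolation between two null-Jacobian $\Sp^1$-maps that are $L^2$-close and both lift locally creates no Jacobian, provided the interpolation region is chosen where $|u_j-v|$ is small, so that the projection stays in the null-Jacobian class. Minimality of $u_j$ then gives $\limsup\E_\sigma(u_j,B_r)\le\E_\sigma(v,B_r)$. Taking $v=u_\infty$ yields energy convergence, hence strong convergence, and the general $v$ yields minimality of $u_\infty$.

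Now $u_\infty=h_\infty$ is $(n-2)$-symmetric, so $u_\infty(x',x'')=g(x'')$ with $g$ a $0$-homogeneous map on $\R^2$, and its null-Jacobian minimality passes to $g$. The Jacobian condition holds because $\star Ju_\infty$ is the product of $\star Jg$ with $\R^{n-2}$. For minimality, the energy of an $(n-2)$-translation-invariant map reduces, after integrating out the $\R^{n-2}$ variables, to a two-dimensional energy with kernel $c\,|x''-y''|^{-2-2\sigma}$. Competitors for $g$ are extended invariantly and truncated in the $x'$ directions by a standard large-box argument, whose boundary errors are lower order. Theorem~\ref{thm: classification R2} then forces $g$ to be constant. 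Hence $u_\infty$ is constant, which is trivially $n$-symmetric, and the $L^2$ convergence $U_j\to U_\infty$ shows that $u_j$ is $(n,\varepsilon)$-symmetric for large $j$. This contradicts the choice of the $u_j$.
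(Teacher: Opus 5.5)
Your architecture is the paper's: contradiction, a compactness step producing a limit that is again a null-Jacobian minimizer, $(n-2,0)$-symmetry of the limit, reduction to $\R^2$ via Lemma~\ref{lem: dim reduction minimality}, and then Theorem~\ref{thm: classification R2}. The paper, however, simply invokes Proposition~\ref{prop: compactness min} for the compactness. Your self-contained substitute breaks exactly at the point you flag.

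In the transition region $v=u_\infty$, so $w_j=\rho u_\infty+(1-\rho)u_j$ vanishes on $\{\rho=1/2\}\cap\{u_j=-u_\infty\}$. This set is generically an $(n-2)$-dimensional cycle around which $w_j$ has degree $\pm1$. For a.e. center $a$ in the disk, $\star J(\pi_a\circ w_j)$ is therefore $\pi$ times the oriented preimage $w_j^{-1}(a)$, a nonzero current in the transition region. $L^2$-closeness does not exclude this: a thin region where the phase difference crosses $\pi$ costs little in $L^2$. Nor can you choose the transition region so that $|u_j-u_\infty|$ is pointwise small there. The $u_j$ are only $W^{1,p}_{\rm loc}$ with $p<2$ and may be singular on codimension-two sets. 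Averaging over radii only controls integrals on $(n-1)$-spheres, where such maps need not be continuous if $n\ge 3$. (Were $|u_j-u_\infty|\le 2-\delta$ uniformly there, $w_j/|w_j|$ would already work and no projection would be needed.) Hence $\pi_{a_j}(w_j)$ is not an admissible competitor for the null-Jacobian minimality of $u_j$. Neither the energy convergence nor the minimality of $u_\infty$ follows, and the latter is exactly what you need to apply Lemma~\ref{lem: dim reduction minimality} and Theorem~\ref{thm: classification R2}.

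The remedy, as indicated in the proof of Proposition~\ref{prop: compactness min}, is to glue phases instead of maps. On a ball, $\star Ju_j=0$ and $\star Jv=0$ give liftings $u_j=e^{i\varphi_j}$ and $v=e^{i\varphi_v}$, with $\varphi_j$ locally bounded in $W^{1,p}$ for $p<2$ by the estimate of \cite{GlobRegFrac}. After normalizing additive constants in $2\pi\Z$, one gets $\varphi_j\to\varphi_\infty$ in $L^p_{\rm loc}$ with $e^{i\varphi_\infty}=u_\infty$. The map $e^{i(\rho\varphi_v+(1-\rho)\varphi_j)}$ is then $\Sp^1$-valued with vanishing Jacobian by construction, so no projection is needed.

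Three smaller points:
\begin{enumerate}
\item $\star Ju_\infty=0$ does not require strong $H^\sigma$ convergence. Weak $H^\sigma$ convergence already gives strong $H^{1/2}_{\rm loc}$ convergence, and Theorem~\ref{teo:BN-Invent fractional} applies. The real obstacle is the minimality of the limit.
\item An $L^2$ bound on $U_j-H_j$ gives no energy bound on $h_j$. None is needed, though: weak-$*$ compactness of the $0$-homogeneous profiles together with $u_\infty=h_\infty$ on $B_1$ suffices.
\item In your dimension reduction, truncate in the $x'$ directions by reparametrization, as with the conical caps in Lemma~\ref{lem: dim reduction minimality}. A cutoff interpolation would create Jacobian for the same reason as above.
\end{enumerate}
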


\begin{proof}
The proof is similar to that of \cite[Lemma 4.3]{GlobRegFrac} but using our improved classification Theorem \ref{thm: classification R2} after the compactness argument.

Suppose, to the contrary, that there exists $\ep_0>0$ and a sequence of null-Jacobian minimizing $\sigma$-harmonic maps $u_k \in {H}^{\sigma}(B_2; \Sp^1) $ such that $\star J u_k =0$ and $u_k$ is $(n-2, 1/k)$-symmetric but not $(n, \ep_0)$-symmetric on $B_1$. By the compactness of null-Jacobian minimizing $\sigma$-harmonic maps Proposition \ref{prop: compactness min}, up to subsequences, $u_k  \rightharpoonup  u$ weakly in $ H^\sigma(B_2; \Sp^1)$ and $u_k  \to  u$ strongly in $ H^\sigma(B_1; \Sp^1)$ for some null-Jacobian minimizing $\sigma$-harmonic map $u \in H^\sigma(B_2; \Sp^1)$.

Moreover, the limit map $u$ is $0$-homogeneous, it satisfies $\star J u = 0$ in $B_1$ by Theorem \ref{teo:BN-Invent fractional}, and it is $(n-2, 0)$-symmetric but not $(n, \ep_0)$-symmetric in $B_1$. 

Since $u$ has $(n-2)$ independent directions of translation invariance, by Lemma \ref{lem: dim reduction minimality} it can be written (up to a rotation) as $u(x)=v(x_1, x_2)$ where $v \in H^\sigma_{\rm loc}(\R^2; \Sp^1)$ is a null-Jacobian minimizing $\sigma$-harmonic map. Clearly $\star J v =0$, otherwise we would get $\star J u \neq 0$ in $B_1$, which is a contradiction. 

Hence $v \in H^\sigma_{\rm loc}(\R^2; \Sp^1)$ is a $0$-homogeneous, null-Jacobian minimizing $\sigma$-harmonic map with $\star J v = 0 $. By Theorem \ref{thm: classification R2}, we get that $v$ is constant. Hence, also $u$ is constant, and this contradicts that $u$ is not $(n, \ep_0)$-symmetric in $B_1$.
\end{proof}

\section{The fractional $s$-mass}\label{sec: frac s mass}

In \cite{codim2-frac} we introduced and studied a fractional notion of area for codimension-two
surfaces in $\R^n$ (or on closed manifolds), which we call the \emph{fractional $s$-mass}
and denote by $\M_{s}$. In analogy with the fractional perimeter, $\M_{s}$ provides a
fractional (and nonlocal) counterpart to the classical $(n-2)$-dimensional Hausdorff measure with multiplicity.

In the present work, we adopt a formulation of $\M_{s}$ based on prescribing the distributional Jacobian of an $\Sp^1$-valued map, in the sense described in Section \ref{sbs: factor Jac}. This is slightly different from what we did in \cite{codim2-frac}; we will prove in Subsection \ref{sbs: equivalence} that these two formulations are completely equivalent.

Let $\Sigma$ be an $(n-2)$-dimensional admissible boundary on $M$. We define the $s$-mass of $\Sigma$ as 
\begin{equation}\label{eq: M_s and F^J def}
      \mathbb{M}_{s}(\Sigma) := \inf_{u \in \mathfrak{F}_s ^J(\Sigma)}  [u]^2_{H^{\frac{1+s}{2}}(M)} , \quad \mbox{where } \, \mathfrak{F}_s ^J(\Sigma):=\Big\{ u \in H^{\frac{1+s}{2}}(M; \Sp^1) \, \colon  {\star Ju}= \pi \Sigma \Big\}
\end{equation}
and $[u]^2_{H^{\frac{1+s}{2}}(M)}$ is the fractional Sobolev seminorm given in \eqref{eq:Hs-kernel}.

\subsection{Equivalence with the weak-linking definition}\label{sbs: equivalence}

In our previous work \cite{codim2-frac} that focused on \emph{smooth} $(n-2)$-dimensional surfaces $\Sigma$ with integer multiplicity, we defined the $s$-mass by minimization of the fractional seminorm \eqref{eq:Hs-kernel} on a different class of maps $\mathfrak{F}_{\hspace{-1pt} s}^{\hspace{.9pt} w}  (\Sigma)$ with a weak-linking condition around $\Sigma$. More precisely, let
\begin{equation}\label{eq:sigma smooth}
\parbox{0.8\linewidth}{\centering
$\Sigma\subset M$ be a $C^{2}$, closed, oriented, $(n-2)$-dimensional boundary with locally constant integer multiplicity.
}
\end{equation}
That is, $\Sigma := d_1 \Sigma_1 \cup \dots \cup d_m \Sigma_m$ where $m\geq 1$, $d=(d_1,\dots,d_m)\in \N_+^m$, and $\Sigma_1,\dots,\Sigma_m$ are pairwise disjoint, closed, connected, oriented, $(n-2)$-dimensional boundaries.

In \cite{codim2-frac}, we minimized over the class of weak-linking maps 
\begin{equation}\label{defn:Fweak}
        \mathfrak{F}_{\hspace{-1pt} s}^{\hspace{.9pt} w}  (\Sigma):=\biggl\{ u \in H^{\frac{1+s}{2}}(M; \Sp^1) \colon \exists \{u_k\}\subset \mathfrak{F}_{\hspace{-.7pt} s}  (\Sigma),\ u_k\to u \sms \mbox{ in } H^{\alpha}(M;\Sp^1) \ \forall \alpha \in \biggl( \hspace{-0.05cm} 0, \frac{1+s}{2}\biggr) \biggr\} , 
    \end{equation}
where (up to a sign)
\begin{multline}
    \mathfrak{F}_{\hspace{-.7pt} s}(\Sigma)  := \Big\{ u \in C^1(M \setminus \Sigma; \Sp^1) \cap H^{\frac{1+s}{2}}(M; \Sp^1) \colon \\
    \deg(u, \gamma) = d_1\link(\gamma,\Sigma_1)+\dots+d_m \link(\gamma,\Sigma_m) \ \text{for any} \ \gamma:\Sp^1\to M\setminus \Sigma \Big\}, \label{eq: F old def}
\end{multline}

In the present work, we minimize instead on the class
$$ \mathfrak{F}_s ^J(\Sigma):=\Big\{ u \in H^{\frac{1+s}{2}}(M; \Sp^1) \, \colon  {\star Ju}= \pi \Sigma \Big\},$$
so we define $\M_{s}(\Sigma)$ by prescribing the distributional Jacobian. A priori, these two classes do not need to coincide, since they encode singularities in different ways. The following lemma shows that, for smooth $\Sigma$, they coincide.

\begin{lemma}\label{lem: equivalence linking-Jacobian} Let $\Sigma$ be smooth (i.e., as in \eqref{eq:sigma smooth}) and $s\in (0,1)$. Then 
\[ 
\mathfrak{F}_{\hspace{-1pt} s}^{\hspace{.9pt} w}  (\Sigma) \equiv \mathfrak{F}_s ^J(\Sigma).
\]
In particular, the $s$-mass defined via weak linking and the one defined via prescribed Jacobian coincide.
\end{lemma}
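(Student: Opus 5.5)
We prove the two inclusions separately, writing $\sigma:=\frac{1+s}{2}\in(\frac12,1)$.

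\emph{Step 1: $\mathfrak{F}_s(\Sigma)\subset \mathfrak{F}_s^J(\Sigma)$.} Let $u\in\mathfrak{F}_s(\Sigma)$. Since $u$ is $C^1$ away from the $C^2$ submanifold $\Sigma$, its Jacobian $\star Ju$ is supported on $\Sigma$. We compute it by testing against forms $\omega$ supported in a tubular neighbourhood of $\Sigma$. In Fermi coordinates $(y,z)\in\Sigma_j\times D_r$ we apply Fubini on the normal discs $\{y\}\times D_r$. On each disc, the one-dimensional Jacobian of $u$ restricted to the disc is $\pi\deg(u,\partial D_\rho)\delta_0$. Here $\deg(u,\partial D_\rho)$ is the degree of $u$ on a small linking loop, which equals $d_j$ by the linking condition in \eqref{eq: F old def}. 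To make the slicing rigorous, we first excise a tube $T_\rho(\Sigma)$ and apply Stokes on its complement. This gives $\langle\star Ju,\omega\rangle=\lim_{\rho\to0}\int_{\partial T_\rho}\omega\wedge j(u)$. The limit requires a bound on $\int_{\partial T_\rho}|\nabla u|$ along a sequence $\rho\to0$. We obtain it by factorizing $u=e^{i\varphi}v$ as in \eqref{eq: Mir factorization}, with $v$ a model map of degree $d_j$ around each $\Sigma_j$ and $\varphi\in H^\sigma$; the term involving $\varphi$ then contributes nothing, since $d\varphi\wedge d\omega$ integrates to zero. Hence $\star Ju=\pi\sum_j d_j\Sigma_j=\pi\Sigma$.

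\emph{Step 2: $\mathfrak{F}_s^w(\Sigma)\subset\mathfrak{F}_s^J(\Sigma)$.} Let $u\in\mathfrak{F}_s^w(\Sigma)$, and take $u_k\in\mathfrak{F}_s(\Sigma)$ with $u_k\to u$ in $H^\alpha$ for all $\alpha<\sigma$. Choosing $\alpha=1/2$, Theorem~\ref{teo:BN-Invent fractional} gives
\[
\F(\star Ju_k-\star Ju)\le C[u_k-u]_{H^{1/2}}\bigl([u_k]_{H^{1/2}}+[u]_{H^{1/2}}\bigr)\to0.
\]
Since $\star Ju_k=\pi\Sigma$ for every $k$ by Step 1, we conclude $\star Ju=\pi\Sigma$.

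\emph{Step 3: $\mathfrak{F}_s^J(\Sigma)\subset\mathfrak{F}_s^w(\Sigma)$.} This is the main obstacle. Let $u\in\mathfrak{F}_s^J(\Sigma)$ and fix a model map $u_0\in\mathfrak{F}_s(\Sigma)$, for instance one of the form $\prod_j(z/|z|)^{d_j}$ near each $\Sigma_j$, extended suitably. Since $\Sigma$ is a boundary, such an extension exists and lies in $H^\sigma$. Then $w:=u\bar u_0$ satisfies $\star Jw=\star Ju-\star Ju_0=0$. Here we use that the Jacobian of a product is the sum of the Jacobians, which holds at the level of factorizations: writing $u=e^{i\varphi}v$, we have $w=e^{i\varphi}(v\bar u_0)$ with $v\bar u_0\in W^{1,1}$ of zero Jacobian. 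We then apply Theorem~\ref{thm: approx with standard singularities} to approximate $w$ in $H^\sigma$ by maps $w_k\in\mathcal{R}$. The difficulty is that the singular sets $\mathcal{S}_k$ carry Jacobians $\pi\mathcal{S}_k$ that need not vanish; we only know they tend to $0$ in the flat norm. Two strategies are available. The first is to use the zero-Jacobian density result of Brezis--Mironescu: maps in $H^\sigma(M;\Sp^1)$ with $\star Jw=0$ are exactly the $H^\sigma$-closure of smooth maps, possibly after a cohomological twist $e^{i\varphi}$ times a harmonic representative, which is smooth anyway. If that is unavailable, the second strategy is to accept $w_k\in\mathcal{R}$ and note that for $\alpha<\sigma$ the $H^\alpha$ cost of the dipole-removing modifications vanishes. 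This is exactly why the definition of $\mathfrak{F}_s^w$ only asks for convergence in $H^\alpha$ for $\alpha<\sigma$. Either way we obtain smooth $w_k\to w$ in $H^\alpha$ for every $\alpha<\sigma$. Then $u_k:=w_ku_0\in\mathfrak{F}_s(\Sigma)$, because multiplying by a smooth map does not change the degree on linking loops. Moreover $u_k\to u$ in $H^\alpha$, since products of bounded maps in $H^\alpha\cap L^\infty$ behave continuously with respect to this convergence. It remains to check that $u_k\in H^\sigma$: this holds because $w_k$ is smooth and $u_0\in H^\sigma$.

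Combining the inclusions of Steps 2 and 3 gives $\mathfrak{F}_s^w(\Sigma)=\mathfrak{F}_s^J(\Sigma)$, and therefore the two definitions of the $s$-mass coincide.
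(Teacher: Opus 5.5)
Your Steps 2 and 3 follow the paper's argument. The inclusion $\mathfrak{F}_s^w(\Sigma)\subseteq\mathfrak{F}_s^J(\Sigma)$ comes from flat continuity of the Jacobian under $H^{1/2}$ convergence (Theorem~\ref{teo:BN-Invent fractional}). The converse comes from dividing by a standard vortex $u_0$ and approximating the zero-Jacobian quotient by smooth maps. In Step 3 only your first strategy is an actual argument. On a closed manifold the density of $C^\infty(M;\Sp^1)$ among zero-Jacobian maps in $H^\sigma$ is not in \cite{Bre-Mi}, which treats Euclidean domains; it is precisely Theorem~\ref{thm: density on M}, which the paper proves in the appendix. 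Your second strategy (``the $H^\alpha$ cost of dipole removal vanishes'') is unsupported and should be dropped. With the density theorem you in fact get convergence in $H^\sigma$ itself.

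The genuine gap is in Step 1, on which Step 2 depends. First, you cannot run Stokes on $\partial T_\rho$ with $u$ itself. A map $u\in H^\sigma$ need not have integrable gradient near $\Sigma$, and $\star Ju$ is \emph{defined} through a factorization, not as $\int d\omega\wedge j(u)$.

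Second, the factorization you then invoke, $u=e^{i\varphi}v$ with $v$ a model vortex and $\varphi\in H^\sigma$, does not exist in general. Take $\chi\in C^\infty(M\setminus\Sigma)$ equal to $\dist(\cdot,\Sigma)^{-a}$ near $\Sigma$, with $1-\sigma\le a<(1-\sigma)/\sigma$, so that $\chi\in W^{1,2\sigma}(M)\setminus H^\sigma(M)$. Set $u:=e^{i\chi}v_0$. Then $u\in C^1(M\setminus\Sigma)\cap H^\sigma$ by Gagliardo--Nirenberg, and $u$ satisfies the linking condition. However, $u\bar v_0=e^{i\chi}$ has no lift in $H^\sigma$, because a $2\pi\Z$-valued function in $H^\sigma+W^{1,2\sigma}$ on the connected manifold $M$ is constant (cf.\ \cite[Corollary~6.2]{Bre-Mi}).

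The factorization \eqref{eq: Mir factorization} only provides some $v\in W^{1,2\sigma}$. For such a $v$, degrees on loops are meaningless unless you know that $v$ is continuous off $\Sigma$. That is the missing ingredient, and the paper supplies it. By Step~15 in the proof of \cite[Theorem~7.1]{Bre-Mi}, both factors are continuous wherever $u$ is. Hence $v\in W^{1,2\sigma}\cap C^0(M\setminus\Sigma)$ and $\deg(v,\gamma)=\deg(u,\gamma)$, since $e^{i\varphi}$ has a continuous lift there. The degree--Jacobian relation for such $W^{1,1}$ maps, \cite[Theorem~3.14]{Bre-Mi}, then gives $\star Ju=\star Jv=\pi\Sigma$. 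Alternatively, your slicing on normal discs can now be carried out for $v$ instead of $u$.
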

\begin{proof}
First of all, for $\mathfrak{F}_{\hspace{-.7pt } s}(\Sigma)$ given by \eqref{eq: F old def}, we claim that
\begin{equation*}
\mathfrak{F}_{\hspace{-.7pt } s}(\Sigma)  = \Big\{ u \in C^1(M \setminus \Sigma; \Sp^1) \cap H^{\frac{1+s}{2}}(M; \Sp^1)  \, \colon {\star Ju} = \pi \Sigma \Big\} .
\end{equation*}

Indeed, it follows from Step~15 in the proof of \cite[Theorem~7.1]{Bre-Mi} that the two factors in (\ref{eq: Mir factorization}) are continuous where $u$ is continuous, so the characterization of the Jacobian in terms of the degree for maps in $ C^1(M \setminus \Sigma; \Sp^1) \cap H^{\frac{1+s}{2}}(M; \Sp^1)$ follows from the corresponding statement for maps in $ C^1(M \setminus \Sigma; \Sp^1) \cap W^{1,1+s}(M; \Sp^1)$, that is \cite[Theorem~3.14]{Bre-Mi}.

Now we show the two inclusions separately.

\vsp 
\noindent 
$ \bullet \,\, \mathfrak{F}_{\hspace{-1pt} s}^{\hspace{.9pt} w}  (\Sigma) \subseteq \mathfrak{F}_s ^J(\Sigma)$. Let $u \in \mathfrak{F}_{\hspace{-1pt} s}^{\hspace{.9pt} w}(\Sigma)$. By definition, there exists a sequence $u_k \in C^1(M \setminus \Sigma;\Sp^1)$ such that $\star J u_k = \pi \Sigma$ and $u_k \to u$ in $H^{1/2}(M;\Sp^1)$. By Theorem \ref{teo:BN-Invent fractional} the map $u \mapsto \star Ju$ is continuous from $H^{1/2}(M;\Sp^1)$ into $(n-2)$-currents endowed with the flat topology, and therefore $\star Ju = \lim_{k \to \infty} \star J u_k = \pi \Sigma$, so $u\in \mathfrak{F}_s ^J(\Sigma)$.

\vsp 
\noindent 
$  \bullet  \,\, \mathfrak{F}_s ^J(\Sigma) \subseteq   \mathfrak{F}_{\hspace{-1pt} s}^{\hspace{.9pt} w}  (\Sigma)$. Let $u \in \mathfrak{F}_s ^J(\Sigma)$. By \cite[Proposition~3.1]{codim2-frac} (see also Step 2 in the proof of the $\Gamma$-limsup, Section \ref{sec: gamma limsup currents}) there exists a standard-vortex $u_\circ$ around $\Sigma$ such that 
\begin{equation*}
    u_\circ \in C^1(M \setminus \Sigma ; \Sp^1) \cap  H^{\frac{1+s}{2}}(M ; \Sp^1)\cap W^{1,1}(M;\Sp^1), \qquad \star Ju_\circ = \pi \Sigma.
\end{equation*}
Consider the quotient $u u_\circ^{-1}$. Then, since $H^{\frac{1+s}{2}} \cap L^\infty$ is an algebra, the map $u u_\circ^{-1}$ belongs to $H^{\frac{1+s}{2}}(M; \Sp^1)$ and satisfies
\begin{equation*}
    \star J(u u_\circ^{-1}) = \star J u - \star J u_\circ = \pi \Sigma - \pi \Sigma =0 . 
\end{equation*}
By Theorem \ref{thm: density on M} applied to $u u_\circ^{-1}$ there exists a smooth sequence $u_k \in C^{\infty}(M; \Sp^1)$ such that $u_k \to u u_\circ^{-1}$ in $H^{\frac{1+s}{2}}(M; \Sp^1)$ as $k \to + \infty$. In particular
\begin{equation*}
    u_k u_\circ \in \mathfrak{F}_{\hspace{-1pt} s}^{\hspace{.9pt}}  (\Sigma) \quad \mbox{and} \quad u_k u_\circ \to u \s \mbox{in } H^{\frac{1+s}{2}}(M; \Sp^1) . 
\end{equation*}
This implies that $u \in \mathfrak{F}_{\hspace{-1pt} s}^{\hspace{.9pt} w}  (\Sigma)$ by the very definition of $\mathfrak{F}_{\hspace{-1pt} s}^{\hspace{.9pt} w}  (\Sigma)$. 
\end{proof}

In the proof of the second inclusion above, we have also proved the following. 

\begin{cor}\label{cor: equivalence inf on smooth} Let $\Sigma$ be smooth (precisely, as in \eqref{eq:sigma smooth}) and $s\in (0,1)$. Then
\begin{equation*}
\mathfrak{F}_s ^J(\Sigma) \, \mbox{ is the closure of } \, \mathfrak{F}_{\hspace{-.7pt } s}(\Sigma) \,  \mbox{ in } \, H^\frac{1+s}{2}(M; \Sp^1). 
\end{equation*}  
In particular, the $s$-mass coincides with the one given by taking the infimum over $\mathfrak{F}_{\hspace{-1pt} s}^{\hspace{.9pt}}  (\Sigma)$. 
\end{cor}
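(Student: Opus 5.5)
We prove the two inclusions: the closure of $\mathfrak{F}_s(\Sigma)$ in $H^{\frac{1+s}{2}}(M;\Sp^1)$ is contained in $\mathfrak{F}_s^J(\Sigma)$, and conversely. The statement about the $s$-mass then follows at once, since the seminorm $[\,\cdot\,]^2_{H^{\frac{1+s}{2}}(M)}$ is continuous in the $H^{\frac{1+s}{2}}$ topology. Hence the infimum over a set equals the infimum over its closure.

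\emph{Closure contained in $\mathfrak{F}_s^J(\Sigma)$.} By the claim at the start of the proof of Lemma~\ref{lem: equivalence linking-Jacobian}, every element of $\mathfrak{F}_s(\Sigma)$ satisfies $\star Ju=\pi\Sigma$. Let $u_k\in\mathfrak{F}_s(\Sigma)$ converge to $u$ in $H^{\frac{1+s}{2}}(M;\Sp^1)$. Since $M$ is compact and $\frac{1+s}{2}>\frac12$, the convergence also holds in $H^{1/2}$. Theorem~\ref{teo:BN-Invent fractional}, applied on $M$ (covered by finitely many Lipschitz charts, or directly on $M$), gives $\F(\star Ju_k-\star Ju)\to 0$. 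Therefore $\star Ju=\pi\Sigma$, and $\mathfrak{F}_s^J(\Sigma)$ is closed and contains $\mathfrak{F}_s(\Sigma)$.

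\emph{$\mathfrak{F}_s^J(\Sigma)$ contained in the closure.} Here we reuse the construction from the second inclusion of Lemma~\ref{lem: equivalence linking-Jacobian}. Fix $u\in\mathfrak{F}_s^J(\Sigma)$ and take the standard vortex $u_\circ\in C^1(M\setminus\Sigma;\Sp^1)\cap H^{\frac{1+s}{2}}\cap W^{1,1}$ with $\star Ju_\circ=\pi\Sigma$. Then $w:=u\bar u_\circ\in H^{\frac{1+s}{2}}(M;\Sp^1)$ and $\star Jw=0$, using that the Jacobian is additive under products of $\Sp^1$-valued maps, as in the lemma. The density theorem for null-Jacobian maps gives $w_k\in C^\infty(M;\Sp^1)$ with $w_k\to w$ in $H^{\frac{1+s}{2}}$.

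It remains to check that $w_ku_\circ\to u$ in $H^{\frac{1+s}{2}}$ and that $w_ku_\circ\in\mathfrak{F}_s(\Sigma)$.
\begin{itemize}
\item Convergence: $w_ku_\circ-wu_\circ=(w_k-w)u_\circ$, and $u_\circ$ is bounded and lies in $H^{\frac{1+s}{2}}$. The algebra estimate $[fg]\lesssim\|f\|_\infty[g]+\|g\|_\infty[f]$ alone does not force $[(w_k-w)u_\circ]\to0$, because $[u_\circ]$ is fixed. We therefore argue instead that $(w_k-w)u_\circ\to0$ a.e. along a subsequence, with the pointwise difference quotients dominated via Vitali/dominated convergence: the $u_\circ$-part is bounded by $|w_k-w|(y)\,|u_\circ(x)-u_\circ(y)|$, and $|w_k-w|\le2$ tends to $0$ a.e. Together with $[w_k-w]\to0$, this gives the convergence.
\item Membership: $w_ku_\circ$ is $C^1$ off $\Sigma$, and its Jacobian is $0+\pi\Sigma$. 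By the claim characterizing $\mathfrak{F}_s(\Sigma)$ through the Jacobian, $w_ku_\circ\in\mathfrak{F}_s(\Sigma)$.
\end{itemize}

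The main obstacle is the convergence of the product, that is, controlling the cross term $\iint|w_k-w|^2(y)|u_\circ(x)-u_\circ(y)|^2\K$. Dominated convergence handles it: the integrand is bounded by $4|u_\circ(x)-u_\circ(y)|^2\K$, which is integrable, and it tends to $0$ a.e. after passing to a subsequence. Since every subsequence has a further subsequence converging to $u$, the whole sequence converges.
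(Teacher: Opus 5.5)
Your proof is correct and follows the paper's route. The paper derives the corollary directly from the two inclusions in the proof of Lemma~\ref{lem: equivalence linking-Jacobian}: closedness of $\mathfrak{F}_s^J(\Sigma)$ via Theorem~\ref{teo:BN-Invent fractional}, and approximation of $u$ by $w_k u_\circ$ using the standard vortex $u_\circ$ and Theorem~\ref{thm: density on M}. Your dominated-convergence argument for $(w_k-w)u_\circ\to 0$ in $H^{\frac{1+s}{2}}$ supplies a step the paper only asserts, and, as you note, the algebra property alone would not give that convergence.
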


\subsection{Further properties at fixed $s$}

\begin{prop}[Coercivity and semicontinuity of $\M_{s}$]\label{prop: fixed s coerc and lsc}
Let $\{\Sigma_k\}$ be a sequence of $(n-2)$-boundaries in $M$ and assume there exists $s\in (0,1)$ such that
$$\limsup_{k\to \infty} \M_{s}(\Sigma_k)<+\infty.$$

Then there exists an $(n-2)$-boundary $\Sigma$ in $M$ such that (up to a subsequence) $\Sigma_k\to \Sigma$ in the flat sense and
$$\liminf_{k\to \infty} \M_{s}(\Sigma_k)\geq \M_{s}(\Sigma).$$
\end{prop}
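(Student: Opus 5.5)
We will argue by the direct method, working with near-minimizers of the $s$-mass and passing to the limit in their Jacobians. Set $\sigma:=\frac{1+s}{2}\in(1/2,1)$ and, after extracting a subsequence, assume $\liminf_k \M_s(\Sigma_k)=\lim_k\M_s(\Sigma_k)=:L<+\infty$. For each $k$ we pick $u_k\in\mathfrak{F}_s^J(\Sigma_k)$ with
\[
[u_k]^2_{H^{\sigma}(M)}\le \M_s(\Sigma_k)+\tfrac1k .
\]
Then $|u_k|=1$ a.e. and the seminorms are uniformly bounded, so $\{u_k\}$ is bounded in $H^\sigma(M;\R^2)$.

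The compact embedding $H^\sigma(M)\hookrightarrow H^{\alpha}(M)$ for $\alpha<\sigma$ will give the needed compactness. It follows, for instance, from Proposition~\ref{prop: spectral characterization}: the tail $\sum_{\lambda_j>\Lambda}\lambda_j^\alpha\langle u,\varphi_j\rangle^2$ is controlled by $\Lambda^{\alpha-\sigma}$ times the $H^\sigma$ bound. Up to a subsequence we obtain $u_k\to u$ in $H^{1/2}(M)$ and a.e., together with $u_k\rightharpoonup u$ weakly in $H^\sigma$. Consequently $|u|=1$ a.e. Fatou's lemma, applied to the kernel representation \eqref{eq:Hs-kernel}, yields
\[
[u]^2_{H^\sigma(M)}\le\liminf_k [u_k]^2_{H^\sigma(M)}=L .
\]
Hence $u\in H^\sigma(M;\Sp^1)$.

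Next we define the limit boundary by $\pi\Sigma:=\star Ju$. Theorem~\ref{teo:BN-Invent fractional}, applied with $\Omega=M$ (or on a finite cover by Lipschitz charts, using the closed-manifold version), gives
\[
\F(\pi\Sigma_k-\pi\Sigma)=\F(\star Ju_k-\star Ju)\le C[u_k-u]_{H^{1/2}}\bigl([u_k]_{H^{1/2}}+[u]_{H^{1/2}}\bigr)\to0 ,
\]
since the $H^{1/2}$ seminorms are bounded by the $H^\sigma$ ones. Therefore $\Sigma_k\to\Sigma$ in the flat norm. Because $u\in H^\sigma(M;\Sp^1)$, the current $\star Ju$ is an admissible boundary: via the factorization \eqref{eq: Mir factorization} it equals $\star Jv$ with $v\in W^{1,2\sigma}(M;\Sp^1)$, and $\frac1\pi\star Jv$ is the boundary of an integer rectifiable current of finite mass. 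So $\Sigma$ is an $(n-2)$-boundary in the admissible class, and by construction $u\in\mathfrak{F}_s^J(\Sigma)$.

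It follows that
\[
\M_s(\Sigma)\le[u]^2_{H^\sigma(M)}\le L=\liminf_k\M_s(\Sigma_k),
\]
which is the claim. The only delicate step is the continuity of the Jacobian in the flat topology, together with the fact that the limit Jacobian is $\pi$ times an admissible boundary. The first is exactly Theorem~\ref{teo:BN-Invent fractional}, which needs only $H^{1/2}$ convergence. For the second we rely on the structure results for Jacobians of $\Sp^1$-valued maps recalled in Section~\ref{sbs: factor Jac}. Should one prefer to avoid these, an alternative is to note that the flat limit of $\frac1\pi\star Ju_k=\partial T_k$ is still a boundary, since flat limits of boundaries are boundaries.
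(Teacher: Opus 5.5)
Your argument is correct and is essentially the paper's proof. Both take (near-)minimizers $u_k$ of $\M_s(\Sigma_k)$, extract a subsequence converging weakly in $H^{\frac{1+s}{2}}(M)$ and strongly in $H^{1/2}(M)$, set $\Sigma:=\star Ju/\pi$, obtain flat convergence from Theorem~\ref{teo:BN-Invent fractional}, and get the inequality from lower semicontinuity of the seminorm. The differences are cosmetic: you use near-minimizers instead of minimizers $u_k\in\mathcal{A}(\Sigma_k)$, which spares you from invoking existence of minimizers, and you spell out the compact embedding and the admissibility of $\star Ju/\pi$ via the factorization, which the paper takes for granted.
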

\begin{proof}
For every $k\in\N$ let $u_k\in \mathcal{A}(\Sigma_k)$ be a minimizer in the definition of $\M_{s}(\Sigma_k)$, so that $\star J u_k =\pi \Sigma_k$ and
$$\limsup_{k\to \infty} \,  [u_k]_{H^\frac{1+s}{2}(M)}<+\infty.$$

Therefore, there exists a map $u\in H^{\frac{1+s}{2}}(M;\Sp^1)$ and a subsequence (not relabeled) such that $u_k\rightharpoonup u$ weakly in $H^{\frac{1+s}{2}}(M;\Sp^1)$ and strongly in $H^{1/2}(M;\Sp^1)$. Thus, setting $\Sigma:=\star Ju/\pi$, Theorem~\ref{teo:BN-Invent fractional} and the weak lower semicontinuity of the fractional seminorm imply that
$$\lim_{k\to \infty}\F(\Sigma-\Sigma_k)=\lim_{k\to \infty}\frac{1}{\pi}\F(\star Ju_k - \star Ju) =0, $$
and 
$$\liminf_{k\to \infty} \M_{s}(\Sigma_k)= \liminf_{k\to \infty} \, [u_k]_{H^\frac{1+s}{2}(M)}^2 \geq [u]_{H^\frac{1+s}{2}(M)}^2 \geq \M_{s}(\Sigma).$$
\end{proof}

\begin{prop}[Fractional isoperimetric inequality]\label{prop: frac iso}
There exists a constant $C_{s}>0$ such that for every admissible $(n-2)$-boundary $\Sigma$ in $M$ there exists an integral $(n-1)$-current $S$ such that $\Sigma= \partial S$ and
\begin{equation*}
    \M(S) \leq C_s |M|^{\frac{s}{s+1}} \M_{s}(\Sigma)^{\frac{1}{1+s}} . 
\end{equation*}
Moreover, $S$ can be taken to be a regular level set $S=v^{-1}( \theta )$ (in the sense of \cite[Section 7.5]{ABO2-singularities}) of some map $v\in W^{1,1+s}(M; \Sp^1)$ with $\star Jv = \pi (\partial S )= \pi \Sigma$. 
\end{prop}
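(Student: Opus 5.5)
We may assume $\M_s(\Sigma)<+\infty$; otherwise there is nothing to prove. Take a minimizer $u\in\mathcal{A}(\Sigma)$, or a near-minimizer, so that $\star Ju=\pi\Sigma$ and $[u]^2_{H^{\frac{1+s}{2}}(M)}\le 2\M_s(\Sigma)$. The idea is to split $u$ using the factorization \eqref{eq: Mir factorization} with $\sigma=\frac{1+s}{2}$, namely $u=e^{i\varphi}v$ with $\varphi\in H^{\sigma}(M)$ and $v\in W^{1,2\sigma}(M;\Sp^1)=W^{1,1+s}(M;\Sp^1)$. Then $\star Jv=\star Ju=\pi\Sigma$. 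The quantitative version of the factorization in \cite[Theorem 7.1]{Bre-Mi} gives
\[
\|\nabla v\|_{L^{1+s}(M)}^{1+s}\le C_s\,[u]^2_{H^{\sigma}(M)}.
\]
This is the natural homogeneity: $|\nabla v|^{2\sigma}$ scales like the $H^\sigma$ energy. On a closed manifold the constant may need care. If the reference only gives the estimate for maps on $\R^n$ or on balls, I would localize with a finite atlas and partition of unity, or pass through the density class $\mathcal{R}$ of Theorem~\ref{thm: approx with standard singularities}. I expect this quantitative factorization bound on $M$ to be the main obstacle: the rest of the argument is standard, but the estimate must be genuinely $\Sigma$-independent, and the constant must not depend on $u$.

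Next, apply the coarea/slicing argument of \cite[Section 7.5]{ABO2-singularities} to $v$. For a.e. $\theta\in\Sp^1$, the preimage $S_\theta:=v^{-1}(\theta)$, suitably oriented, is an integer rectifiable $(n-1)$-current with $\partial S_\theta=\star Jv/\pi=\Sigma$. The coarea formula on $\Sp^1$ gives
\[
\int_{\Sp^1}\M(S_\theta)\,d\theta\le\int_M|\nabla v|\,dV.
\]
So there is some $\theta$ with $\M(S_\theta)\le\frac{1}{2\pi}\|\nabla v\|_{L^1(M)}$. Hölder's inequality then yields
\[
\|\nabla v\|_{L^1}\le |M|^{\frac{s}{1+s}}\|\nabla v\|_{L^{1+s}}\le C_s|M|^{\frac{s}{1+s}}\,\M_s(\Sigma)^{\frac{1}{1+s}},
\]
which is exactly the claimed exponent structure. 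Since $\M(S_\theta)<\infty$ and $\partial S_\theta=\Sigma$ is an integral boundary in the sense needed, $S_\theta$ is integral in the sense stated, and it is a regular level set of $v\in W^{1,1+s}$ with $\star Jv=\pi\Sigma$, as required.

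Finally, I would check that the near-minimizer costs only a factor $2^{1/(1+s)}$, which is absorbed into $C_s$; in fact a minimizer exists by the direct method, as in Proposition~\ref{prop: fixed s coerc and lsc}. I would also check that the orientation conventions in the slicing formula match the normalization $\star J=\pi\Sigma$, so that $\partial S_\theta=\Sigma$ rather than $2\Sigma$ or $-\Sigma$.
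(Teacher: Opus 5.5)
Your argument is correct and is essentially the paper's proof: take a minimizer $u$, factorize $u=e^{i\varphi}v$ with $v\in W^{1,1+s}(M;\Sp^1)$ and $\star Jv=\pi\Sigma$, select a regular level set $S_\theta=v^{-1}(\theta)$ with $\partial S_\theta=\Sigma$ and $\M(S_\theta)\le C\int_M|\nabla v|$ (the paper cites \cite[Theorem~3.8]{ABO2-singularities} for this step), and conclude with H\"older's inequality and the factorization bound $\|\nabla v\|_{L^{1+s}}^{1+s}\le C_s[u]^2_{H^{\frac{1+s}{2}}(M)}$ from \cite[equation~(7.5)]{Bre-Mi}. The paper invokes that bound directly on $M$, so the localization you flag as the main obstacle is not carried out there.
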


\begin{proof}
    Let $u_\Sigma \in H^{\frac{1+s}{2}}(M; \Sp^1)$ be a minimizer in the definition of $\M_{s}(\Sigma)$, and $u_\Sigma = v e^{i\varphi}$ be the factorization \eqref{eq: Mir factorization}. By \cite[Theorem 3.8]{ABO2-singularities} applied to $v$, there exists a regular level set $S_\theta := v^{-1}( \theta )$ of $v$ such that 
    \begin{equation*}
        \star J v = \pi (\partial S_\theta )  \qquad \mbox{and}  \qquad \M(S_\theta) \le C \int_{M} |\nabla v| . 
    \end{equation*}
    Since $M$ is compact, by Hölder's inequality
    \begin{equation*}
\M(S_\theta) \leq C |M|^{\frac{s}{s+1}} \|\nabla v\|_{L^{1+s}(M)} \le C_s |M|^{\frac{s}{s+1}}  [u_\Sigma]_{H^{\frac{1+s}{2}}(M)}^{\frac{2}{1+s}} =  C_s |M|^{\frac{s}{s+1}} \M_{s}(\Sigma)^{\frac{1}{1+s}}  , 
\end{equation*}
where the second inequality follows from~\cite[equation~(7.5)]{Bre-Mi}.
\end{proof}

Proposition~\ref{prop: frac iso} is to be understood as a genuinely nonlocal isoperimetric inequality, with no direct counterpart for the classical mass.
Indeed, if one attempts to replace $\M_s(\Sigma)$ by $\M(\Sigma)$, then no estimate of the same type can hold uniformly with respect to the multiplicity $m\in \Z \setminus\{0\}$ of $\Sigma$. This is because the left-hand side would grow as $\sim |m|$ while the right-hand side would grow as $\sim |m|^{1/2}$.

\subsubsection{The domain of the $s$-mass}\label{sbs: domain of the s-mass} We collect a few remarks on which $(n-2)$-currents have finite $s$-mass.

\begin{itemize}
    \item[$(i)$] \textit{Characterization for fixed $s\in (0,1)$.} By the Gagliardo--Nirenberg inequality, if there exists a map $u \in W^{1,1+s}(M; \Sp^1)$ such that $\star Ju = \pi \Sigma$, then $\M_{s}(\Sigma)<+\infty$. Conversely, by the factorization theorem in \cite[Theorem 7.1]{Bre-Mi}, if $\M_{s}(\Sigma)<+\infty$ then there exists a map $v \in W^{1,1+s}(M; \Sp^1)$ with $\star Jv = \pi \Sigma$. In particular, 
\begin{equation*}
     \big\{\Sigma \, : \, \M_{s}(\Sigma) <+\infty \big\} = \Big\{  \tfrac{1}{\pi} {\star Ju} \, : \, u\in W^{1,1+s}(M; \Sp^1)\Big\} . 
\end{equation*}

Moreover, in \cite{Bousquet} (see also \cite[Theorem 3.18]{Bre-Mi}), Bousquet provided a complete characterization of the Jacobians of maps $u\in W^{1,p}(M;\Sp^1)$ for $p\in(1,2)$. In our setting, this characterization states that 
\begin{equation}\label{Bousquet_char}
\Big\{  \tfrac{1}{\pi} {\star Ju} \, : \, u\in W^{1,1+s}(M; \Sp^1)\Big\} 
= \Big\{  \tfrac{1}{\pi} {\star Ju} \, : \, u\in W^{1,1}(M; \Sp^1)\Big\} 
\cap \big( W^{1,\frac{1+s}{s}}(M; \Lambda^{n-2})\big)^* .
\end{equation}
If $\Sigma$ is an integral $(n-2)$-boundary with finite mass, \cite[Theorem 4.4]{ABO2-singularities} ensures that $\Sigma=\frac{1}{\pi}\star Jv$ for some $v\in W^{1,1}(M;\Sp^1)$. Hence, for integral boundaries with finite mass, acting continuously on $(n-2)$ forms of class $W^{1,\frac{1+s}{s}}$ is sufficient. However, this condition is still quite abstract.

\item[$(ii)$] \textit{Examples.} 
\begin{itemize}
\item \emph{Polyhedral boundaries.} By \cite[Proposition~5.8]{ABO2-singularities}, every $(n-2)$-dimensional polyhedral boundary
is the Jacobian of a map in $W^{1,p}(M;\Sp^1)$ for every $p<2$. Hence it has finite $s$-mass for every $s\in(0,1)$.
\item \emph{Smooth boundaries.} By \cite{Bousquet} (see also \cite[Proposition~3.1]{codim2-frac}),
every finite union of smooth, oriented, connected $(n-2)$-dimensional boundaries has finite $s$-mass.
\item \emph{Integral boundaries with bounded density.} More generally, any $(n-2)$-dimensional integral boundary $T$ such that $\sup\{\|T\|(B(x,r))/r^{n-2}:x\in M, \, r>0\}<+\infty$ has finite $s$-mass for every $s\in (0,1)$. This follows from (\ref{Bousquet_char}) and \cite[Theorem~5]{Traces-rectif}.
\end{itemize}

\item[$(iii)$] \textit{Non-example: finite mass does not imply finite $s$-mass.} There exists an integral, unit-density, smooth $(n-2)$-current $T$ with $\M(T)<+\infty$ but $\M_{s}(T)=+\infty$
for every $s\in(1/(n-1),1)$; see \cite[Lemma~15.40]{Bre-Mi}.
In particular
\[
\big\{ \Sigma \text{ integral boundary} \, : \, \M(\Sigma)<+\infty \big\} \not\subset \big\{\Sigma \, : \, \M_{s}(\Sigma) <+\infty \big\} . 
\]

\item[$(iv)$] \textit{The normalized regime as $s \to 1$.} Consider the class of admissible boundaries $\Sigma$ such that
\begin{equation}\label{eq: rem domain mass 4th point}
\liminf_{s \to 1} \, (1-s)^2 \M_{s}(\Sigma)<+\infty.
\end{equation}
By Corollary~\ref{cor: bdd rescled mass -> integral}, every such $\Sigma$ is integral.
But this class is strictly smaller than the set of all integral boundaries: indeed, the current $T$ from $(iii)$ is an integral boundary but satisfies
$\M_{s}(T)=+\infty$ for all $s$ close to $1$, hence it does not satisfy \eqref{eq: rem domain mass 4th point}.

\end{itemize}

\subsection{Proof of the envelope formula for the first variation of $\M_s$}

In this section we prove Proposition~\ref{lem:envelope-Ms}. For a smooth vector field $X$ on $M$, we denote by $\phi_t^X \colon M \to M $ its integral flow at time $t\in \R $. We will need the following result \cite[Lemma 3.10]{CFSfrac}, about the control of the $H^\sigma$ seminorm under inner variations.

\begin{lemma}\label{lemma: energy under flow}
Let $\sigma \in (0,1)$ and $ v\in H^{\sigma}(M)$ be such that $|v|\le 1$. Let $X$ be a smooth vector field on $M$ and $v_t := v \circ \phi_{-t}^X$. Then, for all $T>0$ and integer $k \ge 0$ there holds
\begin{equation*}
    \sup_{t\in (-T,T)} \bigg| \frac{d^k}{d  t^k} [v \circ \phi^X_{-t}]^2_{H^\sigma(M)} \bigg|\leq C\big( 1 + [v ]^2_{H^\sigma(M)}  \big)\, ,
\end{equation*}
for some constant $C=C(M,\sigma,k,T, \| X\|_{C^k(M)} )>0$.
\end{lemma}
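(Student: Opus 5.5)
The statement is Lemma~\ref{lemma: energy under flow}: a uniform bound on the $t$-derivatives of $[v\circ\phi^X_{-t}]^2_{H^\sigma(M)}$. The plan is to move the flow off $v$ and onto the kernel and volume form, and then differentiate under the integral sign.

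\emph{Step 1: change of variables.} Write $\phi_t:=\phi^X_t$. Substituting $x=\phi_t(x')$, $y=\phi_t(y')$ gives
\begin{equation*}
[v\circ\phi_{-t}]^2_{H^\sigma(M)}=\iint_{M\times M}|v(x)-v(y)|^2\,\K_\sigma(\phi_t(x),\phi_t(y))\,J_t(x)J_t(y)\,dV(x)\,dV(y),
\end{equation*}
where $J_t$ is the Jacobian of $\phi_t$ with respect to $dV$. Since $M$ is compact and $X$ is smooth, $J_t$ is smooth in $(x,t)$, and $\sup_{|t|<T}\|\partial_t^k J_t\|_{L^\infty}$ depends only on $M,T$ and $\|X\|_{C^k}$. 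Set $K_t(x,y):=\K_\sigma(\phi_t x,\phi_t y)J_t(x)J_t(y)$. The whole $t$-dependence now sits in $K_t$, so by the Leibniz rule the goal reduces to a pointwise bound on $\partial_t^k K_t$.

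\emph{Step 2: pointwise kernel bound.} The key estimate is
\begin{equation*}
|\partial_t^k K_t(x,y)|\le C\big(\K_\sigma(x,y)+1\big)\qquad\text{for } |t|<T.
\end{equation*}
To prove it, differentiate $t\mapsto\K_\sigma(\phi_tx,\phi_ty)$. By the chain rule, $\partial_t^k$ produces terms of the form $(\nabla^j\K_\sigma)(\phi_tx,\phi_ty)$ applied to $j$ copies of differences of $X$ and its derivatives along the flow, for $j\le k$.

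Near the diagonal, use the heat-kernel representation of $\K_\sigma$ together with standard Gaussian derivative bounds for $H_M$; the paper's comparison of $\K_\sigma$ with ${\rm dist}^{-(n+2\sigma)}$ (Proposition~\ref{prop: comparison kernel sharp}) is the relevant input. These give $|\nabla^j\K_\sigma(x,y)|\le C\,{\rm dist}(x,y)^{-n-2\sigma-j}$.

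The loss of $j$ powers is compensated because each such term carries a factor of size $O({\rm dist}(x,y))$: the two points move with nearby velocities $X(\phi_tx)$ and $X(\phi_ty)$, and their difference, measured in coordinates or via parallel transport, is $\lesssim\|X\|_{C^1}\,{\rm dist}$. The cleanest way to organise this is to express $\K_\sigma$ as a function of $(x,\,y-x)$ in charts. Alternatively, one can use that $\K_\sigma$ is $(-n-2\sigma)$-homogeneous up to lower-order smooth corrections, so that a vector field of Lipschitz size acting on it costs no extra singularity.

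Finally, the bi-Lipschitz property of $\phi_t$ gives ${\rm dist}(\phi_tx,\phi_ty)\simeq{\rm dist}(x,y)$, which lets one replace the kernel evaluated at $(\phi_tx,\phi_ty)$ by $C\,\K_\sigma(x,y)$. Away from the diagonal everything is smooth and bounded.

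\emph{Step 3: conclusion.} Integrating the Step~2 bound against $|v(x)-v(y)|^2$ gives a first term $C[v]^2_{H^\sigma}$. The second term is controlled using $|v|\le1$, so that $|v(x)-v(y)|^2\le4$, and $\iint_{M\times M}1\le|M|^2$. Differentiation under the integral sign is justified by dominated convergence, with the majorant $|v(x)-v(y)|^2\,C(\K_\sigma+1)\in L^1$. This yields the claimed bound $C\big(1+[v]^2_{H^\sigma(M)}\big)$.

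The main obstacle is Step~2 for higher $k$: one has to show that repeated differentiation never worsens the singularity beyond ${\rm dist}^{-n-2\sigma}$. I would first try to handle it by writing $K_t$ in normal coordinates as $|x-y|^{-n-2\sigma}a_t(x,y)$ with $a_t$ smooth and bounded in all $t$-derivatives. This uses the heat kernel parametrix and the fact that $|\phi_tx-\phi_ty|^2/|x-y|^2$ is a smooth, positive function of $(x,y,t)$.
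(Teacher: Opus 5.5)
The paper does not prove this lemma; it quotes it from \cite[Lemma~3.10]{CFSfrac}. Your scheme is the standard route to bounds of this kind: change variables so that the flow acts on $K_t(x,y)=\K_\sigma(\phi_tx,\phi_ty)J_t(x)J_t(y)$, prove $|\partial_t^kK_t|\le C(\K_\sigma+1)$ uniformly for $|t|<T$, then integrate against $|v(x)-v(y)|^2$, using $|v|\le 1$ for the bounded part. It produces exactly the form $C(1+[v]^2_{H^\sigma(M)})$, and Steps~1 and~3 are fine. Step~2, however, does not follow from what you invoke; only the fallback in your last paragraph actually makes it work.

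\textbf{Why Step~2 fails as written.} Proposition~\ref{prop: comparison kernel sharp} and Lemma~\ref{lem: estimate k hat} are zeroth-order comparisons and contain no derivative information. More importantly, even granting $|\nabla^j\K_\sigma(x,y)|\le C\dist(x,y)^{-n-2\sigma-j}$, the chain rule does not produce ``differences of $X$''. For $k=1$ it gives $\nabla_1\K_\sigma\cdot X(\phi_tx)+\nabla_2\K_\sigma\cdot X(\phi_ty)$, and each summand separately is of size $\dist(x,y)^{-n-2\sigma-1}$. Integrated against $|v(x)-v(y)|^2$, such a bound is not controlled by $[v]^2_{H^\sigma(M)}$; for $\sigma\ge1/2$ it is infinite for every nonconstant $v$. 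The cancellation between the two summands is exactly what must be proved. It requires control of derivatives of $\K_\sigma$ along directions tangent to the diagonal. In a chart, this means $|\partial_x^\alpha\partial_z^\beta k(x,z)|\le C|z|^{-n-2\sigma-|\beta|}$ for $k(x,z)=\K_\sigma(x,x+z)$. Bounds on $|\nabla^j\K_\sigma|$ alone do not give this.

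\textbf{How to fix it.} Your closing suggestion is the right fix and should replace the heuristic.
\begin{enumerate}
\item From the heat-kernel parametrix one gets, near the diagonal, $\K_\sigma=\dist^{-n-2\sigma}a+b$ with $a$ smooth and $b\in C^k$.
\item Set $\mathcal X:=X\oplus X$ on $M\times M$. Then $\partial_t^mF(\phi_tx,\phi_ty)=(\mathcal X^mF)(\phi_tx,\phi_ty)$, and $\mathcal X$ is tangent to the diagonal.
\item A vector field tangent to the diagonal preserves the class of functions vanishing to second order on it. Hence $|\mathcal X^m(\dist^2)|\le C\dist^2$ near the diagonal.
\item Fa\`a di Bruno together with the bi-Lipschitz bound $\dist(\phi_tx,\phi_ty)\simeq\dist(x,y)$ for $|t|<T$ then gives $|\partial_t^m\dist(\phi_tx,\phi_ty)^{-n-2\sigma}|\le C\dist(x,y)^{-n-2\sigma}$.
\item The terms involving $a$, $b$ and $J_t$ have bounded $t$-derivatives, so the Leibniz rule completes Step~2.
\end{enumerate}

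A minor correction: $|\phi_tx-\phi_ty|^2/|x-y|^2$ is not smooth across the diagonal, since its limit depends on the direction of $y-x$. It is, however, bounded above and below with bounded $t$-derivatives, which is all you need.
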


\begin{lemma}\label{lem: conv first variation}
    Let $\sigma \in (0,1)$, $u\in H^{\sigma}(M; \R^m)$, and let $X$ be a smooth vector field on $M$. Then  
    \begin{equation*}
        u_k \to u \,\, \mbox{ in } H^\sigma(M) \quad \implies \quad \frac{d}{dt}\bigg|_{t=0}[u_k \circ\phi_{-t}^X]^2_{H^\sigma(M)} \to \frac{d}{dt} \bigg|_{t=0}[u\circ\phi_{-t}^X]^2_{H^\sigma(M)} . 
    \end{equation*}
\end{lemma}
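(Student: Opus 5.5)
We compute the first variation explicitly and then pass to the limit in the resulting formula. Write the squared seminorm of $u_t := u\circ\phi_{-t}^X$ by the change of variables $x=\phi_t^X(x')$, $y=\phi_t^X(y')$. This moves the $t$-dependence entirely onto the kernel and the volume form:
\[
[u\circ\phi_{-t}^X]^2_{H^\sigma(M)} = \iint_{M\times M} |u(x)-u(y)|^2\, \K_\sigma\big(\phi^X_t(x),\phi^X_t(y)\big)\, J_t(x)J_t(y)\, dV(x)dV(y),
\]
where $J_t$ is the Jacobian determinant of $\phi_t^X$. Differentiating at $t=0$ gives
\[
\frac{d}{dt}\Big|_{t=0}[u\circ\phi_{-t}^X]^2_{H^\sigma(M)} = \iint_{M\times M}|u(x)-u(y)|^2\, \mathcal{L}_X(x,y)\,dV(x)dV(y),
\]
with
\[
\mathcal{L}_X(x,y):=\big(X(x)\cdot\nabla_x+X(y)\cdot\nabla_y\big)\K_\sigma(x,y)+\K_\sigma(x,y)\big(\dive X(x)+\dive X(y)\big).
\]
The differentiation under the integral sign is justified by the estimate of Lemma~\ref{lemma: energy under flow} (or directly from the kernel bounds below), applied in the same way that lemma is proved in \cite{CFSfrac}.

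The key point is a pointwise bound $|\mathcal{L}_X(x,y)|\le C\,\K_\sigma(x,y)$, or more precisely $|\mathcal{L}_X(x,y)|\le C\,\dist(x,y)^{-n-2\sigma}$ with $C$ depending on $\|X\|_{C^1}$ and $M$. Near the diagonal this comes from the fact that $X(x)-X(y)=O(\dist(x,y))$. The derivative of the kernel along the \emph{diagonal} flow, $X(x)\cdot\nabla_x+X(y)\cdot\nabla_y$ applied to $\K_\sigma$, then gains back the factor lost by differentiating $\dist^{-n-2\sigma}$. This uses the asymptotics of $\K_\sigma$ and its first derivatives from Proposition~\ref{prop: comparison kernel sharp} in the Appendix: essentially $\K_\sigma$ is a smooth function of $\dist(x,y)$ and $x$, and the derivative of $\dist(x,y)$ along $(X(x),X(y))$ is $O(\dist(x,y))$. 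Away from the diagonal, $\K_\sigma$ is smooth and the bound is trivial. I expect this kernel estimate to be the main technical obstacle on a general manifold, since one needs derivative bounds on the heat-kernel-defined $\K_\sigma$, not just comparability. In $\R^n$ it is the elementary computation $\nabla|x-y|^{-n-2\sigma}\cdot(X(x)-X(y))$.

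With the bound in hand, the convergence is routine. Since $|\mathcal{L}_X|\le C\K_\sigma$, the bilinear form
\[
B(v,w):=\iint (v(x)-v(y))\cdot(w(x)-w(y))\,\mathcal{L}_X\,dV\,dV
\]
satisfies $|B(v,w)|\le C[v]_{H^\sigma}[w]_{H^\sigma}$ by Cauchy--Schwarz. Hence
\[
|B(u_k,u_k)-B(u,u)|\le C[u_k-u]_{H^\sigma}\big([u_k]_{H^\sigma}+[u]_{H^\sigma}\big)\to0.
\]
This is the claimed convergence of first variations, and it holds for $\R^m$-valued maps with no constraint needed.
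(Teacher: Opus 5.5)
Your reduction is sound as far as it goes. The change of variables, the formula for the derivative with kernel $\mathcal{L}_X$, and the Cauchy--Schwarz/polarization step do prove the lemma \emph{once} one has a bound $\bigl|\partial_t\bigl[\K_\sigma(\phi^X_t x,\phi^X_t y)\,J_t(x)J_t(y)\bigr]\bigr|\le C\,\K_\sigma(x,y)$, uniformly for $|t|\le t_0$. This bound is needed both to differentiate under the integral sign and for the bilinear estimate. It is where all the work lies, and you do not prove it.

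The result you invoke cannot supply it. Proposition~\ref{prop: comparison kernel sharp} is a zeroth-order statement, $|\K_\sigma-\dist^{-n-2\sigma}|\le C\dist^{-n-2\sigma+1}$, and an error term of that size carries no information about its derivatives. What you need is $(X(x)\cdot\nabla_x+X(y)\cdot\nabla_y)(\K_\sigma-\dist^{-n-2\sigma})=O(\dist^{-n-2\sigma})$, a gradient bound one order better than the naive $O(\dist^{-n-2\sigma-1})$. Lemma~\ref{lemma: energy under flow} does not help either. It does not identify the derivative with your integral. Its bound $C(1+[v]^2_{H^\sigma(M)})$ holds only for $|v|\le 1$ and is not homogeneous, so applying it to $v=u_k-u$ gives no smallness. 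The remark that $\K_\sigma$ is ``essentially a smooth function of $\dist(x,y)$ and $x$'' is a heuristic, not an argument, on a general manifold.

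The estimate is true, and your route can be completed as follows:
\begin{itemize}
\item For $t\le 1$, use the parametrix $H_M(x,y,t)=(4\pi t)^{-n/2}e^{-\dist(x,y)^2/4t}\bigl(a_0(x,y)+O(t)\bigr)$ together with gradient bounds on the remainder.
\item For $t\ge 1$, use the smoothness of $H_M$.
\item Observe that $(X,X)$ is tangent to the diagonal of $M\times M$. It therefore maps the smooth function $\dist^2$, which vanishes to second order on the diagonal, to $O(\dist^2)$.
\end{itemize}
Done this way, you obtain an explicit kernel representation of the first variation as a bounded quadratic form on $H^\sigma(M)$, valid also for unbounded $\R^m$-valued maps.

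The paper avoids kernel derivatives altogether. It writes the inner variation via the Caffarelli--Silvestre extension on $M\times(0,\infty)$, as a weighted integral of a quadratic expression in $\widetilde\nabla U$ (the stress--energy tensor). Choosing $\mathbf X(x,z)=(X(x),z\eta(z))$ makes its coefficients bounded. Convergence then follows at once from $\widetilde\nabla U_k\to\widetilde\nabla U$ in $L^2(z^{1-2\sigma}\,dV\,dz)$, which comes from linearity of the extension and the identification of $[u]^2_{H^\sigma(M)}$ with the weighted Dirichlet energy. The price is relying on the known extension formula for inner variations. The gain is that no heat-kernel asymptotics enter.
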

\begin{proof}
    The fact that the derivatives in the first inner variations exist follows from Lemma \ref{lemma: energy under flow}; we just need to justify continuity with respect to the strong convergence in $H^\sigma$.

It is well known that such inner variation can be expressed in terms of the Caffarelli--Silvestre extension and the relative weighted stress-energy tensor in the extension;  see \cite[Prop.~2.15]{Partialreg} or \cite[Lemma 4.16]{MillotSire}. If \(U_k\) and \(U\) denote the component-wise Caffarelli--Silvestre extensions of \(u_k\) and \(u\) respectively, this formula yields (up to a constant)

\begin{align}
& \frac{d}{dt}\bigg|_{t=0}  [u_k \circ\phi_{-t}^X]^2_{H^\sigma(M)}
 \nonumber \\ & = \int_{\widetilde M}
\Big(
|\widetilde\nabla U_k |^2\,\widetilde{\dive}\mathbf X
-
2 \sum_{i,j=1}^{n+1}(\partial_iU_k \cdot\partial_j U_k) \partial_j\mathbf X_i
\Big) z^{1-2\sigma} +  (1-2\sigma) \int_{\widetilde M} z^{-2\sigma} |\widetilde\nabla U_k|^2 \mathbf X_{n+1} , \label{eq: fvar Caf extension} 
\end{align}
where $\widetilde M := M \times (0,\infty)$ with the product metric, and $\mathbf X =(\mathbf X_1, \dotsc, \mathbf X_{n+1}) \in C^1_c(\widetilde M)$ is any smooth vector field with $\mathbf X(x,0)  =(X(x),0)$. Choosing \(\mathbf X\) of the form \(\mathbf X(x,z)=(X(x), z\eta(z))\) where \(\eta \in C^1_c([0, \infty))\) is such that \(\eta \equiv 1\) in a neighborhood of $z=0$, all the coefficients in the integral above are bounded and compactly supported in the \(z\)-variable. Moreover, the strong convergence \(u_k\to u\) in \(H^\sigma(M)\) implies
\[
\widetilde\nabla U_k \to \widetilde\nabla U
\quad\mbox{strongly in } \, L^2(\widetilde M, z^{1-2\sigma} dV(x)dz).
\]
Indeed, this follows from the characterization of the \(H^\sigma(M)\) seminorm in terms of the weighted $H^1(\widetilde M)$ energy of the extension (see \cite{CFSfrac}). Since \eqref{eq: fvar Caf extension} is a (weighted) quadratic expression in \(\widetilde\nabla U\) with bounded coefficients, we may pass it to the limit as $k \to \infty $ and conclude the convergence of the first variations.
\end{proof}

\begin{proof}[Proof of Proposition~\ref{lem:envelope-Ms}] For brevity we set $\Sigma_t := \phi_t^X(\Sigma)$ and $\sigma=(1+s)/2$. Since for every $u\in\mathfrak{F}_s^J(\Sigma)$ we have
\begin{equation*}
        \star J (u \circ \phi^X_{-t}) = (\phi^X_{t})_\sharp (\star J u ) , 
\end{equation*}
by Lemma \ref{lemma: energy under flow} (applied with $k=0$) for small $t\le t_0$ the composition with $\phi_{-t}^X$ yields a bijection between the admissible classes
\begin{equation*}
u\in\mathfrak{F}_s^J(\Sigma)
\quad\Longleftrightarrow\quad
u\circ\phi_{-t}^X \in\mathfrak{F}_s^J(\Sigma_t).
\end{equation*}
Hence, for every minimizer $v_t \in \mathcal{A}(\Sigma_t)$ and fixed $v_0 \in \mathcal{A}(\Sigma)$ we have
\begin{equation}\label{bound_vt}
    \sup_{|t|\le t_0} \M_s(\Sigma_t) = \sup_{|t|\le t_0} [v_t]^2_{H^\sigma(M)} \le \sup_{|t|\le t_0} [v_0 \circ\phi_{-t}^X ]^2_{H^\sigma(M)} \le C(1+[v_0]^2_{H^\sigma(M)})=:C_0 , 
\end{equation}
and setting $f(u,t):=[u\circ\phi_{-t}^X]^2_{H^\sigma(M)}$ we can write (for $|t|\le t_0$)
\begin{equation*}
\M_s(\Sigma_t) = \inf_{u\in\mathfrak{F}_s^J(\Sigma)} f(u,t) = \inf_{u\in K } f(u,t) , 
\end{equation*}
where 
\begin{equation*}
    K:= \big\{u\in H^\sigma(M; \Sp^1) \, : \, [u]^2_{H^\sigma(M)}\le C_0,\, \star Ju = \pi \Sigma \big\} . 
\end{equation*}
Now we aim to apply a standard envelope-type theorem like \cite[Corollary 4 (ii)]{MilgromSegal} to conclude \eqref{eq: right der}-\eqref{eq: left der}.

Unfortunately, we cannot apply \cite[Corollary 4 (ii)]{MilgromSegal} directly, as its assumptions are not satisfied in our setting. However, we can still exploit its proof with minor adaptations.

To this end, we first observe that for every choice of a minimizer $u\in \mathcal{A}(\Sigma)$ it holds that $\M_s(\Sigma_t)\leq f(u,t)$ and $\M_s(\Sigma) = f(u,0)$, which yields
$$\limsup_{t\to 0^+} \frac{\M_s(\Sigma_t)-\M_s(\Sigma)}{t} \leq \lim_{t\to 0^+} \frac{f(u,t)-f(u,0)}{t}=\frac{d}{dt}\bigg|_{t=0^+} [u\circ\phi_{-t}^X]^2_{H^\sigma(M)},$$
and hence also
\begin{equation}\label{est_right-deriv}
\limsup_{t\to 0^+} \frac{\M_s(\Sigma_t)-\M_s(\Sigma)}{t} \leq \inf_{u\in\mathcal{A}(\Sigma)} \frac{d}{dt}\bigg|_{t=0^+} [u\circ\phi_{-t}^X]^2_{H^\sigma(M)}
\end{equation}

In order to prove the opposite inequality, we first note that Lemma~\ref{lemma: energy under flow} (applied with $k=2$) implies that the family $\{f(u, \cdot)\}_{u\in K}$ is equidifferentiable for $|t|\le t_0$, that is:
\begin{equation*}
    \forall \, t \in (-t_0, t_0) \mbox{ the ratio } \frac{f(u, t')-f(u,t)}{t'-t} \mbox{ converges uniformly for } u\in K, \mbox{ as } t'\to t. 
\end{equation*}
Thus, from \cite[Theorem 3]{MilgromSegal} we deduce
$$\frac{d}{dt}\bigg|_{t=0^+} \M_s(\phi_t^X(\Sigma))=\lim_{\tau\to 0^+} \frac{\partial f}{\partial t} (v_\tau,0)$$
for every choice of minimizers $v_\tau\in \mathcal{A}(\Sigma_\tau)$, and both the derivative and the limit exist.

Now we claim that, up to a subsequence, $v_\tau$ converges strongly in $H^\sigma(M;\Sp^1)$ to some minimizer $v_0\in \mathcal{A}(\Sigma)$. Indeed, \eqref{bound_vt} yields weak compactness of $\{v_\tau\}$ in $H^\sigma(M;\Sp^1)$ and, by lower semicontinuity of the seminorm, any weak limit $v_0$ must be a minimizer in $\mathcal{A}(\Sigma)$. Moreover, since $\tau \mapsto \M_s(\Sigma_\tau)$ is continuous, we also have that
$$\lim_{\tau\to 0^+} [v_\tau]_{H^\sigma(M)}^2 =\lim_{\tau\to 0^+} \M_s(\Sigma_\tau) = \M_s(\Sigma) = [v_0]_{H^\sigma(M)}^2 ,$$
so the convergence is actually strong in $H^\sigma$.

By Lemma \ref{lem: conv first variation} we know that $\partial f/\partial t (\cdot,0)$ is continuous with respect to the strong $H^\sigma$ convergence, and hence 
$$\frac{d}{dt}\bigg|_{t=0^+} \M_s(\phi_t^X(\Sigma))= \frac{\partial f}{\partial t} (v_0,0)=\frac{d}{dt}\bigg|_{t=0^+} [v_0\circ\phi_{-t}^X]^2_{H^\sigma(M)}.$$

Together with \eqref{est_right-deriv}, this proves \eqref{eq: right der}, and also that the infimum is attained. The proof of \eqref{eq: left der} is similar.

\end{proof}

\begin{rem} Assume that $\Sigma$ minimizes the $s$-mass $\M_s$ in a domain $\Omega$, that is 
\begin{equation*}
    \M_s (\Sigma) \le \M_s (\Sigma') \quad \mbox{for every } \Sigma' \mbox{ such that } \Sigma \cap \Omega^c = \Sigma' \cap \Omega^c .
\end{equation*}
Then, for every fixed smooth vector field \(X\) with ${\rm spt} X \subset \Omega$, \(t=0\) is a local minimum of \(f(t) := \M_s (\phi_t^X(\Sigma))\). Hence, the right derivative $\partial_+ f (0) $ is nonnegative and the left derivative $\partial_- f (0) $ is nonpositive. Moreover, the envelope formulas \eqref{eq: right der} and \eqref{eq: left der} imply that always (i.e. even for non-minimizers)
\begin{equation*}
    \partial_+ f (0) \coloneqq  \frac{d}{dt}\bigg|_{t=0^+} \M_s (\phi_t^X(\Sigma)) \le \frac{d}{dt}\bigg|_{t=0^-} \M_s (\phi_t^X(\Sigma)) \eqcolon \partial_- f (0) , 
\end{equation*}
which gives 
\begin{equation*}
    0 \le \partial_+ f (0) \le \partial_- f (0) \le  0 . 
\end{equation*}
Hence both one-sided derivatives exist and vanish, and therefore \( f'(0)=\frac{d}{dt}\big|_{t=0}\M_s (\phi_t^X(\Sigma))\) exists and is equal to \(0\).
\end{rem}

\section{Equi-coercivity and $\Gamma$-convergence on currents}\label{sec: gamma comp}

\subsection[Equi-coercivity of the $s$-mass: proof of $(i)$ in Theorem \ref{thm: new main asymptotics}]{Equi-coercivity of the $s$-mass: proof of $(i)$ in Theorem \ref{thm: new main asymptotics}} \label{subsec:5-1}

In this section we prove the equi-coercivity for the $s$-mass, relying on the corresponding result for the Ginzburg--Landau functionals proved by Alberti--Baldo--Orlandi \cite{2005-Indiana-ABO}.
We will use the following elementary inequalities
\begin{alignat}{2}
& (1-e^{-\lambda})^2  \le \lambda^\alpha, 
&\qquad \forall \lambda>0,\ \alpha\in(0,2], 
\label{eq: elem ineq 1}\\
& \lambda e^{-2\lambda t}  \le C\, t^{\sigma-1}\lambda^\sigma,
&\qquad \forall \lambda>0,\ \sigma\in[0,1],\ t>0 .
\label{eq: elem ineq 2}
\end{alignat}

\begin{proof}[Proof of {\it (i)} in Theorem \ref{thm: new main asymptotics}] Let $\sigma := \frac{1+s}{2}\in(1/2,1)$, and let $u_\sigma\in H^\sigma(M;\Sp^1)$ be a minimizer in the definition of $\M_{s}(\Sigma_s)$. In particular, by hypothesis
\begin{equation}\label{eq: bound limsup u}
    \limsup_{\sigma \to 1 } \, (1-\sigma)^2 [u_\sigma]_{H^\sigma(M)}^2 =  \limsup_{s \to 1} \, \frac{1}{4}(1-s)^2 \M_{s}(\Sigma_s) <+\infty .
\end{equation}

For $t>0$, denote by
\[
P_t u_\sigma \in C^\infty(M;\R^2)
\]
the (component-wise) solution of the heat equation with initial datum $u_\sigma$. Our goal is to estimate the Ginzburg--Landau energy of these maps. Since $|u_\sigma|=1$, by the maximum principle we have $|P_t u_\sigma|\le 1$ on $M$. We obtain
\begin{align*}
\frac{1}{4}\int_M (1-|P_t u_\sigma|^2)^2
&= \frac{1}{4}\int_M (|u_\sigma|^2-|P_t u_\sigma|^2)^2 \\
&= \frac{1}{4}\int_M \big((u_\sigma-P_t u_\sigma)\cdot(u_\sigma+P_t u_\sigma)\big)^2 \\
&\le \int_M |u_\sigma-P_t u_\sigma|^2 .
\end{align*}
We will repeatedly use the spectral representation in Proposition \ref{prop: spectral characterization}. Writing 
\begin{equation*}
    u_\sigma = \sum_{k\ge 0} a_k \varphi_k , 
\end{equation*}
we have
\[
P_t u_\sigma = \sum_{k\ge 0} a_k e^{-\lambda_k t}\varphi_k,
\quad \mbox{and} \quad
u_\sigma-P_t u_\sigma = \sum_{k\ge 0} a_k(1-e^{-\lambda_k t})\varphi_k .
\]
Hence
\[
\int_M |u_\sigma-P_t u_\sigma|^2 
= \sum_{k\ge 0} |a_k|^2 (1-e^{-\lambda_k t})^2 .
\]
Using the elementary inequality \eqref{eq: elem ineq 1} we can estimate
\[
\int_M |u_\sigma-P_t u_\sigma|^2 
\le t^\sigma \sum_{k\ge 0} |a_k|^2 \lambda_k^\sigma
= \frac{\alpha_{n,\sigma}}{2} t^\sigma [u_\sigma]_{H^\sigma(M)}^2 ,
\]
where we used the spectral representation of Proposition \ref{prop: spectral characterization}. 

Similarly
\[
\int_M |\nabla (P_t u_\sigma)|^2
= \sum_{k\ge 0} |a_k|^2 \lambda_k e^{-2\lambda_k t} , 
\]
and by the elementary inequality \eqref{eq: elem ineq 2} we get
\[
\int_M |\nabla (P_t u_\sigma)|^2 
\le C t^{\sigma-1} \sum_{k\ge 0} |a_k|^2 \lambda_k^\sigma
= C \alpha_{n,\sigma} t^{\sigma-1} [u_\sigma]_{H^\sigma(M)}^2 .
\]

By the definition of the Ginzburg--Landau energy and \eqref{eq: alpha-est}, we have
\begin{align*}
{GL}_{\sqrt{t}}^n(P_t u_\sigma, M)
& := \frac{1}{2}\int_M |\nabla (P_t u_\sigma)|^2
   + \frac{1}{4t}\int_M (1-|P_t u_\sigma|^2)^2 \le C (1-\sigma)\, t^{\sigma-1} [u_\sigma]_{H^\sigma(M)}^2 ,
\end{align*}
and therefore
\begin{equation}\label{eq: GL-bound-sigma}
\frac{{GL}_{\sqrt{t}}^n(P_t u_\sigma,M)}{|\log \sqrt{t} |}
\le C \frac{(1-\sigma) t^{\sigma-1}}{|\log \sqrt{t} |}[u_\sigma]_{H^\sigma(M)}^2 .
\end{equation}

We now choose
\[
t=t(\sigma):= e^{-\frac{2}{1-\sigma}} \to 0 
\quad\text{as }\sigma\to 1.
\]
Then $|\log(\sqrt{t(\sigma)})| = \frac{1}{1-\sigma}$ and
\[
t(\sigma)^{\sigma-1}
= e^{-\frac{2(\sigma-1)}{1-\sigma}} = e^2.
\]
Plugging this into \eqref{eq: GL-bound-sigma} we obtain
\begin{equation}\label{eq: en bound to optimize}
\frac{{GL}_{\sqrt{t(\sigma)}}^n(P_{t(\sigma)} u_\sigma, M)}{|\log \sqrt{t(\sigma)}|}
\le C  (1-\sigma)^2 [u_\sigma]_{H^\sigma(M)}^2.
\end{equation}
By hypothesis \eqref{eq: bound limsup u}, the right-hand side is uniformly bounded as $\sigma \to 1$. Hence, by Theorem~\ref{thm: ABO comp}, we get that (up to subsequences) the Jacobians $\star J(P_{t(\sigma)} u_\sigma )$ converge to an integral $(n-2)$-boundary in the flat topology of $M$. 

We are left to show that the flat limits of $\star J u_\sigma$ and $\star J(P_{t(\sigma)} u_\sigma)$ coincide as $\sigma\to 1$. Using again the spectral representation and \eqref{eq: elem ineq 1} applied with $\lambda=\lambda_k t$ and $\alpha = \sigma-1/2$, we get
\begin{align*}
     [ u_\sigma  -P_t u_\sigma]_{H^{1/2}(M)}^2 & = \frac{2}{\alpha_{n,1/2}}  \sum_{k \ge 0} |a_k|^2 (1-e^{-\lambda_k t})^2 \lambda_k^{1/2} \\ & \le \frac{2}{\alpha_{n,1/2}} \,  t^{\sigma-1/2} \sum_{k \ge 0} |a_k|^2 \lambda_k^{\sigma} \le  C (1-\sigma)t^{\sigma-1/2} [u_\sigma]^2_{H^\sigma(M)} . 
\end{align*}
By interpolation between $L^2$ and $H^\sigma$ (see \cite[Lemma 7.1]{CG24}) and Young's inequality we also have
\[
[ u_\sigma ]_{H^{1/2}(M)} + [P_t u_\sigma]_{H^{1/2}(M)}
\le C ( 1+ [ u_\sigma ]_{H^\sigma(M)} + [P_t u_\sigma]_{H^\sigma(M)} ),
\]
and, by the spectral representation,
\[
[ P_t u_\sigma ]_{H^\sigma(M)}^2
= \frac{2}{\alpha_{n,\sigma}} \sum_{k\ge 0} |a_k|^2 e^{-2\lambda_k t} \lambda_k^\sigma
\le \frac{2}{\alpha_{n,\sigma}} \sum_{k\ge 0} |a_k|^2 \lambda_k^\sigma
=  [u_\sigma]_{H^\sigma(M)}^2 .
\]
Thus 
\[
[ u_\sigma ]_{H^{1/2}(M)} + [P_t u_\sigma]_{H^{1/2}(M)}
\le C (1+ [u_\sigma]_{H^\sigma(M)}) .
\]

Hence, for $\sigma > 3/4$, combining these estimates with Theorem~\ref{teo:BN-Invent fractional}, we obtain
\begin{align*}
    \F(\star J u_\sigma - \star J(P_{t(\sigma)} u_\sigma))  & \le C [ u_\sigma  -P_t u_\sigma]_{H^{1/2}(M)}\Big( 1+ [ u_\sigma ]_{H^{1/2}(M)} + [P_t u_\sigma]_{H^{1/2}(M)} \Big) \\ & \le C (1-\sigma)^{1/2} t(\sigma)^{1/8} ( [u_\sigma]_{H^\sigma(M)} + [u_\sigma]^2_{H^\sigma(M)}  ) \\ & = C \frac{e^{-\frac{1}{4(1-\sigma)}}}{(1-\sigma)^{3/2}} \Big( (1-\sigma)^{2} [  u_\sigma ]^2_{H^{\sigma}(M)} +(1-\sigma)^{2} [  u_\sigma ]_{H^{\sigma}(M)} \Big)  \to 0 , 
\end{align*}
and this concludes the proof. 
\end{proof} 

\begin{rem} Even optimizing all the constants in the proof above, the authors were unable to obtain the sharp constant $C$ in the right-hand side of \eqref{eq: en bound to optimize}, which would yield the correct liminf estimate in Theorem~\ref{thm: new main asymptotics}. Hence, it seems that one cannot obtain the $\Gamma$-liminf part of our result (proved in Section \ref{sbs: liminf}) from the analogous one for the Ginzburg--Landau energy in \cite{2005-Indiana-ABO}.     
\end{rem}

\begin{rem}
Using the same strategy as in the previous section, one can also deduce a very short proof of the equi-coercivity for the $p$-energy of Sobolev maps taking values in spheres of general dimension. This equi-coercivity property was recently studied by the authors in \cite{p-energy} and, in the context of more general target manifolds, in \cite{p-energy-Verona}; see also \cite[Chapter~12]{Stern:PhD} for related formulations. Thus, the present approach yields an alternative proof of the compactness component (for spherical targets) that underlies these results. 
\end{rem}

\subsection{Proof of the liminf inequality}\label{sbs: liminf}

The goal of this subsection is to prove the liminf inequality corresponding to Theorem~\ref{thm: new main asymptotics}, namely
    \begin{equation}
        \label{eq:liminf-1}
            \lim_{s \to 1} \F (\Sigma_s - \Sigma) = 0 \quad \implies \quad \liminf_{s \to 1} \hspace{0.03cm} (1-s)^2 \M_s(\Sigma_s) \geq \frac{2\pi \omega_{n-1}}{n} \M(\Sigma),
    \end{equation}
where, for every $s \in (0,1)$, $\Sigma_s$ and $\Sigma$ are admissible boundaries as defined in~\eqref{eq:sigma class def}.

\medskip

The proof of the liminf inequality relies on several technical tools, but in many places it builds on ideas previously introduced by the authors in~\cite{codim2-frac}. For this reason, we first outline the four main steps of the proof, originally used in~\cite{codim2-frac} in the case where $\Sigma$ was smooth, and indicate how they must be adapted to treat the general case. We then give a detailed proof of each step, making use of the results proved in~\cite{codim2-frac} whenever possible.

\medskip

\noindent \ding{182} (Reductions) First, we may assume that $\Sigma$ in~\eqref{eq:liminf-1} is an integral $(n-2)$-boundary. Indeed, the implication is nontrivial only when the liminf is finite. In this case, the equi-coercivity result proved in Section~\ref{subsec:5-1} implies that $\Sigma$ is an integral $(n-2)$-boundary.

Moreover, using an approximation argument and working in local charts, we reduce the problem to proving the following statement. 

\begin{tcolorbox}[colback=blue!3!white]
  Let $\Omega \subset \R^n$ be a bounded open set and let $\Sigma$ be an integral $(n-2)$-boundary in $\Omega$. Let $\{\Sigma_s\}$ be a family of smooth, closed, oriented $(n-2)$-submanifolds converging to $\Sigma$ in the flat sense, and let $w_s\in C^1(\Omega \setminus \Sigma_s;\Sp^1) \cap W^{1,1}(\Omega; \Sp^1)$ be maps with $\star Jw_s=\pi \Sigma_s$. Then,
\begin{equation}\label{eucl_liminf1}
\liminf_{s \to 1} \hspace{0.03cm} (1-s)^2 [w_s]^2_{H^{\frac{1+s}{2}}(\Omega)}\geq \frac{2\pi \omega_{n-1}}{n} \M_\Omega(\Sigma). 
\end{equation}
\end{tcolorbox}

\noindent This result is the analogue of Proposition~1.2 in~\cite{codim2-frac} in the case of non-smooth $\Sigma$.

\medskip

\noindent \ding{183} (Discretization) In order to prove~\eqref{eucl_liminf1}, we consider discretized approximations of the functions $w_s$ appearing on the left-hand side. For every $(\eps,R,z) \in (0,1) \times O(n) \times (0,1)^n$, where $O(n)$ denotes the orthogonal group in dimension $n$, we define a suitable piecewise affine function $w_s^{\eps, R, z}$ coinciding with $w_s$ on the lattice $R(\eps \mathbb{Z}^n + \eps z)$ and determined by the parameters $\eps, R$ and~$z$. These piecewise affine interpolations are not defined on the whole $\Omega$, but only on a union of cubes inside $\Omega$. This union always contains $\Omega_{\sqrt{n} \eps}$, where for $\eta > 0$ we set
    \begin{equation}
        \label{eq:Omega-eta}
        \Omega_{\eta} :=\{ x \in \Omega : \mathrm{dist}(x, \R^n \setminus \Omega) > \eta  \}.
    \end{equation}
Moreover, these piecewise affine interpolations no longer take values in the circle.

\medskip

\noindent \ding{184} (Comparison) Here, we compare the fractional seminorm of $w_s$ to the average of the Ginzburg--Landau energy (in a form slightly different from~\eqref{eq:def-GL-1}) of the interpolating maps $w_s^{\eps, R, z} \colon \Omega_{\sqrt{n} \eps} \to \R^2$ as the parameters $\eps, R$ and $z$ vary. More precisely, combining~\cite[Lemma~4.2]{codim2-frac} (with the change of variable $\rho=r^2$) and~\cite[Lemma~2 and eq.~(4.21)]{2009-ARMA-AC}, we derive the following estimate.
\begin{tcolorbox}[colback=blue!3!white]
  For every $\eta \in (0,1)$ there exists $s_0 \in (0,1)$ such that for every $s \in (s_0, 1)$ we have
    \begin{equation} \label{eq:key-est1}
        (1-s)^2 [w_s]_{H^{\frac{1+s}{2}}(\Omega)} ^{2} \geq \frac{2\omega_{n-1}}{n} \int_0^{1-\eta} 
        \abs{\log \rho} \, d\rho \fint_{O(n)} dR \int_{(0,1)^n} {GL}_{\rho_s, \beta}^n(w_s^{\rho_s,R,z},\Omega_{\eta}) \, dz,
    \end{equation}
    where $\rho_s:=\rho^{\frac{1}{1-s}}$, $\beta \in (0,1)$ is arbitrary, and
    \begin{equation} \label{eq:GL-beta}
        {GL}_{\eps, \beta}^n(v,\Omega) 
        := 
        \frac{1}{\abs{\log \eps}}\int_{\Omega} \frac{1-\beta}{2} \abs{\nabla v}^2 + \beta c_0 \frac{(1-|v|^2)^2}{\eps^2}  \, dx.
    \end{equation}
    Here, $c_0 > 0$ is the constant appearing in~\cite[Lemma~2]{2009-ARMA-AC}.
\end{tcolorbox}

\noindent The quantity $\rho_s$ relates the parameter $s$ of the fractional seminorm with the scaling parameter $\eps$ in the definition of the Ginzburg--Landau energy. In fact, we note that for every $\rho \in (0,1)$, it holds that $\rho_s \to 0$ as $s \to 1$. 

\medskip

\noindent \ding{185} (Degree) In this step, we exploit the connection between the Ginzburg--Landau energy and the topological degree. It follows from~\cite{HanLi96, Hong96} that for any continuous map $w \colon (-\delta,\delta)^2 \to \R^2$ such that $\abs{\mathrm{deg}(w, (-\delta,\delta)^2, 0)} = d$, we have
    \begin{equation} \label{eq:GL-degree-12}
        {GL}_{\eps, \beta}^2(w,(-\delta,\delta)^2) \ge (1-\beta)\pi \cdot d + O \big( \abs{\log \eps}^{-1} \big), \qquad \text{as $\eps \to 0^+$},
    \end{equation}
where the remainder term depends on $\delta$ and the trace of $w$ on the boundary of $(-\delta,\delta)^2$.

At this point, the strategy differs depending on whether $\Sigma$ is smooth or not. We first sketch the argument in the smooth case and then explain how to adapt the proof to the general case.

\medskip

\emph{Case of smooth $\Sigma$.} Since $\Sigma$ is a boundary, its normal bundle is trivial, and therefore the tubular neighborhood $\mathcal{U}_{\delta}(\Sigma) := \{ x \in \Omega : \mathrm{dist}(x, \Sigma) < \delta \}$ is diffeomorphic to the product $(\Sigma \cap \Omega) \times (-\delta,\delta)^2$.

Let $w \colon \Omega \to \R^2$ be a continuous map such that the restriction of $w$ to any small two-dimensional square orthogonal to $\Sigma$ has a well-defined degree at $0 \in \R^2$. Up to considering a connected component, we may further assume $\Sigma$ connected, so that the degree is constant. We denote by $d$ the absolute value of this number.

Putting together these observations, the previous estimate~\eqref{eq:GL-degree-12}, and Fubini's theorem, we formally derive
    \[
        {GL}_{\eps, \beta}^n(w,\mathcal{U}_{\delta}(\Sigma)) \gtrsim \int_{\Sigma \cap \Omega} {GL}_{ \eps, \beta}^2(w_{\sigma},(-\delta,\delta)^2) \, d \mathcal{H}^{n-2}(\sigma) \ge (1-\beta) \pi \cdot d \cdot \mathcal{H}^{n-2}(\Sigma \cap \Omega) + O \big( \abs{\log \eps}^{-1} \big),
    \]
where $w_{\sigma}$ denotes the restriction of $w$ to the two-dimensional square $\{ \sigma \} \times (-\delta,\delta)^2$. 

By combining this formula with the choices $\eps = \rho_s$ and $w = w_s^{\rho_s,R,z}$, along with inequality~\eqref{eq:key-est1} from the third step, we can formally conclude the proof (after letting $\beta \to 0^+$, $\eta \to 0^+$, and noticing that $\int_0^1 \abs{\mathrm{log} \rho} \, d\rho = 1$).

To make this argument precise, we need to prove that for almost every $\sigma \in \Sigma$, and for sufficiently many choices of the parameters $(r,R,z)$, the topological degree at $0 \in \R^2$ of $w_s^{\rho_s,R,z}$ on $\{ \sigma \} \times (-\delta,\delta)^2$ is~$\pm d$. This fact is not a direct consequence of the identity ${\star Jw_s}=\pi \Sigma_s$, which only encodes information on the degree of $w_s$ on two-dimensional squares orthogonal to $\Sigma_s$. In particular, it provides no control on the degree of $w_s$ on squares centered at points of $\Sigma$, where a priori the degree may not even be well defined. However, as a consequence of the flat convergence of $\Sigma_s$ to $\Sigma$, it is possible to prove that the restriction of $w_s$ to $\{ \sigma \} \times (-\delta, \delta)^2$ has degree~$\pm d$ for every $\sigma \in \Sigma$ outside an exceptional set whose measure tends to zero as $s\to 1$. We then show that this property is stable under the discretization procedure leading to $w_s^{\rho_s,R,z}$, again for all parameters outside an exceptional set of asymptotically vanishing measure.

There is a second delicate point, namely the behavior of the remainder term in inequality~\eqref{eq:GL-degree-12}. Indeed, for the choice $\eps= \rho_s :=\rho^{\frac{1}{1-s}}$, it is not immediately clear that the remainder term is $(1-s) O ( \abs{\log \rho}^{-1} )$ uniformly with respect to $w_s^{\rho_s,R,z}$ and $\sigma \in \Sigma$. This difficulty is resolved by using sharper estimates on the remainder term established in~\cite{1999-SIAM-Jerrard, 2005-Indiana-ABO}. 

\medskip

\emph{General case of non-smooth $\Sigma$.} The strategy outlined in Step~\ding{185} cannot be applied as stated when $\Sigma$ is not associated with a smooth submanifold, since in that case the notion of a tubular neighborhood becomes problematic. However, since $\Sigma$ is an integral current, it is close, at small scales and in the flat distance, to its tangent plane at almost every point. Moreover, the mass of $\Sigma$ inside a small cube of radius $r$ centered at such a point is comparable to $r^{n-2}$ times the multiplicity of $\Sigma$ at that point.

Consequently, in order to prove inequality~\eqref{eucl_liminf1}, we first establish a more robust estimate in which $\Sigma$ is replaced by its tangent plane, while allowing the functions $w_s$ appearing in the statement of Step~\ding{182} to be singular on a larger set. This estimate is the content of Proposition~\ref{prop:local-liminf} below, and represents the main novelty compared to the proof of the liminf inequality in the smooth case. It can be interpreted as a quantitative version of~\cite[Proposition~4.1]{codim2-frac}.

At this point, to prove Proposition~\ref{prop:local-liminf}, we can combine Steps~\ding{183},~\ding{184}, and~\ding{185}, since the limiting current is now a codimension-two plane, and therefore smooth.

To summarize, we have the chain of implications  
    \[
        \text{Proposition}~\ref{prop:local-liminf} \implies~\eqref{eucl_liminf1} \implies~\eqref{eq:liminf-1}.
    \]

The rest of the section is organized as follows. First, we introduce the relevant notation and recall the precise definition of the discretized approximations used in~\cite[Subsection~4.1]{codim2-frac}. We then turn to the proof of Proposition~\ref{prop:local-liminf}, which constitutes the bulk of the section. Then, we establish the two implications  
Proposition~\ref{prop:local-liminf} $\implies$~\eqref{eucl_liminf1} and~\eqref{eucl_liminf1} $\implies$~\eqref{eq:liminf-1}. These require less work and are independent from the previous arguments.

\medskip

As anticipated in Step~2, every choice of parameters $(\eps, R, z) \in (0,1) \times O(n) \times (0,1)^n$ defines the lattice $R(\eps \Z^n + \eps z) \subset \R^n$. Given an open set $F \subset \R^n$, we denote by 
    \begin{equation*}
            \mathscr{C}_{\eps,R,z}(F) := \Big\{ j + R((0,\eps)^n) : j \in R(\eps \Z^n + \eps z) \ \text{and} \ j + R((0,\eps)^n) \Subset F \Big\},
    \end{equation*}
the family of cubes induced by the lattice and contained in $F$, and by
    \begin{equation*}
            \mathscr{E}_{\eps,R,z}(F) := \Big\{ I : I \ \text{is a closed one-dimensional edge of some cube $Q \in \mathscr{C}_{\eps,R,z}(F)$} \Big\}.
    \end{equation*}

We recall that the union of the cubes in $\mathscr{C}_{\eps,R,z}(F)$ contains the set $F_{\sqrt{n} \eps}$ of points whose distance from the complement of $F$ is greater than $\sqrt{n} \eps$ (see~\eqref{eq:Omega-eta}). Let now $w \colon F \to \R^2$. We want to define a piecewise affine function $w^{\eps,R,z} \colon F_{\sqrt{n} \eps} \to \R^2$ that coincides with $w$ on the lattice $R(\eps \Z^n + \eps z)$. An affine function is uniquely determined by its values at $n+1$ distinct points. However, each cube $Q \in \mathscr{C}_{\eps,R,z}(F)$ has $2^n$ vertices. For this reason, we consider the Kuhn decomposition of $Q$ into $n!$ simplices with vertices among those of $Q$. On each simplex $T$, we then define $w^{\eps,R,z}$ as the unique affine function that coincides with $w$ at the vertices of $T$; we refer to~\cite[Section~4]{2009-ARMA-AC},~\cite[Subsection~2.2]{2023-Solci}, and~\cite[Subsection~4.1]{codim2-frac} for more details.

\medskip

We are now in a position to prove the main result of this section.

\begin{prop}\label{prop:local-liminf}
Let $n\geq 2$ be an integer. Let $E\subset \R^{n-2}$ be a bounded open set with Lipschitz boundary, and let $\delta \in (0,1/2)$. Let us set
    
    \[
        F:=E\times (-\delta,\delta)^2
        \qquad\mbox{and}\qquad
        F ^{*} :=F \setminus (E \times \{(0,0)\} ).
    \]

We also consider for every $\ell \in (0,\delta)$ and every $\sigma \in E$ the closed curve
    \[
        \gamma_{\sigma,\ell}:= \{\sigma\}\times \partial ([-\ell,\ell]^2) \subset F ^{*}.
    \]
    
Moreover, for every $s\in (0,1)$ let $N_{s} \subset F$ be a relatively closed, $(n-1)$-rectifiable set, and let $w_s\in H^{\frac{1+s}{2}}(F; \Sp^1)\cap C^1(F\setminus N_s;\Sp^1)$.

Let also $d\in \N^+$, and let us assume that $\abs{\deg(w_{s},\gamma)}=d$ for every closed curve $\gamma \subset F^{*}\setminus N_{s}$ that is homotopic to some $\gamma_{\sigma,\ell}$ in $F^{*}$ and for every $s\in (0,1)$. Finally, let us set also
\[
    \lambda:=\limsup_{s \to 1} \biggl(\frac{\mathcal{H}^{n-1}(N_s)}{\delta\mathcal{H}^{n-2}(E)}\biggr)^{1/3}.
\]

Then, there exists a dimensional constant $C_n>0$ such that
\begin{equation}\label{th:local-liminf}
\liminf_{s\to 1^{-}} \hspace{0.03cm} (1-s)^2 [w_s]_{H^{\frac{1+s}{2}}(F)} ^2 \geq \frac{2\pi\omega_{n-1}}{n} d \cdot (1- C_n \lambda) \cdot \mathcal{H}^{n-2}(E).
\end{equation}
\end{prop}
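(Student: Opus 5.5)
We follow Steps \ding{183}--\ding{185} of the outline, with the tangent plane $E\times\{(0,0)\}$ in the role of the smooth limit current and the ``bad'' set $N_s$ treated as a small exceptional set. We apply \eqref{eq:key-est1} with $\Omega=F$, which reduces the lower bound on $(1-s)^2[w_s]^2_{H^{\frac{1+s}{2}}(F)}$ to a lower bound for the averaged Ginzburg--Landau energies ${GL}^n_{\rho_s,\beta}(w_s^{\rho_s,R,z},F_\eta)$. Here the average runs over $R\in O(n)$ and $z\in(0,1)^n$, and $\rho\in(0,1-\eta)$ is weighted by $|\log\rho|$. Since $\int_0^1|\log\rho|\,d\rho=1$, it suffices to show that, for fixed $\rho$, $\beta$, $\eta$,
\[
\liminf_{s\to1}\fint_{O(n)}\!dR\int_{(0,1)^n}{GL}^n_{\rho_s,\beta}(w_s^{\rho_s,R,z},F_\eta)\,dz\ \ge\ (1-\beta)\pi d\,(1-C_n\lambda)\,\mathcal H^{n-2}(E_\eta)-o(1).
\]
We then let $\beta,\eta\to0$; the factor $2\omega_{n-1}/n$ together with $\pi$ gives the claimed constant.

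For fixed $(R,z)$ we slice $F_\eta$ by the planar squares $Q_\sigma:=\{\sigma\}\times(-\delta',\delta')^2$, $\sigma\in E_\eta$, and use Fubini to write the energy as $\int_{E_\eta}{GL}^2_{\rho_s,\beta}(w_s^{\rho_s,R,z}|_{Q_\sigma})\,d\sigma$ up to the gradient components tangential to $E$, which we discard. For \emph{good} slices we then apply \eqref{eq:GL-degree-12}, with the sharp remainder of \cite{1999-SIAM-Jerrard, 2005-Indiana-ABO}. This remainder is controlled by the boundary energy on $\partial Q_\sigma$ divided by $|\log\rho_s|=|\log\rho|/(1-s)$, so it vanishes uniformly as $s\to1$ once the boundary energies are averaged (in $\ell$ and in the parameters) and bounded by the total energy. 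A slice is good when (a) the interpolant $w_s^{\rho_s,R,z}$ is close to $\Sp^1$ on $\partial Q_\sigma$, and (b) its degree on $\partial Q_\sigma$ is $\pm d$. For (b) we use the hypothesis: whenever $\partial Q_\sigma$, and all segments of the lattice edges in $\mathscr E_{\rho_s,R,z}$ used to interpolate along it, avoid $N_s$, then $w_s$ is $C^1$ on a neighbourhood of the relevant curve, which is homotopic to $\gamma_{\sigma,\ell}$ in $F^*$. Closeness of the interpolant to $w_s$ then transfers the degree $\pm d$, with the usual Jerrard-type control on edges where the oscillation of $w_s$ is small.

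The main obstacle is the measure estimate for the bad slices, and this is where the exponent $1/3$ and the factor $C_n\lambda$ should arise. We would introduce an auxiliary scale $\ell\in(\lambda^{1/3}\delta\cdot c,\delta)$, replacing $\delta'$ by $\ell$ and averaging over $\ell$. By the coarea formula applied to the projection $\pi_E$ together with the $\ell$-parameter, the set of $(\sigma,\ell)$ for which $\gamma_{\sigma,\ell}$ meets $N_s$ has measure at most $C\mathcal H^{n-1}(N_s)$. Each lattice edge is one-dimensional in $n$ dimensions, so for a random $(R,z)$ the probability that an edge near $\gamma_{\sigma,\ell}$ crosses the $(n-1)$-rectifiable set $N_s$ is likewise bounded by Crofton's formula in terms of $\rho_s\mathcal H^{n-1}(N_s)$ per edge, summed over the $\sim\ell/\rho_s$ edges along the curve. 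We then optimize between discarding the squares of size $\ell$ that are too small and the measure of the bad set, roughly $\mathcal H^{n-1}(N_s)/\ell$ compared against $\delta\mathcal H^{n-2}(E)$. We expect this balancing (possibly combined with a Chebyshev step on the remainder terms) to lose a fraction $C_n\big(\mathcal H^{n-1}(N_s)/(\delta\mathcal H^{n-2}(E))\big)^{1/3}$ of $E$, giving $(1-C_n\lambda)$ in the limit. The steps from \cite{codim2-frac} for fixed smooth geometry carry over verbatim; only this quantitative exceptional-set bookkeeping is new.
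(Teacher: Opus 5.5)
Your architecture is the paper's: reduce via \eqref{eq:key-est1}, slice by the normal squares $\{\sigma\}\times[-\ell,\ell]^2$, transfer the degree from $w_s$ on a loop in the $1$-skeleton of the random lattice to the interpolant, and apply the Ginzburg--Landau lower bound of \cite[Lemma~4.6]{codim2-frac} on a square whose side is chosen by averaging in $\ell$. However, the step that is genuinely new in this proposition, the quantitative exceptional-set estimate, is not carried out, and one of its two ingredients is missing altogether.

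\textbf{Missing oscillation control.} Avoiding $N_s$ only makes $w_s$ continuous on the relevant edges. To get $|w_s^{\rho_s,R,z}|>1/2$ on the cubes crossed by $\gamma_{\sigma,\ell}$, and to identify the degree of $w_s^{\rho_s,R,z}/|w_s^{\rho_s,R,z}|$ with that of $w_s$, you need a smallness condition such as $\osc(w_s,I)\le\sqrt2/n$ on every edge $I$. This puts the vertex values of each such cube in a quarter circle. Your requirement (a) and the ``closeness of the interpolant to $w_s$'' are not implied by anything you estimate. The paper counts the cubes violating this condition by Markov's inequality and the averaged edge-oscillation bound \cite[Lemma~4.3]{codim2-frac}, namely $\int\sum_I\osc(w_s,I)^2\lesssim\rho_s^{s+1-n}(1-s)[w_s]^2_{H^{\frac{1+s}{2}}(F)}$. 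Since $(1-s)^2[w_s]^2$ is bounded and $\rho_s^s=\rho^{s/(1-s)}$ decays exponentially, these cubes are negligible. This is exactly where the fractional energy enters the bookkeeping, and nothing in your plan replaces it.

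\textbf{The $N_s$ part is only conjectured, by the wrong mechanism.} There is nothing to balance at small scales. For any fixed $\ell$ the lower bound is $\pi d\log(\ell/\rho_s)/|\log\rho_s|\to\pi d$, so discarding squares with $\ell<c\lambda^{1/3}\delta$ costs nothing at leading order, and no exponent $1/3$ can come from that. Nor is there a per-edge bound of the form $\rho_s\mathcal H^{n-1}(N_s)$ for edges near a \emph{fixed} loop: that probability can be close to $1$ if $N_s$ passes within $\rho_s$ of the loop. Crofton's formula \cite[Lemma~4.5]{codim2-frac} only bounds the total expected number of cubes with an edge meeting $N_s$, by $\lesssim\rho_s^{1-n}\mathcal H^{n-1}(N_s)$.

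The paper converts this into a loss of a fraction of $E$ through three nested Chebyshev steps:
\begin{enumerate}
\item It discards parameters $(R,z)$ with more than $2\lambda^2\delta\mathcal H^{n-2}(E)\rho_s^{1-n}$ bad cubes; these have asymptotic measure $\le c_n2^{n-1}\lambda$.
\item It then discards slices $\sigma$ met by more than $\lambda\delta/\rho_s$ bad cubes; these have measure $\le2p_n\lambda\mathcal H^{n-2}(E)$.
\item On the remaining slices, the blocked scales in $(\delta/2,\delta-\eta)$ have measure $\le\sqrt n\lambda\delta<\delta/8$. Hence a good $\ell_\sigma$ with $\int_{\gamma_{\sigma,\ell_\sigma}}e\le\frac4\delta\int_{\mathrm{slice}}e$ exists.
\end{enumerate}
This nesting, $\lambda^3\to\lambda^2\to\lambda$, is the origin of the cube root. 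A single Fubini--Chebyshev argument in $(R,z,\sigma,\ell)$, using that each bad cube blocks a set of $(\sigma,\ell)$ of measure $\lesssim\rho_s^{n-1}$, would also work, but some such argument, covering both kinds of bad cubes, must actually be written.

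\textbf{The remainder term.} In \cite[Lemma~4.6]{codim2-frac} the boundary energy enters multiplied by $K\nu\ell$, not divided by $|\log\rho_s|$. It does not vanish as $s\to1$; instead it is absorbed into a factor $(1+K\nu)$, and $\nu\to0$ at the end.
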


\begin{proof}

Assume first $n\ge 3$. We first notice that it suffices to prove~\eqref{th:local-liminf} for $\lambda \in [0,\lambda_0]$, for some $\lambda_0 > 0$ depending only on $n$, that is, when $\mathcal{H}^{n-1}(N_s)$ is not asymptotically too large compared with $\delta \mathcal{H}^{n-2}(E)$. Indeed, if it holds in this range with a constant $C_n > 0$, then it also holds for all $\lambda \ge 0$, possibly with the larger constant $\max\{C_n,\lambda_0^{-1}\}$. The choice of $\lambda_0$ will be specified later in the proof.

Moreover, we observe that without loss of generality we can assume that the liminf in \eqref{th:local-liminf} is finite, and up to extracting a subsequence (that we do not relabel), we can also assume that it is a limit, so that in particular
\begin{equation}\label{eq:limsup_finite}
\limsup_{s\to 1^{-}} \hspace{0.03cm} (1-s)^2 [w_s]_{H^{\frac{1+s}{2}}(F)} ^{2} <+\infty.
\end{equation}

We fix $\eta \in (0,\delta/8)$ and choose $s_0 = s_0(\eta)$ such that~\eqref{eq:key-est1} holds. Then we let $\rho \in (0,1-\eta)$ and $s \in (s_0,1)$. For convenience, we also set $\Cube := (0,1)^n$.

At this point, we select the ``good'' cubes in $\mathscr{C}_{\rho_s,R,z}(F)$, namely those cubes for which we have an estimate on the oscillation of $w_s$ on each of their one-dimensional edges, and such that $N_s$ does not intersect any of these edges.

To this end, for every $(R,z)\in O(n) \times \Cube$, we consider the set
    \begin{equation} \label{eq:O}
        \mathscr{O}_{s,\rho,R,z}:=\Big\{Q\in \mathscr{C}_{\rho_s,R,z}(F) : \osc(w_s,I)> \frac{\sqrt{2}}{n} \text{ for some edge $I$ of $Q$} \Big\},
    \end{equation}
of cubes for which we do not have a good oscillation estimate on all their edges, and the set
    \begin{equation} \label{eq:N}
        \mathscr{N}_{s,\rho,R,z}:=\Big\{Q\in \mathscr{C}_{\rho_s,R,z}(F) : I \cap N_s \neq \varnothing \text{ for some edge $I$ of $Q$}\Big\},
    \end{equation}
of cubes with at least one edge intersecting $N_s$.

We consider also the set
    \begin{equation} \label{eq:B}
        \mathcal{B}_{s,\rho}^{\lambda}:=
            \Big\{(R,z)\in O(n) \times \Cube
            : \lvert \mathscr{O}_{s,\rho,R,z}\cup \mathscr{N}_{s,\rho,R,z} \rvert  > \frac{2\lambda^2 \delta}{\rho_{s} ^{n-1}}\mathcal{H}^{n-2}(E)\Big\}.
    \end{equation}
For convenience, we set $f_{s,\rho}(R,z):=\lvert \mathscr{O}_{s,\rho,R,z} \rvert$ and $g_{s,\rho}(R,z):=\lvert \mathscr{N}_{s,\rho,R,z} \rvert$. Then, by Markov's inequality, we obtain
    \begin{equation} \label{eq:Markov-1}
        \mu \otimes \Leb^n \bigg( \Big\{ f_{s,\rho} > \frac{\lambda^2 \delta}{\rho_{s} ^{n-1}}\mathcal{H}^{n-2}(E) \Big\} \bigg) \le \frac{\rho_s^{n-1}}{\lambda^2 \delta} \frac{1}{\mathcal{H}^{n-2}(E)} \int_{O(n) \times \Cube} f_{s,\rho} \, d\mu dz, 
    \end{equation}
where $\mu$ denotes the standard probability measure on $O(n)$, namely the bi-invariant Haar measure normalized to have total mass one. The same formula holds with $g_{s,\rho}$ in place of $f_{s,\rho}$.

We define $\mathscr{F} \subset \mathscr{E}_{\rho_s,R,z}(F)$ as the subset of one-dimensional edges of cubes in $\mathscr{O}_{s,\rho,R,z}$. Then,
    \begin{equation} \label{eq:Markov-2}
        f_{s,\rho}(R,z) \le 2^{n-1} \sum_{I \in \mathscr{E}_{\rho_s,R,z}(F)} \chi_{\mathscr{F}}(I) \le n^2 2^{n-2} \sum_{I \in \mathscr{E}_{\rho_s,R,z}(F)} \osc(w_s,I)^2.
    \end{equation}
Here, $\chi_{\mathscr{F}}(I) = 1$ if $I \in \mathscr{F}$ and $0$ otherwise. The first inequality then follows from the fact that each edge belongs to at most $2^{n-1}$ cubes, while the second one uses that $1 < n \cdot \osc(w_s,I)/\sqrt{2}$ for every edge $I$ belonging to a cube in $\mathscr{O}_{s,\rho,R,z}$, by~\eqref{eq:O}. Combining~\eqref{eq:Markov-2} with~\cite[Lemma~4.3]{codim2-frac} we deduce that
    \begin{equation}
        \label{eq:Markov-3}
        \int_{O(n) \times \Cube} f_{s,\rho} \, d\mu dz \le \frac{n^2 2^{n-2}}{\rho_s^{n-1-s}} \cdot c_{n,s_0} (1-s) [w_s]_{H^{\frac{1+s}{2}}(F)}^2,
    \end{equation}
where $c_{n,s_0} > 0$ is the positive constant appearing in~\cite[Lemma~4.3]{codim2-frac}.

From~\eqref{eq:Markov-1} and~\eqref{eq:Markov-3} we have
    \begin{equation}
        \label{eq:Markov-4}
            \mu \otimes \Leb^n \bigg( \Big\{ f_{s,\rho} > \frac{\lambda^2 \delta}{\rho_{s} ^{n-1}}\mathcal{H}^{n-2}(E) \Big\} \bigg) \le \frac{\rho_s^s}{\lambda^2 \delta} \cdot \frac{n^2 2^{n-2}}{\mathcal{H}^{n-2}(E)} \cdot c_{n,s_0} (1-s) [w_s]_{H^{\frac{1+s}{2}}(F)}^2.
    \end{equation}

Moreover, it holds that
    \begin{equation} \label{eq:Markov-5}
        g_{s,\rho}(R,z) \le 2^{n-1} \sum_{I \in \mathscr{E}_{\rho_s,R,z}(F)} \mathcal{H}^0(I \cap N_s),
    \end{equation}
since each edge belongs to at most $2^{n-1}$ cubes, and $\mathcal{H}^0(I \cap N_s) \ge 1$ for every edge $I$ belonging to a cube in $\mathscr{N}_{s,\rho,R,z}$, by~\eqref{eq:N}. Combining~\eqref{eq:Markov-5} with~\cite[Lemma~4.5]{codim2-frac} we deduce that
    \begin{equation}
        \label{eq:Markov-6}
        \int_{O(n) \times \Cube} g_{s,\rho} \, d\mu dz \le \frac{2^{n-1}}{\rho_s^{n-1}} \cdot c_{n} \mathcal{H}^{n-1}(N_s),
    \end{equation}
where $c_{n} > 0$ is the positive constant appearing in~\cite[Lemma~4.5]{codim2-frac}.

From~\eqref{eq:Markov-1} (with $g_{s,\rho}$ in place of $f_{s,\rho}$) and~\eqref{eq:Markov-6} we have
    \begin{equation}
        \label{eq:Markov-7}
            \mu \otimes \Leb^n \bigg( \Big\{ g_{s,\rho} > \frac{\lambda^2 \delta}{\rho_{s} ^{n-1}}\mathcal{H}^{n-2}(E) \Big\} \bigg) \le \frac{1}{\lambda^2 \delta} \cdot \frac{2^{n-1}}{\mathcal{H}^{n-2}(E)} \cdot c_{n} \mathcal{H}^{n-1}(N_s).
    \end{equation}

Putting together the definition~\eqref{eq:B} and inequalities~\eqref{eq:Markov-4} and~\eqref{eq:Markov-7}, we obtain
    \begin{equation*}
            \mu \otimes \Leb^n(\mathcal{B}^{\lambda}_{s,\rho})
            \le
            \frac{c_{n,s_0} n^2 2^{n-2}}{\lambda^2 } \cdot \frac{\rho_s^s}{\delta \mathcal{H}^{n-2}(E)} \cdot (1-s) [w_s]_{H^{\frac{1+s}{2}}(F)}^2
            +
            \frac{c_n 2^{n-1}}{\lambda^2 } \cdot \frac{\mathcal{H}^{n-1}(N_s)}{\delta \mathcal{H}^{n-2}(E)}.
    \end{equation*}
In particular, using~\eqref{eq:limsup_finite}, we also have
    \begin{equation}
        \label{eq:KestA}
        \limsup_{s \to 1} \mu \otimes \Leb^n(\mathcal{B}^{\lambda}_{s,\rho}) \le \frac{c_n 2^{n-1}}{\lambda^2 } \limsup_{s \to 1} \bigg( \frac{\mathcal{H}^{n-1}(N_s)}{\delta \mathcal{H}^{n-2}(E)} \bigg) = c_n 2^{n-1}\lambda.
    \end{equation}
The previous estimate is nontrivial only if $\lambda_0 \le c_n^{-1} 2^{-(n-1)}$, which we assume from now on. 

\smallskip

Let $\pi_{1} \colon \R^{2} \times \R^{n-2} \to \R^{2}$ and $\pi_{2} \colon \R^2 \times \R^{n-2} \to \R^{n-2}$ be the orthogonal projections onto the two factors. We then consider the set
    \[
        \mathfrak{S}_{s,\rho,R,z}^{\lambda}
        :=
        \Big\{\sigma\in E_{\eta} : \lvert \{ Q\in \mathscr{O}_{s,\rho,R,z} \cup \mathscr{N}_{s,\rho,R,z} : \sigma\in \pi_{2}(Q) \} \rvert > \frac{\lambda\delta}{\rho_{s}} \Big\},
    \]
namely the set of points $\sigma\in E_\eta$ (see~\eqref{eq:Omega-eta} for the notation) for which the number of ``bad'' cubes intersecting $\{\sigma\}\times (-\delta,\delta)^2$ exceeds $\lambda\delta / \rho_s$. We observe that
\begin{align}
    \notag
    \frac{\lambda\delta}{\rho_{s}} \mathcal{H}^{n-2}(\mathfrak{S}_{s,\rho,R,z}^{\lambda}) & \leq 
    \int_{E_\eta} \sum_{Q\in \mathscr{O}_{s,\rho,R,z} \cup \mathscr{N}_{s,\rho,R,z}} \chi_{\pi_{2}(Q)}(\sigma) \, d\sigma \\[1.5ex] \notag
    & = 
    \sum_{ Q \in \mathscr{O}_{s,\rho,R,z} \cup \mathscr{N}_{s,\rho,R,z}} \mathcal{H}^{n-2}(\pi_{2}(Q)) \\[1.5ex] \label{eq:KestB}
    & \leq 
    p_{n} \rho_{s}^{n-2} \lvert \mathscr{O}_{s,\rho,R,z}\cup \mathscr{N}_{s,\rho,R,z} \rvert,
\end{align}
where $\chi_{\pi_2(Q)}(\sigma) =1$ if $\sigma \in \pi_2(Q)$ and $0$ otherwise, and $p_n:=\max\{\mathcal{H}^{n-2}(\pi_{2}(R(\Cube))) : R \in O(n)\}$ is a constant depending only on the dimension.

As a consequence of~\eqref{eq:B} and~\eqref{eq:KestB}, for every $(R,z)\in (O(n) \times \Cube) \setminus \mathcal{B}^{\lambda}_{s,\rho}$ we have
    \begin{equation}\label{est:meas_Y}
        \mathcal{H}^{n-2}(\mathfrak{S}_{s,\rho,R,z}^{\lambda}) \leq 2 p_n \lambda \mathcal{H}^{n-2}(E).
    \end{equation}

We are now in a position to use the relation between the Ginzburg--Landau energy (see~\eqref{eq:GL-beta}) and the topological degree, as anticipated in the description of Step~4. 

We introduce the following notation:
    \begin{equation*}
        e_{\eps, \beta}(v)   
            := 
        \frac{1-\beta}{2\abs{\log \eps}} \left( \abs{\nabla v}^2 + \frac{\beta c_0}{\eps^2} (1-|v|^2)^2 \right), \qquad v \colon \Omega \to \R^2,
    \end{equation*}
where $\Omega \subset \R^n$ is an open set, and $(\eps, \beta) \in (0,1)^2$. With this notation, the functional ${GL}_{\eps, \beta}^n(v,\Omega)$ defined in~\eqref{eq:GL-beta} is simply the integral of $e_{\eps, \beta}(v)$ over $\Omega$.

\smallskip

We claim that if $\lambda_0 \le \frac{1}{8\sqrt{n}}$, then there exists a universal constant $K > 0$ such that, for every $\nu \in (0,1/K)$, there exist constants $D_1 > 0$ (depending only on $\beta$ and $\nu$) and $s_1 \in [s_0,1)$ (depending on $\eta,\delta,\beta, d$, and $\nu$) such that
    \begin{equation}
        \label{eq:main-final-1}
            \int_{\{\sigma\}\times [-\delta+\eta,\delta-\eta]^2} e_{\rho_s,\beta}(w_s^{\rho_s,R,z}) \, d\mathcal{H}^2
            \ge
            (1-\beta)(1-K\nu)\pi \cdot d \cdot \bigg(1-\frac{\abs{\log \delta} +D_1(1+\log d)}{\abs{\log \rho_s}}\bigg),
    \end{equation}
for every $(\rho,R,z) \in (0,1-\eta) \times O(n) \times \Cube$, every $\sigma\in E_{\eta} \setminus \mathfrak{S}_{s,\rho,R,z}^{\lambda}$, and every $s \in (s_1,1)$. Here, $w_{s} ^{\rho_s,R,z}$ denotes the discretized approximation of $w_s$ introduced in Step~2.

This is a refined version of~\eqref{eq:GL-degree-12} from Step~4, with an explicit dependence of the remainder term on the parameters. Estimate~\eqref{eq:main-final-1} is essentially a consequence of~\cite[Lemma~3.10]{2005-Indiana-ABO}. For this reason, we first prove~\eqref{th:local-liminf} assuming~\eqref{eq:main-final-1}, and we verify~\eqref{eq:main-final-1} at the end of the proof. 

\smallskip

We recall that $F = E \times (-\delta,\delta)^2$ and that $F_{\eta}$ denotes the set of points whose distance from the complement of $F$ is greater than $\eta$. Clearly, we have
    \begin{equation}
        \label{eq:main-final-2}
        {GL}^n_{\rho_s,\beta}(w_s^{\rho_s,R,z}, F_{\eta})
        \ge
        \int_{E_{\eta} \setminus \mathfrak{S}_{s,\rho,R,z}^{\lambda}} d\sigma
        \int_{\{\sigma\} \times [-\delta+\eta, \delta-\eta]^2} e_{\rho_s,\beta}(w_s^{\rho_s,R,z}) \, d\mathcal{H}^2.
    \end{equation}

From~\eqref{est:meas_Y}, for every $(R,z)\in (O(n) \times \Cube) \setminus \mathcal{B}^{\lambda}_{s,\rho}$ we have
    \[
        \mathcal{H}^{n-2}(E_{\eta} \setminus \mathfrak{S}_{s,\rho,R,z}^{\lambda}) \ge \mathcal{H}^{n-2}(E_{\eta}) - 2 \lambda p_n \mathcal{H}^{n-2}(E),
    \]
where the right-hand side is independent of $R$ and $z$. Moreover, choosing $\lambda_0 \le \frac{1}{4 p_n}$, there exists $\eta_0=\eta_0(E) \in (0,\delta/8)$ such that for every $\eta \in (0,\eta_0)$ and every $\lambda \in (0,\lambda_0)$ the right-hand side of the above inequality is strictly positive.

Consequently, from~\eqref{eq:main-final-1} and~\eqref{eq:main-final-2}, we deduce that, for every $\rho \in (0,1-\eta)$,
    \begin{multline}
        \label{eq:main-final-3}
         \liminf_{s \to 1} \int_{(O(n) \times \Cube) \setminus \mathcal{B}_{s,\rho}^{\lambda}} {GL}^n_{\rho_s,\beta}(w_s^{\rho_s,R,z}, F_{\eta}) \, d\mu dz \\[1.5ex]
        \ge
        \big(\mathcal{H}^{n-2}(E_{\eta}) - 2 \lambda p_n \mathcal{H}^{n-2}(E) \big) (1-\beta)(1-K\nu)\pi d \cdot \liminf_{s \to 1^{-}} \mu \otimes \Leb^n \big((O(n) \times \Cube) \setminus \mathcal{B}_{s,\rho}^{\lambda} \big).
    \end{multline}

We can now combine all the estimates to conclude the proof. We set
    \[
        \lambda_0 := \min\bigg\{ \frac{1}{c_n2^{n-1}}, \frac{1}{4p_n}, \frac{1}{8\sqrt{n}} \bigg\}, \quad \lambda \in ( 0,\lambda_0], \quad \text{and} \quad \eta \in (0,\eta_0(E)).
    \]
By restricting the inner integral on the right-hand side of~\eqref{eq:key-est1} to the complement of $\mathcal{B}_{s,\rho}^{\lambda}$, taking the liminf as $s \to 1$, applying Fatou's lemma, and using~\eqref{eq:KestA} and~\eqref{eq:main-final-3}, we obtain
    \begin{multline}
        \label{eq:recall-1}
            \liminf_{s \to 1} \hspace{0.03cm} (1-s)^2 [w_s]_{H^{\frac{1+s}{2}}(F)} ^{2} \geq \\[1ex]
            \frac{2\omega_{n-1}}{n} 
            \big(\mathcal{H}^{n-2}(E_{\eta}) - 2 \lambda p_n \mathcal{H}^{n-2}(E) \big) (1-\beta)(1-K\nu)\pi d (1-c_n 2^{n-1} \lambda)
            \int_0^{1-\eta} 
            \abs{\log \rho} \, d \rho.
        \end{multline}

Finally, letting $\nu \to 0^{+}$, $\beta\to 0^{+}$, and $\eta\to 0^{+}$ in~\eqref{eq:recall-1}, and using that $\int_0^1 \abs{\log \rho} \, d\rho =1$, we conclude that
    \begin{align*}
        \liminf_{s \to 1} \hspace{0.03cm} (1-s)^2 [w_s]_{H^{\frac{1+s}{2}}(F)} ^{2}
        & \geq 
        \frac{2 \pi \omega_{n-1}}{n} d \cdot
            \mathcal{H}^{n-2}(E) \cdot ( 1 - 2 \lambda p_n ) (1-c_n 2^{n-1} \lambda) \\[1.5ex]
        & \geq
        \frac{2 \pi \omega_{n-1}}{n} d \cdot
            \mathcal{H}^{n-2}(E) \cdot (1-C_n \lambda),
    \end{align*}
where $C_n:=2p_n+c_n 2^{n-1}$. This concludes the proof of~\eqref{th:local-liminf}.

\smallskip

It remains to prove~\eqref{eq:main-final-1}. For every $\sigma\in E_{\eta}$ we define
    \[
        \mathcal{L}_{s,\rho,R,z}(\sigma):=\Big\{ \ell \in (\delta/2,\delta-\eta) : \exists Q \in \mathscr{O}_{s,\rho,R,z} \cup \mathscr{N}_{s,\rho,R,z} \text{ such that } \gamma_{\sigma,\ell} \cap Q \neq \varnothing \Big\},
    \]
namely the set of $\ell \in (\delta/2,\delta-\eta)$ for which $\gamma_{\sigma,\ell}$ intersects at least one ``bad'' cube. We recall that we have chosen $\eta<\delta/8$, so that $(\delta/2,\delta-\eta)$ is not empty.

For every $\sigma\in E_{\eta} \setminus \mathfrak{S}_{s,\rho,R,z}^{\lambda}$ it holds that
\begin{equation}\label{est:meas_L}
    \mathcal{H}^{1}(\mathcal{L}_{s,\rho,R,z}(\sigma)) \leq \sum_{Q\in\mathscr{O}_{s,\rho,R,z}\cup \mathscr{N}_{s,\rho,R,z}} \diam (Q\cap (\{\sigma\}\times(-\delta,\delta)^2))\leq \frac{\lambda\delta}{\rho_s} \sqrt{n} \rho_s = \lambda \sqrt{n} \delta.    
\end{equation}
Since $\lambda \le \lambda_0 \le \frac{1}{8\sqrt{n}}$, we have $\lambda \sqrt{n} \delta \le \delta/8 < \delta/2 - \eta$. Hence, for every $\sigma \in E_{\eta} \setminus \mathfrak{S}_{s,\rho,R,z}^{\lambda}$, there exist many curves $\gamma_{\sigma,\ell}$ that avoid all ``bad'' cubes.


\smallskip

For every such curve $\gamma_{\sigma,\ell}$ with $\ell \notin \mathcal{L}_{s,\rho,R,z}(\sigma)$, let $U_{\sigma,\ell}$ be the union of the cubes in $\mathscr{C}_{\rho_s,R,z}(F)$ intersecting $\gamma_{\sigma,\ell}$, and let $S_{\sigma,\ell}$ be the union of their edges.

We claim that for every $\ell\notin \mathcal{L}_{s,\rho,R,z}(\sigma)$, it holds that
    \begin{equation}\label{claim_v}
        \abs*{w_s^{\rho_s,R,z}(x)}>1/2 \quad \forall x\in U_{\sigma,\ell}
            \qquad \text{and} \qquad
        \abs*{\deg(w_s^{\rho_s,R,z}/|w_s^{\rho_s,R,z}|, \gamma_{\sigma,\ell})} = d.
    \end{equation}

Indeed, if $Q \notin \mathscr{O}_{s,\rho,R,z}$, the oscillation estimate for $w_s$ yields $\abs{w_s(x)-w_s(y)} \leq \sqrt{2}/n$ for every pair of vertices $x,y$ of $Q$ connected by an edge. Since any two vertices of a $n$-dimensional cube can be joined by a path consisting of at most $n$ edges, the triangle inequality implies that $\abs{w_s(x)-w_s(y)} \leq \sqrt{2}$ for all vertices $x,y$ of $Q$. Hence, the image of the vertices of $Q$ under $w_s$ is contained in a quarter of the circle. By construction, $w_{s} ^{\rho_s,R,z}(Q)$ lies in the convex hull of the images of the vertices of $Q$, and $w_{s} ^{\rho_s,R,z}=w_s$ on these vertices. It follows that $\abs{w_{s} ^{\rho_s,R,z}} \geq \sqrt{2}/2>1/2$ on every cube $Q\notin \mathscr{O}_{s,\rho,R,z}$, and therefore on $U_{\sigma,\ell}$.

Moreover, $\gamma_{\sigma,\ell}$ can be continuously deformed inside $U_{\sigma,\ell}$ (and hence inside $F^*$, since $\ell > \delta/2$ and $\rho_s$ is small) into a curve $\gamma \subset S_{\sigma,\ell}$. In particular, $\gamma \cap N_{s} = \varnothing$, because the cubes in $U_{\sigma,\ell}$ do not belong to $\mathscr{N}_{s,\rho,R,z}$. Hence, by our hypothesis, $\abs{\deg(w_{s},\gamma)}=d$.

Finally, we observe that the oscillation estimate implies that $w_s ^{\rho_s,R,z}/\abs{w_s^{\rho_s,R,z}}$ is homotopic to $w_s$ on each edge in $S_{\sigma,\ell}$. Indeed, on these edges $w_s$ takes values in an arc of length less than $\pi/2$, while $w_s^{\rho_s,R,z}$ takes values in its convex hull, which does not contain the origin. Hence $w_s ^{\rho_s,R,z}/\abs{w_s^{\rho_s,R,z}}$ is homotopic to $w_s$ on $\gamma$, and we conclude that
    \[
        \abs*{\deg(w_s ^{\rho_s,R,z}/\abs{w_s^{\rho_s,R,z}}, \gamma_{\sigma,\ell})} =\abs*{\deg(w_s ^{\rho_s,R,z}/\abs{w_s^{\rho_s,R,z}}, \gamma)}
        =
        \abs*{\deg(w_s,\gamma)}=d,
    \]
which concludes the proof of~\eqref{claim_v}.

\smallskip

At this point, we observe that for every $\sigma \in E_{\eta}$ and every $\beta\in (0,1)$ it holds that
    \[
        \int_{0}^{\delta-\eta} d\ell \int_{\gamma_{\sigma,\ell}} e_{\rho_s,\beta}(w_{s} ^{\rho_s,R,z})\,d\mathcal{H}^1 = \int_{\{\sigma\}\times [-\delta+\eta,\delta-\eta]^2} e_{\rho_s,\beta}(w_{s}^{\rho_s,R,z})\,d\mathcal{H}^2.
    \]
In particular, since $\eta \in (0,\delta/8)$ and $\lambda_0 \le \frac{1}{8\sqrt{n}}$, by~\eqref{est:meas_L} it follows that for every $\sigma \in E_\eta \setminus \mathfrak{S}_{s,\rho,R,z}^{\lambda}$ there exists $\ell_\sigma\in (\delta/2,\delta-\eta)\setminus \mathcal{L}_{s,\rho,R,z}(\sigma)$ such that
    \begin{equation} \label{eq-GL-b1}
        \int_{\gamma_{\sigma,\ell_\sigma}} e_{\rho_s,\beta}(w_{s} ^{\rho_s,R,z})\,d\mathcal{H}^1 \leq \frac{4}{\delta} \int_{\{\sigma\}\times [-\delta+\eta,\delta-\eta]^2} e_{\rho_s,\beta}(w_{s} ^{\rho_s,R,z})\,d\mathcal{H}^2.
    \end{equation}

As a consequence of~\eqref{claim_v}, the restriction $\widetilde{w}_{s} ^{\rho_s,R,z}$ of $w_{s} ^{\rho_s,R,z}$ to $\{ \sigma \} \times [-\ell_{\sigma}, \ell_{\sigma}]^2$ satisfies the assumptions of~\cite[Lemma~4.6]{codim2-frac} (which is a reformulation of~\cite[Lemma~3.10]{2005-Indiana-ABO}) for every $(\rho,R,z) \in (0,1-\eta) \times O(n) \times \Cube$ (see also~\cite[Remark~4.7]{codim2-frac}).

Let $K, D_0, D_1 > 0$ be the constants given by~\cite[Lemma~4.6]{codim2-frac}. Since $2 \ell_{\sigma} > \delta$, we can choose $s_1 \in [s_0,1)$ such that
    \[
       \frac{\rho_s}{2\ell_{\sigma}} < 1 \quad \text{and} \quad \frac{\rho_s}{2\ell_{\sigma}} \abs*{ \log \frac{\rho_s}{2\ell_{\sigma}}} < \frac{D_0}{d}, \quad \forall \rho \in (0,1-\eta).
    \]
Then, for every $s \in [s_1,1)$, combining~\eqref{eq-GL-b1} with~\cite[Lemma~4.6]{codim2-frac} (applied with $\nu/4$ in place of~$\rho$) we obtain
    \begin{align}
        \notag
        (1 + K \nu) \int_{\{\sigma\}\times [-\delta+\eta,\delta-\eta]^2} & e_{\rho_s,\beta}(w_{s} ^{\rho_s,R,z}) \, d\mathcal{H}^2 \\ \notag
        &\geq
        \int_{\{\sigma\}\times [-\ell_\sigma,\ell_\sigma]^2} e_{\rho_s,\beta}(w_{s} ^{\rho_s,R,z}) \, d\mathcal{H}^2 + \frac{K \nu \ell_\sigma}{4} \int_{\gamma_{\sigma,\ell_\sigma}} e_{\rho_s,\beta}(w_{s} ^{\rho_s,R,z})\,d\mathcal{H}^1 \\[1ex] \notag
        &\geq 
        \int_{\{\sigma\}\times [-\ell_\sigma,\ell_\sigma]^2} e_{\rho_s,\beta}(\widetilde{w}_{s} ^{\rho_s,R,z}) \, d\mathcal{H}^2 + \frac{K \nu \ell_\sigma}{4} \int_{\gamma_{\sigma,\ell_\sigma}} e_{\rho_s,\beta}(\widetilde{w}_{s} ^{\rho_s,R,z})\,d\mathcal{H}^1 \\[1ex] \notag
        &\geq 
        (1-\beta) \pi d \bigg(1-\frac{\abs{\log \delta} +D_1(1+\log d)}{\abs{\log \rho_s}}\bigg),
    \end{align}
where in the last line we used that $2\ell_{\sigma} \in (\delta,2\delta)$ and $\delta < 1/2$. Finally,~\eqref{eq:main-final-1} follows from the elementary inequality $(1+K\nu)^{-1} \ge 1-K\nu$. This concludes the proof for $n \ge  3$. 

For $n=2$ the same argument applies with many simplifications. The set $E$ is finite and the argument applies separately at each point of $E$, with no need for slicing in the $\sigma$-variable, and the estimate on the exceptional set $\mathfrak S_{s,\rho,R,z}^{\lambda}$ is unnecessary. For every $\sigma \in E$, one just chooses $\ell\in(\delta/2,\delta-\eta)$ so that the square loop $\gamma_{\sigma, \ell}$ avoids the bad squares, which follows directly from the bound on their total number. The degree argument and the two-dimensional Ginzburg--Landau lower bound are exactly the same, giving \eqref{th:local-liminf} with $n=2$. 
\end{proof}


In order to prove that $\text{Proposition}~\ref{prop:local-liminf} \implies~\eqref{eucl_liminf1}$, we will need the following lemma.

\begin{lemma}\label{lemma:blow-up}
Let $\Sigma=\lcurr E,\theta,\tau\rcurr$ be an integral $k$-current with zero boundary in some open set $\Omega\subset \R^n$. For every $x\in\R^n$ and every $r>0$ set $\Sigma_{x,r}:=\lcurr E_{x,r},\theta_{x,r},\tau_{x,r}\rcurr$, where
    \[
        E_{x,r}:=\frac{E-x}{r},\qquad \theta_{x,r}(y):=\theta(x+ry),\qquad \tau_{x,r}(y):=\tau(x+ry).
    \]

For every $x\in E$ at which $E$ admits an approximate tangent plane let us consider also the following integral $k$-current on $\R^n$:
    \[
        \Sigma_{x,0}:=\lcurr T_x E,\theta_{x,0},\tau_{x,0}\rcurr,\quad\text{where}\quad
        \theta_{x,0}(y)\equiv\theta(x),\qquad \tau_{x,0}(y)\equiv\tau(x).
    \]

Then, for $\mathcal{H}^{k}$-almost every $x\in E$ and every $R>0$ it holds that
    \[
        \lim_{r\to 0^+} \F_{B_R}(\Sigma_{x,r}-\Sigma_{x,0})=0,
    \]
and $\|\Sigma_{x,r}\|\rightharpoonup \theta(x) \mathcal{H}^{k}\mres T_x E$ locally in the weak sense of measures.
\end{lemma}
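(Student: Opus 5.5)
The plan is to reduce both claims to the classical blow-up theory for rectifiable sets and integer multiplicities, and then to upgrade weak convergence to flat convergence using the fact that the blow-ups have zero boundary. Since $\Sigma$ is an integral current with $\partial\Sigma=0$ in $\Omega$, it has finite mass, and $\|\Sigma\| = \theta\,\mathcal{H}^k\mres E$ with $\theta\in L^1(\mathcal{H}^k\mres E)$. Fix $x\in E$ satisfying three conditions, each of which holds $\mathcal{H}^k$-a.e.:
\begin{itemize}
\item $E$ has an approximate tangent plane $T_xE$ at $x$;
\item $x$ is a Lebesgue point of both $\theta$ and $\tau$ with respect to $\mathcal{H}^k\mres E$, in the sense that
\[
r^{-k}\int_{E\cap B_r(x)} \big(|\theta(y)-\theta(x)|+|\tau(y)-\tau(x)|\big)\,d\mathcal{H}^k(y)\to 0;
\]
\item $\mathcal{H}^k(E\cap B_r(x))/(\omega_k r^k)\to 1$, where here $\omega_k$ is the volume of the unit $k$-ball.
\end{itemize}

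\textbf{Weak convergence of mass.} By the definition of approximate tangent plane, $\mathcal{H}^k\mres E_{x,r}\rightharpoonup \mathcal{H}^k\mres T_xE$ locally weakly. The Lebesgue point property then shows that $\theta_{x,r}\mathcal{H}^k\mres E_{x,r}$ and $\theta(x)\mathcal{H}^k\mres E_{x,r}$ differ in total variation on $B_R$ by $o(1)$ after rescaling. Together these give $\|\Sigma_{x,r}\|\rightharpoonup \theta(x)\mathcal{H}^k\mres T_xE$. This is standard (e.g.\ Simon's book or Federer, 4.1.31 and 4.3), and I would mostly cite it.

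\textbf{Weak convergence as currents.} The same Lebesgue point argument, applied now to $\theta\tau$, shows that $\Sigma_{x,r}\to \Sigma_{x,0}$ weakly as currents on $B_{R'}$ for every $R'$. Concretely, for every test form $\omega$,
\[
\langle \Sigma_{x,r},\omega\rangle - \theta(x)\int_{E_{x,r}}\tau(x)\cdot\omega\,d\mathcal{H}^k \to 0,
\]
and the remaining term converges by the tangent plane property. There is a subtlety about the orientation being consistent with $T_xE$; I expect this to hold at a.e.\ point since $\tau(x)$ spans $T_xE$. Moreover, the masses $\M_{B_{R'}}(\Sigma_{x,r})$ are uniformly bounded by the density bound, and $\partial\Sigma_{x,r}=0$ in $B_{R'}$ once $r$ is small enough that $x+rB_{R'}\subset\Omega$.

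\textbf{Upgrade to flat convergence.} This is the main obstacle, since weak convergence alone does not give flat convergence. Here I would use that for normal currents with uniformly bounded mass and boundary mass on a slightly larger ball, weak convergence implies local flat convergence (Federer 4.2.17 / Simon 31.2). Applied to $\Sigma_{x,r}-\Sigma_{x,0}$ on $B_{2R}$, this yields $\F_{B_R}(\Sigma_{x,r}-\Sigma_{x,0})\to 0$. If one insists on the integral flat norm \eqref{eq: flat int}, I would invoke the equivalence with the real flat norm for codimension-two boundaries recalled in the preliminaries from \cite{Bre-Mi}. Alternatively, one could use compactness of integral currents with bounded mass and zero boundary: every subsequence has an integral flat limit, which must coincide with the weak limit $\Sigma_{x,0}$. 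This gives the same conclusion without needing the real-flat theorem.
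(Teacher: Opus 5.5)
Your proposal is correct and follows the paper's proof essentially step by step. The paper fixes points where $E$ has an approximate tangent plane, $\theta$ and $\tau$ are approximately continuous, and the density is bounded. It then uses the same three-term splitting ($\theta$-error, $\tau$-error, tangent-plane term) to get weak convergence both of $\Sigma_{x,r}$ and of $\|\Sigma_{x,r}\|$. Finally, it upgrades to flat convergence using bounded mass and $\partial\Sigma_{x,r}=0$ in $B_R$ for small $r$, via Simon's Theorem 31.2.

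Two small remarks. The orientation ``subtlety'' is a non-issue, since $\tau(x)$ orients $T_xE$ by the very definition of $\Sigma$. The detour through the codimension-two equivalence of flat norms is also unnecessary, because Simon's result already works with integer multiplicity currents.
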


\begin{proof}
We show that the claim holds whenever $x\in E$ is such that all the following holds.
\begin{itemize}
    \item $E$ admits an approximate tangent plane at $x$.
    \item $x$ is an approximate continuity point for both $\theta$ and $\tau$, with respect to the measure $\mathcal{H}^{k}\mres E$, namely
    \begin{gather}
    \lim_{r\to 0} \frac{1}{\mathcal{H}^{k}(E\cap B(x,r))}\int_{B(x,r)} \abs{\theta(y)-\theta(x)}d\mathcal{H}^{k}(y)=0,\label{appr_cont_theta}\\
    \lim_{r\to 0} \frac{1}{\mathcal{H}^{k}(E\cap B(x,r))}\int_{B(x,r)} \abs{\tau(y)-\tau(x)}d\mathcal{H}^{k}(y) =0.\label{appr_cont_tau}
    \end{gather}
    \item The $k$-density of $E$ at $x$ is bounded, namely
    \begin{equation}\label{eq:bounded_density}
    \limsup_{r\to 0^+} \frac{\mathcal{H}^{k}(E\cap B(x,r))}{r^k}<+\infty.
    \end{equation}
\end{itemize} 

We point out that all the previous properties hold for $\mathcal{H}^{k}$-almost every $x\in E$.

Now we show that for every such $x\in E$ it holds that $\Sigma_{x,r}\rightharpoonup \Sigma_{x,0}$ as $r\to 0^+$ in the weak sense of currents. To this end, we fix a test $k$-form $\omega \in \D^{k}(B_R)$, and we compute
\begin{align*}
\langle \Sigma_{x,r}-\Sigma_{x,0} , \omega \rangle &= \int_{E_{x,r}} \theta_{x,r}(y) \tau_{x,r}(y)\cdot\omega(y) \,d\mathcal{H}^{k}(y) - \int_{T_{x}E}\theta(x)\tau(x)\cdot\omega(y) \,d\mathcal{H}^{k}(y)\\
&= \int_{E_{x,r}} \theta_{x,r}(y) \tau_{x,r}(y)\cdot\omega(y) \,d\mathcal{H}^{k}(y) - \int_{E_{x,r}}\theta(x)\tau_{x,r}(y)\cdot\omega(y) \,d\mathcal{H}^{k}(y)\\
&\quad +\int_{E_{x,r}} \theta(x) \tau_{x,r}(y)\cdot\omega(y) \,d\mathcal{H}^{k}(y) - \int_{E_{x,r}}\theta(x)\tau(x)\cdot\omega(y) \,d\mathcal{H}^{k}(y)\\
&\quad +\int_{E_{x,r}} \theta(x) \tau(x)\cdot\omega(y) \,d\mathcal{H}^{k}(y) - \int_{T_{x}E}\theta(x)\tau(x)\cdot\omega(y) \,d\mathcal{H}^{k}(y),
\end{align*}
so it follows
\begin{align*}
\abs{\langle & \Sigma_{x,r}  -\Sigma_{x,0} ,  \omega \rangle} \\  & \le  \|\omega\|_{\infty} \bigg( \int_{E_{x,r} \cap B_R} \abs{\theta_{x,r}(y)-\theta(x)} \,d\mathcal{H}^{k}(y) + \abs{\theta(x)}\int_{E_{x,r} \cap B_R} \abs{\tau_{x,r}(y)-\tau(x)} \,d\mathcal{H}^{k}(y) \bigg) \\[1ex]
& + \bigg|\int_{E_{x,r}} \theta(x) \tau(x)\cdot\omega(y) \,d\mathcal{H}^{k}(y) - \int_{T_{x}E}\theta(x)\tau(x)\cdot\omega(y) \,d\mathcal{H}^{k}(y)\bigg|.
\end{align*}

Observe that the second line in the right-hand side tends to zero as $r\to 0^+$ because of the definition of approximate tangent plane.

Moreover, observe that for every $\mathcal{H}^{k}$-measurable function $f \colon E\to [0,+\infty)$ it holds that
$$\int_{E_{x,r}} f(x+ry)\,d\mathcal{H}^{k}(y)=\frac{1}{r^{k}}\int_{E} f(y)\,d\mathcal{H}^{k}(y),$$
hence also the first line tends to zero because of \eqref{appr_cont_theta}, \eqref{appr_cont_tau} and \eqref{eq:bounded_density}. Thus, we have proved that $\Sigma_{x,r}\rightharpoonup \Sigma_{x,0}$ as $r\to 0^+$ in the weak sense of currents.

Similarly, we can show that for every $x\in E$ as above it holds that $\|\Sigma_{x,r}\|\rightharpoonup \theta(x) \mathcal{H}^{k}\mres T_x E$. Indeed, if we fix $\psi \in C^0_c (B_R)$, we get
\begin{align*}
\bigg|\int_{B_R} \psi(y)\, d(\|\Sigma_{x,r}\|- \theta(x)\mathcal{H}^{k}\mres T_x E )(y)\bigg|=\bigg|\int_{E_{x,r}} \psi(y)\theta_{x,r}(y)\, d\mathcal{H}^{k}(y) - \int_{T_x E} \psi(y)\theta(x)\, d\mathcal{H}^{k}(y)\bigg|\\
\leq \|\psi\|_\infty \int_{E_{x,r} \cap B_R} |\theta_{x,r}(y)-\theta(x)| d\mathcal{H}^k(y) + \theta(x) \bigg|\int_{E_{x,r}} \psi(y)\,d\mathcal{H}^k(y) - \int_{T_x E}\psi(y)d\mathcal{H}^k(y) \bigg|,
\end{align*}
and the right-hand side tends to zero as $r\to 0^+$ for the same reasons as before.

As a consequence of the weak convergence, we also deduce that $\M_{B_R}(\Sigma_{x,r})$, which is the total mass of the measure $\|\Sigma_{x,r}\|$, is bounded as $r\to 0^+$.

Since we know that $\M_{B_R}(\partial \Sigma_{x,r})=0$ for $r$  sufficiently small (so that $B(x,rR)\subset \Omega$), the boundedness of the mass is sufficient to upgrade weak convergence of currents to convergence in the flat norm (see, for example, \cite[Theorem 31.2]{GMT-Simon}), so the proof is concluded.
\end{proof}

We are now ready to prove the implication $\text{Proposition}~\ref{prop:local-liminf} \implies~\eqref{eucl_liminf1}$.

\begin{proof}[Proof that \text{Proposition}~\ref{prop:local-liminf} implies~\eqref{eucl_liminf1}.]

Let $\Sigma$ be as in~\eqref{eucl_liminf1} and $E,\theta,\tau$ be such that $\Sigma=\lcurr E,\theta,\tau \rcurr$. Moreover, we fix a small positive number $\ep>0$. By Lemma~\ref{lemma:blow-up} and Vitali covering theorem, we can find a countable (or finite) family $\{Q(x_i,r_i)\}_{i\in I}$ of pairwise disjoint cubes such that the following hold:
\begin{itemize}
    \item $Q(x_i,r_i)$ has sides of length $r_i$ and some faces (of dimension $n-2$) parallel to $T_{x_i}E$.
    \item $\mathcal{H}^{n-2}(E\setminus \bigcup_{i\in I}Q(x_i,r_i))=0.$
    \item $\F_{\widehat{Q}_i}(\Sigma_{x_i,r_i} -\Sigma_{x_i,0})< \ep$ for every $i\in I$, where $\widehat{Q}_i:=(Q(x_i,r_i)-x_i)/r_i$.
    \item $\M_{\widehat{Q}_i}(\Sigma_{x_i,r_i})\leq (1+\ep)\M_{\widehat{Q}_i}(\Sigma_{x_i,0})=(1+\ep)\theta(x_i)$ for every $i\in I$. 
\end{itemize}
We remark that, by scaling, the third and fourth properties imply that
    \[
        \F_{Q(x_i,r_i)}(\Sigma - \lcurr x_i+ T_{x_i}E , \theta(x_i), \tau(x_i)\rcurr)< \ep r_i^{n-1},
        \qquad
        \M_{Q(x_i,r_i)}(\Sigma)\leq (1+\ep)\theta(x_i)r_i^{n-2}.
    \]

As a consequence, we also have that
    \[
        \limsup_{s \to 1} \F_{Q(x_i,r_i)}(\Sigma_s - \lcurr x_i+ T_{x_i}E , \theta(x_i), \tau(x_i)\rcurr)< \ep r_i^{n-1},
    \]
and, since both $\Sigma_s$ and $T_x E$ are smooth manifolds, there is an integral $(n-1)$-current $T_s$, supported on a \emph{closed} $(n-1)$-rectifiable set $N_s$ (in fact, $N_s$ can be chosen to be a smooth manifold by \cite[Theorem~3.16]{Bre-Mi}), such that
    \[
        \partial T_s=\Sigma_s - \lcurr x_i+ T_{x_i}E , \theta(x_i), \tau(x_i)\rcurr \quad\text{in } Q(x_i,r_i), \qquad \text{and}\qquad \mathcal{H}^{n-1}(N_s)\leq \M(T_s)<\ep r_i^{n-1}.
    \]

As a consequence, for every closed Lipschitz curve $\gamma\subset Q(x_i,r_i)\setminus N_s$, we have that $[\Sigma_s]=\theta(x_i)[x_i+T_{x_i}E]$ in the $(n-2)$-dimensional homology of $Q(x_i,r_i)\setminus\gamma$. Therefore, if $A$ is any surface with $\partial A=\gamma$, then \cite[equation~(3.6)]{ABO2-singularities} yields
    \[
        \abs{\deg(w_s,\gamma)}=\abs{\operatorname{int}(\theta(x_i)\Sigma_s,A)} =\theta(x_i)\abs{\operatorname{int}(x_i+T_{x_i}E,A)},
    \]
and in particular we find that $\abs{\deg(w_s,\gamma)}=\theta(x_i)$ for every loop in $Q(x_i,r_i)$ which winds once around $x_i + T_{x_i} E$ and does not intersect $N_s$.

Thus, identifying $T_{x_i} E \cap Q(x_i,r_i)\cong [0,r_i]^{n-2}$, we can apply Proposition~\ref{prop:local-liminf} with $\delta=r_i/2$ and $d=\theta(x_i)$, so we obtain that
    \begin{align*}
        \liminf_{s\to 1^{-}} \hspace{0.03cm} (1-s)^2 [w_s]_{H^{\frac{1+s}{2}}(Q(x_i,r_i))} ^2 &\geq \frac{2\pi\omega_{n-1}}{n}  \theta(x_i) \cdot r_i^{n-2} \cdot \liminf_{s\to 1^{-}}\bigg[1-C_n\biggl(\frac{\mathcal{H}^{n-1}(N_s)}{r_i \cdot r_i^{n-2}}\biggr)^{1/3}\bigg]\\
        &\geq \frac{2\pi\omega_{n-1}}{n}  \frac{\M_{Q(x_i,r_i)}(\Sigma)}{1+\ep}  \Big[1-C_n \ep^{1/3}\Big].
    \end{align*}

Summing over $i\in I$, since the cubes $Q(x_i,r_i)$ are disjoint and cover $E$ (up to an $\mathcal{H}^{n-2}$-negligible set) we obtain that
    \[
        \liminf_{s\to 1^{-}} \hspace{0.03cm} (1-s)^2 [w_s]_{H^{\frac{1+s}{2}}(\Omega)}^2 \geq \frac{2\pi\omega_{n-1}}{n}  \frac{\M_\Omega(\Sigma)}{1+\ep}  \Big[1-C_n \ep^{1/3}\Big].
    \]
Letting $\ep \to 0^+$ we obtain exactly~\eqref{eucl_liminf1}.
\end{proof}

Finally, we show that $\eqref{eucl_liminf1} \implies~\eqref{eq:liminf-1}$.

\begin{proof}[Proof that \eqref{eucl_liminf1} implies~\eqref{eq:liminf-1}.]

Without loss of generality, we can assume that the currents $\{\Sigma_s\}$ are smooth submanifolds for every $s$.

Indeed, if $u_s$ is the minimizer in the definition of $\mathbb{M}_s(\Sigma_s)$, by Theorem~\ref{thm: approx with standard singularities} we can find a sequence $\{u_{s,k}\}$ of maps that are smooth outside some smooth $(n-2)$-submanifold $\Sigma_{s,k} \subset M$, for which ${\star J u_{s,k}} = \pi \Sigma_{s,k}$, and such that $u_{s,k} \to u_s$ strongly in $H^{\frac{1+s}{2}}$.

By Theorem~\ref{teo:BN-Invent fractional}, we have $\Sigma_{s,k} \to \Sigma_s$ in the flat topology, and
	\[
		\limsup_{k\to \infty} \mathbb{M}_s(\Sigma_{s,k})
		\le
		\lim_{k\to \infty} [u_{s,k}]_{H^{\frac{1+s}{2}}(M)}^2
		=
		[u_s]_{H^{\frac{1+s}{2}}(M)}^2
		=
		\mathbb{M}_s(\Sigma_s).
	\]

Therefore, we can find a diagonal sequence $\Sigma_{s,k(s)}$ that still converges to $\Sigma$ in the flat topology and for which the liminf of the fractional mass has not increased. Therefore, it is enough to prove the liminf inequality for this new sequence, which now consists of smooth submanifolds (and will be denoted just by $\Sigma_s$).

By Corollary~\ref{cor: equivalence inf on smooth}, we can find maps $v_s \in C^\infty(M\setminus \Sigma_s;\Sp^1) \cap H^{\frac{1+s}{2}}(M;\Sp^1)$ such that
	\[
		\star J v_s = \pi \Sigma_s, \quad \text{and} \quad \mathbb{M}_s(\Sigma_s) \ge [v_s]_{H^{\frac{1+s}{2}}(M)}^2 - (1-s).
	\]
In particular, it is enough to prove that
	\[
		\liminf_{s \to 1} \hspace{0.03cm} (1-s)^2 [v_s]_{H^{\frac{1+s}{2}}(M)}^2
		\ge \frac{2\pi \omega_{n-1}}{n} \mathbb{M}(\Sigma).
	\]

At this point, we can further reduce ourselves to the Euclidean setting since, for every fixed $\eps>0$, by Proposition \ref{prop: comparison kernel sharp} we can find a family of pairwise disjoint open sets $\{U_j\}_{j \in J}$ in $M$ and parameterizations $\phi_j \colon V_j \to U_j$, where $V_j \subset \mathbb{R}^n$, such that
	\begin{gather*}
		\liminf_{s \to 1} \hspace{0.03cm} (1-s)^2 [v_s]_{H^{\frac{1+s}{2}}(U_j)}^2
		\ge
		(1-\varepsilon) \liminf_{s \to 1} \hspace{0.03cm} (1-s)^2
		[v_s \circ \phi_j]_{H^{\frac{1+s}{2}}(V_j)}^2, \\[1ex]
		\mathbb{M}_{V_j}(\phi_j^* \Sigma)
		\ge
		(1-\varepsilon)\,\mathbb{M}_{U_j}(\Sigma),
		\quad \text{and} \quad
		\sum_{j\in J} \mathbb{M}_{U_j}(\Sigma)
		\ge
		(1-\varepsilon)\,\mathbb{M}(\Sigma),
	\end{gather*}
so that it is enough to show that
	\[
		\liminf_{s \to 1} \hspace{0.03cm} (1-s)^2
		[v_s \circ \phi_j]_{H^{\frac{1+s}{2}}(V_j)}^2
		\ge
		\frac{2\pi \omega_{n-1}}{n} \mathbb{M}_{V_j}(\phi_j^* \Sigma),
	\]
and this follows from~\eqref{eucl_liminf1}.
\end{proof}

\subsection[Proof of the limsup inequality]{Proof of the limsup inequality}\label{sec: gamma limsup currents}

In this subsection, we prove the $\Gamma$-limsup inequality of Theorem~\ref{thm: new main asymptotics}, that is: for every admissible boundary $\Sigma$ there exists a (discrete) sequence of admissible boundaries $\{\Sigma_s\}$ such that
\[
\lim_{s \to 1} \F (\Sigma_s - \Sigma) = 0  \qquad \mbox{and} \qquad  \limsup_{s \to 1} \hspace{0.03cm} (1-s)^2 \M_{s}(\Sigma_s) \leq \frac{2\pi\omega_{n-1}}{n} \M(\Sigma) . 
\]
Note that if $\M(\Sigma) = +\infty$ there is nothing to prove, so we assume that $\M(\Sigma)<+\infty$ from now on, which in particular implies that $\Sigma$ is an integral $(n-2)$-boundary.

We will use a particular case of a deep result by Almgren--Browder--Caldini--De~Lellis on the smooth approximation of integral cycles \cite{almgren2024optimal}. This is not strictly necessary (see Remark \ref{rem: Gamma-limusp secondo modo} below for an alternative construction), but it allows us to reuse in a very direct way our construction in \cite{codim2-frac} of the recovery sequence for smooth surfaces. 

\begin{teo}[{\cite[Theorem 1.1]{almgren2024optimal}}]\label{thm: Caldini approx} Let $M$ be a smooth, closed, oriented, Riemannian manifold, and let $\Sigma$ be an $m$-dimensional integral boundary in $M$. Then, for every $\ep>0$, there exists a smooth, oriented, embedded, $m$-dimensional submanifold $S$ homologous to $\Sigma$ such that 
\begin{equation*}
    \F(\Sigma-S) < \ep, \qquad \mathcal{H}^m(S) < \M(\Sigma)+\ep . 
\end{equation*}
\end{teo}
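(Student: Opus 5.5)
This result is taken from \cite{almgren2024optimal}, so the plan below only outlines the route I would follow to reprove it. The idea is to first approximate $\Sigma$ by an integral polyhedral boundary, and then desingularize that polyhedral chain into a smooth, embedded, multiplicity-one submanifold. Each step should change both the flat distance and the mass by at most a prescribed fraction of $\ep$.

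\textbf{Step 1 (polyhedral approximation).} Triangulate $M$ smoothly and work simplex by simplex in charts. I would apply Federer's strong polyhedral approximation theorem for integral currents with $\partial\Sigma=0$. This gives an integral polyhedral cycle $P$ and an integral current $R$ with $\Sigma-P=\partial R$, $\M(R)<\ep/3$ and $\M(P)<\M(\Sigma)+\ep/3$. The bi-Lipschitz distortion of the charts can be absorbed into $\ep$ by taking the triangulation fine. Since $\Sigma$ is a boundary and $\Sigma-P=\partial R$, the current $P$ is also a boundary and is homologous to $\Sigma$.

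\textbf{Step 2 (multiplicity one).} Each top-dimensional face $F$ of $P$ carries an integer multiplicity $m_F$. I would replace $F$ by $|m_F|$ parallel, slightly displaced copies with the appropriate orientation, choosing the displacements generic and of size $\eta\ll1$. This changes the mass only by $O(\eta)$ times the mass of the $(m-1)$-skeleton, and the new chain is cut off near the $(m-1)$-skeleton. It also changes the flat norm by $O(\eta)$.

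\textbf{Step 3 (gluing along the skeleton).} This is the main obstacle. Along an $(m-1)$-cell the cycle condition says that the oriented sheets arriving there have total multiplicity zero. I would pair incoming sheets with outgoing sheets and connect each pair by a smooth bent strip of width $\eta$ (rounding the corner), so that the mass added is $O(\eta)$. This pairing must be carried out consistently near the lower-dimensional cells as well, so I would induct downward on the skeleton. At each stratum the local picture is a cone over a smooth cycle in a small sphere, which can be smoothed by the inductive hypothesis in one lower dimension and then coned and rounded off. The resulting object is a smooth immersed cycle $S'$, homologous to $P$, with $\mathcal{H}^m(S')\le\M(P)+\ep/3$.

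\textbf{Step 4 (embeddedness).} When $2m\ge n$, self-intersections cannot be removed by perturbation alone. After a generic perturbation I would make $S'$ self-transverse, and then resolve the double-point set by the oriented cut-and-paste surgery used for cycles. This surgery preserves the homology class and reduces mass up to an arbitrarily small error. The resulting embedded $S$ satisfies $\F(\Sigma-S)<\ep$ and $\mathcal{H}^m(S)<\M(\Sigma)+\ep$. I expect Steps 3 and 4 to carry the real difficulty: one has to keep the inductive desingularization of the lower strata and the surgery of the double points both orientation-consistent and mass-efficient, which is exactly the content of \cite{almgren2024optimal}.
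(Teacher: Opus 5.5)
The paper gives no proof of this statement. It is quoted from \cite{almgren2024optimal}, and Remark~\ref{rem: Gamma-limusp secondo modo} notes that the $\Gamma$-limsup could avoid it altogether, using only polyhedral approximation and \cite[Theorem~5.10]{ABO2-singularities}. Your outline therefore has to stand on its own, and it has a genuine gap at the words ``coned and rounded off'' in Step~3.

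Near a cell of $P$, once the link is smoothed, you are looking at the cone over a closed oriented embedded $(j-1)$-manifold $L\subset S^{N}$ of codimension $k=n-m$. Rounding off means replacing the cone inside a small ball by an embedded oriented $j$-manifold $W$ with $\partial W=L$. By Pontryagin--Thom, such a $W$ exists if and only if the class of $L$ in $\pi_N(MSO(k))$ vanishes, and for $k\ge 3$ this can fail. For instance, if $L\cong\mathbb{CP}^2$ (embedded in some sphere), its signature is nonzero, so it bounds no compact oriented $5$-manifold at all, embedded or not. Your inductive hypothesis only tells you that $L$ is null-homologous, which is much weaker. In codimension $k\le 2$ the step is unobstructed: every closed oriented submanifold of a sphere of codimension one or two bounds a domain, resp.\ a Seifert hypersurface, which can be pushed into the ball. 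That covers the case $m=n-2$ used in the paper, but the statement is for every $m$.

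There is a quick way to see that an essential idea is missing. Steps~2--4 never use that $\Sigma$ is a boundary; Step~1 uses it only to say that $P$ is homologous to $\Sigma$. Applied to a polyhedral cycle representing an arbitrary class, your argument would therefore show that every integral homology class of every closed oriented manifold is carried by a smooth embedded oriented submanifold. This contradicts Thom's negative answer to the Steenrod realization problem, which yields closed oriented manifolds with integral classes having no such representative.

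The local cobordism obstructions at the singular strata of $P$ are exactly Thom's obstructions. They cannot be removed stratum by stratum; they can only be cancelled using global information. This is where the hypothesis that $\Sigma$ is a boundary has to enter in an essential way, and your plan contains no mechanism for it.

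Step~4 has the same defect. When $n-m\ge 2$ and $2m>n$, the double-point set is a positive-dimensional manifold, along which the two sheets cross like two transverse $(n-m)$-planes in the normal slice $\R^{2(n-m)}$. Resolving this coherently along the whole double-point set is a global, bundle-theoretic problem, not a local oriented cut-and-paste.
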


With this result, we can easily construct the recovery sequence as follows.

\medskip 

  \noindent \ding{182} (Reductions) Let us make some reductions to reduce the proof to a particular case. By a straightforward diagonal argument, it is enough to prove the $\Gamma$-limsup inequality for a collection $\mathcal{S}$ of integral boundaries that is dense in energy, i.e. such that for every integral $(n-2)$-boundary $\Sigma$ there exists a sequence $\{S_k\} \subset \mathcal{S} $ with $ \F(S_k -\Sigma)\to 0$ and $\limsup_{k\to \infty} \, \M(S_k) \le \M(\Sigma) $. By Theorem \ref{thm: Caldini approx}, the collection of smooth, oriented, embedded, $(n-2)$-boundaries (with unit density) is dense in energy. Hence, we assume that $\Sigma$ lies in this class. In particular, by \cite[Theorem VIII.3]{Kirby} (and \cite[Theorem~5.11]{1960-FF}), $\Sigma = \partial N$ for some smooth, embedded $(n-1)$-dimensional surface $N$.

Second, by \eqref{eq: comparability limit dist space} and the definition of $\M_{s}$ it is enough to provide a map $u_\Sigma \in \mathfrak{F}_s ^J(\Sigma) $ such that (recalling the definition of the $H^\sigma_{\rm dist}$ seminorm \eqref{eq: H_dist seminorm def})
\begin{align*}
     \limsup_{s \to 1} & \hspace{0.03cm} (1-s)^2 [u_\Sigma]^2_{H^{\frac{1+s}{2}}_{\rm dist}(M) } \\ & =  \limsup_{s \to 1} \hspace{0.03cm} (1-s)^2 \iint_{M  \times M } \frac{|u_\Sigma(x)-u_\Sigma(y)|^2}{\dist(x,y)^{n+1+s}} \,dV(x)dV(y) \le \frac{2\pi\omega_{n-1}}{n} \mathcal{H}^{n-2}(\Sigma  ) . 
\end{align*}

\begin{rem}\label{rem: limsup bdd domain}
    If we were to have a regular domain $\Omega$ in place of $M$ (see Remark \ref{rem: localized stuff}), it would still be sufficient to show this property only for $\Omega \equiv M$. Indeed, once this has been proved, for a general $\Omega$ we could bound 
\begin{equation*}
    [u_\Sigma]^2_{H^{\frac{1+s}{2}}_{\rm dist}(\Omega)} \le [u_\Sigma]^2_{H^{\frac{1+s}{2}}_{\rm dist}(M)} - [u_\Sigma]^2_{H^{\frac{1+s}{2}}_{\rm dist}(M\setminus \overline{\Omega})} . 
\end{equation*}
Then, by \eqref{eq: comparability limit dist space} and the $\Gamma$-liminf inequality we would have
\begin{equation*}
    \liminf_{s \to 1} \hspace{0.03cm} (1-s)^2 [u_\Sigma]^2_{H^{\frac{1+s}{2}}_{\rm dist}(M\setminus \overline{\Omega})} =  \liminf_{s \to 1} \hspace{0.03cm} (1-s)^2 [u_\Sigma]^2_{H^{\frac{1+s}{2}}(M\setminus \overline{\Omega})} \ge \frac{2\pi\omega_{n-1}}{n} \mathcal{H}^{n-2}(\Sigma \setminus \overline{\Omega}) , 
\end{equation*}
which would allow to conclude the desired localized form:
\begin{equation*}
    \limsup_{s \to 1} \hspace{0.03cm} (1-s)^2 [u_\Sigma]^2_{H^{\frac{1+s}{2}}_{\rm dist}(\Omega)} \le  \frac{2\pi\omega_{n-1}}{n} \Big( \mathcal{H}^{n-2}(\Sigma) -  \mathcal{H}^{n-2}(\Sigma \setminus \overline{\Omega}) \Big) =  \frac{2\pi\omega_{n-1}}{n} \mathcal{H}^{n-2}(\Sigma \cap \Omega) , 
\end{equation*}

\end{rem}

\medskip  
\noindent \ding{183} (Construction of $u_\Sigma$) The construction of such $u_\Sigma \in \mathfrak{F}_s ^J(\Sigma) $ is identical to the one in \cite[Proposition 3.1]{codim2-frac}, where the authors build such a map in the case of a smooth $\Sigma \subset \R^n$. Indeed, in the proof of \cite[Proposition 3.1]{codim2-frac}, the only properties used are that $\Sigma$ is smooth and that $\Sigma$ is the boundary of a smooth (embedded) surface $N$, so that $\Sigma$ has a trivial normal bundle. We briefly recall the construction in words here. 

Since the normal bundle of $\Sigma$ is trivial, for $\delta>0$ sufficiently small, via the normal exponential map we can identify a tubular neighborhood $\mathcal{T}_\delta$ of $\Sigma$ with $\Sigma\times (-\delta,\delta)^2$, where $ \{\sigma\} \times (-\delta,\delta)^2$ represents a normal square at $\sigma \in \Sigma$.

On the normal square we choose a smooth vortex (with cutoff) profile $u_\star \colon (-\delta,\delta)^2\to \Sp^1$ with the following properties: $u_\star(p)=p/|p|$ in a neighborhood of the origin, $u_\star$ is constantly equal to $(1,0)$ in a neighborhood of the three sides of $\partial(-\delta,\delta)^2$ outside $N$, and $u_\star$ performs one full $2\pi$ turn around $\Sp^1$ along the remaining side (the one intersecting $N$), so that its degree is $1$. We then define $u_\Sigma$ on $\mathcal{T}_\delta$ by inserting $u_\star$ fiberwise on $\Sigma$; namely, $u_\Sigma$ depends only on the normal coordinates and restricts to a degree-one vortex in some small disk on each normal fiber.

Next, we extend $u_\Sigma$ across a neighborhood of $N$ by introducing a branch cut along $N$, in such a way that the phase jumps by $2\pi$ when crossing (a small normal neighborhood of) $N$. Finally, we set $u_\Sigma\equiv (1,0)$ outside a compact neighborhood of $N$. By construction, the junctions can be arranged to be $C^1$ away from $\Sigma$, and the resulting map links $\Sigma$ with degree one. By construction $\star Ju_\Sigma = \pi \Sigma$, hence $u_\Sigma \in \mathfrak{F}_s ^J(\Sigma) $.

\medskip 
\noindent \ding{184} (Completion of the proof) Split the domain of the integral 
\begin{equation*}
    \iint_{M  \times M} \frac{|u_\Sigma(x)-u_\Sigma(y)|^2}{\dist(x,y)^{n+1+s}} \,dV(x)dV(y) 
\end{equation*}
in 
\begin{equation*}
    M\times M =  \big( \mathcal{T}_\delta \times \mathcal{T}_\delta \big) \cup \big(  \mathcal{T}_\delta^c\times\mathcal{T}_\delta^c \big) \cup \big( \mathcal{T}_\delta^c \times \mathcal{T}_\delta \big)  \cup \big(  \mathcal{T}_\delta \times \mathcal{T}_\delta^c \big) . 
\end{equation*}
Since $u_\Sigma$ is $C^1$ on $\mathcal{T}_\delta^c$, using $|u_\Sigma(x)-u_\Sigma(y)| \le C \dist(x,y) $ yields
\begin{align*}
    \iint_{\mathcal{T}_\delta^c\times\mathcal{T}_\delta^c}  
\frac{|u_\Sigma(x)-u_\Sigma(y)|^2}{\dist(x,y)^{n+1+s}} &  \,dV(x)dV(y) 
\\ & \lesssim  \iint_{\mathcal{T}_\delta^c\times\mathcal{T}_\delta^c}
\frac{1}{\dist(x,y)^{n-1+s}}\,dV(x)dV(y) \lesssim \int_0^{{\rm diam}(M)} r^{-s} \, dr = \frac{C}{1-s}, 
\end{align*}
hence $(1-s)^2$ times this term tends to $0$ as $s \to 1$. The mixed term $\mathcal{T}_\delta^c \times \mathcal{T}_\delta $ (and the same for the other mixed term) can be dealt with by splitting 
\begin{equation*}
    \mathcal{T}_\delta^c \times \mathcal{T}_\delta = \mathcal{T}_\delta^c \times \mathcal{T}_{\delta/2} \cup \mathcal{T}_\delta^c \times (\mathcal{T}_\delta  \setminus \mathcal{T}_{\delta/2}) . 
\end{equation*}
On $\mathcal{T}_\delta^c \times \mathcal{T}_{\delta/2}$, ${\rm dist}(x,y) \ge c(\delta)>0$, so the kernel is bounded away from the diagonal, while for $(x,y) \in \mathcal{T}_\delta^c \times (\mathcal{T}_\delta  \setminus \mathcal{T}_{\delta/2})$ the map $u_\Sigma$ is smooth in these sets and thus satisfies $|u_\Sigma(x)-u_\Sigma(y)| \le C \dist(x,y) $ again. In both cases the contribution is $O(1)$ and $O((1-s)^{-1})$ respectively, hence it is killed by $(1-s)^2$ as $s \to 1$.

Therefore, for every $\delta$ small, we have 
\begin{equation*}
\limsup_{s \to 1} \hspace{0.03cm} (1-s)^2 [u_\Sigma]^2_{H^{\frac{1+s}{2}}_{\rm dist}(M)} 
= \limsup_{s \to 1} \hspace{0.03cm} (1-s)^2
\iint_{\mathcal{T}_\delta \times \mathcal{T}_\delta }\frac{|u_\Sigma(x)-u_\Sigma(y)|^2}{\dist(x,y)^{n+1+s}}\,dV(x)dV(y).
\end{equation*}
From here, proceeding exactly as the proof of \cite[eq. (4.45)]{codim2-frac} gives 
\begin{equation*}
    \limsup_{s \to 1} \hspace{0.03cm} (1-s)^2 [u_\Sigma]^2_{H^{\frac{1+s}{2}}_{\rm dist}(M)} \le \frac{2\pi\omega_{n-1}}{n} \mathcal{H}^{n-2}(\Sigma). 
\end{equation*}
Indeed, working in the tubular neighborhood $\mathcal{T}_\delta \simeq \Sigma \times (-\delta,\delta)^2$, the distance and volume form are uniformly comparable to their Euclidean counterparts for $\delta$ small; therefore, the computation leading to \cite[eq. (4.45)]{codim2-frac} applies verbatim (up to multiplicative errors that tend to $1$ as $\delta \to 0$). This completes the proof. 
 
\begin{rem}\label{rem: Gamma-limusp secondo modo}
In the proof of the $\Gamma$-limsup above, we invoked a deep density theorem of Almgren--Browder--Caldini--De~Lellis, Theorem~\ref{thm: Caldini approx}, in order to reduce the $\Gamma$-limsup construction to the case of smooth, embedded boundaries of unit density. 

We could have avoided using this result by following a more classical, albeit substantially longer, route based on a polyhedral approximation. Starting from an integral $(n-2)$-boundary $\Sigma = \partial T$, a Federer--Fleming type density yields a sequence of integral polyhedral $(n-2)$-boundaries $P_k$ converging to $\Sigma$ in flat norm and satisfying $\limsup_{k\to\infty} \M(P_k) \le \M(\Sigma)$.  On this polyhedral class, the existence of a competitor with prescribed Jacobian equal to $P_k$ can be obtained from \cite[Theorem 5.10]{ABO2-singularities}; see also \cite[Theorem 5.8]{p-energy}.

Since $P_k$ is polyhedral, it decomposes into finitely many top-dimensional $(n-2)$-dimensional faces together with a lower-dimensional skeleton. The contribution to the energy away from the top-dimensional faces can be estimated with the strategy developed for the $p$-energy in \cite{p-energy}, with $p=1+s \to 2$, together with the sharp version of the Gagliardo--Nirenberg inequality in \cite{Maz-Shap02}. Precisely, \cite[Remark 1]{Maz-Shap02} with $p=1+s$ and $\theta=(1+s)/2 $ gives
\begin{equation*}
    (1-s) [u]^2_{H^{\frac{1+s}{2}}} \le C \|\nabla u\|^{1+s}_{L^{1+s}} \|u\|_{L^\infty}^{1-s} =  C \|\nabla u\|^{1+s}_{L^{1+s}} . 
\end{equation*}
This, with the proof of the $\Gamma$-limsup in \cite{p-energy} for the $p$-energy, yields bounds on the off-face interactions (including mixed terms between neighborhoods of distinct faces) that become negligible after multiplication by $(1-s)^2$ as $s \to  1 $. 

On the other hand, on each $(n-2)$-dimensional face, one may work in coordinates and apply verbatim the Euclidean computations in \cite{codim2-frac}. The only additional input is to replace the Euclidean kernel by the distance kernel used here, which is achieved via the energy comparability estimates \eqref{eq: comparability limit dist space}. Consequently, each top-dimensional face contributes the correct leading-order constant to the rescaled energy, while the error due to the passage from the manifold distance and volume element to the flat case remains negligible in the $s \to 1$ limit. 

Putting together these steps, one obtains an alternative proof of the $\Gamma$-limsup inequality. However, this approach requires considerably more details (and notation), since one must track interactions across the polyhedral skeleta, and verify that all remainder terms are $o((1-s)^{-2})$ after rescaling. For this reason, we opted for the shorter argument based on Theorem~\ref{thm: Caldini approx}.
\end{rem}

\appendix

\section*{Appendix}\label{sec:appendix}
\addcontentsline{toc}{section}{Appendix}
\setcounter{section}{1}
\setcounter{teo}{0}
\setcounter{equation}{0}

\subsection{The distance kernel and comparability with Euclidean seminorms}\label{app1: kernel and comparability}

Throughout this subsection, it is convenient to allow $M$ to be a general complete, not necessarily compact, $n$-dimensional Riemannian manifold.

\begin{defn}[Uniformly flat ball]\label{def: unif flat ball}
 Let $p \in M$ and $R>0$. We say that the ball $\B_R(p)$ is \emph{uniformly flat with parametrization $\varphi_p$} if there exist an open neighborhood $\U_p$ of $p$ and a diffeomorphism $\varphi_p \colon B_R(0) \to \U_p$ such that
\[
\B_{\frac{9}{10}R}(p) \subset \U_p \subset \B_{\frac{11}{10}R}(p),
\qquad
\varphi_p(0)=p,
\]
and, in the coordinates induced by $\varphi_p^{-1}$, the metric coefficients $g_{ij}(x)$ satisfy
\begin{equation*}
    \tfrac{99}{100} |v|^2 \le g_{ij}(x)v^i v^j \le \tfrac{101}{100} |v|^2 ,
  \quad \forall \,  x \in B_R(0),\ v\in\mathbb{R}^n,
\end{equation*}
and 
\begin{equation*}
     R \abs*{ \nabla g_{ij}(x)} \le \tfrac{1}{100} , 
  \quad \forall \, x \in B_R(0),\ i,j\in\{1,\dots,n\}.
\end{equation*}
\end{defn}

 Observe that for every closed Riemannian manifold $M$ there exists $R_0=R_0(M)>0$ such that every ball $\B_R(p)$ with $R\le R_0$ is uniformly flat, with parametrization given by normal coordinates at $p$.

\begin{lemma}\label{lem: estimate k hat}
    Let $\sigma \in (1/2,1)$, $p\in M$, and assume that $\B_R(p) $ is uniformly flat with parametrization $\varphi \colon B_R(0) \to \U_p$. Denote $K(x,y): = \K_\sigma(\varphi(x), \varphi(y))$. Given $x\in B_R(0)$, let
$A(x)$ denote the positive symmetric square root of the matrix $g_{ij}(x)$ and, for $x,z\in B_{R/2}(0)$, define
\begin{equation}\label{eq: appendix kdef}
    k(x, z): = K(x,x+z) \qquad \mbox{and}\quad  \widehat k(x,z) := k(x,z) -  \frac{1}{|A(x)z|^{n+2\sigma}}.
\end{equation}
Then, there is a dimensional $C_n>0$ such that
\begin{equation}\label{remaining0}
\big|\widehat k(x,z)\big| \le  R^{-1}\frac{C_n}{|z|^{n+2\sigma -1}}\quad \mbox{ for all } x,z \in  B_{R/4}(0) . 
\end{equation}
\end{lemma}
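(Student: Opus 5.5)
The plan is to use the heat-kernel definition of $\K_\sigma$ and split the $t$-integral at a scale adapted to $|z|$. In the coordinates given by $\varphi$ we write
\[
K(x,y)=\alpha_{n,\sigma}^{-1}\frac{\sigma}{\Gamma(1-\sigma)}\int_0^\infty H_M(\varphi(x),\varphi(y),t)\,\frac{dt}{t^{1+\sigma}},
\]
and compare it with the frozen-coefficient Euclidean heat kernel $H_x(z,t):=(4\pi t)^{-n/2}\sqrt{\det g(x)}^{-1}e^{-|A(x)^{-1}\cdots|}$. More precisely, we take the heat kernel of the constant-coefficient operator $g^{ij}(x)\partial_i\partial_j$, rescaled so that it is a density with respect to $dV$ at $x$, i.e.\ proportional to $e^{-|A(x)z|^2/4t}$. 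Remark~\ref{rem: normalization K}, after the linear change of variables $z\mapsto A(x)z$, shows that inserting this frozen kernel into the $t$-integral gives exactly $|A(x)z|^{-(n+2\sigma)}$. Therefore $\widehat k(x,z)$ equals the same normalized $t$-integral of the difference $H_M-H_x$.

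Next we bound this difference. We split the integral into short times $t\le R^2$ and long times $t\ge R^2$.

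For $t\ge R^2$ we use the Gaussian upper bound $H_M(p,q,t)\le C t^{-n/2}$ for $t\le R^2$ (local) together with the fact that, on a uniformly flat ball, the kernel is bounded by $CR^{-n}$ for $t\ge R^2$. This part contributes at most $C\alpha_{n,\sigma}^{-1}\sigma\Gamma(1-\sigma)^{-1}R^{-n-2\sigma}/\sigma$. Using \eqref{eq: alpha-est} and $\Gamma(1-\sigma)^{-1}\sim(1-\sigma)$, this is $\le C R^{-n-2\sigma}\le C R^{-1}|z|^{-(n+2\sigma-1)}$, since $|z|\le R/4$. The frozen kernel satisfies the same bound.

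For $t\le R^2$ we invoke a standard parametrix (Minakshisundaram--Pleijel, or a Levi-type construction) for the heat kernel in coordinates where $|g-\delta|\le 1/100$ and $R|\nabla g|\le 1/100$. This should give
\[
\big|H_M(\varphi(x),\varphi(x+z),t)-H_x(z,t)\big|\le C\,\frac{\sqrt t+|z|}{R}\,t^{-n/2}e^{-c|z|^2/t}.
\]
The point is that the error between the variable and the frozen operator is first order in $|y-x|$ with coefficient $\|\nabla g\|\le 1/(100R)$, and the parabolic scaling turns $|y-x|$ into $\sqrt t$ off the Gaussian. Integrating against $t^{-1-\sigma}dt$ over $(0,\infty)$ gives $C R^{-1}|z|^{1-n-2\sigma}$, times a Gamma factor bounded by $C\Gamma(\tfrac{n+2\sigma-1}{2})$. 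Multiplying by the normalization $\alpha_{n,\sigma}^{-1}\sigma/\Gamma(1-\sigma)$, which is bounded by \eqref{eq: alpha-est} uniformly for $\sigma\in(1/2,1)$ (both numerator and denominator are of order $1-\sigma$), we obtain \eqref{remaining0}.

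The main obstacle is the short-time parametrix estimate with the explicit factor $R^{-1}$ and constants depending only on $n$, uniformly over all uniformly flat balls. To get this we would rescale $B_R$ to $B_1$, since the hypotheses are scale invariant and $H$ scales as $R^{-n}H(\cdot/R,\cdot/R,t/R^2)$. We would then run one step of the Levi iteration: write $H_M-H_x=\int_0^t\!\int (L-L_x)H_x\,H_M$, and use $|(L-L_x)H_x|\le C(|\nabla g|\,|z|\,|\nabla^2 H_x|+|\nabla g|\,|\nabla H_x|)$ together with Gaussian bounds for $H_M$. We also need locality, meaning the contribution from outside the chart is $O(e^{-cR^2/t})$, which follows from finite propagation speed or Davies--Gaffney estimates. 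Alternatively, we could cite the comparison in \cite{CFSfrac}.
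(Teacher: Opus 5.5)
Your proposal is correct and is essentially the paper's argument: you compare $H_M$ with the frozen Gaussian $(4\pi t)^{-n/2}e^{-|A(x)z|^2/(4t)}$, use a relative Gaussian error of size $\sqrt t/R$ together with locality, and observe that the prefactor $\alpha_{n,\sigma}^{-1}\sigma/\Gamma(1-\sigma)=\pi^{n/2}/(4^\sigma\Gamma(\frac n2+\sigma))$ is bounded. (The prefactor of the frozen kernel must be exactly $(4\pi t)^{-n/2}$, with no $\sqrt{\det g(x)}$ factor, so that its $t$-integral equals $|A(x)z|^{-n-2\sigma}$; your ``more precisely'' sentence has this right, but your first displayed formula does not.) The paper simply cites \cite[Lemma~2.18]{CFSfrac} to localize to a cut-off model metric on $\R^n$, and \cite[Proposition~2.19]{CFSfrac} for $|h-h_\circ|\le C\min\{1,\sqrt t\}e^{-c|z|^2}$ for all $t>0$, so it needs neither your Levi iteration nor your separate long-time bound; the latter is only asserted in your proposal, though it follows from monotonicity of $t\mapsto H_M(p,p,t)$, $H_M(p,q,t)^2\le H_M(p,p,t)H_M(q,q,t)$, and the local bound at $t=R^2$.
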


\begin{proof}
    The proof is identical to that of \cite[eq. (18)]{CFSfrac} but tracking the dependence of the constants on $\sigma$. Since the statement is scaling invariant, we may assume $R=1$. 

    \vsp
\textbf{Claim.}  It is sufficient to prove the case when $M$ is replaced by $M=(\R^n,g)$, $p=0$, $\varphi = {\rm id}$, and $g_{ij}$ satisfying 
    \begin{equation}\label{eq: g assump Rn}
    \frac{1}{2} |v|^2 \le g_{ij}(x) v^i v^j \le 2 |v|^2 , \quad \abs*{D g_{ij}(x)} \le 1, \quad \forall  x\in \R^n . 
\end{equation}

\vsp\noindent
    Indeed, assume this case is settled. In the general case, fix a radial nonincreasing cutoff $\eta\in C_c^\infty(B_1)$ such that $\eta\equiv 1$ on $B_{7/8}$, $|\nabla \eta| \le 10$, and consider $\R^n$ with the extended metric 
\[
 g'_{ij} := g_{ij}\,\eta + \delta_{ij}(1-\eta).
\]
Clearly, by construction, $g'$ satisfies \eqref{eq: g assump Rn}. Then, $\widehat k '(x,z)$ (i.e., the one as in \eqref{eq: appendix kdef} relative to $(\R^n, g')$) satisfies \eqref{remaining0}. 

Moreover, the manifolds $(M,g)$ and $(\R^n,  g' )$ satisfy the assumptions of \cite[Lemma 2.18]{CFSfrac} with $M'=(\R^n,  g' )$ and $\varphi'={\rm id}$. For $x,y\in B_1$, recall 
\begin{equation}\label{wjhtihwi9h1}
  K(x,y) = \K_{\sigma}(\varphi(x),\varphi(y))
  = \alpha_{n,\sigma}^{-1}\frac{\sigma}{\Gamma(1-\sigma)}\int_0^\infty H_M(\varphi(x),\varphi(y),t)\,\frac{dt}{t^{1+\sigma}}, 
\end{equation}
and set
\[
  K'(x,y) := \alpha_{n, \sigma}^{-1} \frac{\sigma}{\Gamma(1-\sigma)}\int_0^\infty H_{M'}(\varphi'(x),\varphi'(y),t)\,\frac{dt}{t^{1+\sigma}} . 
\]
By \cite[Lemma 2.18]{CFSfrac}, for all $x,y\in B_{1/2}$, we have
\[
\begin{aligned}
\big|
       \big(K-K'\big)(x,y)\big|
&\le\alpha_{n, \sigma}^{-1} \frac{\sigma}{\Gamma(1-\sigma)} \int_0^\infty 
   \big| H_M(\varphi(x),\varphi(y),t) - H_{M'}(\varphi'(x),\varphi'(y),t) \big|\,\frac{dt}{t^{1+\sigma}} \\
&\le C \int_0^\infty e^{-c/t}\,\frac{dt}{t^{1+\sigma}} \\ & \le C . 
\end{aligned}
\]
Hence, for all $x,z\in B_{1/4}$, by the triangle inequality 
\begin{align*}
    \big|\widehat k(x,z)\big| \le \big| (K-K'\big)(x,x+z)\big| + \big| \widehat k '(x,z) \big| \le C + \frac{C}{|z|^{n+2\sigma-1}} \le \frac{\widetilde C}{|z|^{n+2\sigma-1}} ,  
\end{align*}
and the claim is proved. Thus, we have reduced to proving the desired estimate in the model situation $M=(\R^n,g)$, $p=0$, $\varphi={\rm id}$, and $g$ such that \eqref{eq: g assump Rn} holds. 

Define $h(z,x,t)$ by the identity 
\begin{equation*}
    H_M(x ,y,t) = \frac{1}{t^{n/2}} h\bigg( \frac{A(x)(y-x)}{\sqrt{t}} , x , t \bigg) . 
\end{equation*}
Let also $h_\circ (z,x,t) := (4\pi)^{-n/2}e^{-|z|^2/4}$. By \cite[Proposition 2.19]{CFSfrac} we have the estimate 
\begin{equation*}
    \big|(h-h_\circ)(z,x,t) \big| \le C \min\{1, \sqrt{t}\} e^{-c|z|^2} , \qquad \forall \, (x,z,t) \in \R^n \times \R^n \times (0,\infty) ,
\end{equation*}
where $c,C>0$ are dimensional constants. 

Recalling \eqref{wjhtihwi9h1} and the fact that $\varphi={\rm id}$, for $x, z \in B_{1/4}$ we have
\begin{equation*}
  k(x,z) := K(x,x+z)
  = \alpha_{n,\sigma}^{-1} \frac{\sigma}{\Gamma(1-\sigma)} \int_0^\infty h\bigg(\frac{A(x)z}{\sqrt t},x,t\bigg)\,
      \frac{dt}{t^{n/2+1+\sigma}}, 
\end{equation*}
which gives
\begin{align*}
\big| \widehat k(x,z) \big| 
&= \bigg| k(x,z)-\frac{1}{|A(x)z|^{n+2\sigma}} \bigg| \nonumber \\
&\le \alpha_{n,\sigma}^{-1} \frac{\sigma}{\Gamma(1-\sigma)} \int_0^\infty 
   \bigg| (h - h_\circ) \bigg(\frac{A(x)z}{\sqrt t},x,t\bigg) \bigg|
          \,
   \frac{dt}{t^{n/2+1+\sigma}} \nonumber \\ & \le C \int_0^\infty   e^{-c|A(x)z|^2/t} \frac{dt}{t^{n/2+1/2+\sigma}} \nonumber \\ & \le  C \int_0^\infty   e^{-c|z|^2/t} \frac{dt}{t^{n/2+1/2+\sigma}} \nonumber \\ & = \frac{C}{|z|^{n+2\sigma-1}} , 
\end{align*}
where we have also used that $|A(x)z|\ge \tfrac1{\sqrt 2}|z|$ for all $x,z$ by assumption \eqref{eq: g assump Rn} on the metric $g$. This concludes the proof.
\end{proof}

\begin{lemma}\label{lem: estimate K distant points}
    Let $\sigma \in (1/2,1)$, $p\in M$, and assume that $\B_R(p) $ is uniformly flat with parametrization $\varphi \colon B_R(0) \to \U_p$. Then, for all $x\in B_{R/4}(0)$ and $q\in M\setminus \varphi(B_R(0))$ there holds 
    \begin{equation*}
        \big| \K_\sigma(\varphi(x), q) \big| \le \frac{C_n}{R^{n+2\sigma}} , 
    \end{equation*}
    for some $C_n>0$ dimensional constant. 
\end{lemma}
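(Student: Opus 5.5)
The plan is to bound the heat kernel directly with Gaussian upper bounds, using only that $\varphi(x)$ and $q$ are at distance comparable to $R$.

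\textbf{Step 1: separation.} Since $x\in B_{R/4}(0)$ and the metric coefficients in the chart are comparable to the Euclidean ones, we have $\varphi(x)\in\B_{R/3}(p)$, say. On the other hand, $q\notin\U_p\supset\B_{\frac{9}{10}R}(p)$, so the triangle inequality gives
\[
d:=\dist(\varphi(x),q)\ge \tfrac{9}{10}R-\tfrac13 R\ge R/2 .
\]

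\textbf{Step 2: kernel bound from heat kernel bounds.} Write
\[
\K_\sigma(\varphi(x),q)=\alpha_{n,\sigma}^{-1}\frac{\sigma}{\Gamma(1-\sigma)}\int_0^\infty H_M(\varphi(x),q,t)\,\frac{dt}{t^{1+\sigma}} .
\]
The prefactor is bounded by a dimensional constant uniformly in $\sigma\in(1/2,1)$. Indeed, $\alpha_{n,\sigma}^{-1}\le C/(\sigma(1-\sigma))$ by \eqref{eq: alpha-est}, and $1/\Gamma(1-\sigma)\sim(1-\sigma)$ as $\sigma\to1$. The positivity $\K_\sigma\ge0$ is immediate, so only an upper bound is needed.

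I would split the time integral at $t=R^2$.
\begin{itemize}
\item For $t\le R^2$, use a Gaussian upper bound $H_M(a,b,t)\le C t^{-n/2}e^{-c\,\dist(a,b)^2/t}$. The resulting integral is $\int_0^\infty t^{-n/2-1-\sigma}e^{-cd^2/t}\,dt=C\,d^{-n-2\sigma}$, which is at most $C R^{-n-2\sigma}$.
\item For $t\ge R^2$, use the on-diagonal bound $H_M\le C t^{-n/2}$. This gives $\int_{R^2}^\infty t^{-n/2-1-\sigma}\,dt\le C R^{-n-2\sigma}$.
\end{itemize}

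\textbf{Main obstacle: uniform Gaussian bounds with dimensional constants.} The difficulty is obtaining these bounds with constants depending only on $n$, using only uniform flatness of the single ball $\B_R(p)$. Nothing is assumed about $M$ outside this ball.

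\emph{Localization.} I would follow the reduction in the proof of Lemma~\ref{lem: estimate k hat}, rescaling to $R=1$ and invoking \cite[Lemma 2.18]{CFSfrac}. This compares $H_M$ with the heat kernel of a model $(\R^n,g')$ whose metric satisfies \eqref{eq: g assump Rn}.
\begin{itemize}
\item By the Aronson estimates, the model heat kernel has dimensional Gaussian bounds.
\item The localization error is $\le C e^{-c/t}$ for points inside the ball.
\item Since $q$ lies outside the chart, I would first exploit symmetry of $H_M$ and the semigroup property, or alternatively a Davies--Gaffney / finite propagation argument. In either case, heat flowing from $\varphi(x)$ to a region at distance $\ge 1/2$ is bounded by $Ce^{-c/t}$ for small times, after one more application of the local parabolic mean value inequality near $\varphi(x)$.
\end{itemize}

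\emph{Large times.} If a global on-diagonal bound is unavailable, I would instead use the local mean value inequality for the heat equation on the uniformly flat ball. Combined with $\int_M H_M(\cdot,q,t)\le1$, this gives $H_M(\varphi(x),q,t)\le C\max\{t^{-n/2},R^{-n}\}$ for $t\ge R^2$. The constant piece $R^{-n}$ is harmless, because it integrates against $t^{-1-\sigma}$ over $[R^2,\infty)$ to give $C R^{-n-2\sigma}$.

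Collecting the two time ranges yields $|\K_\sigma(\varphi(x),q)|\le C_n R^{-n-2\sigma}$, as claimed.
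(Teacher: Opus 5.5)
Your reduction matches the paper's: once you have a pointwise bound on $H_M(\varphi(x),q,t)$ with dimensional constants, the lemma follows by integrating against $t^{-1-\sigma}$. The prefactor is indeed harmless; it equals $\pi^{n/2}/(4^\sigma\Gamma(n/2+\sigma))$, since $\Gamma(1-\sigma)=\sigma|\Gamma(-\sigma)|$. Your large-time bound, via the mean value inequality in the flat ball and $\int_M H_M(\cdot,q,t)\,dV\le 1$, is also fine.

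\textbf{The gap is the small-time off-diagonal bound,} which is the real content of the lemma. The global Gaussian bound in your first bullet is not available, as you note. None of the substitutes you list gives a pointwise bound at $q$, precisely because nothing is known about $M$ near $q$.
\begin{itemize}
\item \cite[Lemma 2.18]{CFSfrac} needs both points in the chart.
\item Davies--Gaffney and finite propagation speed give estimates integrated in both variables. Turning them into a value at $q$ needs local regularity near $q$.
\item The semigroup identity $H_M(\varphi(x),q,t)=\int_M H_M(\varphi(x),z,\tfrac t2)H_M(z,q,\tfrac t2)\,dV(z)$ just reproduces the unknown $H_M(\varphi(x),z,\cdot)$ for $z$ outside the chart, with total weight $1$. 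So it does not close.
\item The mean value inequality near $\varphi(x)$ reduces matters to bounding $\int_{\B_r(\varphi(x))}H_M(z,q,s)\,dV(z)=(P_s\mathbf{1}_{\B_r(\varphi(x))})(q)$. This is the heat arriving near $\varphi(x)$ from the point $q$, again a pointwise value at $q$. It is not controlled by ``heat flowing from $\varphi(x)$ to the far region'', which is an $L^1$ quantity over a set containing $q$.
\end{itemize}

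What is missing is a localization in the second variable that uses only the flat annulus. After rescaling to $R=1$, one way is as follows.
\begin{itemize}
\item Set $D:=\varphi(B_{3/8}(0))$ and $v(w,s):=(P_s\mathbf{1}_{\B_r(\varphi(x))})(w)$. Then $v$ solves the heat equation on $M\setminus\overline D$ with zero initial data there.
\item The parabolic maximum principle, applied on a Dirichlet exhaustion of $M$ (which converges to the minimal heat kernel), gives $v(q,s)\le\sup_{\partial D\times(0,s]}v$.
\item On $\partial D$ both arguments of the kernel lie in $\varphi(B_{1/2}(0))$, at mutual distance bounded below. So \cite[Lemma 2.18]{CFSfrac} plus Aronson's bound give $\sup_{\partial D\times(0,s]}v\le Cr^ne^{-c/s}$.
\item Inserting this into the mean value inequality gives $H_M(\varphi(x),q,t)\le Ce^{-c/t}$.
\end{itemize}
Equivalently, apply the strong Markov property at the hitting time of $\partial D$. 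This is exactly \cite[Lemma 2.16]{CFSfrac}, which the paper quotes directly: $|H_M(\varphi(x),q,t)|\le Ce^{-c/t}$ for all $t>0$ when $R=1$. Because that bound covers all times at once, the paper needs no splitting at $t=R^2$. It concludes from $\int_0^\infty e^{-c/t}t^{-1-\sigma}\,dt=c^{-\sigma}\Gamma(\sigma)$.
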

\begin{proof}
    The proof is identical to that of \cite[eq. (20)]{CFSfrac} but tracking the dependence of the constants on $\sigma$, with our normalization of $\K_\sigma$ (see Remark \ref{rem: normalization K}). 

    Since the statement is scaling invariant, with no loss of generality, assume $R=1$. Under our hypothesis, by \cite[Lemma 2.16]{CFSfrac} we have the estimate 
    \begin{equation*}
        \big| H_M(\varphi(x), q, t)\big| \le C e^{-c/t} , \quad\mbox{for all } x \in B_{1/4}(0),\,  q\in M\setminus \varphi(B_1(0)), \, t>0 . 
    \end{equation*}
Using this inequality in the definition of $\K_\sigma$ gives
\begin{equation*}
  \big| \K_\sigma(\varphi(x), q) \big|
  \le \alpha_{n,\sigma}^{-1} \frac{\sigma}{\Gamma (1-\sigma )}
     \int_0^{\infty} \big| H_M(\varphi(x), q,t) \big| \,\frac{dt}{t^{1+\sigma}} \le C \int_0^{\infty}  e^{-c/t}  \,\frac{dt}{t^{1+\sigma}} \le C  , 
  \end{equation*}
  as desired. 
\end{proof}

\begin{prop}\label{prop: comparison kernel sharp}
    Let $M$ be a closed manifold and $\sigma \in (1/2,1)$. Then 
    \begin{equation*}
        \abs*{\K_\sigma(p,q) - \frac{1}{\dist(p,q)^{n+2\sigma}}} \le  \frac{C_M}{\dist(p,q)^{n+2\sigma-1}} ,  
    \end{equation*}
    for some $C_M>0$ depending only on $M$ and not on $\sigma$. 
\end{prop}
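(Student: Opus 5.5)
We split into the near-diagonal regime and the far regime, using a fixed scale $R_0=R_0(M)$ below which every geodesic ball is uniformly flat with normal-coordinate parametrization (as observed after Definition~\ref{def: unif flat ball}).

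\emph{Far regime.} If $\dist(p,q)\ge R_0/8$, then both $\dist(p,q)^{-(n+2\sigma)}$ and $\dist(p,q)^{-(n+2\sigma-1)}$ are comparable to constants depending only on $M$. It therefore suffices to bound $\K_\sigma(p,q)$ uniformly in $\sigma\in(1/2,1)$. We use Lemma~\ref{lem: estimate K distant points} with $R$ a fixed fraction of $R_0$, chosen so that $q\notin\varphi_p(B_R(0))$, and $x=0$. This gives $|\K_\sigma(p,q)|\le C_nR^{-n-2\sigma}\le C_M$. The difference is then bounded by $C_M'\le C_M''\dist(p,q)^{-(n+2\sigma-1)}$, since $\dist(p,q)\le\diam M$.

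\emph{Near regime.} Assume $\dist(p,q)<R_0/8$. Take normal coordinates $\varphi=\exp_p$ on $B_{R_0}(0)$, with $x=0$ and $q=\varphi(z)$, $|z|=\dist(p,q)<R_0/4$. Since $g_{ij}(0)=\delta_{ij}$, we have $A(0)=I$. Lemma~\ref{lem: estimate k hat} then gives
\[
\bigl|\K_\sigma(p,q)-|z|^{-(n+2\sigma)}\bigr|\le C_nR_0^{-1}|z|^{-(n+2\sigma-1)}.
\]
In normal coordinates centered at $p$ we have exactly $|z|=\dist(p,q)$ (radial geodesics), so the estimate is already the claimed one. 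This neat point is worth stating explicitly. If one prefers to center coordinates elsewhere, one instead uses $\bigl|\,|A(x)z|-\dist(\varphi(x),\varphi(x+z))\bigr|\le C|z|^2$. Combined with the mean value theorem, this gives
\[
\bigl|a^{-(n+2\sigma)}-b^{-(n+2\sigma)}\bigr|\le (n+2)\,|a-b|\,\min(a,b)^{-(n+2\sigma+1)}\lesssim |z|^{-(n+2\sigma-1)},
\]
with constants independent of $\sigma$.

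\emph{Main point to check.} The only delicate issue is uniformity in $\sigma$. All constants must come from Lemmas~\ref{lem: estimate k hat} and~\ref{lem: estimate K distant points}, which are stated with dimensional constants for $\sigma\in(1/2,1)$. The scale $R_0$ and $\diam M$ depend only on $M$, and the factor $n+2\sigma\le n+2$ is bounded. Combining the two regimes with $C_M$ equal to the maximum of the two constants concludes the proof.
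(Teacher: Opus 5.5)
Your proof is correct and follows the paper's argument. The paper also splits at a fixed fraction of the uniform-flatness radius $R_0$, applies Lemma~\ref{lem: estimate k hat} in normal coordinates at $p$ (where $A(0)=I$ and $|z|=\dist(p,q)$) for nearby points, and uses Lemma~\ref{lem: estimate K distant points} together with $\dist(p,q)\le\diam M$ for distant ones. Your threshold $R_0/8$ is slightly more careful than the paper's $R_0/4$, since it keeps $z$ strictly inside the open ball $B_{R_0/4}(0)$ where Lemma~\ref{lem: estimate k hat} applies.
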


\begin{proof} This follows similarly as in \cite[Proposition 4.9]{Enric}, using Lemma \ref{lem: estimate K distant points} and Lemma \ref{lem: estimate k hat} to track precisely the dependence of the constants as $\sigma$ is close to $1$ (or $s=2\sigma$ close to $2$, in the notation of \cite{Enric}). 

Fix $R_{0}>0$ depending only on $M$ such that every ball $\{\B_{R_0}(p)\}_{p\in M} \subset M$ is uniformly flat, in the sense of Definition \ref{def: unif flat ball}. We distinguish two cases.

If $\dist(p,q)\le R_{0}/4$, we work in normal coordinates $\varphi_p$ at $p$ with $0=\varphi_{p}^{-1}(p)$ and $y=\varphi_{p}^{-1}(q)$, so that $A(0)=\delta_{ij}$ and $|y|=\dist(p,q)$. By Lemma \ref{lem: estimate k hat}, we obtain
\begin{equation*}
        \abs*{\K_\sigma(p,q) - \frac{1}{\dist(p,q)^{n+2\sigma}}} \le  \frac{C}{\dist(p,q)^{n+2\sigma-1}} .  
    \end{equation*}

On the other hand, if $\dist(p,q)\ge R_{0}/4$, then
\[
\frac{1}{\dist(p,q)^{n+2\sigma}}
  \le \frac{4/R_0}{\dist(p,q)^{n+2\sigma-1}},
\]
and Lemma \ref{lem: estimate K distant points} with $R=R_{0}/4$ yields
$|\K_{\sigma}(p,q)|\le C/R_0^{n+2\sigma}$.  Using also
$\dist(p,q)\le\diam(M)$, we conclude
\begin{align*}
    \abs*{\K_\sigma(p,q) - \frac{1}{\dist(p,q)^{n+2\sigma}}}  \le  \frac{C}{R_0^{n+2\sigma}} +  \frac{4/R_0}{\dist(p,q)^{n+2\sigma-1}} 
       \le \frac{C}{\dist(p,q)^{n+2\sigma-1}} ,
\end{align*}
also in this case. Combining the two regimes, we get the desired estimate for all $p,q \in M$.
\end{proof}

For $\sigma \in (0,1)$ and an open set $\Omega \subset M$ define
\begin{equation}\label{eq: H_dist seminorm def}
    [u]_{H^{\sigma}_{\rm dist}(\Omega)}^2
  := \iint_{\Omega\times \Omega} \frac{|u(x)-u(y)|^2}{\dist(x,y)^{n+2\sigma}} \,dV(x) dV(y) .
\end{equation}

\begin{lemma}\label{lem: equal distance kernel limit}
For $\sigma \in (1/2,1)$ and $\Omega \subset M $ open, the spaces $H^{\sigma}(\Omega)$ and $H^{\sigma}_{\rm dist}(\Omega)$ coincide, and the associated seminorms are equivalent. Moreover, for $ u\in H^{\sigma}(\Omega) \cap L^\infty(\Omega)$ and for every $\delta>0$,
    \begin{equation*}
        \big| [u]_{H^{\sigma}(\Omega)}^2 - [u]_{H^{\sigma}_{\rm dist}(\Omega)}^2 \big| \le C_M \big( \delta \min \Big\{ [u]_{H^{\sigma}_{\rm dist} (\Omega)}^2 , [u]_{H^{\sigma}(\Omega)}^2 \Big \} + C_\delta \|u\|^2_{L^\infty(\Omega)} \big) . 
    \end{equation*}
In particular, for every $ u \in L^\infty(\Omega)$, 
\begin{equation}\label{eq: comparability limit dist space}
    \limsup_{\sigma \to 1}  \, (1-\sigma)^2 [u]_{H^{\sigma}(\Omega)}^2 =   \limsup_{\sigma \to 1}  \, (1-\sigma)^2  [u]_{H^{\sigma}_{\rm dist}(\Omega)}^2 . 
\end{equation}
\end{lemma}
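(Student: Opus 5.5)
The plan is to split the double integral into a near-diagonal part and a far part, and to control the difference of the two kernels with Proposition~\ref{prop: comparison kernel sharp}. Fix $\delta>0$. For $(x,y)\in\Omega\times\Omega$ with $\dist(x,y)<\delta$, Proposition~\ref{prop: comparison kernel sharp} gives
\[
\Big|\K_\sigma(x,y)-\dist(x,y)^{-n-2\sigma}\Big|\le C_M\,\dist(x,y)^{-n-2\sigma+1}\le C_M\,\delta\,\dist(x,y)^{-n-2\sigma}.
\]
The right-hand side is therefore a small multiple of the distance kernel. Integrating $|u(x)-u(y)|^2$ against this bound over the near region yields a contribution of at most $C_M\delta[u]^2_{H^\sigma_{\rm dist}(\Omega)}$.

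To obtain the same kind of bound with $[u]^2_{H^\sigma(\Omega)}$ instead, I would first show that near the diagonal the two kernels are comparable. Taking $\delta\le\delta_0(M)$ with $C_M\delta_0\le 1/2$, the estimate gives
\[
\tfrac12\,\dist^{-n-2\sigma}\le\K_\sigma\le\tfrac32\,\dist^{-n-2\sigma}\quad\text{on }\{\dist<\delta_0\},
\]
so the near part is also bounded by $2C_M\delta[u]^2_{H^\sigma(\Omega)}$. Together these give the factor $\delta\min\{\cdot,\cdot\}$ in the statement.

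On the far region $\{\dist(x,y)\ge\delta\}$ both kernels are bounded by $C_M\delta^{-n-2\sigma}$. For the distance kernel this is immediate. For $\K_\sigma$ it follows from Proposition~\ref{prop: comparison kernel sharp}, or alternatively from Lemma~\ref{lem: estimate K distant points}. Using $|u(x)-u(y)|^2\le 4\|u\|_\infty^2$ and the finite volume of $M$, the far contributions of both seminorms are at most $C_\delta\|u\|^2_{L^\infty}$. Here $C_\delta$ can be taken uniform in $\sigma\in(1/2,1)$, since $\delta^{-n-2\sigma}\le\delta^{-n-2}$ for $\delta<1$.

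Adding the near and far parts proves the quantitative estimate. The same near/far splitting, without the $L^\infty$ bound, also gives equivalence of the spaces. Near the diagonal the kernels are comparable up to factors depending on $M$. Far from the diagonal both kernels are bounded, so those parts are controlled by $\|u\|_{L^2}^2$. Hence the two seminorms are finite together, and they are equivalent modulo $L^2$. This is the sense intended, and it suffices since both spaces are defined inside $L^2$.

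For \eqref{eq: comparability limit dist space}, multiply the estimate by $(1-\sigma)^2$. The term $(1-\sigma)^2C_\delta\|u\|_\infty^2$ tends to $0$ as $\sigma\to 1$. If both limsups are infinite there is nothing to prove. Otherwise, call $L$ the smaller of the two limsups. The estimate then gives
\[
\big|\limsup(1-\sigma)^2[u]^2_{H^\sigma}-\limsup(1-\sigma)^2[u]^2_{H^\sigma_{\rm dist}}\big|\le C_M\,\delta\,L,
\]
which in particular shows the other limsup is also finite. Letting $\delta\to0$ then gives equality.

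The main technical point is making sure all constants are uniform in $\sigma$ close to $1$, and this is precisely what Proposition~\ref{prop: comparison kernel sharp} provides. The other minor point to check is the comparability of the kernels near the diagonal, which is needed to place the minimum of the two seminorms in the estimate.
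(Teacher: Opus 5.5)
Your near/far splitting is what the paper's one-line proof amounts to: the paper defers to \cite[Lemma~4.10]{Enric}, with Proposition~\ref{prop: comparison kernel sharp} as the input. The ingredients are the bound $|\K_\sigma-\dist^{-n-2\sigma}|\le C_M\delta\,\dist^{-n-2\sigma}$ near the diagonal, the comparability $\tfrac12\dist^{-n-2\sigma}\le\K_\sigma\le\tfrac32\dist^{-n-2\sigma}$ on $\{\dist<\delta_0\}$ to produce the minimum, and $L^\infty$ bounds off the diagonal. Your derivations of the quantitative estimate and of \eqref{eq: comparability limit dist space} are correct. For $\delta>\delta_0$, just split at $\delta_0$ and set $C_\delta:=C_{\min\{\delta,\delta_0\}}$.

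The one place you fall short of the statement is the equivalence of the seminorms. You prove $[u]^2_{H^\sigma(\Omega)}\le C\big([u]^2_{H^\sigma_{\rm dist}(\Omega)}+\|u\|^2_{L^2(\Omega)}\big)$ and its reverse, and then declare that equivalence modulo $L^2$ is ``the sense intended''. The lemma asserts more than that. The missing ingredient is a positive \emph{lower} bound for both kernels on the far region $\{\dist\ge\delta_0\}$.

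For the distance kernel this is just $\dist^{-n-2\sigma}\ge\diam(M)^{-n-2\sigma}$.

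For $\K_\sigma$, two facts suffice:
\begin{itemize}
\item The prefactor equals $\alpha_{n,\sigma}^{-1}\sigma/\Gamma(1-\sigma)=\pi^{n/2}/(4^\sigma\Gamma(n/2+\sigma))$, which is bounded above and below for $\sigma\in(1/2,1)$.
\item On the closed connected $M$, the heat kernel is positive and $H_M(x,y,t)\to|M|^{-1}$ uniformly as $t\to\infty$, by the spectral gap.
\end{itemize}
Restricting the $t$-integral to $t\ge T$ then gives $\K_\sigma\ge c_M>0$ on all of $M\times M$, uniformly in $\sigma$.

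Combined with the upper bounds you already have, this yields $c\,\dist^{-n-2\sigma}\le\K_\sigma\le C\,\dist^{-n-2\sigma}$ pointwise on $M\times M$. Hence $c\,[u]^2_{H^\sigma_{\rm dist}(\Omega)}\le[u]^2_{H^\sigma(\Omega)}\le C\,[u]^2_{H^\sigma_{\rm dist}(\Omega)}$, with no $L^2$ term. This gap does not affect the quantitative estimate or the limsup identity, which only use the near-diagonal comparability.
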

\begin{proof}
    The proof is identical to that of \cite[Lemma 4.10]{Enric}, using our Proposition \ref{prop: comparison kernel sharp} in place of \cite[Proposition 4.9]{Enric}. 
\end{proof}

\subsection{Some properties of circle-valued $\sigma$-harmonic maps}

\begin{prop}\label{prop: compactness min} Let $\sigma\in (1/2,1)$, $\Omega \subset \R^n$ be a bounded open set and $ \{u_k\}_k \subset H^\sigma(\Omega ; \Sp^1)$ be a sequence of minimizing (resp. null-Jacobian minimizing) $\sigma$-harmonic maps in $\Omega$ with 
 \begin{equation*}
       \sup_k \E_\sigma(u_k, \Omega) <\infty \s \mbox{and} \s u_k \to u \, \mbox{ a.e. in } \Omega . 
\end{equation*}
Then $u_k \rightharpoonup u $ in ${H}^\sigma (\Omega; \Sp^1)$ and $u_k \to u$ strongly in ${H}^\sigma(\Omega'; \Sp^1)$ for every open $\Omega'$ such that $\Omega' \Subset \Omega$. Moreover, $u$ is a minimizing (resp. null-Jacobian minimizing) $\sigma$-harmonic map in $\Omega$.
\end{prop}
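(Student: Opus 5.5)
The plan is the standard compactness argument for minimizing harmonic maps (Luckhaus-type, here in the nonlocal version of the Millot--Sire / Cabré--Cinti--Serra framework), with one extra point: the competitors must respect the Jacobian constraint. \textbf{Step 1 (weak convergence).} The bound $\sup_k\E_\sigma(u_k,\Omega)<\infty$, together with $|u_k|=1$ and a.e. convergence, gives a weak limit in $H^\sigma(\Omega;\R^2)$. Fatou's lemma identifies this limit with $u$ and shows $|u|=1$. Compactness of $H^\sigma\hookrightarrow H^{1/2}$ on bounded sets and Theorem~\ref{teo:BN-Invent fractional} then give $\star Ju_k\to\star Ju$ in the flat norm on compact subsets. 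In the null-Jacobian case this yields $\star Ju=0$.

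\textbf{Step 2 (competitor construction).} Fix $\Omega'\Subset\Omega$ and a competitor $v$ for $u$ with ${\rm spt}(u-v)\Subset\Omega'$ (and $\star Jv=0$ in the null-Jacobian case). Choose a cutoff $\eta$ equal to $1$ on a neighborhood of ${\rm spt}(u-v)$, supported in $\Omega'$, with transition layer of width $\tau$. Set
\[
v_k:=\Pi\big(\eta v+(1-\eta)u_k\big).
\]
Here $\Pi$ is the nearest-point projection onto $\Sp^1$. To keep the convex combination away from the origin, first average in a suitable radius of the layer, or equivalently use the phase form $v_k=v\,e^{i(1-\eta)\psi_k}$ with $u_k\bar u=e^{i\psi_k}$. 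The phase form is preferable in the null-Jacobian case because $\star J$ is additive under products: $\star J(v_k)=\star Jv+\star J(\text{factor})$. The factor $u_k\bar u$ has zero Jacobian and lies in $H^\sigma\cap L^\infty$, so by the factorization \eqref{eq: Mir factorization} it admits a lift $\psi_k\in H^\sigma+W^{1,2\sigma}$ in the appropriate sense. Hence $\star Jv_k=0$. By construction $v_k=u_k$ outside $\Omega'$.

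\textbf{Step 3 (energy comparison).} Minimality of $u_k$ gives $\E_\sigma(u_k,\Omega')\le\E_\sigma(v_k,\Omega')$. Expanding the double integral, $\E_\sigma(v_k)$ differs from $\E_\sigma(v)$ plus the exterior energy of $u_k$ by error terms. These errors are controlled by
\[
\iint|u_k-u|^2\,|\eta(x)-\eta(y)|^2\,\K_\sigma,
\]
together with cross terms on the layer. Since $u_k\to u$ in $L^2$ and the $H^\sigma$ energies are uniformly bounded, these errors vanish as $k\to\infty$ for fixed $\tau$. This uses the usual bound $|\eta(x)-\eta(y)|\le\min\{1,|x-y|/\tau\}$ and the Hardy-type estimate of Lemma~\ref{lemma:frac-hardy} for the interaction with the exterior.

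\textbf{Step 4 (conclusion).} Combining Step 3 with lower semicontinuity gives
\[
\E_\sigma(u,\Omega')\le\liminf_k\E_\sigma(u_k,\Omega')\le\limsup_k\E_\sigma(u_k,\Omega')\le\E_\sigma(v,\Omega').
\]
This proves minimality of $u$ in the relevant class. Choosing $v=u$ gives convergence of the energies. Together with weak convergence, this yields strong convergence in $H^\sigma(\Omega')$.

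\textbf{Main obstacle.} The hardest step is Step 2 in the null-Jacobian case: the interpolation must produce $\Sp^1$-valued maps whose Jacobian is still zero. A naive projection can create vortices in the transition layer. I would first try the phase-multiplication construction, which relies on the lifting given by Theorem~\ref{teo:fractional_lifting}/factorization. The delicate estimate is the $H^\sigma$ energy bound of $e^{i(1-\eta)\psi_k}$ in terms of $\|u_k-u\|_{L^2}$. If the lift is only in $W^{1,2\sigma}$ and not small, I would add a Fubini selection of a good layer radius where $|u_k-u|$ is small, so that only the small-oscillation part of the lift is cut off.
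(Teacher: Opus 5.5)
\textbf{The gap is in Step~3, and Step~4 inherits it.} The error terms you list, $\iint|u_k-u|^2|\eta(x)-\eta(y)|^2\K_\sigma$ plus cross terms, are the errors of the \emph{unconstrained} interpolation $w_k=\eta v+(1-\eta)u_k=v+(1-\eta)(u_k-u)$. For that quadratic object the defect of $u_k-u$ does cancel. But $w_k$ is not admissible, and the energy added by making it $\Sp^1$-valued is not controlled by $\|u_k-u\|_{L^2}$ and $\sup_k\E_\sigma(u_k,\Omega)$.

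In your phase form $v_k=v\,e^{i(1-\eta)\psi_k}$ you must compare the chords $|e^{i(1-\eta)\psi_k(x)}-e^{i(1-\eta)\psi_k(y)}|$ with $|e^{i\psi_k(x)}-e^{i\psi_k(y)}|$, and these are not comparable. If $\psi_k(x)-\psi_k(y)=2\pi$ and $\eta(x)=\eta(y)=\tfrac12$, the first equals $2$ and the second equals $0$. Suppose $\psi_k$ winds by $2\pi$ on sets of vanishing volume that carry energy of order one inside the layer. This is compatible with $\psi_k\to0$ in $L^1$ and bounded energy, and cutting such windings can produce an error of order one, not $o(1)$.

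The bound you can actually prove for the $W^{1,2\sigma}$ part of the lift, $[e^{i\phi}]^2_{H^\sigma}\lesssim\|\nabla\phi\|^{2\sigma}_{L^{2\sigma}}$, is bounded but not small. A Fubini choice of a good radius does not help, because the nonlocal layer has positive width and $|u_k-u|$ is not pointwise small on it.

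In the free case the phase form is unavailable, since $\star J(u_k\bar u)=\star Ju_k-\star Ju\neq0$ in general. Projecting $w_k$ near its zeros is the same difficulty, and ``average in a suitable radius'' does not address it. The Hardy inequality plays no role, since $v_k=u_k$ off ${\rm spt}\,\eta$. Note also that minimality of such limits for $\sigma\in(1/2,1)$ is precisely what was left open in \cite{Partialreg}, so this is not a routine Luckhaus step.

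\textbf{What is missing is an a priori regularity input, not more minimality.} The paper takes strong $H^\sigma_{\rm loc}$ convergence from the compactness theorem for stationary maps \cite[Theorem~7.1]{Partialreg}. It also uses uniform bounds $\|\nabla u_k\|_{L^p(\Omega'')}\le C$ for some $p\in(2\sigma,2)$ from \cite[Theorem~1.5]{GlobRegFrac}. With these, $w_k=\rho v+(1-\rho)u_k\to v$ strongly on compact subsets. Using Hardt--Lin averaged projections $\pi_a$ with $a\in D_{1/8}$, the only bad contribution is bounded by $C|\Omega''\cap\{|w_k|<1/2\}|^{1-2\sigma/p}\|\nabla w_k\|^{2\sigma}_{L^p(\Omega''\setminus K)}\to0$. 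This is exactly where $p>2\sigma$ is needed.

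In the null-Jacobian case the paper does the same with phases, which are bounded in $W^{1,p}$. Gagliardo--Nirenberg interpolation with $p>2\sigma$ then gives $e^{i(1-\rho)\psi_k}\to1$ in $H^\sigma_{\rm loc}$; the same interpolation also yields $u_k\to u$ in $H^\sigma_{\rm loc}$ directly.

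Your Steps~1--2 (flat convergence of Jacobians, additivity of $\star J$ under products) are fine. Without such a $W^{1,p}$ bound with $p>2\sigma$, Steps~3--4 do not close. One further point: even for the linear interpolation, the comparison controls only the energy on $\{\eta=1\}$, not $\limsup_k\E_\sigma(u_k,\Omega')$ as you state, since nothing forces strong convergence near $\partial\Omega'$.
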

\begin{proof}
   For minimizing $\sigma$-harmonic maps, the compactness and convergence directly follow from \cite[Theorem 7.1]{Partialreg}; in fact, they hold more generally for stationary $\sigma$-harmonic maps. We just have to justify that the limit $u$ is minimizing for $\sigma\in (1/2,1)$, which is the range left open in \cite{Partialreg}; see \cite[Remark 7.4]{Partialreg}. 

To this end, fix $v\in {H}^\sigma(\Omega ; \Sp^1)$ such that $K:=\supp(v-u)\Subset \Omega$, and we have to prove that
\begin{equation}\label{claim:minimality}
    \E_\sigma(u,\Omega)\leq \E_\sigma(v,\Omega).
\end{equation}

Let $\rho\in C^\infty _c(\Omega;[0,1])$ be such that $\rho=1$ in $K$, and let us set $w_k:=\rho v +(1-\rho)u_k$. Then $w_k\to \rho v + (1-\rho)u=v$ strongly in $H^\sigma$ on every compact subset of $\Omega$, since $u_k\to u$ and $\rho=1$ where $u$ and $v$ do not coincide. Moreover, the map $w_k$ coincides with $u_k$ outside the support of $\rho$, which is a compact subset of $\Omega$, but it might not take values into $\Sp^1$ on $ \{0<\rho<1\}$.

To fix this, we exploit the projection averaging technique of \cite{HL-CPAM} (see also \cite[Chapter~10]{Bre-Mi}). Let $D_1 \subset \R^2$ be the unit disk. For every $a\in D_1$ we consider the radial projection $\pi_a \colon D_1\setminus \{a\}\to \Sp^1$ from $a$ onto $\Sp^1$, and we claim that for every $k$ there exists $a_k \in D_{1/8}$ such that $v_k:=\pi_{a_k} (w_k ) $ satisfy
\begin{equation}\label{claim:energy_vk_to_v}
\liminf_{k\to \infty} \E_\sigma(v_k,\Omega')\leq \E_\sigma(v,\Omega)\qquad \text{for every }\Omega' \Subset\Omega.
\end{equation}

Once this is established, by minimality of $u_k$ we deduce that
$$\E_\sigma(u,\Omega')\leq \liminf_{k\to \infty} \E_\sigma(u_k,\Omega') \leq \liminf_{k\to \infty} \E_\sigma(v_k,\Omega')\leq \E_\sigma(v,\Omega),$$
and letting $\Omega'\nearrow \Omega$ we get (\ref{claim:minimality}).

So we now prove (\ref{claim:energy_vk_to_v}). As in \cite[Chapter~10]{Bre-Mi}, consider a smooth function $\phi \colon [0,1]\to [0,1]$ such that $\phi(t)=0$ for $t\in [0,1/4]$ and $\phi(t)=1$ for $t\in [1/2,1]$, and write
$$v_k:=\phi(|w_k|)v_k + (1-\phi(|w_k|))v_k.$$

Observe that for every choice of $a\in D_{1/8}$ it holds that $\phi(|w_k|)v_k \to \phi(|v|)v=v$ strongly in $H^\sigma$ on every compact subset of $\Omega$, because the map $z\mapsto \phi(|z|)\pi_a(z)$ is smooth from $D_1$ into itself for every $a\in D_{1/8}$.

Now we claim that we can choose $a_k \in D_{1/8}$ so that $(1-\phi(|w_k|))v_k\to 0$ strongly in $H^\sigma$, at least up to a subsequence. To this end, observe that $1-\phi(|w_k|)=0$ where $|w_k|\geq 1/2$ (hence in particular in $K$) and
$$|\nabla ( (1-\phi(|w_k|))v_k )|\leq C(1+|\nabla \pi_a(w_k)|)|\nabla w_k|\leq \frac{C}{|w_k-a|}|\nabla w_k|.$$

Thus, if we fix any $p\in (2\sigma,2)$ and any $\Omega''\Subset\Omega$, it holds that
\begin{align*}
\int_{D_{1/8}} da \int_{\Omega''} |\nabla ((1-\phi(|w_k|))v_k)|^{2\sigma}\,dx &\leq \int_{\Omega''\cap\{|w_k|<1/2\}}|\nabla w_k|^{2\sigma} \,dx \int_{D_{1/8}}\frac{C}{|w_k-a|^{2\sigma}}\,da\\
&\leq C\int_{\Omega''\cap\{|w_k|<1/2\}} |\nabla w_k|^{2\sigma}\,dx\\
&\leq C |\Omega''\cap\{|w_k|<1/2\}|^{1-\frac{2\sigma}{p}} \biggl(\int_{\Omega''\setminus K} |\nabla w_k|^{p}\,dx\biggr)^{\frac{2\sigma}{p}}.
\end{align*}

Finally, we observe that $|\nabla w_k|\leq |\nabla \rho||u_k-v| + |\nabla v| +|\nabla u_k|=|\nabla \rho||u_k-u| + |\nabla u| +|\nabla u_k|$ in $\Omega''\setminus K$, and hence the last integral is uniformly bounded thanks to \cite[Theorem~1.5]{GlobRegFrac}. It follows that
$$\lim_{k\to \infty}\int_{D_{1/8}} da \int_{\Omega''} |\nabla ((1-\phi(|w_k|))v_k)|^{2\sigma}\,dx=0,$$
so we can find $a_k \in D_{1/8}$ and a subsequence (not relabeled) for which $(1-\phi(|w_k|))v_k\to 0$ in $W^{1,2\sigma}(\Omega'')$, and hence also in $H^\sigma(\Omega'')$. It follows that $v_k \to v$ in $H^\sigma(\Omega'')$ for every $\Omega''\Subset\Omega$. Hence, if $\Omega'\Subset\Omega''\Subset\Omega$, we find that (along this subsequence)
\begin{align*}
\limsup_{k\to \infty} \E_\sigma(v_k,\Omega') &\leq \lim_{k\to \infty} [v_k]_{H^\sigma(\Omega'')}^2 + 2\iint_{\Omega'\times (\R^n\setminus\Omega'')} \frac{|v_k(y)-v_k(x)|^2}{|y-x|^{n+2\sigma}}\\
&= [v]_{H^\sigma(\Omega'')}^2 + 2\iint_{\Omega'\times (\R^n\setminus\Omega'')} \frac{|v(y)-v(x)|^2}{|y-x|^{n+2\sigma}}\\
&\leq \E_\sigma(v,\Omega'').
\end{align*}
This implies (\ref{claim:energy_vk_to_v}) and concludes the proof for minimizing $\sigma$-harmonic maps. 

The null-Jacobian minimizing case is proved in the same way, working with phases instead of the maps themselves, so we do not need to use projections (which might create Jacobian). The phases are well defined since the Jacobians vanish, and are bounded in $W^{1,p}$ for every $p < 2$ by \cite[Theorem~1.5]{GlobRegFrac}. We omit the details.
\end{proof}

\begin{rem}
    A similar property in the case of minimizing intrinsic $\sigma$-harmonic maps has recently been proved in \cite[Theorem 1.6]{IntrinsicFHM}.
\end{rem}

\begin{lemma}\label{lem: dim reduction minimality}
Let $\sigma\in (0,1)$, $n \ge 2$, and let $u \in H^\sigma_{\rm loc}(\R^n;\Sp^1)$ be a $0$-homogeneous $k$-symmetric map. Namely, up to a rotation, there exists a $0$-homogeneous map $v \in H^\sigma_{\rm loc}(\R^{n-k};\Sp^1)$ such that
\[
u(x)=v(x_1,\dots,x_{n-k}).
\]
If $u$ is a minimizing (resp. null-Jacobian minimizing) $\sigma$-harmonic map in $\R^n$, then $v$ is a minimizing (resp. null-Jacobian minimizing) $\sigma$-harmonic map in $\R^{n-k}$. 
\end{lemma}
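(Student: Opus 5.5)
The plan is a slicing argument. From a competitor for $v$ on a ball of $\R^{n-k}$ we build a competitor for $u$ on a long cylinder. We then compare energies per unit length in the $k$ invariant directions, which is the standard route for dimension reduction of minimizing maps. Write $x=(x',x'')\in\R^{n-k}\times\R^k$. Fix $w\in H^\sigma_{\rm loc}(\R^{n-k};\Sp^1)$ with $\supp(w-v)\subset B'_\rho$ (and $\star Jw=0$ in the null-Jacobian case). Choose a cutoff $\eta_L(x'')$ equal to $1$ on $[-L,L]^k$ and vanishing outside $[-L-1,L+1]^k$.

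The competitor is
\[
U_L(x)=\text{the map equal to } w(x') \text{ where } \eta_L=1, \text{ to } v(x') \text{ where } \eta_L=0,
\]
interpolated in between. The interpolation is done via phases when $w\bar v$ has a lifting, which is automatic in the null-Jacobian case. In the general case we work in the slab and use the projection-averaging trick of Proposition~\ref{prop: compactness min}, choosing a good $a$. Then $\supp(U_L-u)$ is compact.

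Minimality of $u$ on a large ball containing this support gives $\E_\sigma(u,\cdot)\le\E_\sigma(U_L,\cdot)$. Since $U_L-u$ is supported in $B'_\rho\times[-L-1,L+1]^k$, the difference of energies equals a difference of interaction integrals over pairs with at least one point in that set. Integrating out the invariant variable $x''-y''$ via
\[
\int_{\R^k}\frac{dz''}{(|z'|^2+|z''|^2)^{\frac{n+2\sigma}{2}}}=\frac{c_{n,k,\sigma}}{|z'|^{n-k+2\sigma}},
\]
the contribution from $[-L,L]^k\times\R^k$ is $(2L)^k c\,[\E_\sigma(w,B'_\rho)-\E_\sigma(v,B'_\rho)]$ in $\R^{n-k}$. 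The boundary layer $[-L-1,L+1]^k\setminus[-L,L]^k$ contributes $O(L^{k-1})$. The interaction between the two regions $\{\eta_L=1\}$ and $\{\eta_L=0\}$, where $U_L$ differs from the product extension, contributes $O(L^{k-1})$ as well, though only after some care. Dividing by $L^k$ and letting $L\to\infty$ yields $\E_\sigma(v,B'_\rho)\le\E_\sigma(w,B'_\rho)$.

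In the null-Jacobian case we must also check $\star JU_L=0$. Writing $U_L=v\,e^{i\eta_L\psi(x')}$ with $w=ve^{i\psi}$, we get $\star JU_L=\star Ju=0$, because multiplication by $e^{i\phi}$ with $\phi\in H^\sigma$ does not change the Jacobian (the factorization of Section~\ref{sbs: factor Jac}). This is exactly why phases are used there. In the general minimizing case no Jacobian constraint is needed.

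The main obstacle is the $O(L^{k-1})$ bound on the layer terms, since $\sigma>1/2$ and the kernel's decay in $x''$ is borderline. Also, $v$ itself is only $H^\sigma_{\rm loc}$, with infinite energy at infinity. The cross term $\iint |U_L(x)-U_L(y)|^2|x-y|^{-n-2\sigma}$ over the layer is controlled as follows. Use $|U_L(x)-U_L(y)|\le |w-v|(x')+|w-v|(y')+$ smooth-$\eta_L$ terms, each supported in $x'\in B'_\rho$. Then $|\eta_L(x'')-\eta_L(y'')|^2\le\min(1,|x''-y''|^2)$, integrated against the kernel over $x''$ near the layer, gives a finite constant per unit of layer area times $\|\psi\|_{L^2}^2+[\psi]^2_{H^\sigma}$. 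A fractional Hardy-type bound like Lemma~\ref{lemma:frac-hardy} handles the part near $\partial B'_\rho$. Only differences $U_L-u$ enter, so the infinite energy of $v$ cancels exactly when comparing $\E_\sigma(U_L)$ and $\E_\sigma(u)$.
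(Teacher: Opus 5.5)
Your cutoff-and-phase construction works in the null-Jacobian case, and there it is a genuinely different route from the paper's. You interpolate between $w$ and $v$ across the boundary of a slab in the invariant directions, and then show that the layer and interface interactions are $O(L^{k-1})$; your kernel estimate handles this, because the separation in $x''$ together with $\sigma>1/2$ makes the relevant integrals converge. The paper, after reducing to $k=1$, never interpolates at all.

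Two small corrections in that case:
\begin{itemize}
\item The lift of $w\bar v$ supplied by the Brezis--Mironescu theory lies in $H^\sigma+W^{1,2\sigma}$, not in general in $H^\sigma$. Your layer bounds in terms of $\|\psi\|_{L^2}^2+[\psi]_{H^\sigma}^2$ must therefore also handle a $W^{1,2\sigma}$ component. This is routine, via $|e^{ia}-e^{ib}|\le\min\{2,|a-b|\}$ and Gagliardo--Nirenberg.
\item Compact support of the lift uses $n-k\ge 2$, i.e.\ connectedness of the complement of $\supp(w-v)$. This is the only nontrivial case anyway.
\end{itemize}

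The genuine gap is the free minimizing case, which you dispose of in one sentence. There $w\bar v$ generally has nonzero Jacobian, so no phase is available. The projection-averaging argument of Proposition~\ref{prop: compactness min} does not transfer to your setting. That argument projects in $W^{1,2\sigma}$, and it needs two things: that $\int_{D_{1/8}}|z-a|^{-2\sigma}\,da<\infty$, and $W^{1,p}_{\rm loc}$ bounds with $p>2\sigma$ on \emph{both} maps being interpolated, which come from the regularity theory of stationary maps.

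Your $w$ is an arbitrary $H^\sigma$ competitor, so $\nabla w$ need not lie in $L^{2\sigma}$. Projecting directly at the level of $H^\sigma=W^{\sigma,2}$ would instead require $\int_{D_{1/8}}|z-a|^{-2}\,da$, which diverges. Making your route work would need an extra reduction (for instance factorizing $w$, or density of nicer competitors with the same exterior data). It would also need uniform per-unit-area energy bounds for the projected map along the layer. None of this is in the proposal.

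The missing idea is the paper's conical caps. On $C_R^\pm$ one sets $\widetilde w_R(x)=w\big(x'/(R+1-|x_n|)\big)$. This map:
\begin{itemize}
\item is automatically $\Sp^1$-valued;
\item agrees with $v$ on the lateral boundary of the cone, because $v$ is $0$-homogeneous and $w=v$ near $\partial B_1'$;
\item has $H^\sigma$ energy at most $C[w]^2_{H^\sigma(B_1')}$ when $n>2\sigma$;
\item contributes an amount independent of $R$, which disappears after dividing by $2R$.
\end{itemize}
So a single construction, which the paper uses for both the free and the null-Jacobian case, needs neither lifting nor projection.
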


\begin{proof} The proof for the two cases of free minimizing maps and null-Jacobian minimizing maps is identical; we just prove the statement for minimizing $\sigma$-harmonic maps. 

The proof relies on a dimensional reduction argument, similar to Step~2 in the proof of \cite[Lemma~7.13]{Partialreg}. It is enough to treat the case $k=1$, since the general statement follows by iterating the same argument until no direction of translation invariance is left. We write points of $\R^n$ as $(x',x_n)\in \R^{n-1}\times \R$, so that $u(x',x_n)=v(x')$, and we show that $v$ is minimizing in~$\R^{n-1}$.

Since $v$ is $0$-homogeneous, by scaling it is enough to prove that $v$ is minimizing in $B_1'\subset \R^{n-1}$. Let therefore $w\in H^\sigma_{\mathrm{loc}}(\R^{n-1};\Sp^1)$ be a competitor for $v$ in $B_1'$, namely such that $\operatorname{spt}(w-v)\Subset B_1'$. We have to prove that
\begin{equation}\label{eq: goal minimality v}
\mathcal E_\sigma(v,B_1')\leq \mathcal E_\sigma(w,B_1').
\end{equation}

For every $R>1$, let
\[
Q_R:=B_1'\times (-R,R)
\]
and
\[
C_R^\pm:=\Bigl\{(x',x_n): \pm x_n\in (R,R+1), \ |x'|\leq R+1-|x_n|\Bigr\}.
\]
These are two conical caps of unit slope. We then define
\[
\widetilde w_R(x',x_n):=
\begin{cases}
w(x') & \text{in } Q_R,\\[4pt]
w \bigg(\dfrac{x'}{R+1-|x_n|}\bigg) & \text{in } C_R^+\cup C_R^-,\\[8pt]
v(x') & \text{elsewhere}.
\end{cases}
\]

First, we prove that $\widetilde{w}_R \in H^{\sigma}_{\rm loc}(\R^n;\Sp^1)$. By~\cite[Lemma~15.30]{Bre-Mi} it is enough to show that
    \[
        w_{\pm}:=w\bigg( \frac{x'}{R+1 \mp x_n} \bigg) \in H^{\sigma}(C_R^{\pm};\Sp^1).
    \]
Letting $K:=\{ (x',t) : t \in (0,1), |x'| <t \}$, we have
    \begin{equation} \label{eq:somewhat-long1}
       [w_+]^2_{H^\sigma(C_R^+)}:=\iint_{C_R^+ \times C_R^+} \frac{|w_+(x)-w_+(y)|^2}{|x-y|^{n+2\sigma}} \, dxdy
       =
       \iint_{K \times K} \frac{\big| w\big( \tfrac{x'}{r} \big) - w\big( \tfrac{y'}{t} \big) \big|^2}{ (|x'-y'|^2+|r-t|^2)^{\frac{n+2\sigma}{2}}} dx'dy' dr dt.
    \end{equation}
By substitution $x'=rx''$ and $y'=ty''$, the last integral equals
    \begin{equation} \label{eq:somewhat-long2}
        \iint_{B_{1}' \times B_{1}'} |w(x'')-w(y'')|^2 \Bigg( \int_0^1 \int_0^1  \frac{r^{n-1} t^{n-1}}{(|r x'' - t y''|^2+|r-t|^2)^{\frac{n+2\sigma}{2}}} drdt\Bigg) dx'' dy''.
    \end{equation}
From here, a standard but somewhat tedious estimate on the inner integral gives
\begin{equation} \label{eq:somewhat-long3}
    \int_0^1 \int_0^1  \frac{r^{n-1} t^{n-1}}{(|r x'' - t y''|^2+|r-t|^2)^{\frac{n+2\sigma}{2}}} drdt \le  \frac{C}{|x''-y''|^{n+2\sigma-1}}, 
\end{equation}
for some $C > 0$ whenever $|x''| \le 1$ or $|y''|\le 1$ and $n>2\sigma$. Combining~\eqref{eq:somewhat-long1},~\eqref{eq:somewhat-long2}, and~\eqref{eq:somewhat-long3}
\begin{equation} \label{eq:somewhat-long4}
    [w_+]^2_{H^\sigma(C_R^+)} \le C \iint_{B_{1}' \times B_{1}'} \frac{|w(x'')-w(y'')|^2}{|x''-y''|^{n-1+2\sigma}} dx''dy'' <+\infty, 
\end{equation}
where the last integral is finite since $w \in H^{\sigma}(B_1';\Sp^1)$. The same argument applies to $w_-$.

Moreover, by construction it holds that
\[
\operatorname{spt}(\widetilde w_R-u)\subseteq C_R^- \cup Q_R \cup C_R^+ .
\]
Since $u$ is minimizing in $\R^n$, we have
\begin{equation}\label{eq:minimality-cylinder}
\mathcal E_\sigma(u,C_R^- \cup Q_R \cup C_R^+)\leq \mathcal E_\sigma(\widetilde w_R,C_R^- \cup Q_R \cup C_R^+).
\end{equation}

Now, we claim that, as $R\to \infty$,
\begin{equation}\label{eq:reduction-limits}
\begin{aligned}
\frac{1}{2R}\mathcal E_\sigma(u,C_R^- \cup Q_R \cup C_R^+) &\to c_{n,\sigma} \mathcal E_\sigma(v,B_1'),\\[0.5ex]
\frac{1}{2R}\mathcal E_\sigma(\widetilde w_R,C_R^- \cup Q_R \cup C_R^+) &\to c_{n,\sigma} \mathcal E_\sigma(w,B_1'),
\end{aligned}
\end{equation}
where
\begin{equation*} 
c_{n,\sigma}:=\frac{\alpha_{n-1,\sigma}}{\alpha_{n,\sigma}}>0,
\end{equation*}
with $\alpha_{n,\sigma}$ defined by~\eqref{eq: alpha-def}.

Assuming this for the moment, dividing \eqref{eq:minimality-cylinder} by $2R$ and letting $R\to  \infty$ yields
\[
\mathcal E_\sigma(v,B_1')\leq \mathcal E_\sigma(w,B_1'),
\]
which is exactly \eqref{eq: goal minimality v}. Therefore, it remains to prove \eqref{eq:reduction-limits}.

We only discuss the second convergence, since the first one for $u$ is identical, and in the regions $C_R^\pm$ even simpler. We prove separately that
\begin{equation}\label{eq: conv Q_R}
\frac{1}{2R}\mathcal E_\sigma(\widetilde w_R,Q_R)\to c_{n,\sigma}\,\mathcal E_\sigma(w,B_1')
\end{equation}
and
\begin{equation}\label{eq: conv C_R}
\frac{1}{2R}\mathcal E_\sigma(\widetilde w_R,C_R^\pm)\to 0.
\end{equation}

We start from the cylindrical part in $Q_R$. By definition
\begin{align*}
\mathcal E_\sigma(\widetilde w_R,Q_R)
=
\iint_{Q_R\times Q_R}
\frac{|\widetilde w_R(x)-\widetilde w_R(y)|^2}{|x-y|^{n+2\sigma}}\,dx dy + 2 
\iint_{Q_R\times (\R^n\setminus Q_R)}
\frac{|\widetilde w_R(x)-\widetilde w_R(y)|^2}{|x-y|^{n+2\sigma}}\,dx dy.
\end{align*}
Since \(\widetilde w_R(x',x_n)=w(x')\) on \(Q_R\), the first term can be written as
\begin{align*}
\iint_{Q_R\times Q_R} & 
\frac{|\widetilde w_R(x)-\widetilde w_R(y)|^2}{|x-y|^{n+2\sigma}}\,dx dy
\\ & =
\iint_{B_1'\times B_1'}
|w(x')-w(y')|^2
\Bigg(
\int_{-R}^R \int_{-R}^R 
\frac{dx_n dy_n}
{(|x'-y'|^2+|x_n-y_n|^2)^{\frac{n+2\sigma}{2}}}
\Bigg)
dx' dy'.
\end{align*}
Then, arguing analogously as in Step~2 of the proof of \cite[Lemma~7.13]{Partialreg}, after dividing by \(2R\), applying Fubini's theorem, and performing the change of variable
\[
x_n = y_n + t|x'-y'|,
\]
we obtain, for every $x' \neq y'$, that
\[
\frac{1}{2R}\int_{-R}^R dy_n \int_{-R}^R
\frac{dx_n}{\bigl(|x'-y'|^2+|x_n-y_n|^2\bigr)^{\frac{n+2\sigma}{2}}}
\to
\frac{1}{|x'-y'|^{n-1+2\sigma}} \int_{-\infty}^{+\infty} \frac{dt}{(1+t^2)^{\frac{n+2\sigma}{2}}}.
\]
A direct computation shows that
\[
    \int_{-\infty}^{+\infty} \frac{dt}{(1+t^2)^{\frac{n+2\sigma}{2}}}  = c_{n,\sigma},
\]
where $c_{n,\sigma}$ is the constant introduced above. Consequently,
\begin{equation}\label{eq:inner-limit}
\frac{1}{2R}
\iint_{Q_R\times Q_R}
\frac{|\widetilde w_R(x)-\widetilde w_R(y)|^2}{|x-y|^{n+2\sigma}}\,dx dy
\to
c_{n,\sigma}
\iint_{B_1'\times B_1'}
\frac{|w(x')-w(y')|^2}{|x'-y'|^{n-1+2\sigma}}\,dx' dy'.
\end{equation}

We now consider the mixed term. A similar computation to Step~2 of the proof of \cite[Lemma~7.13]{Partialreg} also shows that 
\[
\frac{1}{2R}
\iint_{Q_R\times (\R^n\setminus Q_R)}
\frac{|\widetilde w_R(x)-\widetilde w_R(y)|^2}{|x-y|^{n+2\sigma}}\,dx dy
\to
c_{n,\sigma}
\iint_{B_1'\times (\R^{n-1}\setminus B_1')}
\frac{|w(x')-w(y')|^2}{|x'-y'|^{n-1+2\sigma}}\,dx' dy'.
\]
Combining this with \eqref{eq:inner-limit}, we deduce \eqref{eq: conv Q_R}.

It remains to prove~\eqref{eq: conv C_R}. We only consider $C_R^+$, since the argument for $C_R^-$ is identical. We write
\begin{equation*}
    \E_\sigma(\widetilde w_R,C_R^+) 
    =
    [w_+]^2_{H^\sigma(C_R^+)}
        + 2 \int_{C_R^+}\int_{\R^n \setminus C_R^+} \frac{|\widetilde w_R(x)-\widetilde w_R(y)|^2}{|x-y|^{n+2\sigma}} \, dx dy.
\end{equation*}
From~\eqref{eq:somewhat-long4} it follows immediately that $R^{-1} [w_+]^2_{H^\sigma(C_R^+)} \to 0$ as $R \to +\infty$.

Setting $T_R(x)=(x',R+1-x_n)$ the second term becomes (up to a factor of $2$)
    \begin{multline*}
         \int_{K}\int_{\R^n \setminus K} \frac{|\widetilde w_R(T_R(x))-\widetilde w_R(T_R(y))|^2}{|x-y|^{n+2\sigma}} \, dx dy
        \\[1ex]
        \le 
        \underbrace{\iint_{(B_2' \times (-2,2))^2} \frac{|\widetilde w_R(T_R(x))-\widetilde w_R(T_R(y))|^2}{|x-y|^{n+2\sigma}} \, dx dy}_{=:\mathrm{I}_R}
        +
        \underbrace{\int_{K}\int_{\R^n \setminus (B_2' \times (-2,2))} \frac{4}{|x-y|^{n+2\sigma}} \, dx dy.}_{=:\mathrm{II}}
    \end{multline*}

Finally, the map $\widetilde{w}_R \circ T_R$ is independent of $R$ on $B_2' \times (-2,2)$. On the other hand, we have already shown that $\widetilde{w}_R \in H^\sigma(\R^n;\Sp^1)$. It follows that $\mathrm{I}_R$ is finite and independent of $R$, and therefore $\mathrm{I}_R/R \to 0$ as $R \to +\infty$. Since $\sigma > 0$, the quantity $\mathrm{II}$ is finite as well, and hence $\mathrm{II}/R \to 0$ as $R \to +\infty$. This shows~\eqref{eq: conv C_R} and concludes the proof. 
\end{proof}

\begin{lemma}\label{lem: A in W^1r} Let $\sigma \in (1/2,1)$, and let $u \in H^\sigma(B_2; \Sp^1)$ be a $\sigma$-harmonic map in $B_2$, namely a solution of $(-\Delta)^\sigma u = A_\sigma(u) u $ in $\mathscr{D}'(B_2)$. Assume that 
\begin{equation*}
    |\nabla u| \in L^{ p }(B_2), \quad \forall p \in [1,3). 
\end{equation*}
Then $ u \in  W^{2\sigma, \tfrac{p}{2\sigma}  }  (B_1) $ and $A_\sigma(u) \in W^{1, \tfrac{p}{1+2\sigma} }   (B_1)$ for every $p \in [1,3)$.
\end{lemma}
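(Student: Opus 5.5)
The plan is to first show $A_\sigma(u)\in W^{1,q}_{\rm loc}$ with $q=p/(1+2\sigma)$, and then read off the regularity of $u$ from the equation $(-\Delta)^\sigma u = A_\sigma(u)u$ by fractional elliptic regularity. The starting point is the identity $|u(x)-u(y)|^2 = 2-2u(x)\cdot u(y)$, valid since $|u|=1$. It gives
\[
A_\sigma(u)(x)=\int \frac{|u(x)-u(y)|^2}{|x-y|^{n+2\sigma}}\,dy ,
\]
a bounded-below quantity that is morally $\tfrac12\,|(-\Delta)^{\sigma/2}u|^2$-like: it is quadratic in first-order differences of $u$ of fractional order $\sigma$.

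\textbf{Step 1: near and far parts of $A_\sigma(u)$.} Fix a cutoff $\chi\in C^\infty_c(B_2)$ with $\chi\equiv1$ on $B_{3/2}$, and split $A_\sigma(u)(x)$ into the contribution of $y\in B_{7/4}$ and of $y$ outside. For $x\in B_1$ the far part is smooth in $x$, since the kernel is smooth away from the diagonal and $u$ is bounded. For the near part, I differentiate in $x$ after writing $y=x+h$:
\[
\partial_{x_i}\int \frac{|u(x)-u(x+h)|^2}{|h|^{n+2\sigma}}\,dh
= 2\int \frac{(u(x)-u(x+h))\cdot(\partial_iu(x)-\partial_iu(x+h))}{|h|^{n+2\sigma}}\,dh ,
\]
with a harmless boundary term coming from the cutoff.

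\textbf{Step 2: $L^q$ bound on the derivative.} I bound this integral pointwise by splitting the $h$-domain at a scale $r$. For $|h|<r$ I use $|u(x)-u(x+h)|\le |h|\,\mathcal M|\nabla u|(x)$ in an averaged sense, with $\mathcal M$ the maximal function (Hajłasz pointwise inequality). The term $|\nabla u(x+h)|$ then also averages to $\mathcal M|\nabla u|(x)$. This region contributes $\lesssim (\mathcal M|\nabla u|)^2\, r^{2-2\sigma}$. For $|h|\ge r$ I use $|u(x)-u(x+h)|\le 2$, which gives $\lesssim r^{-2\sigma}\,\mathcal M|\nabla u|(x)$. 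Optimizing with $r=(\mathcal M|\nabla u|)^{-1}$ yields
\[
|\nabla A_\sigma(u)|\lesssim (\mathcal M|\nabla u|)^{1+2\sigma}
\]
up to lower order terms. Since $\mathcal M$ is bounded on $L^{p}$ for $p>1$, this gives $\nabla A_\sigma(u)\in L^{p/(1+2\sigma)}(B_1)$ for every $p<3$. Because $1+2\sigma<3$, I can choose $p$ so that $q>1$. For $q\le1$ (small $p$) the statement follows from the case of larger $p$ by Hölder on the bounded set $B_1$.

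\textbf{Step 3: regularity of $u$.} The right-hand side $f:=A_\sigma(u)u$ lies in $L^{p/2\sigma}$: the pointwise bound $A_\sigma(u)\lesssim (\mathcal M|\nabla u|)^{2\sigma}$ follows by the same splitting, and $|u|=1$. I localize, writing $(-\Delta)^\sigma(\chi u)=\chi f+g$ with $g$ smooth in $B_1$ (the tail and commutator terms). Calderón--Zygmund / Bessel potential estimates for $(-\Delta)^{\sigma}$ then give $\chi u\in W^{2\sigma,p/2\sigma}_{\rm loc}$, in the Bessel-potential sense, which I would take as the meaning of $W^{2\sigma,p/2\sigma}$ here. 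For $p/2\sigma\le1$ I again reduce to larger $p$.

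\textbf{Main obstacle.} I expect the main difficulty to be Step 2: justifying the differentiation under the integral, and making the maximal-function bound rigorous when $2\sigma>1$. The near-diagonal integrand involves a difference of gradients, and the bound above uses Lipschitz-type control only in an averaged way. My first attempt would be the Hajłasz inequality $|u(x)-u(y)|\le |x-y|\,(\mathcal M|\nabla u|(x)+\mathcal M|\nabla u|(y))$, with the integral in $h$ handled by a dyadic decomposition and Fubini. If that fails, I would fall back on a difference-quotient argument in $L^q$ instead of pointwise differentiation.
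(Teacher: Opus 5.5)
\textbf{The gap is in Step 2, and it is more than a matter of rigor.} In the region $|h|<r$, only the factor $u(x)-u(x+h)$ gains a power of $|h|$. The factor $\partial_i u(x)-\partial_i u(x+h)$ gains nothing from first-order information. So your near-diagonal bound is really $(\mathcal M|\nabla u|)^2\int_{|h|<r}|h|^{1-n-2\sigma}\,dh$, which diverges because $2\sigma>1$. The quantity $(\mathcal M|\nabla u|)^2r^{2-2\sigma}$ you wrote is the near-diagonal bound for $A_\sigma(u)$ itself, not for its gradient.

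More fundamentally, Step 2 never uses the equation, and without it the claimed pointwise bound is false. Take $u(x)=e^{i|x_1|}$, which is Lipschitz with $|\nabla u|\equiv 1$. Integrating out $y'$ reduces the kernel to $\kappa_{n,\sigma}|x_1-y_1|^{-1-2\sigma}$, and for small $x_1>0$ one gets
\[
\partial_1 A_\sigma(u)(x)=4\kappa_{n,\sigma}\int_{x_1}^{\infty}\frac{\sin(2x_1-t)}{t^{1+2\sigma}}\,dt=4\kappa_{n,\sigma}\,\frac{\sigma-1}{\sigma(2\sigma-1)}\,x_1^{1-2\sigma}+O(1).
\]
This is unbounded, although $\mathcal M|\nabla u|\le 1$. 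Heuristically, $\nabla A_\sigma(u)$ is bilinear in $u$ of total order $1+2\sigma>2$, so one factor must carry more than one derivative, and $|\nabla u|\in L^p$ alone cannot supply that. A dyadic/Fubini refinement, or your difference-quotient fallback, has the same defect.

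\textbf{The missing idea is to get the extra $2\sigma-1$ derivatives on $\nabla u$ from the equation first,} which means reversing your order. Your Step 3 is essentially fine and does not depend on Step 2. Once the Haj\l{}asz term at $x+h$ is handled properly, the splitting gives $A_\sigma(u)\lesssim \bigl(\mathcal M(|\nabla u|^2)\bigr)^{\sigma}$ rather than $(\mathcal M|\nabla u|)^{2\sigma}$. This still lies in $L^{p/2\sigma}$ for $p>2$, and Calder\'on--Zygmund then gives $u\in W^{2\sigma,p/2\sigma}$ locally. That is an elementary substitute for the paper's first Kato--Ponce--Vega step.

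Next, write $\nabla A_\sigma(u)$ as a constant multiple of the commutator $\Gamma_\sigma(\nabla u,u)$, where $\Gamma_\sigma(f,g)=(-\Delta)^\sigma(fg)-f(-\Delta)^\sigma g-g(-\Delta)^\sigma f$. Apply the Kato--Ponce--Vega inequality with $2\sigma-1$ derivatives on $\nabla u\in L^{p/2\sigma}$ and one derivative on $u\in L^{p}$; the exponents match since $\tfrac{2\sigma}{p}+\tfrac1p=\tfrac{1+2\sigma}{p}$. This is exactly how the paper concludes.

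Even after this upgrade, a pointwise size estimate of your integral is only borderline. The two differences gain $|h|$ and $|h|^{2\sigma-1}$, leaving $\int|h|^{-n}\,dh$. That is why a bilinear commutator estimate, which exploits cancellation, is the right tool here rather than a maximal-function bound.
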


\begin{proof}
Since $B_2 \subset \mathbb{R}^n$ is a smooth bounded domain, there exists a single linear extension operator
\[
E \colon \mathscr{D}'(B_2) \to \mathscr{D}'(\R^n) ,\qquad Eu|_{B_2}=u,
\]
which is bounded simultaneously on all the spaces $W^{\sigma,p}$ for $\sigma\ge 0$ and $p\ge 1$. We refer to \cite[Section 4]{Extension-allspaces} for the construction of such an extension operator. 

By the fractional Gagliardo--Nirenberg inequality $ u \in W^{\alpha, p}(B_2)$  for every $\alpha \in (0,1]$ and $p\ge 1$ such that $\alpha p <3$. Fix $\theta \in (\tfrac{2\sigma}{3},1)$. Observe that 
\begin{equation*}
    A_\sigma(u) = \tfrac{1}{2} \Gamma_\sigma(u,u), \quad \mbox{where } \, \Gamma_\sigma(f,g) := (-\Delta)^\sigma(fg)- f (-\Delta)^\sigma g - g(-\Delta)^\sigma f . 
\end{equation*}
Hence, by the Kato-Ponce-Vega (KPV) inequality, for example \cite[Theorem 1.2]{KPV} applied with
\begin{equation*}
    p= \frac{3 \theta }{2\sigma} , \qquad p_1=p_2= \frac{3 \theta}{\sigma}, \qquad s_1=s_2= \sigma  , 
\end{equation*}
we have 
\begin{equation*}
    \| A_\sigma(u)\|_{L^{\frac{3 \theta }{2\sigma}}} \lesssim \|(-\Delta)^{\frac{\sigma}{2}} u \|_{L^{\frac{3 \theta }{\sigma}}} ^2 \simeq \| u\|_{W^{\sigma, \frac{3\theta}{\sigma}}} ^2 < + \infty , 
\end{equation*}
since $\sigma \cdot \frac{3\theta}{\sigma} = 3\theta <3$. Since this holds for every $\theta \in (\tfrac{2\sigma}{3},1)$, this shows that $A_\sigma(u) \in L^{ \tfrac{p}{2\sigma}}  (B_1)$ for every $p \in (2\sigma,3)$. Now, as $u$ solves $(-\Delta)^\sigma u=A_\sigma(u) u$ in $B_2$, fractional Calderón--Zygmund estimates (see e.g. \cite{FracCZ}) yield $ u \in  W^{2\sigma, \tfrac{p}{2\sigma} }  (B_1) $ for every $p<3$. 

Observe that $\nabla A_\sigma(u) = \tfrac{1}{2} \Gamma_\sigma(\nabla u, u)$. Fix $\theta \in (\frac{1+2\sigma}{3}, 1)$. By the KPV inequality (i.e. \cite[Theorem 1.2]{KPV}) with
\begin{equation*}
    p = \frac{3 \theta }{1+2\sigma} , \qquad p_1 = \frac{3 \theta}{2\sigma}, \qquad p_2 = 3\theta , \qquad s_1 =2 \sigma - 1, \qquad s_2 = 1 , 
\end{equation*}
we have 
\begin{align*}
    \| \nabla A_\sigma(u)\|_{L^{\frac{3 \theta }{1+2\sigma}}} & = \| \Gamma_\sigma(\nabla u, u)\|_{L^{\frac{3 \theta }{1+2\sigma}}}  \\ & \lesssim \|(-\Delta)^{\sigma-\frac{1}{2}} (\nabla u) \|_{L^{\frac{3 \theta }{2\sigma}}}  \| (-\Delta)^{\frac{1}{2}} u\|_{L^{3\theta}}      \lesssim \| u \|_{W^{2\sigma, \frac{3 \theta }{2\sigma}}} \|u\|_{W^{1, 3\theta }} < +\infty . 
\end{align*}
Since this holds for every $\theta \in (\frac{1+2\sigma}{3}, 1)$, this concludes the proof.
\end{proof}

\subsection{Some properties of circle-valued $H^s$ maps}

\begin{lemma}\label{lem: convergence modified heat kernel} Let $0< r < \rho< {\rm inj}(M)$, and let $\eta : M \times M \to [0,1]$ be a smooth symmetric function such that $\eta = 1$ when ${\rm dist}(x,y) \le \tfrac{r}{2} $ and $\eta = 0$ when ${\rm dist}(x,y) > r $. For $t>0$ and $f : B_{2\rho}(q) \to \R$ let 
\begin{equation}\label{eq: tilde P def}
     \widetilde P_t f(x) := \frac{\int_M H (x,y,t)\eta(x,y)f(y) \, dV(y)}{\int_M H (x,y,t)\eta(x,y) \, dV(y)} ,  \qquad x\in B_\rho(q). 
\end{equation}
Then, as $t\to 0$, $\widetilde P_t f \to f$ in $W^{1,p}(B_\rho(q))$ for every $p\ge 1$ and $f\in W^{1,p}(B_{2\rho}(q))$, and in
$H^\sigma(B_\rho(q))$ for every $\sigma \in (0,1)$ and
$f\in H^\sigma(B_{2\rho}(q))$. 
\end{lemma}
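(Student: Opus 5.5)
The plan is to treat $\widetilde P_t$ as an approximate identity: a family of positive averaging operators whose kernels
\[
k_t(x,y):=\frac{H(x,y,t)\eta(x,y)}{Z_t(x)}, \qquad Z_t(x):=\int_M H(x,y,t)\eta(x,y)\,dV(y),
\]
are probability densities supported in $\{\dist(x,y)\le r\}$ and concentrating at scale $\sqrt t$. First we collect the heat kernel facts needed, via the Gaussian comparison used in the proof of Lemma \ref{lem: estimate k hat}. In normal coordinates at $x$, $H(x,y,t)=t^{-n/2}h\big(A(x)(y-x)/\sqrt t,x,t\big)$ with $|h-h_\circ|\le C\min\{1,\sqrt t\}e^{-c|z|^2}$. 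Together with the standard off-diagonal bounds on $\nabla_x H$, this yields:
\begin{itemize}
\item[(a)] $Z_t(x)\to 1$ uniformly on $B_\rho(q)$, and $|\nabla_x Z_t|\le C$, where the bound comes from the $t^{-1/2}$ cancellations in the Gaussian;
\item[(b)] $\int k_t(x,y)\,\dist(x,y)^\alpha\,dV(y)\le C t^{\alpha/2}$;
\item[(c)] $\int |\nabla_x k_t(x,y)|\,dV(y)\le C t^{-1/2}$.
\end{itemize}
Since $r<\rho$, $\widetilde P_t f$ on $B_\rho(q)$ only sees $f$ on $B_{2\rho}(q)$, so it is well defined.

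Next we prove uniform bounds. Positivity and $\int k_t\,dV(y)=1$ give $\|\widetilde P_t f\|_{L^p(B_\rho)}\le C\|f\|_{L^p(B_{2\rho})}$ by Jensen/Schur. For the gradient we write
\[
\nabla\widetilde P_t f(x)=\int \nabla_x k_t(x,y)\big(f(y)-f(x)\big)\,dV(y).
\]
This uses $\int\nabla_x k_t=0$. The central point is to move the derivative from $x$ to $y$: in Euclidean coordinates the Gaussian satisfies $\nabla_x G=-\nabla_y G$. On a manifold we use the parametrix expansion to write $\nabla_x k_t=-\nabla_y k_t+E_t$, where $\int|E_t|\,dV(y)\le C$. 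The $E_t$ term is then bounded by $C\|f\|_{L^p}$ using (b) with $\alpha=0$, and the cutoff derivative $\nabla\eta$ lives where $\dist\ge r/2$, so it contributes $O(e^{-c/t})$. Integrating by parts in $y$ gives $\|\nabla\widetilde P_t f\|_{L^p(B_\rho)}\le C\|f\|_{W^{1,p}(B_{2\rho})}$ uniformly in $t$.

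For the $H^\sigma$ bound we use Minkowski's inequality. In local coordinates we write $\widetilde P_t f(x)=\int \tilde k_t(x,z)f(x+z)\,dz$, so that $\widetilde P_tf(x)-\widetilde P_tf(x')$ splits into a translation part $\int\tilde k_t(x,z)\big(f(x+z)-f(x'+z)\big)dz$ and a kernel-variation part $\int\big(\tilde k_t(x,z)-\tilde k_t(x',z)\big)f(x'+z)dz$. Using (a)–(c), the second part is controlled by $\min\{|x-x'|t^{-1/2},1\}$ times local averages of $|f(x'+z)-f(x')|$, which is again bounded by $[f]_{H^\sigma}$. This yields $[\widetilde P_tf]_{H^\sigma(B_\rho)}\le C\|f\|_{H^\sigma(B_{2\rho})}$. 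An alternative is interpolation between the $L^2$ and $H^1$ bounds, but that route does not directly give $\sigma$-uniformity, so we prefer the direct argument.

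Finally we conclude by density. For $f$ smooth on $B_{2\rho}$, (a), (b) and Taylor expansion give $\widetilde P_tf\to f$ in $C^1(\overline{B_\rho})$, hence in both norms. Smooth functions are dense in $W^{1,p}(B_{2\rho})$ and $H^\sigma(B_{2\rho})$, so combining density with the uniform operator bounds gives the result by the standard $\varepsilon/3$ argument. The main obstacle is the uniform $W^{1,p}$ and $H^\sigma$ bounds, specifically the claim that $\nabla_xk_t+\nabla_yk_t$ has $L^1$ norm bounded independently of $t$. We plan to get this from the $x$-smoothness of $h$ in the expansion from \cite[Proposition 2.19]{CFSfrac}, together with derivative estimates for $h-h_\circ$ of the same Gaussian type.
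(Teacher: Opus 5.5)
Your proposal is correct in outline, but it follows a genuinely different and heavier route than the paper.

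\textbf{Your route.} You treat $\widetilde P_t$ as a local approximate identity. You prove $t$-uniform bounds on $W^{1,p}$ and $H^\sigma$ directly from kernel estimates, and then conclude by density. As you say, the crux is the commutation estimate $\int|\nabla_x k_t+\nabla_y k_t|\,dV(y)\le C$. Schur's test also needs the matching column bound for $E_t$, which (b) with $\alpha=0$ does not supply. This estimate requires Gaussian bounds on the $x$- and $z$-derivatives of $h$ at the level of the parametrix. That goes beyond the bound on $h-h_\circ$ quoted from \cite{CFSfrac}.

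\textbf{The paper's route.} The paper avoids all of this through one observation you did not use: on a closed manifold $\int_M H(x,y,t)\,dV(y)=1$. Hence $1-Z_t(x)=\int_M H(x,y,t)(1-\eta(x,y))\,dV(y)$, and the off-diagonal Gaussian bounds give $\|1-Z_t\|_{C^1}\le Ce^{-c/t}$, which is far stronger than your (a). Extending $f$ to some $F$ on $M$, the paper then writes
\[
\widetilde P_t f-f=\Big(\frac{1}{Z_t}-1\Big)\int_M H\eta f\,dV+\int_M H(\eta-1)F\,dV+(P_tF-F).
\]
The first two terms are $O(e^{-c/t})$ in $W^{1,p}(B_\rho(q))$, and by interpolation also in $H^\sigma$. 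The last term tends to zero by the boundedness and strong continuity of $P_t=e^{t\Delta}$ on $W^{1,p}(M)$ and on $H^\sigma(M)$. The $H^\sigma$ case is immediate from the spectral representation in Proposition~\ref{prop: spectral characterization}.

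\textbf{Comparison.} In effect, the paper outsources your uniform $W^{1,p}$ bound to a standard semigroup fact, for instance the Bochner-type gradient bound $|\nabla P_tF|\le e^{Kt}P_t|\nabla F|$. That bound is an intrinsic version of your exchange of $\nabla_x$ for $\nabla_y$. Your approach, in return, is purely local: it never uses a global extension of $f$ or global properties of $e^{t\Delta}$. It would therefore apply to any family of normalized kernels with Gaussian bounds and approximate translation invariance.

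\textbf{On $\sigma$-uniformity.} Your concern here is moot, since $\sigma$ is fixed in the statement and the paper interpolates only on exponentially small error terms. Moreover, your Gagliardo estimate as sketched produces the factor $\int\min\{|h|^2/t,1\}\,|h|^{-n-2\sigma}\,dh\sim t^{-\sigma}/(\sigma(1-\sigma))$. So it is not uniform in $\sigma$ either.
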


\begin{proof}
We just sketch the proof. Extend \(f\) to a function $F$ defined on \(M\), continuously in the corresponding Sobolev space. For $x\in B_\rho(q)$, the numerator defining
$\widetilde P_t f(x)$ is unchanged if $f$ is replaced by $F$. Let \(P_t=e^{t\Delta}\) be the heat semigroup and set
\[
m_t(x):=\int_M H(x,y,t)\eta(x,y)\,dV(y).
\]
Since \(1-\eta\) is supported where ${\rm dist}(x,y) \ge \tfrac{r}{2}$, Gaussian estimates for \(H\) and \(\nabla_x H\) give
\begin{equation}\label{eq: m_t decay to 1}
\|1-m_t\|_{C^1(M)}\leq C e^{-c/t} ,
\end{equation}
and 
\begin{equation}\label{eq: (1-eta)F decay}
\left\|
\int_M H (  \cdot ,y,t)(\eta( \cdot,y) -1 )F(y)\,dV(y)
\right\|_{W^{1,p}(B_\rho(q))}
\leq C e^{-c/t}\|F\|_{W^{1,p}(M)}.
\end{equation}

Write $\widetilde P_t f - f$ as 
\begin{align*}
   \left(\frac{1}{m_t}-1 \right)\left( \int_M H ( \cdot ,y,t)\eta(\cdot,y)f(y) dV(y)\right) + \int_M H (\cdot,y,t)(\eta(\cdot,y)-1)F(y)\,dV(y) + P_t F-F . 
\end{align*}
By \eqref{eq: m_t decay to 1} and the boundedness of the heat semigroup in $W^{1,p}$
\begin{equation*}
    \left\| \left(\frac{1}{m_t}-1 \right)\left( \int_M H ( \cdot ,y,t)\eta(\cdot,y)f(y) dV(y)\right)\right\|_{W^{1,p}(B_\rho(q))} \le C\| f \|_{W^{1,p}(B_{2\rho}(q))} \|1-m_t\|_{C^1(M)} . 
\end{equation*}
Moreover, since \(P_t\) is strongly continuous on \(W^{1,p}(M)\), we have $P_t F \to F$ in $W^{1,p}(M)$. Letting $t \to 0$ and using \eqref{eq: m_t decay to 1} and \eqref{eq: (1-eta)F decay} the convergence in $W^{1,p}(B_\rho(q))$ follows.

For $\sigma \in (0,1)$, interpolation of the corresponding $L^2$ and $H^1$ estimates gives exponentially small bounds in $H^\sigma$.
Together with the strong continuity of $P_t$ on $H^\sigma(M)$, the
same decomposition shows the $H^\sigma$ convergence.
\end{proof}

We believe that the following result is known to experts, but it does not appear to be written anywhere in our case of a general closed ambient manifold $M$. It can be found in the literature in \cite[Theorem 3]{Bousquet} for $M=\Sp^n$ and for smooth, connected domains in $\R^n$ in \cite[Theorem 10.4']{Bre-Mi}. For its proof, we essentially follow the strategy in \cite[Theorem 10.4']{Bre-Mi} with some modifications.

\begin{teo}\label{thm: density on M} Let $(M,g)$ be a closed Riemannian manifold and $\sigma\in (1/2,1)$. Then
\begin{equation*}
\overline{C^\infty(M; \Sp^1)}^{H^\sigma(M; \Sp^1)} = \Big\{ u \in H^\sigma(M; \Sp^1) \, : \, \star Ju=0 \Big\} . 
\end{equation*}
\end{teo}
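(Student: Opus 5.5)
We prove the two inclusions separately. The inclusion ``$\subseteq$'' is immediate. Every $u\in C^\infty(M;\Sp^1)$ satisfies $\star Ju=0$, because $j(u)$ is closed when $u$ is smooth and $\Sp^1$-valued ($du^1\wedge du^2=0$ since the image is one-dimensional). If $u_k\to u$ in $H^\sigma(M;\Sp^1)$ with $\sigma>1/2$, then $u_k\to u$ in $H^{1/2}$, since $M$ is compact and the $L^2$ norms are bounded by $|u|=1$. Theorem~\ref{teo:BN-Invent fractional}, applied on $M$ (or on finitely many Lipschitz charts, summing the flat estimates), then gives $\star Ju_k\to\star Ju$ in the flat norm. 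Hence $\star Ju=0$.

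For ``$\supseteq$'', let $u\in H^\sigma(M;\Sp^1)$ with $\star Ju=0$, and write the factorization \eqref{eq: Mir factorization}: $u=e^{i\varphi}v$ with $\varphi\in H^\sigma(M)$ and $v\in W^{1,2\sigma}(M;\Sp^1)$, so that $\star Jv=\star Ju=0$. The term $e^{i\varphi}$ is handled by density. Approximate $\varphi$ by $\varphi_k\in C^\infty(M)$ in $H^\sigma$ (for instance $\varphi_k=P_{t_k}\varphi$). Since $z\mapsto e^{iz}$ is Lipschitz and $H^\sigma\cap L^\infty$ is an algebra, $e^{i\varphi_k}v\to e^{i\varphi}v$ in $H^\sigma$, and the question reduces to approximating $v$. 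By the fractional Gagliardo--Nirenberg inequality, $W^{1,2\sigma}\cap L^\infty\hookrightarrow H^\sigma$ continuously. It therefore suffices to show that an $\Sp^1$-valued $W^{1,p}$ map with $p=2\sigma\in(1,2)$ and vanishing Jacobian is a $W^{1,p}$-limit of smooth $\Sp^1$-valued maps.

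For this we follow \cite[Theorem 10.4']{Bre-Mi}. First, $\star Jv=0$ means that the $1$-form $j(v)=v^1dv^2-v^2dv^1\in L^p(M;\Lambda^1)$ is closed in the distributional sense. By the Hodge decomposition on the closed manifold $M$, $j(v)=d\psi+h$ with $\psi\in W^{1,p}(M)$ and $h$ harmonic. The next step is to show that $h$ has integer periods. For this, approximate $v$ in $W^{1,p}$ by maps of the class $\mathcal R$ (Theorem~\ref{thm: approx with standard singularities}, combined with a $W^{1,p}$ version of the same density). The periods of $j(w)$ for such $w$ along loops avoiding the singular set lie in $2\pi\Z$ modulo the flux through $\mathcal S$. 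Pairing $h$ with harmonic $(n-1)$-forms and passing to the limit using $\star Jv=0$ (i.e.\ the flat convergence of $\star Jw_k$ to $0$) then shows that the cohomology class $[h]$ lies in $2\pi H^1(M;\Z)$. Hence there is a smooth map $g\colon M\to\Sp^1$ with $j(g)=h$, obtained by integrating $h$. Then $\bar g v$ satisfies $j(\bar g v)=d\psi$, and one checks that $\bar gv=e^{i\psi'}$ for some $\psi'\in W^{1,p}$ with $\psi'-\psi$ constant, using $d(\bar gve^{-i\psi})=0$. Writing $v=g\,e^{i\psi'}$ and approximating $\psi'$ by smooth functions (for instance with the localized heat operators of Lemma~\ref{lem: convergence modified heat kernel}, or globally with $P_t$) yields smooth $\Sp^1$-valued maps converging to $v$ in $W^{1,p}$.

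The main obstacle is the integrality step: passing from the distributional condition $\star Jv=0$ to the conclusion that the harmonic part of $j(v)$ has periods in $2\pi\Z$, for a merely $W^{1,p}$ map $v$ with $p<2$ on a manifold with nontrivial $H^1$. The approach is to pass to the limit in the periods along approximating maps in $\mathcal R$, where periods are genuinely integer multiples of $2\pi$ up to contributions of the singular set, and to use the flat convergence of their Jacobians to $0$ to discard those contributions. If that proves awkward, the alternative is to compute the periods by slicing along generic smooth loops representing a basis of $H_1(M;\Z)$, on which the restriction of $v$ is a $W^{1,p}$ map, hence continuous with a well-defined degree.
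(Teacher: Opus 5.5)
Your proof is correct, but it takes a genuinely different route from the paper.

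\textbf{The paper's route.} The paper never factorizes $u$ globally and uses no Hodge theory. From $\star Ju=0$ it takes local lifts $u=e^{i\varphi_j}$ on a finite cover by small uniformly flat balls, with $\varphi_j\in(H^\sigma+W^{1,2\sigma})(\B_{3r}(p_j))$ by \cite[Theorem~8.8]{Bre-Mi}. It then smooths each phase with the normalized, truncated heat operator $\widetilde P_\ep$ of Lemma~\ref{lem: convergence modified heat kernel}. Since $\widetilde P_\ep$ preserves constants and its kernel is supported at distance at most $r$, the integer transition constants $\varphi_j-\varphi_k=2\pi n_{jk}$ survive the smoothing exactly. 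Hence the maps $e^{i\widetilde P_\ep\varphi_j}$ glue into a single smooth $\Sp^1$-valued map, and global convergence follows from a partition of unity. The topology is carried implicitly by the \v{C}ech cocycle $(n_{jk})$ and is never identified.

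\textbf{What your route buys and costs.} Your argument makes the obstruction explicit, $[h]\in 2\pi H^1(M;\Z)$, realized by a smooth $g$. This gives the global structure $u=g\,e^{i(\varphi+\psi')}$ with $g\in C^\infty(M;\Sp^1)$ and a global phase in $H^\sigma+W^{1,2\sigma}$. After that, any smoothing works, even the global semigroup $P_t$. The price is heavier machinery: the factorization on $M$, $L^p$-Hodge theory, the Gagliardo--Nirenberg embedding, and an integrality step. The paper needs only local lifting and a finite-range smoothing operator.

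\textbf{The integrality step.} Of your two variants, lead with the slicing one. It is complete as it stands and makes the detour through $\mathcal R$ and a $W^{1,p}$ version of Bethuel's density unnecessary:
\begin{itemize}
\item Parametrize a tube around a smooth loop $\gamma$, which you may take immersed, by a local diffeomorphism $\Gamma\colon \Sp^1\times B^{n-1}_\epsilon\to M$.
\item For a.e.\ $y$, both $v\circ\Gamma(\cdot,y)$ and $\psi\circ\Gamma(\cdot,y)$ lie in $W^{1,p}(\Sp^1)$.
\item Integrating $j(v)=d\psi+h$ over such a slice gives $\int_\gamma h=2\pi\deg(v\circ\Gamma(\cdot,y))\in 2\pi\Z$.
\end{itemize}

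The first variant is imprecise as written. The defect between $\int_M j(w_k)\wedge\eta$ and the lattice $2\pi\Z$ is the flux of $\eta$ through an \emph{integral filling} $R_k$ of $\mathcal S_k$, not through $\mathcal S_k$ itself. To make it vanish you need $\partial R_k=\mathcal S_k$ with $R_k$ integral and $\M(R_k)\to 0$. This is the coincidence of the integral and real flat norms in this codimension, and you would have to invoke it explicitly.

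\textbf{A minor point.} The convergence $e^{i\varphi_k}v_k\to e^{i\varphi}v$ in $H^\sigma$ does not follow from ``Lipschitz plus algebra'' alone. Those give bounds, and $\|e^{i\varphi_k}-e^{i\varphi}\|_{L^\infty}$ need not tend to $0$. Instead, argue via a.e.\ convergence and dominated convergence (uniform integrability) in the Gagliardo integrals.
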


\begin{proof}
The inclusion $\subseteq$ is clear, since $\star Ju=0$ is a purely local property and this result is well-known for bounded domains in $\R^n$. See, for example, \cite[Theorem 10.4]{Bre-Mi}. 

We show the opposite inclusion $\supseteq $. Let $u\in H^\sigma(M;\Sp^1)$ with $\star Ju=0$. Since $M$ is closed, see \cite[Remark 2.10]{CFSfrac}, there exists $r>0$ with the property that $\B_{4r}(p)$ is uniformly flat (see Definition \ref{def: unif flat ball}) for every $p \in M$. By compactness, we can find a finite collection of points $p_1, \dotsc, p_m \in M $ such that the balls $\B_r(p_1), \dotsc, \B_r(p_m)$ still cover $M$. For each $j$, let $F_j$ denote the parametrization provided by Definition \ref{def: unif flat ball} for $\B_{4r}(p_j)$.

Since $\star Ju=0$, for every $j$, by the local lifting theory in Euclidean domains \cite[Theorem 8.8]{Bre-Mi} applied to $u \circ F_j $, we can write
\begin{equation}\label{eq: u local lift}
u = e^{i \varphi_j}\s \mbox{on } \B_{3r}(p_j), \qquad \varphi_j \in \big(H^\sigma + W^{1,2\sigma} \big)(\B_{3r}(p_j)) . 
\end{equation}
Here we are implicitly using Proposition \ref{prop: comparison kernel sharp} to compare (locally) the $H^\sigma$ seminorms on $M$ with the ones on $\R^n$. 

Let $\widetilde P_t $ be the smoothing operator in \eqref{eq: tilde P def}. For $\ep>0$, on $\B_{2r}(p_j)$ define the smooth maps
\begin{equation*}
    \varphi_{j,\ep} :=  \widetilde P_\ep \varphi_j \in C^{\infty}(\B_{2r}(p_j)) , \qquad u_{j, \ep} := e^{i\varphi_{j, \ep}}  \in C^{\infty}(\B_{2r}(p_j); \Sp^1) . 
\end{equation*}
By Lemma \ref{lem: convergence modified heat kernel} we have $\varphi_{j, \ep} \to \varphi_{j}$ in $\big(H^\sigma + W^{1,2\sigma} \big)(\B_{2r}(p_j))$. Moreover, by the continuity of $\varphi \mapsto e^{i \varphi}$ from $\big(H^\sigma + W^{1,2\sigma} \big)(\Omega)$ to $H^\sigma(\Omega; \Sp^1)$---see \cite[Section 15.3]{Bre-Mi}---we have that 
\begin{equation}\label{eq: conv u_i ep}
    u_{j , \ep} \to e^{i\varphi_j} = u \quad \mbox{in } H^\sigma(\B_{2r}(p_j) ; \Sp^1) . 
\end{equation}

Now we check consistency on the overlaps. That is, we claim
\begin{equation*}
    u_{j, \ep} = u_{k, \ep} \quad \mbox{on } \B_{r}(p_j) \cap \B_{r}(p_k), \mbox{ whenever } \B_{r}(p_j) \cap \B_{r}(p_k) \neq \varnothing . 
\end{equation*}
By \eqref{eq: u local lift}, whenever $\B_{r}(p_j) \cap \B_{r}(p_k) \neq \varnothing$ we must have that $\varphi_j-\varphi_k$ has values in $2\pi \Z$ in $\B_{2r}(p_j) \cap \B_{2r}(p_k)$. Moreover, by construction $\B_{2r}(p_j) \cap \B_{2r}(p_k)$ is simply connected hence, by \cite[Corollary 6.2]{Bre-Mi} there exists an integer $n_{jk} \in \Z$ such that 
\begin{equation*}
    \varphi_j  - \varphi_k = 2\pi n_{jk} \s \mbox{on } \B_{2r}(p_j) \cap \B_{2r}(p_k) . 
\end{equation*}
Since $ \widetilde P_\ep$ preserves constants and its kernel has support at distance $\le r$, we deduce that 
\begin{equation*}
 \varphi_{j, \ep} - \varphi_{k, \ep} = 2\pi n_{jk} \s \mbox{on } \B_{r}(p_j) \cap \B_{r}(p_k) .
\end{equation*}
Hence $u_{j, \ep} = u_{k, \ep}$ on $\B_{r}(p_j) \cap \B_{r}(p_k)$ as desired. 

Thus, we may define the single sequence $u_\ep \in C^{\infty}(M; \Sp^1)$ by 
\begin{equation*}
    u_{\ep}(x) := u_{j,\ep}(x) , \s \mbox{if } x\in \B_{r}(p_j) , 
\end{equation*}
and this definition is consistent. By \eqref{eq: conv u_i ep} we also have that $u_\ep \to u$ in $H^\sigma(\B_{r}(p_j); \Sp^1)$ for every $j = 1, \dotsc, m$. To pass from these localized convergences to the global convergence on $M$, we can use a standard localization argument. Let $\{\eta_j\}_{j=1}^m \subset C^\infty(M)$ be a smooth partition of unity subordinate to the covering $\{\B_r(p_j)\}_{j=1}^m$, so that
\[ 0\leq \eta_j \leq 1,\quad \operatorname{supp}\eta_j \subset \B_r(p_j), \quad\text{and}\quad \sum_{j=1}^m \eta_j \equiv 1\ \mbox{ on }M, 
\]
and write
\[
u_\ep-u=\sum_{j=1}^m \eta_j (u_\ep-u).
\]
For each fixed $j$, the function $\eta_j(u_\ep-u)$ is supported in $\B_r(p_j)$. Since the multiplication $u_\ep-u \mapsto \eta_j (u_\ep-u)$ is continuous from $H^\sigma(\B_r(p_j))$ to itself, and since the extension by zero is continuous from $H^\sigma_0(\B_r(p_j)) \to H^\sigma(M)$, the convergence $u_\ep \to u$ in $H^\sigma(\B_r(p_j) ; \Sp^1)$ implies, for every $j$, that
\[
\eta_j (u_\ep - u ) \to 0 \quad\text{in }H^\sigma(M).
\]
Summing over $j$ and using the triangle inequality gives
\[
\|u_\ep-u\|_{H^\sigma(M)}
\leq \sum_{j=1}^m \|\eta_j (u_\ep- u)\|_{H^\sigma(M)} \to 0 , \s \mbox{ as } \ep \to 0. 
\]
Thus $u_\ep \to u$ in $H^\sigma(M; \Sp^1)$ and this concludes the proof.
\end{proof}

\begin{teo}\label{teo:fractional_lifting}
Let $\sigma\in (0,1)$, $\sigma p >1 $, and $u \in W^{\sigma,p}(\Sp^1 ; \Sp^1) $ with $\deg u=0$. Let also $ \varphi $ be a continuous lift of $u$. Then there exists a constant $C=C(p)>0$ independent of $\sigma$ such that 
\begin{equation*}
    [\varphi ]_{W^{\sigma,p}(\Sp^1)}^p \le C  [u ]_{W^{\sigma,p}(\Sp^1)}^p  + \frac{C}{(1-\sigma)^{1-1/(\sigma p)}} [u]_{W^{\sigma,p}(\Sp^1)}^{p/\sigma} . 
\end{equation*}    
\end{teo}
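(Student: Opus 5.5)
Since $\deg u=0$ and $\sigma p>1$, $u$ is continuous and admits a continuous lift $\varphi$, unique up to $2\pi\Z$. We compare $|\varphi(x)-\varphi(y)|$ with $|u(x)-u(y)|$ according to the size of $|x-y|$. Let $\delta\in(0,\pi]$ be a scale, to be chosen depending on $[u]_{W^{\sigma,p}}$ and $\sigma$, such that $\osc(u,I)\le 1$ on every arc $I$ of length $\le\delta$. On such arcs the lift is locally Lipschitz in terms of $u$: if $|u(x)-u(y)|\le 1$ and $\varphi$ varies by less than $\pi$ along the short arc between $x$ and $y$, then $|\varphi(x)-\varphi(y)|\le \tfrac{\pi}{2}|u(x)-u(y)|$. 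Hence the part of the double integral where $|x-y|<\delta$ is bounded by $C[u]^p_{W^{\sigma,p}}$, with $C=C(p)$. This gives the first term.

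To choose $\delta$, we use the quantitative Morrey embedding on an arc $I$ of length $r$, in its sharp-in-$\sigma$ form. This is the one-dimensional analogue of \cite[Lemma~2.8]{codim2-frac} used in Proposition~\ref{prop:small_energy_minimizers}, and it is proved by the Campanato/dyadic-averaging argument:
\begin{equation*}
\osc(u,I)^p\le C\,(1-\sigma)\,r^{\sigma p-1}[u]^p_{W^{\sigma,p}(I)}.
\end{equation*}
Since $\sigma p>1$, the smallness condition $\osc\le 1$ on all arcs of length $\delta$ holds once
\begin{equation*}
\delta^{\sigma p-1}=\big(C(1-\sigma)[u]^p\big)^{-1}
\end{equation*}
(or $\delta=\pi$ if this is larger). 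We expect the exact power of $(1-\sigma)$ claimed in the statement to require care here; this is the main technical point.

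For the far part $|x-y|\ge\delta$, we first bound $\osc(\varphi,\Sp^1)$. Covering $\Sp^1$ by $N\sim \delta^{-1}$ arcs of length $\delta$, on each of which $\varphi$ varies by at most $C\osc(u)\le C$, gives $\osc\varphi\le C\delta^{-1}$. This is too crude. Instead we use the Hölder bound at every scale: $|\varphi(x)-\varphi(y)|\lesssim \big((1-\sigma)[u]^p\big)^{1/p}|x-y|^{\sigma-1/p}\cdot(|x-y|/\delta)^{1-\sigma+1/p}$, obtained by summing over the $|x-y|/\delta$ subarcs. Then
\begin{equation*}
\iint_{|x-y|\ge\delta}\frac{|\varphi(x)-\varphi(y)|^p}{|x-y|^{1+\sigma p}}
\end{equation*}
reduces, via the simpler bound $|\varphi(x)-\varphi(y)|\le C|x-y|/\delta$, to
\begin{equation*}
C\delta^{-p}\int_\delta^\pi t^{p-1-\sigma p}\,dt\lesssim \delta^{-p}\,\frac{1}{p(1-\sigma)}.
\end{equation*}
Inserting $\delta^{-1}=\big(C(1-\sigma)[u]^p\big)^{1/(\sigma p-1)}$ yields a power of $[u]$ and of $(1-\sigma)$.

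To land exactly on the exponents $p/\sigma$ and $(1-\sigma)^{-(1-1/(\sigma p))}$, we instead bound $|\varphi(x)-\varphi(y)|$ by the number of $\delta$-arcs crossed, combined with an optimized interpolation between the Lipschitz-in-$\delta$ bound and the trivial bound. We would then balance $\delta$ to match the stated exponents. We expect this bookkeeping, including tracking the sharp factor $(1-\sigma)$ in the Morrey inequality, to be the main obstacle.
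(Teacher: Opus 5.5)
Your near-diagonal step is correct. It is a special case of the paper's estimate on its good set: the sharp one-dimensional Morrey bound $\osc(u,I)^p\le C(1-\sigma)|I|^{\sigma p-1}[u]^p_{W^{\sigma,p}(I)}$ keeps $u(I)$ in a short arc, so $|\varphi(x)-\varphi(y)|\le\frac{\pi}{2}|u(x)-u(y)|$ there.

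The gap is the far region, and it is not a matter of bookkeeping. You fix one scale from the \emph{global} seminorm, $\delta^{1-\sigma p}\simeq C(1-\sigma)[u]^p_{W^{\sigma,p}(\Sp^1)}$, and count $|\varphi(x)-\varphi(y)|\lesssim|x-y|/\delta$. What this actually gives is
\[
\iint_{\{|x-y|\ge\delta\}}\frac{|\varphi(x)-\varphi(y)|^p}{|x-y|^{1+\sigma p}}\,dx\,dy\lesssim\frac{\delta^{-p}}{1-\sigma}\simeq(1-\sigma)^{\frac{p}{\sigma p-1}-1}\,[u]_{W^{\sigma,p}(\Sp^1)}^{\frac{p^2}{\sigma p-1}} .
\]
Since $p^2/(\sigma p-1)>p/\sigma$, this does not imply the statement when $[u]$ is large. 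For example, let $\varphi$ increase linearly by $2\pi N$ on one half of $\Sp^1$ and decrease back on the other half. For fixed $\sigma$, $[u]^p\simeq N^{\sigma p}$ and $[\varphi]^p\simeq N^p\simeq[u]^{p/\sigma}$. Your $\delta\simeq N^{-\sigma p/(\sigma p-1)}$ is far below the true winding scale $1/N$, and your bound is of order $N^{\sigma p^2/(\sigma p-1)}\gg N^p$.

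Tuning $\delta$ or interpolating with H\"older bounds does not fix this. A single scale must be small enough for the worst arc, and counting $\delta$-arcs then charges an $O(1)$ phase increment to every arc, whatever energy it carries. The exponent $p/\sigma$ is exactly what the paper needs: for $p=2$ it makes the second term in step (4) of the chain \eqref{s*2} quadratic in $[g]$, so it can be absorbed in the proof of Theorem~\ref{thm: classification R2}.

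The missing idea, from Merlet's argument which the paper follows, is to localize pair by pair. Set $a(x,y):=[u]^p_{W^{\sigma,p}([x,y])}|x-y|^{\sigma p-1}$ and $E_0:=\{a\le\hat c_0^{-p}\}$ with $\hat c_0\simeq(1-\sigma)^{1/p}$. The set $E_0$ contains your region $\{|x-y|<\delta\}$, and on it your near-diagonal argument applies verbatim. For $(x,y)\notin E_0$, split $[x,y]$ by a dyadic stopping time into arcs $I$ with $[u]^p_{W^{\sigma,p}(I)}|I|^{\sigma p-1}\le\hat c_0^{-p}$. Because $I\mapsto[u]^p_{W^{\sigma,p}(I)}$ is superadditive on disjoint arcs, generation $q$ contains at most $\min\{2^q,\hat c_0^p a(x,y)2^{-q(\sigma p-1)}\}$ bad arcs. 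So the number of pieces satisfies $k_{x,y}\lesssim(\hat c_0^pa(x,y))^{1/(\sigma p)}$, and $|\varphi(x)-\varphi(y)|\lesssim k_{x,y}$. This is where the exponent $1/\sigma$ comes from. Next write
\[
k_{x,y}^p\lesssim\hat c_0^{p/\sigma}\,[u]^{\frac{p}{\sigma}-p}_{W^{\sigma,p}(\Sp^1)}\,[u]^p_{W^{\sigma,p}([x,y])}\,|x-y|^{\frac{\sigma p-1}{\sigma}},
\]
and exchange the integration in $(x,y)$ with the one over the pairs $(z,w)\in[x,y]^2$ that define $[u]^p_{W^{\sigma,p}([x,y])}$. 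Everything reduces to $\int_0^\pi\ell^{-(1/\sigma-(1-\sigma)p)}\,d\ell\simeq\frac{\sigma}{(1-\sigma)(\sigma p-1)}$. Together with the power of $\hat c_0$, this gives a second term at least as good as $(1-\sigma)^{-(1-1/(\sigma p))}[u]^{p/\sigma}_{W^{\sigma,p}(\Sp^1)}$.
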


\begin{proof} This result was proved in \cite[Theorem 1.1]{MerletLift} where the author did not track the precise dependence of the constant as $\sigma \to 1$. We sketch the proof from \cite{MerletLift} here, highlighting the two key points where we obtain an improved dependence on $\sigma$. We keep the exact same notation of \cite{MerletLift}. Let $C>0$ be a general constant depending only on $p$ and not on $\sigma$.

Denote by $[x,y]\subset \Sp^1$ the shortest arc connecting the points $x,y \in \Sp^1$. Since we are in one spatial dimension and $\sigma p>1$, by the Sobolev embedding $u$ has a continuous lift $\varphi \in W^{\sigma,p}(\Sp^1)$ and there holds
\begin{equation}\label{eq: Sob emb 1D}
    |u(x)-u(y)| \le C(1-\sigma)^{1/p} |x-y|^{\sigma-1/p} [u]_{W^{\sigma,p}([x,y])} . 
\end{equation}
  For a reference for the dependence on $\sigma$ of the constant on the right-hand side in this inequality, see, for example, the proof of \cite[Lemma 2.8]{codim2-frac}. From here, the proof goes on exactly as the one in \cite{MerletLift}, with $\hat c_0:=C (1-\sigma)^{1/p}$ in place of what is called $c_0$ in \cite{MerletLift}. 
    
    On the ``good" set 
    \begin{equation*}
        E_0 := \{(x,y) \in \Sp^1 \times \Sp^1  : a(x,y)^{1/p} \le 1/{\hat c_0}  \}, \quad \mbox{where } \,\, a(x,y):=[u]_{W^{\sigma,p}([x,y])}^p |x-y|^{\sigma p-1}  , 
    \end{equation*}
    by \eqref{eq: Sob emb 1D} we have that $u([x,y])$ is contained in a half-circle. Thus, on $E_0$ we have the pointwise comparison $|\varphi(x)-\varphi(y)| \le (\pi/2)|u(x)-u(y)| $ which gives
    \begin{equation}\label{eq: estimate good set}
        \iint_{E_0}\frac{|\varphi(x)-\varphi(y)|^p}{|x-y|^{1+\sigma p}} \, dx dy \le C [u]_{W^{\sigma,p}(\Sp^1)}^p . 
    \end{equation}

    On the complement $\Sp^1 \times \Sp^1 \setminus E_0 $ we let $k_{x,y}$ be the smallest dyadic number $2^q$ such that $[x,y]$ can be partitioned into $2^q$ subintervals, each belonging to $E_0$. Clearly $|\varphi(x)-\varphi(y)| \le (\pi/2) k_{x,y}$ on $\Sp^1 \times \Sp^1 \setminus E_0 $. Arguing as in \cite{MerletLift} we get the estimate 
    \begin{equation*}
        k_{x,y} \le C \hat c_0^{1/\sigma} a(x,y)^{1/(\sigma p)} = C (1-\sigma)^{1/(\sigma p)} a(x,y)^{1/(\sigma p)} , 
    \end{equation*}
    and the proof concludes that
    \begin{align*}
          \iint_{\Sp^1 \times \Sp^1 \setminus E_0}\frac{|\varphi(x)-\varphi(y)|^p}{|x-y|^{1+\sigma p}} \, dx dy \le C (1-\sigma)^{1/(\sigma p)} [u]_{W^{\sigma,p}(\Sp^1)}^{p/\sigma} \bigg( \int_{-1/2}^{1/2} \frac{1}{|w|^{m-1}} dw \bigg) , 
    \end{align*}
where $m:=1+1/\sigma-(1-\sigma)p$. Substituting this value of $m$ gives
\begin{equation*}
     \iint_{\Sp^1 \times \Sp^1 \setminus E_0}\frac{|\varphi(x)-\varphi(y)|^p}{|x-y|^{1+\sigma p}} \, dx dy \le C (1-\sigma)^{1/(\sigma p)-1} [u]_{W^{\sigma,p}(\Sp^1)}^{p/\sigma}
\end{equation*}
which, together with \eqref{eq: estimate good set}, gives the desired estimate.
\end{proof}

\newpage

\noindent \textbf{Acknowledgments}: The authors are grateful to Antoine Detaille for his useful correspondence regarding Theorem \ref{thm: density on M}, and to Gianmarco Caldini for detailed clarifications on~\cite[Theorem~1.1]{almgren2024optimal}.

The authors are members of the {\selectlanguage{italian}``Gruppo Nazionale per l'Analisi Matematica, la Probabilità e le loro Applicazioni''} (GNAMPA) of the {\selectlanguage{italian}``Istituto Nazionale di Alta Matematica''} (INdAM).

M.~C. gratefully acknowledges the Erwin Schrödinger International Institute (ESI) in Vienna and Bocconi University for their kind hospitality and financial support during the realization of this work.

M.~F. is funded by the European Union: the European Research Council (ERC), through StG ``MAGNETIC'', project number: 101165368. Views and opinions expressed are however those of the authors only and do not necessarily reflect those of the European Union or the European Research Council. Neither the European Union nor the granting authority can be held responsible for them.

N.~P. and M.~F. acknowledge the INdAM-GNAMPA Project {\selectlanguage{italian} \em Gamma-convergenza di funzionali geometrici non-locali}, CUP \#E5324001950001\#.

N.~P. acknowledges the MIUR Excellence Department Project awarded to the Department of Mathematics, University of Pisa, CUP I57G22000700001.

\vspace{10pt}\noindent
\textbf{Data availability}: This paper does not use any data set.

\vspace{10pt}\noindent
\textbf{Conflict of interest}: The authors declare that there is no conflict of interest.

\begingroup

            \bibliographystyle{abbrv}
         
\bibliography{references}

\endgroup

\end{document}